\newtheorem{lemma}{Lemma}
\newtheorem{definition}{Definition}
\newtheorem{theorem}{Theorem}
\newtheorem{remark}{Remark}
\newtheorem{example}{Example}
\newtheorem{procedure}{Procedure}
\newtheorem{assumption}{Assumption}
\newcommand{\tabincell}[2]{\begin{tabular}{@{}#1@{}}#2\end{tabular}}
\newenvironment{proof}{~~\textit{Proof:}}{\hfill$\blacksquare$}
\begin{document}

\begin{center}
\LARGE ``Series-Parallel Mechanical Circuit Synthesis of a Positive-Real Third-Order Admittance Using at Most Six Passive Elements for Inerter-Based Control'' with the Supplementary Material
\end{center}

\vspace{1.5cm}

\noindent This report includes the original manuscript (pp.~2--47) and the supplementary material (pp.~48--67) of ``Series-Parallel Mechanical Circuit Synthesis of a Positive-Real Third-Order Admittance Using at Most Six Passive Elements for Inerter-Based Control''.

\vspace{1cm}

\noindent Authors: Kai Wang, Michael Z. Q. Chen, and Fei Liu

\newpage



\title{Series-Parallel Mechanical Circuit Synthesis of a
Positive-Real Third-Order Admittance Using at Most
Six Passive Elements for Inerter-Based Control}
\author{Kai~Wang$^{1}$,  ~Michael~Z.~Q.~Chen$^{2,}$\footnote{Corresponding author:  Michael Z. Q. Chen, mzqchen@outlook.com \newline
$^{1}$ Key Laboratory of Advanced Process Control for Light Industry (Ministry of Education), School of Internet of Things Engineering, Jiangnan University, Wuxi 214122, P. R. China (e-mail: kaiwang@jiangnan.edu.cn). \newline
$^{2}$ School of Automation, Nanjing University of Science and Technology, Nanjing 210094, P. R. China (e-mail: mzqchen@outlook.com).
\newline
This work is supported by the Natural Science Foundation of Jiangsu Province under grant BK20211234, and  the National Natural Science Foundation of China under grants 61873129 and 61833007.
},  ~ and ~ Fei Liu$^{1}$}
\date{}
\maketitle

\begin{abstract}
This paper investigates the circuit synthesis problem for a certain positive-real bicubic (third-order) admittance with a simple pole at the origin  ($s=0$) to be realizable as a  one-port series-parallel damper-spring-inerter circuit consisting of at most six elements, where the results can be directly applied to the design and physical realization of  inerter-based  control systems.
Necessary and sufficient conditions for such a specific bicubic admittance to be realizable by a one-port passive series-parallel mechanical circuit containing at most six elements are derived, and a group of  mechanical circuit configurations covering the whole set of realizability conditions are presented together with element value expressions.
The conditions and element value expressions are related to the admittance coefficients and the roots of certain algebraic equations.
The circuit synthesis results of this paper  are illustrated by several numerical examples including the   control system design  of a train suspension system.
Any realization circuit in this paper contains much fewer passive elements than the ten-element realization  circuit by the well-known Bott-Duffin circuit synthesis approach. The investigations of this paper can contribute to the theory of  circuit synthesis and many other related fields.

\medskip

\noindent{\em Keywords:} Passivity,  positive-real function, circuit synthesis,  inerter-based  control, parameters optimization.
\end{abstract}








\section{Introduction}  \label{eq: introduction}

\emph{Passive circuit synthesis}\cite{AV06}--\cite{CWC19} is the theory of physically realizing passive network systems, which are described by admittances, impedances, driving-point behavioural approach, etc., as passive   circuits containing only passive elements.\footnote{The phrase ``circuit synthesis'' is also called ``network synthesis'' in the research of this field. Moreover, the phrases ``mechanical circuit'', ``electrical circuit'', etc. of this paper can also be called ``mechanical network'', ``electrical network'', etc.} For any one-port linear time-invariant electrical network, the driving-point behaviour about the port voltage $v$ and current $i$
can be described as $\alpha(\frac{d}{dt}) v = \beta(\frac{d}{dt}) i$, where $\alpha, \beta \in \mathbb{R}[s]$   are real-coefficient  polynomials.
Then, the admittance  defined as $Y(s) := \hat{i}(s)/\hat{v}(s)$  can be expressed as a real-rational function  $Y(s) = \alpha(s)/\beta(s)$.
For a real-rational function $H(s)$, if $H(s)$ is analytic for $\Re(s) > 0$ and satisfies $\Re(H(s)) \geq 0$ for $\Re(s) > 0$, then $H(s)$ is defined to be \emph{positive-real} \cite{AV06}.
The admittance $Y(s)$ (resp. impedance $Z(s)$) of any one-port linear time-invariant passive circuit  must be positive-real \cite{AV06}.
By using the \emph{Bott-Duffin circuit synthesis procedure} \cite{BD49}, any positive-real admittance (resp. impedance) can be realized by a one-port linear time-invariant passive electrical circuit consisting of resistors, inductors, and capacitors (also called RLC circuits) \cite{You15,CWC19}.
However,
the driving-point behavior of the Bott-Duffin circuit realization is not controllable and the number of reactive elements (inductors and capacitors) is much larger than the \emph{McMillan degree} \cite[Chapter~3.6]{AV06} of the admittance or impedance function (see  \cite{HS17}). This means that the Bott-Duffin circuit synthesis procedure may generate several redundant elements and appear nonminimal.
Moreover, since
the Bott-Duffin circuit synthesis procedure is not in an explicit form, it is not convenient to calculate the element values.

Nowadays,   one-port linear time-invariant  passive electrical circuits and mechanical circuits can be completely analogous with each other, where the current, voltage, resistors, inductors, and capacitors are respectively analogous to force, velocity,  dampers, springs, and inerters \cite{Smi02}. Therefore, the analysis and synthesis of passive electrical circuits
are actually equivalent with those of passive mechanical circuits, and one can always utilize one-port mechanical circuits consisting of dampers, springs, and inerters (also called one-port damper-spring-inerter circuits) to physically realize any two-terminal linear time-invariant passive mechanical system based on the theory of circuit synthesis.  Regarded as passive controllers,
one-port passive mechanical circuits consisting of dampers, spring, and inerters have been applied to the control of many vibration systems since the invention of inerters \cite{ELSS06}--\cite{DWSZ21},  where the system performances are shown to be enhanced compared with the conventional mechanical circuits consisting of only dampers and springs.
After determining a suitable passive controller,
 passive circuit synthesis  can be directly
applied to physically realize the controller as passive mechanical circuits, which makes the design process  more convenient and systematic.
Moreover, the control methodology based on passive mechanical systems containing inerters has the advantages of low cost and high reliability. Considering the constraints on space, weight, cost, etc.,
it is essential to restrict the complexity of mechanical circuits, which motivates the further investigation on passive   circuit synthesis problems of positive-real admittances (resp. impedances) by using the restricted number of elements, especially for low-order positive-real functions. During recent years, there have been many new results of passive circuit synthesis \cite{CS09(2)}--\cite{WC21},  but many unsolved problems still exist. For instance, the minimal complexity realization problems of positive-real biquadratic (second-order) and bicubic (third-order) impedances as damper-spring-inerter circuits have not been determined. Specifically, Kalman \cite{Kal10,Smi17}
has   highlighted the significance of investigating the minimal realization problems of passive circuits as a field of system theory.

The main task of solving a passive circuit synthesis problem mainly includes two parts, where the first part is to derive necessary and sufficient conditions for a class of positive-real
functions to be realizable as the admittances (or impedances) of  a specific class of passive circuits, and the other part is to determine a set of realization configurations
covering the conditions with element value expressions. The realizability conditions can be utilized as the optimization constraints in the passive controller design of mechanical   systems, such that the  complexity requirements of the realization circuits can be satisfied. After determining the passive controller,
the circuit synthesis results can be utilized to physically realize the the positive-real admittance (resp. impedance) as a damper-spring-inerter circuit.
In addition to mechanical control, passive circuit synthesis can have a long-term impact on many other related fields, such as circuit theory \cite{RV21},
circuit-antenna design \cite{LBHD11}, self-assembling circuit design \cite{DGYM20},  biomedical engineering \cite{Kes17},
fractional-order circuit systems \cite{Tav20,LLDYYIFL21},
negative imaginary systems \cite{XLP16}, modelling of spatially interconnected systems \cite{ZGSX19},  etc. Therefore, investigating passive circuit synthesis is both theoretically and practically meaningful.

The mechanical admittance in many vibration systems should contain a pole at the origin ($s = 0$) to provide static stiffness, such as the admittances of suspension struts (see  \cite{CHW15}), and
the mechanical circuits whose admittances are positive-real biquadratic or bicubic functions without a pole at $s = 0$ need to be connected in parallel with a spring to form such  admittances
(see  \cite{PS06,WLLSC09}). In  \cite{CWSL13}, the circuit synthesis problem for a class of admittances $Y (s) = \alpha (s) / \beta (s)$ containing a pole at $s = 0$ has been solved, where $\alpha(s)$ is a second-order polynomial and  $\beta(s)$ is a third-order polynomial with a root at $s = 0$.
More generally, the low-complexity realization problems of a class of bicubic admittances $Y(s) = \alpha(s)/\beta(s)$ containing a pole at $s = 0$ need to be further investigated (see \eqref{eq: specific bicubic admittance}), where $\alpha(s)$ and $\beta(s)$ are both third-order polynomials.
By the removal of the pole at $s = 0$ (extracting a parallel spring as in Fig.~\ref{fig: Spring-N2}), any positive-real admittance belonging to this class can be converted into a positive-real biquadratic function, and the circuit synthesis results for biquadratic functions can be applied to complete the realization. By the Bott-Duffin synthesis procedure,    ten elements are needed to realize the whole class of such positive-real admittances. However, the realization circuits may contain fewer elements without first removing the pole.
In \cite{WJ19},
the synthesis results of such an admittance as a one-port five-element damper-spring-inerter circuit was derived, but the dimension of the realizability condition set is less than that of the positive-real condition set.
Therefore, it is almost  impossible to obtain the optimal admittance satisfying the conditions in \cite{WJ19}, which is realizable with five elements, when the optimization constraint of the admittance is simply the positive-real condition in
the passive controller design of mechanical   systems. In order to completely solve  the minimal complexity circuit realizations of such a positive-real admittance, it is essential to further investigate the  realization problem of such an admittance as a $k$-element series-parallel circuit, where $k = 6, 7, ..., 10$.
This paper aims to solve the realization problems as one-port six-element series-parallel circuits, such that the realizability condition set is expanded and more general realizability cases are derived.

The investigations in this paper are highlighted in the following statements.
Necessary and sufficient conditions  are derived for the bicubic admittance with a simple pole at $s=0$ to be realizable as a one-port series-parallel damper-spring-inerter circuit consisting of at most six elements (see Theorem~\ref{theorem: main theorem 03}).
Moreover, it is proved that any admittance satisfying the conditions is realizable as such a circuit by the Foster preamble or one of the circuit configurations in Figs.~\ref{fig: Spring-Biquadratic-Network}--\ref{fig: classes 4 and 5} with element values being expressed.
The synthesis results of the above circuits that can be realized by the Foster preamble after completely removing the pole at the origin are first derived, and the circuit decomposition method and
the structure properties of the realization circuits described by graph theory are utilized to determine circuit configurations to cover all the other cases. By deriving the realization results of these configurations, the final results can be obtained.
The realizability conditions and element value expressions are related to the function coefficients and the roots of certain algebraic equations.
For the circuit synthesis results in this paper, it is more convenient to check the realizability and to achieve the realization by using computer softwares, and the realization
circuits contain much fewer elements than the  circuits  by the Bott-Duffin circuit synthesis procedure. The five-element series-parallel circuit synthesis results in \cite{WJ19} are completely included by the results of this paper as specific cases.
Numerical examples are presented for illustration (see Section~\ref{sec: examples}), and the results of passive controller design and the mechanical circuit realization for an inerter-based train-suspension control system are given based on the results of this paper to show the practical significance.

The contributions of this paper are as follows. The results in this paper can guarantee minimal complexity passive circuit realizations and
can contribute to solving other minimal complexity  circuit synthesis problems for  low-order positive-real functions. In addition to train suspension systems as illustrated in Section~\ref{sec: examples}, the circuit synthesis results in this paper can be directly applied to physically realize the
passive mechanical controllers as six-element series-parallel damper-spring-ineter circuits in many other inerter-based mechanical control systems, such as  mass chain systems, car suspension systems,
building vibration systems,   wind turbines, isolator systems,  etc.
In the design process, after determining
the optimal positive-real admittances of this low-order class that constitute the passive controller
based on the theory of optimization and control, one can utilize the algebraic conditions (Theorems~\ref{theorem: main theorem 01}--\ref{theorem: main theorem 03}) to check the realizability, and each admittance satisfying one of the conditions can be further realized as one of the circuit configurations in this paper. Moreover, the realizability conditions or configurations can be utilized as the optimization constraints in addition to the positive-real conditions.
The research of this paper can also have long-term impacts on
other fields, such as electronic engineering, biomedical engineering, etc.

Compared with   \cite{WJ19},
the investigation methods
in this paper are more general, and the algebraic calculations of the realizability results are much more complex.
The realizability results of biquadratic functions as five-element circuits in \cite{JS11} are utilized to derive the realizability results of $Y(s)$ as a one-port series-parallel damper-spring-inerter circuit containing at most six elements as in Fig.~\ref{fig: Spring-N2}, where the impedance of $N_2$ is  a biquadratic function (see Lemma~\ref{lemma: Spring-Biquadratic-Network}). Furthermore, the circuit decomposition approach and the theory of circuit graph are applied to determine the circuit configurations covering all the other cases (see Lemmas~\ref{lemma: Spring-Other-Network} and \ref{lemma: Other Series-Parallel Structures}), which are more general and effective than the enumeration method in \cite{WJ19}. Therefore, it is easier to generalize the investigations in this paper to solve the synthesis problems of circuits containing more elements. In addition, the recent work in \cite{WC21_JFI} investigates the five-element circuit synthesis problem for another class of positive-real bicubic functions, where the function does not contain any pole or zero on $j \mathbb{R} \cup \infty$. Therefore, the research problems, methodologies, and results in \cite{WC21_JFI} are different from those in this paper.

In this paper, one assumes that all the circuits are one-port (two-terminal) linear time-invariant  passive  mechanical circuits consisting of only dampers, spring, and inerters (also called one-port damper-spring-inerter circuits). If there is no specific statement, all the  elements  are of positive and finite values to guarantee the passivity.
All of the circuit synthesis results in this paper are completely applicable to RLC circuit synthesis by replacing dampers, springs, inerters with resistors, inductors, and capacitors, respectively.

\section{Notations}   \label{sec: notation}

Let $\mathbb{R}$ (resp. $j \mathbb{R}$, $\mathbb{C}$) denote the real number set (resp. imaginary number set, complex number set);
let $\mathbb{R}^n$ denote the $n$-dimensional vector set; let $\mathbb{R}[s]$ (resp. $\mathbb{R}(s)$)
denote the set of real-coefficient polynomials (resp. real-rational functions) in  the indeterminate $s$;
let $\mathbb{R}^{m\times n}$ (resp. $\mathbb{R}^{m\times n}[s]$, $\mathbb{R}^{m\times n}(s)$) denote the set of $m \times n$ matrices
with entries belonging to $\mathbb{R}$ (resp. $\mathbb{R}[s]$, $\mathbb{R}(s)$).
For  $\xi \in \mathbb{C}$,  $\Re(\xi)$  denotes its real part. For $z \in \mathbb{R}^n$, $\| z \|$ denotes its Euclidean norm.
For  $H \in \mathbb{R}(s)$ or $\mathbb{R}^{m\times n}(s)$,  $\delta(H(s))$ denotes its \emph{McMillan degree} \cite[Chapter~3.6]{AV06} and $\| H \|_2$ denotes its $\mathcal{H}_2$ norm.
Let $M^\mathrm{T}$ denotes the transpose of $M \in \mathbb{R}^{n}$, $\mathbb{R}^{m\times n}$,
$\mathbb{R}^{m\times n}[s]$, or $\mathbb{R}^{m\times n}(s)$.
Let $\mathbf{0}$ and  $I$  respectively denote    the zero matrix (or zero vector) and the identity matrix of appropriate dimension, and  let $\mathbf{0}_{m\times n}$ further denote $m \times n$ zero matrix (or zero vector).

For the real symmetric $3\times 3$ \emph{Bezoutian matrix} $\mathcal{B}(\alpha, \beta)$ \cite[Definition~8.24]{Fuh12} of two third-order polynomials $\alpha, \beta \in \mathbb{R}[s]$ expressed as
 $\alpha(s) := \alpha_3 s^3 + \alpha_2 s^2 + \alpha_1 s + \alpha_0$ and $\beta(s) := \beta_3 s^3 + \beta_2 s^2 + \beta_1 s$, the entries $\mathcal{B}_{ij}$ for $i, j = 1, 2, 3$
satisfy
\begin{equation*}
\frac{\alpha(s_2)\beta(s_1) - \beta(s_2)\alpha(s_1)}{s_2-s_1} = \sum_{i = 1}^{3}
\sum_{j = 1}^{3} \mathcal{B}_{ij} s_2^{i-1} s_1^{j-1}.
\end{equation*}
Therefore, one defines the notations:
$\mathcal{B}_{11} := -\alpha_0\beta_1$, $\mathcal{B}_{12} := -\alpha_0\beta_2$,
$\mathcal{B}_{13} := -\alpha_0\beta_3$, $\mathcal{B}_{22} := \mathcal{B}_{13} + \alpha_2\beta_1 - \alpha_1\beta_2$, $\mathcal{B}_{23} := \alpha_3\beta_1 - \alpha_1\beta_3$, and $\mathcal{B}_{33} := \alpha_3\beta_2 - \alpha_2\beta_3$.

Letting $\tilde{\beta} (s) := \beta(s)/s = \beta_3 s^2 + \beta_2 s + \beta_1$, one can formulate the Bezoutian matrix $\tilde{\mathcal{B}}(\alpha, \tilde{\beta})$ of $\alpha(s)$ and $\tilde{\beta}(s)$ whose entries $\tilde{\mathcal{B}}_{ij}$ for $i, j = 1, 2, 3$
satisfy
\begin{equation*}
\frac{\alpha(s_2)\tilde{\beta}(s_1) - \tilde{\beta}(s_2)\alpha(s_1)}{s_2-s_1} = \sum_{i = 1}^{3}
\sum_{j = 1}^{3} \tilde{\mathcal{B}}_{ij} s_2^{i-1} s_1^{j-1}.
\end{equation*}
Therefore, one defines the notations:
$\tilde{\mathcal{B}}_{13} := \alpha_3 \beta_1$,
$\tilde{\mathcal{B}}_{23} := \alpha_3 \beta_2$,
$\tilde{\mathcal{B}}_{33} := \alpha_3 \beta_3$,
$\tilde{\mathcal{B}}_{11} := \alpha_1 \beta_1 - \alpha_0 \beta_2$,
$\tilde{\mathcal{B}}_{12} := \alpha_2 \beta_1 - \alpha_0 \beta_3$,
$\tilde{\mathcal{B}}_{22} := \tilde{\mathcal{B}}_{13} + \alpha_2 \beta_2 - \alpha_1 \beta_3$.

Moreover, define the following notations: $\mathcal{M}_{23} := \alpha_3\beta_1 + \alpha_1\beta_3$, $\mathcal{M}_{33} := \alpha_3\beta_2 + \alpha_2\beta_3$,
$\tilde{\mathcal{M}}_{12} := \alpha_2 \beta_1 + \alpha_0 \beta_3$,
$\Delta_\alpha := \alpha_1 \alpha_2 - \alpha_0 \alpha_3$, and $\Delta_\beta := \beta_2^2 - 4 \beta_1 \beta_3$.

\section{Problem Formulation} \label{sec: problem formulation}

A real-rational function $H \in \mathbb{R}(s)$ is defined to be a \emph{positive-real} function if  $H(s)$ is analytic for $\Re(s) > 0$ and satisfies $\Re(H(s)) \geq 0$ for $\Re(s) > 0$ \cite{AV06}. Specifically, a real-rational function $H \in \mathbb{R}(s)$ is called a \emph{minimum  function} if $H(s)$ is positive-real and contains no zero and pole on $j \mathbb{R} \cup \infty$ \cite{You15}. The \emph{Foster preamble} \cite[pg.~161]{Van60} is the successive removal of the poles or zeros belonging to $j \mathbb{R} \cup \infty$ and the minimum constant of $\Re(Y(j \omega))$ or $\Re(Y^{-1}(j \omega))$, such that both the remaining impedance   and admittance are \emph{minimum functions} \cite[pg.~161]{Van60}  with lower  McMillan degrees  or one of the impedance and admittance is zero.
The \emph{Bott-Duffin circuit synthesis procedure} is the most effective passive circuit synthesis algorithm that can realize any given positive-real impedance (also admittance) to be a one-port series-parallel damper-spring-inerter (RLC) circuit, which consists of the Foster preambles and Bott-Duffin cycles \cite[Section~2.4]{CWC19}, \cite{BD49}. In comparison, the modified Bott-Duffin circuit synthesis procedures by Reza, Pantell, Fialkow, and Gerst   can reduce one element in each Bott-Duffin cycle but generate  the non-series-parallel circuit structures \cite{Hug17}. Moreover, some other procedures, such as Miyata synthesis procedure, can only be applied to some specific classes of positive-real functions \cite{Van60}.
As shown in Section~\ref{eq: introduction}, the Bott-Duffin circuit synthesis procedure cannot guarantee the complexity of the circuits realizing the given positive-real functions to be minimal
in many cases.
Therefore, it is essential to investigate the minimal complexity synthesis problems of damper-spring-inerter circuits for given classes of low-order positive-real functions, such as some investigations in \cite{MS19,JS11,Hug17,ZJWN17,WJ19,Hug20,WC21}.

The  design process of a general class of inerter-based control systems is shown in Appendix~\ref{sec: mechanical control}, where the positive-real admittances \eqref{eq: Qi} constitute the passive controller.
In this paper, we will investigate the minimal complexity passive circuit synthesis problems when the  McMillan degree of \eqref{eq: Qi} is three,  and the realizability results  as series-parallel damper-spring-inerter circuits containing no more than six elements will be derived.

As defined in Section~\ref{sec: notation}, $\alpha, \beta \in \mathbb{R}[s]$ are two third-order real-coefficient polynomials in $s$. When the    McMillan degree of \eqref{eq: Qi} is equal or less than three, a certain bicubic admittance $Y(s) \in \mathbb{R}(s)$ is formulated as
\begin{equation} \label{eq: specific bicubic admittance}
Y(s) = \frac{\alpha(s)}{\beta(s)} =  \frac{\alpha_3 s^3 + \alpha_2 s^2 + \alpha_1 s + \alpha_0}{\beta_3 s^3 + \beta_2 s^2 + \beta_1 s}.
\end{equation}
To guarantee the positive-realness of $Y(s)$ in \eqref{eq: specific bicubic admittance},   assume that all of coefficients are nonnegative, that is, $\alpha_i, \beta_j \geq 0$ for $i = 0, 1, 2, 3$ and $j = 1, 2, 3$.
If a given admittance $Y(s)$ in \eqref{eq: specific bicubic admittance} is positive-real and contains any pole or zero belonging to $j \mathbb{R} \cup \infty$ except the simple pole at the origin ($s=0$), then it can be verified that $Y(s)$ is realizable by a  series-parallel damper-spring-inerter circuit that contains at most five elements by making use of the Foster preamble.
Moreover, if the McMillan degree of $Y(s)$ in
\eqref{eq: specific bicubic admittance} is lower than three ($\delta(Y(s)) < 3$), which is equivalent to $R_0(\alpha, \beta) = 0$ with
\[
R_0(\alpha, \beta) := \left|\begin{array}{cccccc} \alpha_3 & \alpha_2 & \alpha_1 & \alpha_0 & 0 & 0 \\0 & \alpha_3 & \alpha_2 & \alpha_1 & \alpha_0 & 0 \\0 & 0 & \alpha_3 & \alpha_2 & \alpha_1 & \alpha_0 \\ \beta_3 &  \beta_2 &  \beta_1 &  0 & 0 & 0 \\0 & \beta_3 &  \beta_2 &  \beta_1 &  0 & 0 \\0 & 0 & \beta_3 &  \beta_2 &  \beta_1 &  0 \end{array}\right|,
\]
then any positive-real admittance $Y(s)$ in \eqref{eq: specific bicubic admittance} must be realizable by a one-port series-parallel damper-spring-inerter circuit that contains at most four elements.
Therefore, to exclude the above low-complexity realization cases that have been solved,
one will make the assumption for the admittance $Y(s)$ in \eqref{eq: specific bicubic admittance} as follows.
\begin{assumption} \label{assumption: 01}
For any admittance $Y(s)$ in \eqref{eq: specific bicubic admittance}, the coefficients are assumed to satisfy
$\alpha_i, \beta_j > 0$ for $i = 0, 1, 2, 3$ and $j = 1, 2, 3$, $\Delta_\alpha \neq 0$, and $R_0(\alpha, \beta)  \neq 0$.
\end{assumption}
It can be derived that the equation $\alpha(s) = 0$ where $\alpha_i > 0$ for $i = 0, 1, 2, 3$ does not have any root on $j \mathbb{R}$ if and only if $\Delta_\alpha \neq 0$. Moreover,
there does not exist any root on $j \mathbb{R} \setminus \{0\}$ for $\beta(s) = 0$ where $\beta_j > 0$, $j = 1, 2, 3$.
Therefore, Assumption~\ref{assumption: 01} can guarantee that the McMillan degree of $Y(s)$ is three, $Y(s)$ contains a simple pole at $s=0$,
and $Y(s)$ does not contain any other pole or zero belonging to $j \mathbb{R} \cup \infty$ except the simple pole at $s=0$.

By applying the Bott-Duffin circuit synthesis procedure, any positive-real admittance $Y(s)$ in
\eqref{eq: specific bicubic admittance} satisfying Assumption~\ref{assumption: 01} can be realized as a one-port damper-spring-inerter series-parallel circuit containing at most ten elements. The  five-element circuit synthesis results for such a class of admittances derived in \cite{WJ19} are only specific subcases of the positive-real condition.
Therefore, in order to completely solve the minimal complexity circuit realizations of the positive-real admittance $Y(s)$ in \eqref{eq: specific bicubic admittance}, it is both theoretically and practically significant to further investigate the  realization problem of such an admittance as a $k$-element series-parallel circuit, where $k = 6, 7, ..., 10$, such that the minimal number of elements $n_{\min}$ to realize the whole class of positive-real admittances in
\eqref{eq: specific bicubic admittance} satisfying Assumption~\ref{assumption: 01} can be determined ($n_{\min} \leq 10$).

The task of this paper is to solve the circuit synthesis problem for any given admittance $Y(s)$ in \eqref{eq: specific bicubic admittance} satisfying Assumption~\ref{assumption: 01} to be realizable by a one-port series-parallel damper-spring-inerter circuit that contains at most six passive elements, where the circuit synthesis results can guarantee the minimality of the circuit complexity.

\section{Main Results} \label{sec: main results}

This section will derive and present the main results of this paper in Theorems~\ref{theorem: main theorem 01}--\ref{theorem: main theorem 03}, where Lemmas~\ref{lemma: positive-realness}--\ref{lemma: lossless subnetwork N1} are the basic lemmas to derive the main results. Theorem~\ref{theorem: main theorem 01} shows the necessary and sufficient condition for admittance $Y(s)$ in \eqref{eq: specific bicubic admittance} satisfying Assumption~\ref{assumption: 01} to be realizable as a one-port
series-parallel damper-spring-inerter circuit containing at most six elements with the specific structure in Fig.~\ref{fig: Spring-N2} (one of the conditions in Lemmas~\ref{lemma: Spring-Biquadratic-Network} and
\ref{lemma: Case-6}--\ref{lemma: Case-8}). Assuming that any of the conditions in Theorem~\ref{theorem: main theorem 01} does not holds,
Theorem~\ref{theorem: main theorem 02} presents the necessary and sufficient condition for the realizability
as any other one-port series-parallel
damper-spring-inerter circuit containing at most six elements (one of the conditions in Lemmas~\ref{lemma: 2-1}--\ref{lemma: 5-2}). Finally, Theorem~\ref{theorem: main theorem 03} combines the results in Theorems~\ref{theorem: main theorem 01} and \ref{theorem: main theorem 02}. The results show that any  admittance satisfying the conditions is realizable as such a circuit by the Foster preamble or one of the circuit configurations in Figs.~\ref{fig: Spring-Biquadratic-Network}--\ref{fig: classes 4 and 5}.

\subsection{Basic Lemmas}

For the admittance $Y(s)$ in \eqref{eq: specific bicubic admittance} satisfying Assumption~\ref{assumption: 01}, the residue of the pole at $s = 0$ is calculated as
$\alpha_0/\beta_1$. Then, it follows that
\begin{equation}  \label{eq: Zb biquadratic impedance}
Z_b(s) := \left( Y(s) - \frac{\alpha_0}{\beta_1 s}  \right)^{-1}
= \frac{a_2 s^2 + a_1 s + a_0}{d_2 s^2 + d_1 s + d_0},
\end{equation}
where
\begin{equation}  \label{eq: a d}
\begin{split}
a_2 = k \beta_1\beta_3, ~~
a_1 = k \beta_1 \beta_2, ~~
a_0 = k \beta_1^2,  ~~
d_2 = k \tilde{\mathcal{B}}_{13},  ~~
d_1 = k \tilde{\mathcal{B}}_{12}, ~~
d_0 = k \tilde{\mathcal{B}}_{11},
\end{split}
\end{equation}
for any $k > 0$. Then, the following lemma presenting a necessary and sufficient condition for   $Y(s)$ in
\eqref{eq: specific bicubic admittance} to be positive-real can be derived, which is equivalent to the result in
 \cite{CS09(2)}.

\begin{lemma}  \label{lemma: positive-realness}
Any admittance $Y(s)$ in \eqref{eq: specific bicubic admittance} satisfying Assumption~\ref{assumption: 01}
is positive-real, if and only if
$\tilde{\mathcal{B}}_{11} \geq 0$,
$\tilde{\mathcal{B}}_{12} \geq 0$, and
$2 \tilde{\mathcal{B}}_{13} - \tilde{\mathcal{B}}_{22} -
2 \sqrt{\tilde{\mathcal{B}}_{11} \tilde{\mathcal{B}}_{33}} \leq 0$.
\end{lemma}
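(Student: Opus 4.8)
The plan is to reduce the claim to the positive-realness of the biquadratic function $Z_b(s)$ in \eqref{eq: Zb biquadratic impedance}--\eqref{eq: a d} and then to apply the imaginary-axis test for biquadratic functions. First I would record that, under Assumption~\ref{assumption: 01}, the only pole of $Y(s)$ on $j\mathbb{R}\cup\infty$ is the simple pole at $s=0$, with residue $\alpha_0/\beta_1>0$; removing a parallel spring of constant $\alpha_0/\beta_1$ (as in Fig.~\ref{fig: Spring-N2}) and inverting gives, by the computation underlying \eqref{eq: Zb biquadratic impedance}--\eqref{eq: a d} with $k=1$,
\[
Z_b(s)=\left(Y(s)-\frac{\alpha_0}{\beta_1 s}\right)^{-1}=\frac{\beta_1(\beta_3 s^2+\beta_2 s+\beta_1)}{\tilde{\mathcal{B}}_{13}s^2+\tilde{\mathcal{B}}_{12}s+\tilde{\mathcal{B}}_{11}} .
\]
The key first step is the equivalence ``$Y(s)$ is positive-real $\iff$ $Z_b(s)$ is positive-real''. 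The direction ``$\Rightarrow$'' is the Foster-preamble fact that subtracting from a positive-real function the principal part at an imaginary-axis pole leaves a positive-real function, together with the fact that the reciprocal of a positive-real function is positive-real; the direction ``$\Leftarrow$'' is immediate, since $\alpha_0/(\beta_1 s)$ with $\alpha_0/\beta_1>0$ is itself positive-real and a sum of positive-real functions is positive-real.

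Next I would characterize positive-realness of the biquadratic $Z_b$. Its numerator coefficients $\beta_1\beta_3,\ \beta_1\beta_2,\ \beta_1^2$ are all positive, so the numerator is Hurwitz and $Z_b$ has no closed-right-half-plane zeros; and its denominator leading coefficient $\tilde{\mathcal{B}}_{13}=\alpha_3\beta_1$ is positive. Hence $Z_b$ is positive-real if and only if (i) its denominator has no open-right-half-plane root, i.e. $\tilde{\mathcal{B}}_{12}\ge 0$ and $\tilde{\mathcal{B}}_{11}\ge 0$, and (ii) $\Re Z_b(j\omega)\ge 0$ for all real $\omega$, with the understanding that the boundary cases in which the denominator acquires imaginary-axis roots (only when $\tilde{\mathcal{B}}_{11}=0$ or $\tilde{\mathcal{B}}_{12}=0$) must be checked to give simple poles with admissible residues. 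Writing $Z_b(j\omega)$ in terms of real and imaginary parts and clearing the nonnegative denominator of $\Re Z_b(j\omega)$, condition (ii) becomes the nonnegativity on $x=\omega^2\ge 0$ of the quadratic
\[
p(x)=a_2 d_2\,x^2-(a_2 d_0+a_0 d_2-a_1 d_1)\,x+a_0 d_0 ,
\]
where $a_2 d_2>0$ and $a_0 d_0\ge 0$. Since $p$ opens upward with nonnegative value at $x=0$, $p\ge 0$ on $[0,\infty)$ holds iff either $a_2 d_0+a_0 d_2-a_1 d_1\le 0$ (vertex at $x\le 0$) or the discriminant is nonpositive, and these two alternatives combine into the single inequality $a_1 d_1\ge(\sqrt{a_2 d_0}-\sqrt{a_0 d_2})^2$.

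Finally I would substitute $a_2=\beta_1\beta_3$, $a_1=\beta_1\beta_2$, $a_0=\beta_1^2$, $d_2=\tilde{\mathcal{B}}_{13}=\alpha_3\beta_1$, $d_1=\tilde{\mathcal{B}}_{12}=\alpha_2\beta_1-\alpha_0\beta_3$, $d_0=\tilde{\mathcal{B}}_{11}=\alpha_1\beta_1-\alpha_0\beta_2$ into $a_1 d_1\ge(\sqrt{a_2 d_0}-\sqrt{a_0 d_2})^2$. Using $\sqrt{a_2 d_0\cdot a_0 d_2}=\beta_1^2\sqrt{\tilde{\mathcal{B}}_{11}\tilde{\mathcal{B}}_{33}}$ (since $\tilde{\mathcal{B}}_{33}=\alpha_3\beta_3$), cancelling the cross term $\alpha_0\beta_2\beta_3$ common to both sides, dividing out the positive factor $\beta_1$, and recalling $\tilde{\mathcal{B}}_{22}=\alpha_3\beta_1+\alpha_2\beta_2-\alpha_1\beta_3$, the inequality collapses exactly to $2\tilde{\mathcal{B}}_{13}-\tilde{\mathcal{B}}_{22}-2\sqrt{\tilde{\mathcal{B}}_{11}\tilde{\mathcal{B}}_{33}}\le 0$; together with $\tilde{\mathcal{B}}_{11}\ge 0$ and $\tilde{\mathcal{B}}_{12}\ge 0$ from step (i), this is precisely the asserted condition. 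The main obstacle I anticipate is bookkeeping rather than conceptual: making the equivalence $Y$ positive-real $\iff$ $Z_b$ positive-real watertight at the boundary (a double pole of $Z_b$ at the origin when $\tilde{\mathcal{B}}_{11}=\tilde{\mathcal{B}}_{12}=0$, conjugate imaginary-axis poles when $\tilde{\mathcal{B}}_{12}=0<\tilde{\mathcal{B}}_{11}$), and carrying the radical through the substitution so that all powers of $\beta_1$ cancel and the radicand collapses to $\tilde{\mathcal{B}}_{11}\tilde{\mathcal{B}}_{33}$; in each such boundary case one checks that the algebra conspires so that the third inequality fails exactly when $Z_b$ fails to be positive-real.
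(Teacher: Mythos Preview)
Your proposal is correct and follows essentially the same route as the paper: reduce positive-realness of $Y(s)$ to that of the biquadratic $Z_b(s)$ by removing the simple pole at $s=0$ and taking the reciprocal, then apply the standard biquadratic positive-realness test. The paper simply cites \cite{Bah84} for the pole-removal step and \cite{CS09(2)} for the biquadratic condition, whereas you spell out the imaginary-axis computation and the substitution explicitly; your algebra is right, and the boundary cases you flag are indeed only bookkeeping.
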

\begin{proof}
By the results in   \cite[pg.~34]{Bah84}, if $Y(s)$ is a positive-real function, then  $1/Z_b(s) = Y(s) - \alpha_0/(\beta_1 s)$ is positive-real, which by the definition of positive-realness further implies that $Z_b(s)$ as in \eqref{eq: Zb biquadratic impedance} is positive-real. Conversely, the positive-realness of $Z_b(s)$ in the form of \eqref{eq: Zb biquadratic impedance} can imply that $Y(s) = 1/Z_b(s) + \alpha_0/(\beta_1 s)$ is positive-real. Together with  the positive-realness condition of biquadratic functions (see \cite{CS09(2)}), this lemma can be proved.
\end{proof}

\begin{definition}  \label{def: 01}
For any $n$-port  damper-spring-inerter circuit,
the \emph{graph} $\mathcal{G}(\mathcal{V},\mathcal{E})$ (or called \emph{network graph} in  \cite[pg.~28]{CWC19}) of the circuit   is the linear graph whose vertex set $\mathcal{V}$ consists of all the vertices
representing the velocity nodes and whose edge set $\mathcal{E}$ contains all the edges representing the
circuit elements (damper, spring, or inerter), the \emph{port graph} $\mathcal{G}_p(\mathcal{V}_p,\mathcal{E}_p)$ of the circuit is the linear graph whose vertices and edges respectively represent all the external terminals and ports, where $\mathcal{V}_p \subset \mathcal{V}$,
and the \emph{augmented graph} $\mathcal{G}_a(\mathcal{V}_a,\mathcal{E}_a)$ of the circuit  is the union of  $\mathcal{G}(\mathcal{V},\mathcal{E})$ and  $\mathcal{G}_p(\mathcal{V}_p,\mathcal{E}_p)$, where $\mathcal{V}_a = \mathcal{V}$ and $\mathcal{E}_a = \mathcal{E} \cup \mathcal{E}_p$.
\end{definition}

Based on Definition~\ref{def: 01}, the following assumption for the one-port damper-spring-inerter circuits can be made.
\begin{assumption}  \label{assumption: 02}
For a one-port damper-spring-inerter circuit, its graph  is \emph{connected} \cite[pg.~15]{SR61} and its augmented graph is \emph{nonseparable} \cite[pg.~35]{SR61}.
\end{assumption}

If Assumption~\ref{assumption: 02} does not hold, then the circuit can be equivalent to another one-port damper-spring-inerter circuit that contains fewer elements and satisfies Assumption~\ref{assumption: 02}.

For any one-port (two-terminal) damper-spring-inerter circuit,
$\mathcal{P}(a,a')$ denotes the \emph{path} \cite[pg.~14]{SR61} of the graph $\mathcal{G}(\mathcal{V},\mathcal{E})$ whose end vertices   $a, a' \in \mathcal{V}$ represent two external terminals of the circuit; $\mathcal{C}(a,a')$ denotes the \emph{cut-set} \cite[pg.~28]{SR61}, such that removing the edges of $\mathcal{C}(a,a')$  can  partition  $\mathcal{G}(\mathcal{V},\mathcal{E})$ as two connected subgraphs respectively containing vertices $a$ and $a'$.
Furthermore,
denote the path $\mathcal{P}(a,a')$   whose all the edges represent inerters (resp. springs) as $b$-$\mathcal{P}(a,a')$ (resp. $k$-$\mathcal{P}(a,a')$), and denote the cut-set $\mathcal{C}(a,a')$ whose all the edges correspond to inerters (resp.  springs) as $b$-$\mathcal{C}(a,a')$ (resp. $k$-$\mathcal{C}(a,a')$).
Then, the following lemmas present the constraints on the circuit realizations of $Y(s)$.

\begin{lemma} \cite[Theorem 2]{Ses59} \label{lemma: graph constraint}
If any admittance $Y(s)$ in \eqref{eq: specific bicubic admittance} satisfying Assumption~\ref{assumption: 01}
can be realized by the one-port damper-spring-inerter circuit satisfying Assumption~\ref{assumption: 02}, then the graph of the circuit
must have $k$-$\mathcal{P}(a,a')$ and cannot not have any of $b$-$\mathcal{P}(a,a')$, $k$-$\mathcal{C}(a,a')$, or $b$-$\mathcal{C}(a,a')$.
\end{lemma}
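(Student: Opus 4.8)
The plan is to reduce the four topological assertions to the behaviour of $Y(s)$ at the only two critical points, $s=0$ and $s=\infty$, using the standard ``degeneracy'' correspondence between the value of a driving-point immittance at these points and the presence of all-spring or all-inerter paths and cut-sets. First I would record what Assumption~\ref{assumption: 01} (together with the discussion following it) already gives: $\delta(Y(s))=3$, the function $Y$ has a \emph{simple pole} at $s=0$ with $\lim_{s\to 0}sY(s)=\alpha_0/\beta_1\in(0,\infty)$, and $Y$ is analytic and nonzero at $s=\infty$ with $Y(\infty)=\alpha_3/\beta_3\in(0,\infty)$. In particular $Y$ genuinely has a pole (not a zero) at the origin and is simultaneously finite and nonzero at infinity. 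I would also record the elementary immittance limits: a spring of rate $k$ has admittance $k/s$, which blows up as $s\to 0$ and vanishes as $s\to\infty$, so it degenerates to a short at $s=0$ and to an open at $s=\infty$; an inerter of inertance $b$ has admittance $bs$, which vanishes as $s\to 0$ and blows up as $s\to\infty$, hence a short at $s=\infty$ and an open at $s=0$; and a damper is a frequency-independent conductance. Assumption~\ref{assumption: 02} is invoked throughout so that the words ``path'', ``cut-set'', and ``separation'' carry their usual meaning and no spurious series/parallel degeneracy can occur.

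Next I would establish the four constraints, each by the same limiting argument applied to the resistive network obtained by replacing springs and inerters with shorts or opens as above. (a) If the subgraph of all spring edges did not connect $a$ to $a'$, there would be a vertex partition separating $a$ from $a'$ across which only dampers and inerters appear; letting $s\to 0$, the inerter admittances vanish while the damper admittances stay bounded, so $Y(s)$ would remain bounded near $s=0$, contradicting the pole of $Y$ at the origin --- hence $k$-$\mathcal{P}(a,a')$ must exist. (b) If a $b$-$\mathcal{C}(a,a')$ existed, then as $s\to 0$ this all-inerter cut-set degenerates to an open circuit, disconnecting $a$ from $a'$ in the limiting network and forcing $Y$ to vanish at $s=0$; this contradicts $Y(0)=\infty$. (c) If a $b$-$\mathcal{P}(a,a')$ existed, then as $s\to\infty$ this all-inerter path degenerates to a short, forcing $1/Y$ to vanish at $s=\infty$, i.e.\ a pole of $Y$ at infinity, contradicting finiteness of $Y(\infty)$. (d) If a $k$-$\mathcal{C}(a,a')$ existed, then as $s\to\infty$ this all-spring cut-set degenerates to an open, disconnecting the terminals and forcing $Y(\infty)=0$, contradicting $Y(\infty)=\alpha_3/\beta_3>0$.

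The step I expect to be the main obstacle is making rigorous the claim that ``the immittance evaluated in the limit equals the immittance of the degenerate resistive network, and it is finite (resp.\ nonzero) exactly under the stated topological condition'', since for complex $s=j\omega$ naive magnitude bounds along the imaginary axis are not quite sufficient. The clean route is the classical topological (spanning-tree / $2$-tree) formula for the driving-point admittance of an RLC one-port: express $Y(s)$ as the ratio of the sum over $2$-trees separating $a$ and $a'$ to the sum over spanning trees, each term weighted by the product of the admittances of its edges, then substitute $k/s$ for springs, $bs$ for inerters, and the damper conductances, and read off the dominant power of $s$ as $s\to 0$ and as $s\to\infty$. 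A monomial survives in the leading coefficient precisely when the underlying tree or $2$-tree uses only springs (for $s\to 0$) or only inerters (for $s\to\infty$), which is exactly what encodes the presence or absence of the four structures. This is the content of \cite[Theorem~2]{Ses59}, to which the remaining bookkeeping can be delegated.
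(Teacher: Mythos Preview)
Your argument is correct. Note, however, that the paper does not supply its own proof of this lemma at all: it simply quotes the result as \cite[Theorem~2]{Ses59} and moves on. What you have written is essentially the classical Seshu argument --- read off the behaviour of $Y$ at $s=0$ and $s=\infty$ from Assumption~\ref{assumption: 01}, translate ``pole/zero/finite-nonzero at $0$ or $\infty$'' into the presence or absence of all-reactive paths and cut-sets via the degeneracy of springs and inerters at those frequencies, and make the correspondence rigorous through the tree/$2$-tree expansion of the driving-point admittance. So there is nothing to compare against in the paper itself; your proposal is a self-contained reconstruction of the cited external result, and the route you take (limits plus the topological formula) is the standard one.
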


\begin{lemma}  \label{lemma: lossless subnetwork N1}
Any  admittance $Y(s)$ in \eqref{eq: specific bicubic admittance} satisfying Assumption~\ref{assumption: 01} cannot be realized
by the one-port passive circuit that is the parallel or series of  a spring-inerter circuit $N_l$
and a passive circuit $N_p$, where the McMillan degree of
the admittance $Y_l(s)$ of the spring-inerter circuit $N_l$ is at least two.
\end{lemma}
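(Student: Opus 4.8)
The plan is to exploit the fact, noted just after Assumption~\ref{assumption: 01}, that $Y(s)$ has neither a pole nor a zero on $j\mathbb{R} \cup \{\infty\}$ other than the simple pole at $s=0$, and to show that a lossless (spring--inerter) summand $N_l$ with $\delta(Y_l(s)) \geq 2$ is forced to introduce an extra pole or zero of that kind. First I would record two standard facts about the admittance $Y_l(s)$ of a one-port spring--inerter circuit: it is a reactance-type (lossless positive-real) function, so all of its poles lie on $j\mathbb{R} \cup \{\infty\}$, are simple, and its McMillan degree equals $[\,s=0\text{ is a pole}\,] + [\,s=\infty\text{ is a pole}\,] + 2\,\#\{\text{finite nonzero pole pairs of }Y_l\}$; and the reciprocal of a lossless function is again lossless with the same McMillan degree. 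Consequently $\delta(Y_l(s)) \geq 2$ forces $Y_l(s)$ to have a pole at some point $p_0 \in (j\mathbb{R} \cup \{\infty\}) \setminus \{0\}$ --- either a nonzero finite pole pair $\pm j\omega_0$, or (when no such pair occurs) a pole at $s = \infty$ accompanying the pole at $s=0$, i.e.\ $Y_l(s) = k_\infty s + k_0/s$ with $k_\infty, k_0 > 0$. The third ingredient is that a positive-real function has a positive residue at every pole lying on $j\mathbb{R} \cup \{\infty\}$, so residues of positive-real functions at such points cannot cancel one another when the functions are added.

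Now take the parallel case and write $Y(s) = Y_l(s) + Y_p(s)$, where $Y_p(s)$ is the (positive-real) admittance of $N_p$; the degenerate situations where $N_p$ is an open circuit ($Y_p \equiv 0$, so $Y = Y_l$ is lossless) or a short circuit ($Y \equiv \infty$) are immediate. Since $Y_l(s)$ has a pole at $p_0 \neq 0$ with positive residue, and $Y_p(s)$ is either analytic at $p_0$ or has a pole there with positive residue, the pole at $p_0$ survives in the sum: $Y(s)$ has a pole at $p_0 \in (j\mathbb{R} \cup \{\infty\}) \setminus \{0\}$, contradicting the pole/zero structure of $Y(s)$. For the series case, pass to impedances: $1/Y(s) = Z_l(s) + Z_p(s)$, where $Z_l(s) = 1/Y_l(s)$ is lossless with $\delta(Z_l(s)) = \delta(Y_l(s)) \geq 2$, hence has a pole at some $q_0 \in (j\mathbb{R} \cup \{\infty\}) \setminus \{0\}$, and $Z_p(s)$ is the positive-real impedance of $N_p$ (the open/short degeneracies again being immediate). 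The same residue argument yields a pole of $1/Y(s)$ at $q_0 \neq 0$, that is, a zero of $Y(s)$ at $q_0$ --- again contradicting the pole/zero structure of $Y(s)$. This exhausts the two possibilities and proves the lemma.

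The argument is essentially bookkeeping once the reactance-function structure is in place; the only point that needs a little care is that the ``pole off the origin'' guaranteed by $\delta(Y_l) \geq 2$ may be the pole at $s = \infty$ rather than a finite imaginary pole (this is exactly the case $Y_l(s) = k_\infty s + k_0/s$, whose reciprocal $\tfrac{s}{k_\infty s^2 + k_0}$ instead supplies a finite nonzero imaginary pole in the series reduction), so both the ``finite nonzero imaginary pole'' and the ``pole at infinity'' sub-cases must be carried through the residue-addition step. Everything else then follows from the non-cancellation of residues of positive-real functions together with the absence, recorded after Assumption~\ref{assumption: 01}, of imaginary-axis and infinite poles and zeros of $Y(s)$ apart from the simple pole at $s = 0$.
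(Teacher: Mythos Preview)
Your proof is correct and follows the same idea as the paper's: a lossless (spring--inerter) admittance of McMillan degree at least two must carry a pole or zero on $j\mathbb{R}\cup\{\infty\}$ away from the origin, and this is incompatible with the pole/zero structure of $Y(s)$ recorded after Assumption~\ref{assumption: 01}. The paper's version is terser---it simply asserts that $Y_l$ has a finite nonzero imaginary pole or zero and that this forces $Y$ to have one as well---whereas you spell out the residue non-cancellation step and explicitly handle the borderline case $Y_l(s)=k_\infty s+k_0/s$ (pole only at $0$ and $\infty$) by tracking the pole at infinity in the parallel case and the finite imaginary poles of the reciprocal in the series case. Your extra care here is warranted: the paper's sentence ``this implies that $Y(s)$ must contain such poles or zeros'' tacitly relies on exactly the positivity-of-residues argument you make explicit.
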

\begin{proof}
Since the McMillan degree of   $Y_l(s)$  is at least two, $Y_l(s)$ must contain at least one pair of finite poles or zeros on $j \mathbb{R} \setminus \{0\}$
referring to the general form of the admittances of spring-inerter  circuits
(see \cite[pg.~51]{Bah84} and  \cite[pg.~15]{CWC19}). This implies that $Y(s)$ must contain such poles or zeros on $j \mathbb{R} \setminus \{0\}$, which contradicts the assumption.
\end{proof}

Since the McMillan degree of any positive-real admittance cannot exceed the number of energy storage elements (spring or inerter) needed to realize the function \cite[pg.~370]{AV06},
any one-port passive circuit that can realize
the positive-real admittance $Y(s)$ in \eqref{eq: specific bicubic admittance} satisfying Assumption~\ref{assumption: 01} contains at least three energy storage elements.   By
Lemmas~\ref{lemma: graph constraint} and \ref{lemma: lossless subnetwork N1}, one can also indicate that the least number of dampers for the series-parallel realizations is two. Therefore, the total number of elements is at least five.

\subsection{Series-Parallel Circuit Realization Containing a Parallel Spring}

This subsection will first investigate the synthesis problem of a specific class of one-port series-parallel damper-spring-inerter circuits consisting of at most six elements, any of which
is the parallel structure of a spring $k_1$ and a circuit $N_2$ (see Fig.~\ref{fig: Spring-N2}).
The main result of this subsection is shown in Theorem~\ref{theorem: main theorem 01}, where the necessary and sufficient condition for the realizability is the union of the conditions in Lemmas~\ref{lemma: Spring-Biquadratic-Network} and
\ref{lemma: Case-6}--\ref{lemma: Case-8}, and the admittance can be realized by the Foster preamble or as one of the mechanical circuit configurations in Figs.~\ref{fig: Spring-Biquadratic-Network} and
\ref{fig: Spring-Biquadratic-Network-partial-removal} with element values being expressed.

As defined in  \cite{JS11}, a real-rational function $H(s)$ is defined to be \emph{regular} if $H(s)$ is positive-real and the minimal value of $\Re(H(j\omega))$ or $\Re(1/H(j\omega))$ is at $\omega = 0$ or $\omega = \infty$. As shown in  \cite[Lemma~5]{JS11}, a biquadratic function $Z_b(s)$ in  \eqref{eq: Zb biquadratic impedance} with $a_i, d_j \geq 0$ for $i, j = 0, 1, 2$ is regular, if and only if  one of the cases holds:
1. $a_2 d_0 - a_0 d_2 \geq 0$ and either $d_1 (a_1 d_0 - a_0 d_1) - d_0 (a_2 d_0 - a_0 d_2) \geq 0$ or $a_1 (a_2 d_1 - a_1 d_2) - a_2 (a_2 d_0 - a_0 d_2) \geq 0$; 2. $a_2 d_0 - a_0 d_2 \leq 0$ and either $d_2 (a_2 d_0 - a_0 d_2) - d_1 (a_2 d_1 - a_1 d_2) \geq 0$ or $a_0 (a_2 d_0 - a_0 d_2) - a_1 (a_1 d_0 - a_0 d_1) \geq 0$. Then, the following lemma can be derived.

\begin{lemma}  \label{lemma: Spring-Biquadratic-Network}
Any   admittance $Y(s)$ in \eqref{eq: specific bicubic admittance} satisfying Assumption~\ref{assumption: 01} can be realized by a one-port series-parallel damper-spring-inerter circuit containing at most six elements as in Fig.~\ref{fig: Spring-N2}, where the impedance of $N_2$ is  a biquadratic function, if and only if $Y(s)$ satisfies one of the five conditions:
\begin{enumerate}
  \item[1.] $Z_b(s)$ as in \eqref{eq: Zb biquadratic impedance} is a regular  function;
  \item[2.] $d_1 > 0$, $d_0 > 0$, and $a_2 (a_1d_1-a_2d_0)^2 - a_1^2 d_2 (a_1d_1-a_2d_0) + a_1^2 a_0 d_2^2   = 0$;
  \item[3.] $d_1 > 0$, $d_0 > 0$, and $d_2 (a_1d_1-a_0d_2)^2 - a_2d_1^2(a_1d_1-a_0d_2) + a_2^2d_1^2d_0 = 0$;
  \item[4.] $d_1 > 0$, $d_0 > 0$, and $a_0 (a_1d_1 - a_0d_2)^2 - a_1^2d_0(a_1d_1 - a_0d_2) + a_2a_1^2d_0^2 = 0$;
  \item[5.] $d_1 > 0$, $d_0 > 0$, and $d_0 (a_1d_1 - a_2d_0)^2 - a_0d_1^2(a_1d_1 - a_2d_0) + a_0^2 d_2 d_1^2 = 0$,
\end{enumerate}
where $a_i, d_j$ for $i, j = 0, 1, 2$ satisfy \eqref{eq: a d} for any $k > 0$. Moreover, if Condition~1 holds,
then
the Foster preamble can be utilized to realize  $Y(s)$ as the required circuit
after extracting the parallel spring $k_1 = \alpha_0/\beta_1$ to remove the pole at $s = 0$.
If   one of Conditions~2--5 holds, then $Y(s)$ can be realized by one of the  circuit configurations in Fig.~\ref{fig: Spring-Biquadratic-Network}, where the element values are expressed in Table~\ref{table: Element values of Cases2-5}.
\end{lemma}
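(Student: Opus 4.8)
The plan is to reduce the realizability of $Y(s)$ as the structure in Fig.~\ref{fig: Spring-N2} to the realizability of the biquadratic impedance $Z_b(s)$ in \eqref{eq: Zb biquadratic impedance} as a series-parallel damper-spring-inerter circuit with at most five elements, and then to invoke the characterization of five-element realizations of biquadratic functions from \cite{JS11}. The first step is to show that in any realization of $Y(s)$ of the form in Fig.~\ref{fig: Spring-N2} with $N_2$ of biquadratic impedance one necessarily has $k_1=\alpha_0/\beta_1$ and $Z_{N_2}(s)=Z_b(s)$. Since $\delta(Y)=3$ under Assumption~\ref{assumption: 01}, the impedance $Z_{N_2}$ has McMillan degree exactly two; a short computation shows it cannot have a zero at $s=0$ (otherwise $\delta(Y)\le 2$), so $Y_{N_2}=1/Z_{N_2}$ is analytic at $s=0$ and the residue of the simple pole of $Y$ at $s=0$ equals $k_1$; hence $k_1=\alpha_0/\beta_1$ and $Z_{N_2}(s)=\bigl(Y(s)-\alpha_0/(\beta_1 s)\bigr)^{-1}=Z_b(s)$. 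Consequently, $Y(s)$ is realizable as in Fig.~\ref{fig: Spring-N2} with at most six elements if and only if $Z_b(s)$ is realizable as a series-parallel damper-spring-inerter circuit with at most five elements.

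The second step applies \cite{JS11}. First, $Z_b(s)$ is positive-real whenever $Y(s)$ is (this is precisely the equivalence used in the proof of Lemma~\ref{lemma: positive-realness}), and by \eqref{eq: a d} together with Assumption~\ref{assumption: 01} and Lemma~\ref{lemma: positive-realness} the coefficients of $Z_b(s)$ satisfy $a_2,a_1,a_0,d_2>0$ and $d_0,d_1\ge 0$. The five-element realizability of such a biquadratic splits into the regular case, which is Condition~1, and a finite list of special cases. When $Z_b(s)$ is regular it is realizable by its Foster preamble with at most five elements, so after the extraction of the parallel spring $k_1=\alpha_0/\beta_1$ removing the pole at $s=0$ one obtains the required realization of $Y(s)$ with at most six elements. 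Translating the special-case conditions of \cite{JS11} into the coefficients of $Z_b(s)$ yields precisely Conditions~2--5, with the side-conditions $d_0,d_1>0$ automatic in these cases (since $d_0=0$ or $d_1=0$ already makes $Z_b(s)$ have a pole on $j\mathbb{R}\cup\infty$, hence regular). The four polynomials of Conditions~2--5 form a single orbit under the group generated by the frequency inversion $s\mapsto 1/s$ (which interchanges $a_0\leftrightarrow a_2$ and $d_0\leftrightarrow d_2$) and the impedance inversion $Z_b\mapsto 1/Z_b$ (which interchanges $a_i\leftrightarrow d_i$), so only one of them needs to be analyzed in detail.

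The third step is the constructive verification under Conditions~2--5. For each of the four cases I would write down the corresponding series-parallel configuration of Fig.~\ref{fig: Spring-Biquadratic-Network} (the parallel combination of the spring $k_1$ with a subnetwork of at most five elements realizing $Z_b(s)$), express its driving-point admittance as an explicit rational function of the element values, match it against $Y(s)$ coefficientwise, and solve; the solution is recorded in Table~\ref{table: Element values of Cases2-5}. It then remains to check that, under the pertinent polynomial equation together with positive-realness and Assumption~\ref{assumption: 01}, every element value so obtained is positive and finite; this yields sufficiency and, combined with the reduction and the characterization of \cite{JS11}, also closes the ``only if'' direction. By the symmetry noted above it suffices to compute the element values and verify positivity for a single case and to transport the conclusion to the remaining three.

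The hardest part is the algebra in the last two steps: extracting from \cite{JS11} the exact extra conditions beyond the generic regular case and verifying that they are equivalent to Conditions~2--5 (neither weaker nor stronger), deriving the closed-form element values of Table~\ref{table: Element values of Cases2-5} by solving the coefficient-matching systems, and proving that the positivity of every such element value follows from the corresponding condition and positive-realness --- all elementary but lengthy. A secondary technical point is the careful treatment of the reduction in the first step when $N_2$ is degenerate (contains a series spring or an absorbable parallel spring, so that $Z_{N_2}$ has a pole at the origin or $N_2$ has fewer than five elements), so that the equivalence ``Fig.~\ref{fig: Spring-N2} with $\le 6$ elements $\Leftrightarrow$ $Z_b$ with $\le 5$ elements'' holds without exception under Assumption~\ref{assumption: 01}; the graph-theoretic constraints of Lemmas~\ref{lemma: graph constraint} and \ref{lemma: lossless subnetwork N1} can be used here to exclude spurious topologies.
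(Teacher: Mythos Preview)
Your proposal is correct and follows essentially the same approach as the paper: reduce to five-element series-parallel realizability of the biquadratic $Z_b(s)$ and then invoke the characterization from \cite{JS11}. The paper's proof is terser---it takes the reduction $k_1=\alpha_0/\beta_1$, $Z_{N_2}=Z_b$ as ``clear'' and obtains Conditions~2--5 together with Table~\ref{table: Element values of Cases2-5} by citing the three-reactive five-element structures of Ladenheim \cite{Lad64} rather than deriving them from \cite{JS11} alone---but your more detailed justification of the reduction and your symmetry observation are sound elaborations of the same argument.
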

\begin{proof}
It is clear that $Y(s)$ is realizable by the required circuit in this lemma, if and only if the biquadratic impedance $Z_b(s)$ calculated in \eqref{eq: Zb biquadratic impedance} is realizable by a one-port five-element series-parallel circuit. As shown in  \cite{JS11},  $Z_b(s)$ is realizable by such a class of circuits, if and only if $Z_b(s)$ is regular (Condition~1), or $Z_b(s)$ is realizable by a one-port five-element series-parallel circuit that contains three energy storage elements, which is the circuit $N_2$ in parallel with spring $k_1$ for any of the mechanical circuit configurations in Fig.~\ref{fig: Spring-Biquadratic-Network}.
Together with the   results in  \cite{Lad64}, Conditions~2--5 of this lemma and the element value expressions in
Table~\ref{table: Element values of Cases2-5}
 can be obtained.
\end{proof}

\begin{figure}[thpb]
      \centering
      \includegraphics[scale=1.2]{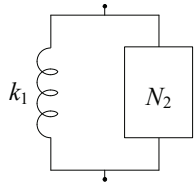}
      \caption{The one-port passive circuit that is the parallel structure of a spring $k_1$ and a series-parallel damper-spring-inerter circuit $N_2$, where $N_2$ contains at most five elements.}
      \label{fig: Spring-N2}
\end{figure}

\begin{figure}[thpb]
      \centering
      \subfigure[]{
      \includegraphics[scale=1.0]{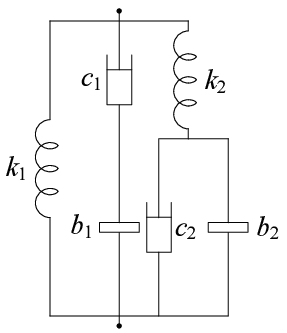}
      \label{fig: Case-2}} \hspace{-0.6cm}
      \subfigure[]{
      \includegraphics[scale=1.0]{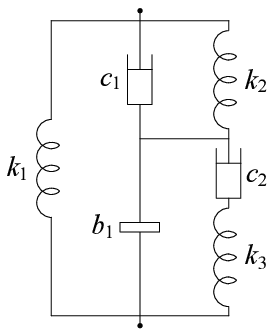}
      \label{fig: Case-3}} \hspace{-0.4cm}
      \subfigure[]{
      \includegraphics[scale=1.0]{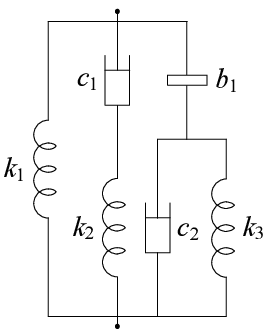}
      \label{fig: Case-4}}  \hspace{-0.5cm}
      \subfigure[]{
      \includegraphics[scale=1.0]{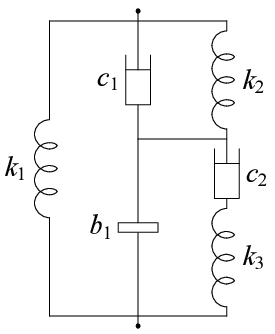}
      \label{fig: Case-5}}
      \caption{The one-port six-element series-parallel circuit configurations corresponding to Conditions~2--5 of Lemma~\ref{lemma: Spring-Biquadratic-Network}, respectively. All of the configurations belong to the structure in Fig.~\ref{fig: Spring-N2}. Here, the element values of the configurations in (a)--(d) are positive and finite.}
      \label{fig: Spring-Biquadratic-Network}
\end{figure}

\begin{table}[!t]
\renewcommand{\arraystretch}{1.1}
\caption{The element values   of the configurations in Fig.~\ref{fig: Spring-Biquadratic-Network}, where
$a_i, d_j$ for $i,j = 0, 1, 2$ satisfy  \eqref{eq: a d}.}
\label{table: Element values of Cases2-5}
\centering
\begin{tabular}{c   c}
\hline
Configuration & Element value expressions    \\
\hline
Fig.~\ref{fig: Case-2} &    \tabincell{c}{$c_1 = \displaystyle\frac{d_2}{a_2}$, $c_2 = \displaystyle\frac{d_0}{a_0}$, $k_1 =  \displaystyle\frac{\alpha_0}{\beta_1}$,  $k_2 = \displaystyle\frac{d_0}{a_1}$,   $b_1 =  \displaystyle\frac{a_1d_1-a_2d_0}{a_1a_0}$, $b_2 = \displaystyle\frac{a_2d_0}{a_1a_0}$}   \\
Fig.~\ref{fig: Case-3} & \tabincell{c}{$c_1 = \displaystyle\frac{d_2}{a_2}$,
$c_2 = \displaystyle\frac{d_0}{a_0}$, $k_1 =  \displaystyle\frac{\alpha_0}{\beta_1}$,
$k_2 = \displaystyle\frac{d_1d_0}{a_1d_1-a_0d_2}$, $k_3 = \displaystyle\frac{d_1d_0}{a_0d_2}$,
$b_1 = \displaystyle\frac{d_1}{a_0}$}  \\
Fig.~\ref{fig: Case-4} & \tabincell{c}{$c_1 = \displaystyle\frac{d_0}{a_0}$, $c_2 = \displaystyle\frac{d_2}{a_2}$, $k_1 = \displaystyle\frac{\alpha_0}{\beta_1}$, $k_2 = \displaystyle\frac{a_1d_1 - a_0d_2}{a_2a_1}$,   $k_3 = \displaystyle\frac{a_0d_2}{a_2a_1}$, $b_1 = \displaystyle\frac{d_2}{a_1}$}  \\
Fig.~\ref{fig: Case-5} & \tabincell{c}{$c_1 = \displaystyle\frac{d_0}{a_0}$,
$c_2 = \displaystyle\frac{d_2}{a_2}$, $k_1 = \displaystyle\frac{\alpha_0}{\beta_1}$,
$k_2 = \displaystyle\frac{d_1}{a_2}$,   $b_1 = \displaystyle\frac{d_2 d_1}{a_1d_1-a_2d_0}$,
$b_2 = \displaystyle\frac{d_2d_1}{a_2d_0}$}  \\
\hline
\end{tabular}
\end{table}

The above lemma shows the realization results by completely removing the pole at $s=0$, and the following lemma
further presents the possible configurations generated by the partial removal of such a pole in order to provide the spring $k_1$ in Fig.~\ref{fig: Spring-N2}.

\begin{lemma}  \label{lemma: Spring-Other-Network}
For any admittance $Y(s)$ in \eqref{eq: specific bicubic admittance} that satisfies Assumption~\ref{assumption: 01} and does not satisfy
the conditions of  Lemma~\ref{lemma: Spring-Biquadratic-Network},
$Y(s)$  is realizable by a  one-port series-parallel damper-spring-inerter circuit containing at most six elements as in Fig.~\ref{fig: Spring-N2}, if and only if $Y(s)$ can be realized by one of the circuit configurations in
Fig.~\ref{fig: Spring-Biquadratic-Network-partial-removal}.
\end{lemma}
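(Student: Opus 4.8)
The plan is to prove the two directions of the equivalence separately. For the sufficiency ($\Leftarrow$) I would only note that every configuration displayed in Fig.~\ref{fig: Spring-Biquadratic-Network-partial-removal} is, by inspection, a one-port series-parallel damper-spring-inerter circuit with at most six elements having the form of Fig.~\ref{fig: Spring-N2} (a spring $k_1$ in parallel with a subnetwork $N_2$ of at most five elements), so realizability by one of them is a particular case of the realizability asserted in the lemma. All the substance is in the necessity.

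For necessity, suppose $Y(s)$ satisfies Assumption~\ref{assumption: 01}, fails every one of Conditions~1--5 of Lemma~\ref{lemma: Spring-Biquadratic-Network}, and is realized by some circuit $N$ that is the parallel connection of a spring $k_1$ and a series-parallel circuit $N_2$ with at most five elements (by the remark following Assumption~\ref{assumption: 02} we may take $N$ to satisfy Assumption~\ref{assumption: 02}). Writing $k_1$ also for the stiffness of this spring and $Y_2(s)$ for the admittance of $N_2$, we have $Y(s) = k_1/s + Y_2(s)$ with $Y_2$ positive-real; comparing residues at $s=0$ (that of $Y$ being $\alpha_0/\beta_1$) gives $0 < k_1 \le \alpha_0/\beta_1$. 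The first step is to rule out $k_1 = \alpha_0/\beta_1$: in that case $Y_2 = Y - \alpha_0/(\beta_1 s) = 1/Z_b$ with $Z_b$ as in \eqref{eq: Zb biquadratic impedance}, so $N_2$ is a $\le 5$-element series-parallel circuit whose impedance $Z_b$ is biquadratic, i.e. $Y$ is realizable by a circuit of exactly the type in the hypothesis of Lemma~\ref{lemma: Spring-Biquadratic-Network}; hence $Y$ would satisfy one of Conditions~1--5, contradicting our assumption. Therefore $0 < k_1 < \alpha_0/\beta_1$.

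The second step is to extract the structure of $N_2$. With $0 < k_1 < \alpha_0/\beta_1$, $Y_2 = Y - k_1/s$ has a simple pole at $s=0$ with positive residue $\alpha_0/\beta_1 - k_1$; since no root of $\tilde{\beta}(s)$ is a root of $\alpha(s)$ (a consequence of Assumption~\ref{assumption: 01}, which also guarantees $\delta(Y)=3$), no pole--zero cancellation occurs and $\delta(Y_2) = \delta(Y) = 3$. Moreover, because $Y$ has no zero or pole on $j\mathbb{R}\cup\infty$ except the simple pole at the origin, the same holds for $Y_2$, so $Z_{N_2}:=1/Y_2$ is positive-real of McMillan degree three, has a simple zero at $s=0$, and is otherwise a minimum function. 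It follows that $N_2$ has at least three energy-storage elements; that, by Lemmas~\ref{lemma: lossless subnetwork N1} and~\ref{lemma: graph constraint} applied to $N$ (whose dampers all lie in $N_2$), it has at least two dampers; and, since $N_2$ has at most five elements, that it has exactly three energy-storage elements and exactly two dampers, so $N$ has exactly six elements. Finally, the simple zero of $Z_{N_2}$ at $s=0$ forces a path of springs between the two terminals of $N_2$, so $N_2$ contains at least one spring.

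The last and hardest step is the structural enumeration. I would enumerate, up to series-parallel equivalence, all five-element series-parallel structures built from two dampers and three energy-storage elements, and prune this list using the graph-theoretic necessary conditions: the graph of $N = k_1 \parallel N_2$ may contain no $b$-$\mathcal{P}(a,a')$, no $k$-$\mathcal{C}(a,a')$ and no $b$-$\mathcal{C}(a,a')$ (Lemma~\ref{lemma: graph constraint}, translated to $N_2$ via the structure $N = k_1 \parallel N_2$), and $N$ may contain no series or parallel spring--inerter sub-block of McMillan degree at least two (Lemma~\ref{lemma: lossless subnetwork N1}); combined with the recursive series-parallel decomposition of $N_2$ and the facts just established, only finitely many candidate structures survive. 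For each surviving structure one computes its impedance symbolically, equates it with $Z_{N_2}=1/(Y-k_1/s)$, and solves for $k_1$ together with the element values; the configurations for which this system has a positive, finite solution are exactly those of Fig.~\ref{fig: Spring-Biquadratic-Network-partial-removal}, and the solution yields the stated element-value expressions. I expect this final step to be the main obstacle: the enumeration is long and, as in \cite{WJ19} but with considerably heavier algebra, matching each admissible impedance to $Y(s)$ and extracting the closed-form realizability conditions and element values is the genuinely difficult part.
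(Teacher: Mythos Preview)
Your sufficiency direction and your first step (ruling out $k_1=\alpha_0/\beta_1$, so that the removal is genuinely partial) are correct and match the paper. The gap is in your second step.

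The assertion ``because $Y$ has no zero or pole on $j\mathbb{R}\cup\infty$ except the simple pole at the origin, the same holds for $Y_2$'' is false. Subtracting $k_1/s$ preserves the pole structure but \emph{not} the absence of $j\mathbb{R}$-zeros: $Y_2(j\omega)=0$ whenever $Y(j\omega)=-jk_1/\omega$, which can happen at any $\omega_0$ where $\Re Y(j\omega_0)=0$. A concrete instance is already built into the target list: in Fig.~\ref{fig: Case-6} with $c_2=0$ one has $N_2=(k_2\parallel c_1)$ in series with $(k_3\parallel b_1)$, and the admittance of $N_2$ factors as $(c_1s+k_2)(b_1s^2+k_3)/\bigl(s(b_1s^2+c_1s+k_2+k_3)\bigr)$, with zeros at $\pm j\sqrt{k_3/b_1}$. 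This same example shows that your chain ``$\ge 3$ reactive elements and $\ge 2$ dampers, hence exactly five elements in $N_2$'' breaks: here $N_2$ has four elements and only one damper, and $Y$ still satisfies Assumption~\ref{assumption: 01}. (In this degenerate case $Y_2$ violates Assumption~\ref{assumption: 01} because $\Delta_{\alpha'}=0$, so Lemma~\ref{lemma: lossless subnetwork N1} cannot be invoked for $N_2$.) Consequently an enumeration restricted to two-damper, three-reactive, five-element $N_2$'s is incomplete.

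The paper's proof sidesteps this by never fixing an element count. It (i) applies Lemma~\ref{lemma: graph constraint} to $N_2$ using only the pole/zero behaviour of $Y_2$ at $0$ and $\infty$, which \emph{is} inherited from $Y$; (ii) shows $N_2$ must contain all three element types, with the damper--spring case dispatched by a regularity argument (a damper--spring $N_2$ would force Condition~1 of Lemma~\ref{lemma: Spring-Biquadratic-Network}); (iii) eliminates one specific candidate structure by a resultant computation; and (iv) case-splits on whether the top-level decomposition of $N_2$ is series or parallel, normalising the parallel case via the two-port equivalence of Fig.~\ref{fig: Network_Equivalence}. Your brute-force plan can be repaired by enumerating \emph{all} $\le 5$-element series-parallel $N_2$'s (not just the two-damper ones) and pruning with the correct constraints, but you will still need the all-three-types argument of (ii) to keep the list manageable.
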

\begin{proof}
The details of this proof can be referred to Appendix~\ref{appendix: A}.  In the proof, Lemmas~\ref{lemma: graph constraint} and \ref{lemma: lossless subnetwork N1} are utilized to establish the realization constraints of $N_2$ as in Fig.~\ref{fig: Spring-N2}, which are further applied to derive the configurations in Fig.~\ref{fig: Spring-Biquadratic-Network-partial-removal}.
\end{proof}

\begin{figure}[thpb]
      \centering
      \subfigure[]{
      \includegraphics[scale=1.0]{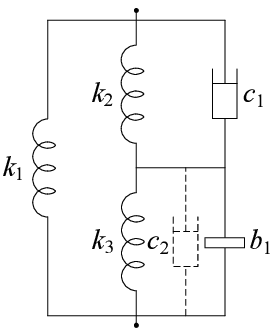}
      \label{fig: Case-6}}
      \subfigure[]{
      \includegraphics[scale=1.0]{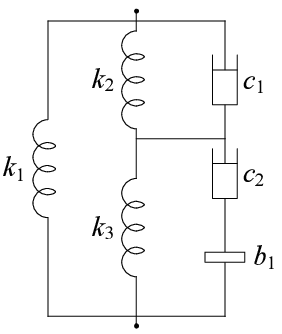}
      \label{fig: Case-7}}
      \subfigure[]{
      \includegraphics[scale=1.0]{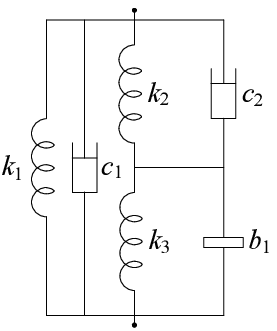}
      \label{fig: Case-8}}
      \caption{The one-port series-parallel circuit configurations containing at most six elements, which can realize the admittance $Y(s)$ in
      \eqref{eq: specific bicubic admittance}  that satisfies Assumption~\ref{assumption: 01} and does not satisfy
the conditions of  Lemma~\ref{lemma: Spring-Biquadratic-Network}. Here, the element value of the damper $c_2$ in (a) satisfies $c_2 \geq 0$ (this damper is open-circuited when $c_2 = 0$), and all the other element values  in (a)--(c) are positive and finite.}
      \label{fig: Spring-Biquadratic-Network-partial-removal}
\end{figure}

Then, the following Lemmas~\ref{lemma: Case-6}--\ref{lemma: Case-8}  present the realizability conditions and element value expressions of the configurations in Fig.~\ref{fig: Spring-Biquadratic-Network-partial-removal}.

\begin{lemma}  \label{lemma: Case-6}
Any admittance $Y(s)$ in \eqref{eq: specific bicubic admittance} satisfying Assumption~\ref{assumption: 01} can be realized by the circuit configuration in Fig.~\ref{fig: Case-6} ($c_2$ can be zero),    if and only if there exits a positive root $x > 0$ for the equation
\begin{subequations}
\begin{equation} \label{eq: Case-6 equation}
\begin{split}
\Delta_\beta x^2 + 2(\beta_2 \beta_3 \mathcal{B}_{23} - (\beta_2^2 - 2\beta_1\beta_3) \mathcal{B}_{33} - 2\beta_3^2 \mathcal{B}_{13})x
+  (\beta_2 \mathcal{B}_{33} - \beta_3 \mathcal{B}_{23})^2= 0,
\end{split}
\end{equation}
such that
\begin{equation} \label{eq: Case-6 condition01}
0< x \leq \tilde{\mathcal{B}}_{23},
\end{equation}
\begin{equation}  \label{eq: Case-6 condition02}
\beta_2 \mathcal{B}_{33} - \beta_3 \mathcal{B}_{23}  < \beta_2 x <   \beta_2 \mathcal{B}_{33} + \beta_3 \mathcal{M}_{23},
\end{equation}
and
\begin{equation}  \label{eq: Case-6 condition03}
2 x^2 - (\tilde{\mathcal{B}}_{23} + 2 \mathcal{B}_{33})x + \alpha_3
(\beta_2 \mathcal{B}_{33} - \beta_3 \mathcal{B}_{23}) > 0.
\end{equation}
\end{subequations}
Moreover, the element values can be expressed as
\begin{equation}  \label{eq: Case-6 element values}
\begin{split}
c_1  = \frac{\alpha_3}{\beta_3}, ~~
c_2 = \frac{\alpha_3 (\tilde{\mathcal{B}}_{23} - x)}{\beta_3 x},  ~~
k_1  = \frac{2 x^2 - (\tilde{\mathcal{B}}_{23} + 2 \mathcal{B}_{33})x + \alpha_3
(\beta_2 \mathcal{B}_{33} - \beta_3 \mathcal{B}_{23})}{2 \beta_3^2 x},  \\
k_2  = \frac{\alpha_3 (\beta_2 x - (\beta_2 \mathcal{B}_{33} - \beta_3 \mathcal{B}_{23}))}{2 \beta_3^2 x},     ~~
k_3  = \frac{\alpha_3(-\beta_2 x + (\beta_2 \mathcal{B}_{33} + \beta_3 \mathcal{M}_{23}))}{2 \beta_3^2 x}, ~~b_1  = \frac{\alpha_3^2}{x}.
\end{split}
\end{equation}
\end{lemma}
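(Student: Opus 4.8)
The plan is to prove this by coefficient matching between the driving-point admittance of the fixed topology in Fig.~\ref{fig: Case-6} and the target $Y(s)$ in \eqref{eq: specific bicubic admittance}, using the auxiliary variable $x$ to parametrize the one remaining degree of freedom in the matching. First I would write down the admittance $Y_c(s)$ of the configuration in Fig.~\ref{fig: Case-6} as an explicit bicubic rational function of $s$ in the six parameters $c_1,c_2,k_1,k_2,k_3,b_1$, allowing $c_2=0$ (the damper branch then being open-circuited and the network degenerating to five elements). Since this configuration is of the parallel type of Fig.~\ref{fig: Spring-N2}, we have $Y_c(s)=k_1/s+Y_{N_2}(s)$ where $N_2$ is the five-element series-parallel subnetwork built from $c_1,c_2,k_2,k_3,b_1$; this decomposition keeps the symbolic computation manageable, and the Bezoutian notation of Section~\ref{sec: notation} should be used throughout to keep the resulting polynomial identities compact.

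For sufficiency ($\Leftarrow$), given a root $x>0$ of \eqref{eq: Case-6 equation} satisfying \eqref{eq: Case-6 condition01}--\eqref{eq: Case-6 condition03}, I would define the six element values by \eqref{eq: Case-6 element values} and check two things. First, each value is positive and finite: $c_1=\alpha_3/\beta_3>0$ and $b_1=\alpha_3^2/x>0$ follow at once from Assumption~\ref{assumption: 01} and $x>0$; $c_2\geq 0$ is precisely \eqref{eq: Case-6 condition01}; $k_2>0$ and $k_3>0$ are the left and right halves of \eqref{eq: Case-6 condition02}; and $k_1>0$ is precisely \eqref{eq: Case-6 condition03}. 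Second, substituting \eqref{eq: Case-6 element values} into $Y_c(s)$ and clearing denominators reproduces $\alpha(s)/\beta(s)$ identically, the crucial point being that this polynomial identity holds exactly because $x$ is a root of \eqref{eq: Case-6 equation}; the quadratic is the single residual constraint left after the other coefficient-matching equations have been used to solve for $c_1,c_2,k_1,k_2,k_3,b_1$.

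For necessity ($\Rightarrow$), suppose $Y(s)$ is realized by the topology of Fig.~\ref{fig: Case-6} with admissible element values. Equating $Y_c(s)$ with $\alpha(s)/\beta(s)$ and comparing the coefficients of $1,s,s^2,s^3$ in numerator and denominator yields a polynomial system in the six element values. I would solve it by first reading off $c_1=\alpha_3/\beta_3$ from the highest-degree relation, then introducing $x:=\alpha_3^2/b_1$ and successively eliminating $c_2,k_1,k_2,k_3$ to obtain the closed forms \eqref{eq: Case-6 element values}; the last equation surviving this elimination is exactly \eqref{eq: Case-6 equation}, so $x$ must be one of its (at most two) positive roots. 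Imposing $c_2\geq 0$ and $k_1,k_2,k_3>0$ on \eqref{eq: Case-6 element values} then reproduces \eqref{eq: Case-6 condition01}--\eqref{eq: Case-6 condition03}, which closes the equivalence.

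The main obstacle is the elimination/consistency computation: showing that the coefficient-matching system can be reduced to exactly \eqref{eq: Case-6 element values} with \eqref{eq: Case-6 equation} as its sole residual condition, and, in the reverse direction, that no spurious branches are introduced, so that the sign conditions \eqref{eq: Case-6 condition01}--\eqref{eq: Case-6 condition03} are equivalent to (not merely necessary for) positivity of all six elements. Some care is also needed in the degenerate case $\Delta_\beta=0$, where \eqref{eq: Case-6 equation} drops to a linear equation, and in rewriting the raw coefficient polynomials in the $\mathcal{B}_{ij}$, $\tilde{\mathcal{B}}_{ij}$, $\mathcal{M}_{23}$ notation. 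I expect the bulk of the work to be this symbolic algebra, which can be deferred to an appendix, with only the key intermediate identities recorded in the main text.
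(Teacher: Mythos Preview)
Your proposal is correct and follows essentially the same route as the paper's proof in Appendix~\ref{appendix: B}: write the admittance of the Fig.~\ref{fig: Case-6} network, match coefficients with $\alpha(s)/\beta(s)$, solve the resulting system for the element values in terms of the single free parameter $x=\alpha_3^2/b_1$, and identify \eqref{eq: Case-6 equation} as the residual constraint while the sign conditions \eqref{eq: Case-6 condition01}--\eqref{eq: Case-6 condition03} encode positivity of the elements. The only cosmetic difference is that the paper introduces an explicit scaling constant $k>0$ with $n(s)=k\alpha(s)$, $d(s)=k\beta(s)$, so that the matching becomes seven exact equations in seven unknowns $(c_1,c_2,k_1,k_2,k_3,b_1,k)$ and $x$ is then defined as $\alpha_3^2/(k\beta_3)$; since $b_1=k\beta_3$ this is the same variable as yours, and the elimination sequence is identical.
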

\begin{proof}
The details of the proof are presented in Appendix~\ref{appendix: B}.
\end{proof}

\begin{lemma} \label{lemma: Case-7}
Any admittance $Y(s)$ in \eqref{eq: specific bicubic admittance} satisfying Assumption~\ref{assumption: 01} can be realized by the circuit configuration in  Fig.~\ref{fig: Case-7}, if and only if
there exists positive roots $x > 0$ and $y > 0$ for the equations
\begin{subequations}
\begin{equation} \label{eq: Case-7 equation}
\begin{split}
\tilde{\mathcal{B}}_{11}  \tilde{\mathcal{B}}_{12} y^2
+
(\alpha_0 - \beta_1 x) \big((\beta_3 \tilde{\mathcal{B}}_{11} + \beta_1 \tilde{\mathcal{B}}_{13}) x
-
(\alpha_2 \tilde{\mathcal{B}}_{11} + \alpha_0 \tilde{\mathcal{B}}_{13}) \big) y
+  2 \alpha_3 (\alpha_0 - \beta_1 x)^3 = 0
\end{split}
\end{equation}
and
\begin{equation} \label{eq: Case-7 equation02}
\begin{split}
\beta_1   \tilde{\mathcal{B}}_{11}^3  y^4 + \tilde{\mathcal{B}}_{11}^2 & (\alpha_0 - \beta_1 x)  (\beta_1 \beta_2 x + \mathcal{B}_{12} - 2 \tilde{\mathcal{B}}_{11}) y^3  \\
&+ \tilde{\mathcal{B}}_{11} (\alpha_0 - \beta_1 x)^2   \big( \tilde{\mathcal{B}}_{11} (\alpha_1 -\beta_2 x)
+ \beta_3 (\alpha_0-\beta_1 x)^2    \big) y^2
 - \alpha_3 (\alpha_0 - \beta_1 x)^6 = 0,
\end{split}
\end{equation}
such that
\begin{equation}  \label{eq: Case-7 equation01}
\alpha_0 - \beta_1 y < \beta_1 x < \alpha_0
\end{equation}
and
\begin{equation}  \label{eq: Case-7 equation02}
\beta_3  \tilde{\mathcal{B}}_{11} y^2 - \alpha_3 (\alpha_0 - \beta_1 x)^2 > 0.
\end{equation}
\end{subequations}
Moreover, the element values can be expressed as
\begin{equation}  \label{eq: Case-7 element values}
\begin{split}
c_1 = \frac{\tilde{\mathcal{B}}_{11} y^2}{(\alpha_0 - \beta_1 x)^2},  ~~
c_2 = \frac{\alpha_3 \tilde{\mathcal{B}}_{11} y^2}{\beta_3 \tilde{\mathcal{B}}_{11} y^2  - \alpha_3 (\alpha_0 - \beta_1 x)^2}, ~~ k_1 = x,  \\
 k_2 = y, ~~ k_3 = \frac{y(\alpha_0 - \beta_1 x)}{\beta_1 (x + y) - \alpha_0}, ~~
b_1 = \frac{\alpha_3 (\alpha_0 - \beta_1 x)^2}{\tilde{\mathcal{B}}_{11} (\beta_1 (x + y) - \alpha_0)}.
\end{split}
\end{equation}
\end{lemma}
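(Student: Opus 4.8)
The plan is to compute the driving-point admittance of the series-parallel network in Fig.~\ref{fig: Case-7} explicitly, as a real-rational function of $s$ whose numerator and denominator coefficients are polynomials in the six element values $c_1,c_2,k_1,k_2,k_3,b_1$, and then to match it with $Y(s)=\alpha(s)/\beta(s)$. Since by Lemma~\ref{lemma: Spring-Other-Network} the configuration sits inside the structure of Fig.~\ref{fig: Spring-N2}, I would organize the bookkeeping by writing the network as the parallel connection of the spring $k_1$ with the five-element subnetwork carrying the three energy-storage elements $k_2,k_3,b_1$, so that $k_1$ acts as a partially extracted pole term and $Z(s):=\bigl(Y(s)-k_1/s\bigr)^{-1}$ must coincide with the impedance of that fixed five-element series-parallel structure. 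Comparing McMillan degrees and the behaviour at $s=0$ and $s=\infty$ then reduces the coefficient matching to a solvable algebraic system in the element values.

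First I would introduce the auxiliary variables $x:=k_1$ and $y:=k_2$ and solve the matching system for the remaining four element values $c_1,c_2,k_3,b_1$ as rational functions of $x$, $y$, and the admittance coefficients; this is designed to reproduce exactly the formulas in \eqref{eq: Case-7 element values}. Because the network carries six parameters while $Y(s)$ has seven coefficients (six after normalization), once $x$ and $y$ are used as free parameters the four remaining unknowns are over-determined, and consistency of the system collapses to precisely two polynomial relations between $x$ and $y$; carrying out the elimination --- most conveniently in terms of the Bezoutian abbreviations $\tilde{\mathcal{B}}_{11}$, $\tilde{\mathcal{B}}_{12}$, $\tilde{\mathcal{B}}_{13}$ and $\Delta_\beta$ --- these reduce to the quadratic-in-$y$ relation \eqref{eq: Case-7 equation} and a companion quartic relation in $y$ (the second displayed equation of the statement). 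Performing this elimination cleanly, so that no spurious factors are introduced or dropped, is the computational core of the argument and the step I expect to be the main obstacle.

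It remains to identify the inequality conditions with the requirement that all six element values be positive and finite. Given a common positive solution $(x,y)$ of the two equations, the strict inequality $\beta_3\tilde{\mathcal{B}}_{11}y^2-\alpha_3(\alpha_0-\beta_1 x)^2>0$ is exactly what makes $c_2$ both positive and finite, and it also forces $\tilde{\mathcal{B}}_{11}>0$, which (together with positive-realness of $Y(s)$, Lemma~\ref{lemma: positive-realness}) makes $c_1$ and $b_1$ finite and positive; the positivity of $k_3$ and $b_1$ further requires $\beta_1(x+y)>\alpha_0$, which combined with $\beta_1 x<\alpha_0$ is the condition \eqref{eq: Case-7 equation01}, while $k_1=x>0$ and $k_2=y>0$ are part of the hypothesis. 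Conversely, if $Y(s)$ is realized by the configuration in Fig.~\ref{fig: Case-7}, putting $x=k_1$ and $y=k_2$ returns positive values, equality of the circuit admittance with $\alpha(s)/\beta(s)$ forces the two polynomial equations, and positivity and finiteness of the elements return the inequalities; substituting \eqref{eq: Case-7 element values} back into the network admittance and simplifying recovers $\alpha(s)/\beta(s)$, so the realization needs no separate check. The delicate points to watch throughout are that the elimination never tacitly divides by $\alpha_0-\beta_1 x$ (nonzero by $\beta_1 x<\alpha_0$) and that the configuration here genuinely requires a nonzero $c_2$, unlike the open-circuit case allowed in Lemma~\ref{lemma: Case-6}.
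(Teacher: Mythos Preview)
Your plan is correct and coincides with the paper's own argument. The paper computes the admittance of Fig.~\ref{fig: Case-7} as $n(s)/d(s)$, sets $n(s)=k\alpha(s)$ and $d(s)=k\beta(s)$ for a scaling $k>0$, introduces exactly $x=k_1$ and $y=k_2$, solves for $c_2,k_3,b_1,k$ and then $c_1$ from subsets of the seven coefficient equations to obtain \eqref{eq: Case-7 element values}, and reads off the two consistency relations from the remaining equations and the inequalities from positivity and finiteness of the element values --- precisely the programme you outline. The only cosmetic difference is that the paper works directly with the full admittance and the scale factor $k$ rather than through the partially extracted impedance $Z(s)=(Y(s)-k_1/s)^{-1}$; your bookkeeping via $Z(s)$ is equivalent but does not actually lower the degree (since $k_1<\alpha_0/\beta_1$ here), so you may find it simpler just to match $n(s)=k\alpha(s)$, $d(s)=k\beta(s)$ directly.
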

\begin{proof}
The derivation process is similar to that of Lemma~\ref{lemma: Case-6}, and the details of the proof are presented in the
supplementary material \cite[Section~II.1]{Wang_sup}.
\end{proof}

\begin{lemma}  \label{lemma: Case-8}
Any admittance $Y(s)$ in \eqref{eq: specific bicubic admittance} satisfying Assumption~\ref{assumption: 01} can be realized by the circuit configuration in  Fig.~\ref{fig: Case-8}, if and only if there exits a positive root $x > 0$ for the equation
\begin{subequations}
\begin{equation} \label{eq: Case-8 equation}
\begin{split}
 \Delta_\beta x^2 - 4\beta_3 (2\beta_1 \mathcal{B}_{23} -\beta_2(\mathcal{B}_{22} - 2 \mathcal{B}_{13})) x  - 4\beta_3 (\beta_1 \mathcal{B}_{23}^2 - \beta_2 \mathcal{B}_{23} (\mathcal{B}_{22} - 2 \mathcal{B}_{13}) + \beta_2 \mathcal{B}_{12} \mathcal{B}_{33}) = 0,
\end{split}
\end{equation}
such that
\begin{equation} \label{eq: Case-8 condition01}
\max\left\{ -2 \mathcal{B}_{23}, -\frac{\beta_3 \mathcal{B}_{23} + \beta_2 \mathcal{B}_{33}}{\beta_3} \right\} < x < \alpha_2\beta_2 - \mathcal{B}_{23}
\end{equation}
and
\begin{equation}  \label{eq: Case-8 condition02}
(\beta_2^2 - 2\beta_1\beta_3) x - 2\beta_3(\beta_1 \mathcal{B}_{23} - \beta_2 (\mathcal{B}_{22} - \mathcal{B}_{13})) < 0.
\end{equation}
\end{subequations}
Moreover, the element values can be expressed as
\begin{equation}  \label{eq: Case-8 element values}
\begin{split}
c_1  = \frac{\alpha_2\beta_2 - \mathcal{B}_{23} - x}{\beta_2^2}, ~~
c_2  = \frac{\beta_3 x + \beta_3 \mathcal{B}_{23} + \beta_2 \mathcal{B}_{33}}{\beta_2^2\beta_3},   ~~
b_1  = \frac{\beta_3 x + \beta_3 \mathcal{B}_{23} + \beta_2 \mathcal{B}_{33}}{\beta_2^3},   \\
k_1  = \frac{x}{2\beta_2\beta_3},  ~~
k_2  = \frac{x + 2 \mathcal{B}_{23}}{2\beta_2\beta_3}, ~~
k_3  = \frac{-(\beta_2^2 - 2\beta_1\beta_3) x + 2\beta_3(\beta_1 \mathcal{B}_{23} - \beta_2 (\mathcal{B}_{22} - \mathcal{B}_{13}))}{2\beta_2^3\beta_3}.
\end{split}
\end{equation}
\end{lemma}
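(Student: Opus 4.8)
The plan is to follow the coefficient-matching route used for Lemmas~\ref{lemma: Case-6} and \ref{lemma: Case-7}, since Fig.~\ref{fig: Case-8} is again a fixed six-element series-parallel topology falling under Fig.~\ref{fig: Spring-N2} by Lemma~\ref{lemma: Spring-Other-Network}: a spring $k_1$ in parallel with a five-element subcircuit carrying the three energy-storage elements $b_1,k_2,k_3$ together with the two dampers $c_1,c_2$. First I would compute the port admittance $Y_c(s)$ of this configuration by collapsing its series/parallel blocks from the leaf elements inward; this produces $Y_c(s)$ as a third-order rational function with a simple pole at the origin whose numerator and denominator coefficients are explicit polynomials in $(c_1,c_2,b_1,k_1,k_2,k_3)$. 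Setting $Y_c(s)\equiv Y(s)$ with $Y(s)$ as in \eqref{eq: specific bicubic admittance} and clearing denominators then gives a determined system: six independent equations (after one common normalization) in the six element values.

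Next I would solve that system in the spirit of the expressions \eqref{eq: Case-8 element values}. Using a first group of the matching equations (for instance those arising from the highest-degree coefficients) I would eliminate most of the unknowns, expressing $c_1,c_2,b_1,k_2,k_3$ as rational functions of a single auxiliary variable $x$ --- a scaled element value, with $k_1=x/(2\beta_2\beta_3)$ --- and of the data $\alpha_i,\beta_j$; here the bilinear coefficient combinations $\mathcal{B}_{ij}$, $\mathcal{M}_{ij}$, $\Delta_\beta$ from Section~\ref{sec: notation} appear naturally, and the denominators $\beta_2^2$, $\beta_2^3$, $\beta_2^3\beta_3$ occurring in \eqref{eq: Case-8 element values} are nonzero by Assumption~\ref{assumption: 01}. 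Substituting these expressions into the one remaining matching equation and simplifying should collapse it to the scalar identity \eqref{eq: Case-8 equation}, which is a quadratic in $x$ with leading coefficient $\Delta_\beta=\beta_2^2-4\beta_1\beta_3$. Consequently $Y(s)$ is realizable by the circuit of Fig.~\ref{fig: Case-8} if and only if \eqref{eq: Case-8 equation} has a positive root $x$ for which every entry of \eqref{eq: Case-8 element values} is positive and finite.

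It then remains to convert those positivity requirements into the stated inequalities. From \eqref{eq: Case-8 element values}: $k_1>0$ forces $x>0$, $k_2>0$ forces $x>-2\mathcal{B}_{23}$, while $c_2>0$ and $b_1>0$ are each equivalent to $\beta_3 x+\beta_3\mathcal{B}_{23}+\beta_2\mathcal{B}_{33}>0$, and these lower bounds merge into the left-hand side of \eqref{eq: Case-8 condition01}; the requirement $c_1>0$ gives exactly the upper bound $x<\alpha_2\beta_2-\mathcal{B}_{23}$, and $k_3>0$ is precisely \eqref{eq: Case-8 condition02}. The converse direction is immediate: for any $x$ satisfying \eqref{eq: Case-8 equation} together with \eqref{eq: Case-8 condition01}--\eqref{eq: Case-8 condition02}, the values in \eqref{eq: Case-8 element values} are admissible, and substituting them back into $Y_c(s)$ recovers $Y(s)$ by the same algebraic identity that produced \eqref{eq: Case-8 equation}.

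The main obstacle I anticipate is the elimination step itself: writing $Y_c(s)$ in fully expanded form for this particular topology and then eliminating five of the six element variables --- essentially a chain of resultant computations --- is lengthy and error-prone, and one must keep careful track of which subset of matching equations is used to solve for each element so as not to introduce spurious branches, and must confirm that the resulting system is exactly determined rather than over- or under-constrained. A secondary point requiring care is the degenerate case $\Delta_\beta=0$, in which \eqref{eq: Case-8 equation} degrades to a linear equation; one has to check that the ``positive root $x$'' formulation and the formulas in \eqref{eq: Case-8 element values} still make sense there. As a consistency check on the whole derivation, one should also verify that the conditions obtained here are complementary to those already covered by Lemmas~\ref{lemma: Spring-Biquadratic-Network} and \ref{lemma: Case-6}--\ref{lemma: Case-7}, in accordance with the way Theorem~\ref{theorem: main theorem 01} is assembled.
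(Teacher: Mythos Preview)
Your proposal is correct and follows essentially the same route as the paper's proof: compute the admittance $n(s)/d(s)$ of the Fig.~\ref{fig: Case-8} topology, match coefficients against $\alpha(s),\beta(s)$ (the paper does this via a scaling factor $k>0$, giving seven equations in the six element values plus $k$, which is equivalent to your ``six equations after normalization''), solve successively for $c_1,c_2,b_1,k_2,k_3$ in terms of the single parameter $x=2\beta_2\beta_3 k_1$, and finally substitute into the one remaining equation (the paper uses the $\alpha_0$-coefficient equation) to obtain the quadratic \eqref{eq: Case-8 equation}. Your reading of the positivity constraints into \eqref{eq: Case-8 condition01}--\eqref{eq: Case-8 condition02} is also exactly how the paper proceeds.
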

\begin{proof}
The derivation process is similar to that of Lemma~\ref{lemma: Case-6}, and the details of the proof are presented in the
supplementary material \cite[Section~II.2]{Wang_sup}.
\end{proof}

 Combining Lemmas~\ref{lemma: Spring-Biquadratic-Network}--\ref{lemma: Case-8} can yield the main theorem of this subsection as follows.

\begin{theorem}  \label{theorem: main theorem 01}
Any  admittance $Y(s)$ in \eqref{eq: specific bicubic admittance} satisfying Assumption~\ref{assumption: 01} is realizable by a one-port series-parallel damper-spring-inerter circuit containing at most six elements as in  Fig.~\ref{fig: Spring-N2}, if and only if
$Y(s)$ satisfies one of the conditions in Lemmas~\ref{lemma: Spring-Biquadratic-Network} and
\ref{lemma: Case-6}--\ref{lemma: Case-8}. Moreover, if Condition~1 in Lemma~\ref{lemma: Spring-Biquadratic-Network} holds, then
the Foster preamble can be utilized to realize $Y(s)$ as the required circuit;  if one of Conditions~2--5 in Lemma~\ref{lemma: Spring-Biquadratic-Network} or
one of the conditions in
Lemmas~\ref{lemma: Case-6}--\ref{lemma: Case-8} holds, then  $Y(s)$ can be realized by one of the circuit configurations in
Figs.~\ref{fig: Spring-Biquadratic-Network} and \ref{fig: Spring-Biquadratic-Network-partial-removal}.
\end{theorem}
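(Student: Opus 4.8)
\noindent\textit{Proof proposal.} The plan is to prove Theorem~\ref{theorem: main theorem 01} as a direct assembly of Lemmas~\ref{lemma: Spring-Biquadratic-Network}--\ref{lemma: Case-8}, organized around whether or not $Y(s)$ satisfies one of the conditions of Lemma~\ref{lemma: Spring-Biquadratic-Network}. The only logical subtlety is that Lemma~\ref{lemma: Spring-Other-Network} carries the side hypothesis ``$Y(s)$ does not satisfy the conditions of Lemma~\ref{lemma: Spring-Biquadratic-Network}'', so I would use the identity $A \lor (\lnot A \land B) \equiv A \lor B$ to absorb it and present the final realizability criterion as a clean disjunction over all the lemma conditions.

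For sufficiency, I would assume $Y(s)$ satisfies one of the conditions listed in Lemmas~\ref{lemma: Spring-Biquadratic-Network} and \ref{lemma: Case-6}--\ref{lemma: Case-8} and split into two cases. If it is one of Conditions~1--5 of Lemma~\ref{lemma: Spring-Biquadratic-Network}, that lemma directly supplies a realization of the required type: the Foster preamble after extracting the parallel spring $k_1 = \alpha_0/\beta_1$ when Condition~1 holds, and one of the configurations of Fig.~\ref{fig: Spring-Biquadratic-Network} with element values from Table~\ref{table: Element values of Cases2-5} when one of Conditions~2--5 holds --- all of which are circuits of the form in Fig.~\ref{fig: Spring-N2}, as the captions record. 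If instead one of the conditions of Lemmas~\ref{lemma: Case-6}--\ref{lemma: Case-8} holds, the matching lemma produces a realization by the corresponding configuration of Fig.~\ref{fig: Spring-Biquadratic-Network-partial-removal}, which is again of the form in Fig.~\ref{fig: Spring-N2} (a parallel spring $k_1$ together with a circuit $N_2$ of at most five elements, by the way these configurations arise in Lemma~\ref{lemma: Spring-Other-Network}). In both cases $Y(s)$ is realizable as required, and the ``moreover'' statement is read off verbatim from whichever lemma was invoked.

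For necessity, I would assume $Y(s)$ is realizable by a one-port series-parallel damper-spring-inerter circuit with at most six elements as in Fig.~\ref{fig: Spring-N2} and again split on Lemma~\ref{lemma: Spring-Biquadratic-Network}. If $Y(s)$ satisfies one of Conditions~1--5 of that lemma, the conclusion is immediate. Otherwise $Y(s)$ satisfies none of them, so Lemma~\ref{lemma: Spring-Other-Network} applies, and its ``only if'' direction forces $Y(s)$ to be realizable by one of the configurations of Fig.~\ref{fig: Spring-Biquadratic-Network-partial-removal}; the ``only if'' directions of Lemmas~\ref{lemma: Case-6}--\ref{lemma: Case-8} then yield that $Y(s)$ satisfies the corresponding one of their conditions. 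Hence in every case $Y(s)$ satisfies one of the conditions in Lemmas~\ref{lemma: Spring-Biquadratic-Network} and \ref{lemma: Case-6}--\ref{lemma: Case-8}, which together with the sufficiency direction gives the stated equivalence.

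All of the analytic content --- the graph-theoretic constraints on $N_2$ from Lemmas~\ref{lemma: graph constraint}--\ref{lemma: lossless subnetwork N1}, the reduction to five-element biquadratic realizability, and the algebraic equations and inequalities in $x$ (and $y$) --- has already been discharged inside the cited lemmas, so this theorem requires no new computation. The closest thing to an obstacle is merely verifying that the case split is genuinely exhaustive: every circuit of the Fig.~\ref{fig: Spring-N2} type either removes the pole at $s=0$ entirely through $k_1$ (so that the impedance of $N_2$ is biquadratic, the regime of Lemma~\ref{lemma: Spring-Biquadratic-Network}) or removes it only partially (the regime of Lemma~\ref{lemma: Spring-Other-Network}), and these two regimes are exactly the two branches above once the side hypothesis of Lemma~\ref{lemma: Spring-Other-Network} is correctly discharged by the $A \lor (\lnot A \land B)$ simplification.
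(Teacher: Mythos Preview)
Your proposal is correct and follows essentially the same approach as the paper's own proof, which is simply the terse observation that when the conditions of Lemma~\ref{lemma: Spring-Biquadratic-Network} fail, Lemma~\ref{lemma: Spring-Other-Network} forces the realization to be one of the configurations in Fig.~\ref{fig: Spring-Biquadratic-Network-partial-removal}, and then Lemmas~\ref{lemma: Case-6}--\ref{lemma: Case-8} supply the conditions. Your explicit handling of the side hypothesis via $A \lor (\lnot A \land B) \equiv A \lor B$ and the separate sufficiency/necessity directions merely spells out what the paper leaves implicit.
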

\begin{proof}
Suppose that the conditions in Lemma~\ref{lemma: Spring-Biquadratic-Network} do not hold. Then, it follows from Lemma~\ref{lemma: Spring-Other-Network} that the circuit configurations that can realize all the other possible admittances $Y(s)$ in this theorem are shown in
Fig.~\ref{fig: Spring-Biquadratic-Network-partial-removal}. By Lemmas~\ref{lemma: Case-6}--\ref{lemma: Case-8}, one can finally prove this theorem.
\end{proof}

\subsection{Realization as Other Series-Parallel Structures}

This subsection will further investigate the cases when the realization circuits do not belong to the structure in Fig.~\ref{fig: Spring-N2}, which means that the conditions of
Theorem~\ref{theorem: main theorem 01} do not hold.
The main result of this subsection is shown in Theorem~\ref{theorem: main theorem 02}, where the necessary and sufficient condition for the realizability is the union of the conditions in Lemmas~\ref{lemma: 2-1}--\ref{lemma: 5-2}, and the admittance can be realized  by one of the circuit configurations in Figs.~\ref{fig: classes 2 and 3} and \ref{fig: classes 4 and 5}.

\begin{lemma}  \label{lemma: Other Series-Parallel Structures}
For any admittance $Y(s)$ in \eqref{eq: specific bicubic admittance} that satisfies Assumption~\ref{assumption: 01} and does not satisfies the conditions of Theorem~\ref{theorem: main theorem 01},  $Y(s)$ is realizable by a one-port series-parallel damper-spring-inerter circuit containing no more than six elements, if and only if
$Y(s)$ can be realized by one of the circuit configurations in  Figs.~\ref{fig: classes 2 and 3} and \ref{fig: classes 4 and 5}.
\end{lemma}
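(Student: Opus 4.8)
The plan is to settle the ``if'' direction by inspection and to reduce the ``only if'' direction to a finite classification of series--parallel topologies, carried out with the graph-theoretic lemmas already at our disposal. The ``if'' direction is immediate: every configuration in Figs.~\ref{fig: classes 2 and 3} and \ref{fig: classes 4 and 5} is, by construction, a one-port series--parallel damper-spring-inerter circuit with at most six elements, so realizability of $Y(s)$ by one of them forces realizability by a circuit of the required type. For the ``only if'' direction, suppose $Y(s)$ is realized by some one-port series--parallel damper-spring-inerter circuit $N$ with at most six elements; by the remark after Assumption~\ref{assumption: 02} we may take $N$ to satisfy Assumption~\ref{assumption: 02}. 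By the discussion following Lemma~\ref{lemma: lossless subnetwork N1}, $N$ has at least three energy-storage elements and at least two dampers, hence exactly five or six elements: either two dampers and three energy-storage elements (five elements), or two dampers and four energy-storage elements, or three dampers and three energy-storage elements (six elements).

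Being series--parallel, $N$ is, at its outermost level, a parallel connection $N = A \parallel B$ or a series connection of two non-trivial series--parallel subnetworks $A$ and $B$, each again obeying Assumption~\ref{assumption: 02}. I would prune the admissible pairs $(A,B)$ with three tools: (i) Lemma~\ref{lemma: graph constraint}, which forbids a $b$-$\mathcal{P}(a,a')$, a $k$-$\mathcal{C}(a,a')$ and a $b$-$\mathcal{C}(a,a')$ in the graph of $N$ and requires a $k$-$\mathcal{P}(a,a')$; (ii) Lemma~\ref{lemma: lossless subnetwork N1}, which forbids $A$ or $B$ from being a spring-inerter subnetwork of McMillan degree at least two, whether placed in series or in parallel; and (iii) the standing hypothesis that $Y(s)$ fails the conditions of Theorem~\ref{theorem: main theorem 01}, which---since that theorem is an equivalence---means no circuit of the form of Fig.~\ref{fig: Spring-N2} realizes $Y(s)$, so $N$ is not a single spring in parallel with a subnetwork of at most five elements.

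Discharging these tools: in either the parallel or the series case, neither $A$ nor $B$ can be a purely lossless (spring-inerter) block, because a single spring is excluded by (iii) when it sits in parallel and produces a $k$-$\mathcal{C}(a,a')$ when it sits in series, a single inerter produces a $b$-$\mathcal{P}(a,a')$ or a $b$-$\mathcal{C}(a,a')$, and any larger spring-inerter block has McMillan degree at least two and is killed by (ii). Hence each of $A$ and $B$ contains at least one damper, so the damper split is $(1,1)$, $(1,2)$ or $(2,1)$, and each block has between two and four elements. The required $k$-$\mathcal{P}(a,a')$ further forces an all-spring terminal path in at least one of $A$, $B$ in the parallel case and in each of $A$, $B$ in the series case; in the parallel case the admittances add, $Y(s) = Y_A(s) + Y_B(s)$, so the simple pole at $s = 0$ and its residue $\alpha_0/\beta_1$ are distributed over $Y_A$ and $Y_B$, and this together with the McMillan-degree bookkeeping (each block has McMillan degree at most three) leaves only a short list of block pairs. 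Recursing one more level inside each surviving block and discharging the same three tools pins the topology down completely; redundant series or parallel pairs of like elements are absorbed, and one verifies that the resulting configurations are precisely those of Figs.~\ref{fig: classes 2 and 3} and \ref{fig: classes 4 and 5}, up to relabelling of interchangeable branches. The five-element circuits of \cite{WJ19} reappear here as degenerate specialisations obtained by letting an element value tend to $0$ or $\infty$.

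The main obstacle is the combinatorial bookkeeping rather than any single deep step: one must enumerate every series--parallel topology on five and on six edges, label the edges as dampers, springs and inerters in all ways compatible with the admissible type profiles, discard each labelled topology that violates Lemma~\ref{lemma: graph constraint} or Lemma~\ref{lemma: lossless subnetwork N1}, carries a redundant element, duplicates another up to relabelling, or has the form of Fig.~\ref{fig: Spring-N2}, and then confirm that the few survivors coincide with the configurations drawn in Figs.~\ref{fig: classes 2 and 3} and \ref{fig: classes 4 and 5}. Because this case analysis is lengthy, I would relegate its details to an appendix, as was done for Lemma~\ref{lemma: Spring-Other-Network}.
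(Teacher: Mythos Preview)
Your plan follows essentially the same architecture as the paper's proof: reduce via Assumption~\ref{assumption: 02}, split at the outermost series/parallel node, and prune the resulting case tree with Lemmas~\ref{lemma: graph constraint} and \ref{lemma: lossless subnetwork N1} together with the hypothesis that no realization of the form of Fig.~\ref{fig: Spring-N2} exists. That part is sound, and your observation that each top-level block must contain a damper is a clean way to organise the split.

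There is, however, a genuine gap. Your discard criteria are purely structural (graph constraints, lossless-subnetwork constraint, redundancy, duplication, Fig.~\ref{fig: Spring-N2} form). These filters do \emph{not} suffice to whittle the survivors down to exactly Figs.~\ref{fig: classes 2 and 3} and \ref{fig: classes 4 and 5}. Several six-element series--parallel topologies pass every one of your tests yet have an admittance whose numerator and denominator are coprime degree-four polynomials for \emph{all} positive element values, so they can never realize a bicubic $Y(s)$. The paper isolates these in a separate lemma (Lemma~\ref{lemma: Other Series-Parallel Structures lemma 03} in Appendix~\ref{appendix: C}, with the configurations drawn in Fig.~\ref{fig: non-realizable-2}) and eliminates them by computing the resultant $R_0(n,d,s)$ and checking it is never zero---an algebraic argument, not a graph-theoretic one. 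Conversely, many of the configurations that \emph{do} appear in Figs.~\ref{fig: classes 2 and 3} and \ref{fig: classes 4 and 5} also have a degree-four admittance generically, but there the resultant \emph{can} vanish, and the realizability conditions in Lemmas~\ref{lemma: 3-1}--\ref{lemma: 5-2} encode exactly that vanishing. Your proposal therefore needs an additional step: for each topology that survives the structural filters, compute the admittance, check whether the resultant of numerator and denominator (as polynomials in $s$) can vanish for some positive element values, and discard those for which it cannot. Without this step, your ``confirm that the survivors coincide'' would fail, and you have not indicated how you would then proceed. The paper also makes systematic use of the equivalence in Fig.~\ref{fig: Network_Equivalence} to normalise sub-structures before enumeration; this is not strictly necessary for correctness but keeps the case count manageable.
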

\begin{proof}
The details of the proof can be referred to Appendix~\ref{appendix: C}.
   In the proof,
a series of realization constraints on the types of elements and constraints on the structures are presented by Lemmas~\ref{lemma: graph constraint} and \ref{lemma: lossless subnetwork N1} when the conditions of Theorem~\ref{theorem: main theorem 01} do not hold, which are further applied to derive the configurations in  Figs.~\ref{fig: classes 2 and 3} and \ref{fig: classes 4 and 5}.
\end{proof}
\begin{figure}[thpb]
      \centering
      \subfigure[]{
      \includegraphics[scale=1.0]{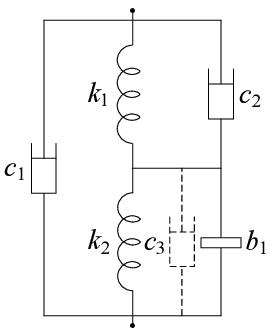}
      \label{fig: 2-1}}  \hspace{-0.5cm}
      \subfigure[]{
      \includegraphics[scale=1.0]{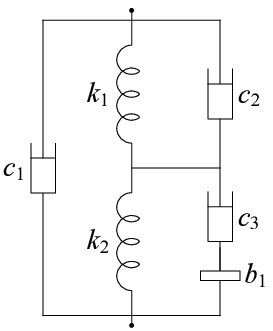}
      \label{fig: 2-2}}  \hspace{-0.5cm}
      \subfigure[]{
      \includegraphics[scale=1.0]{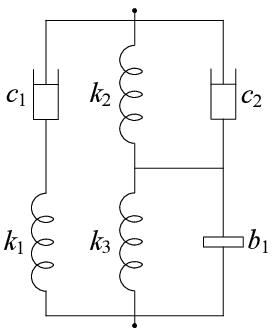}
      \label{fig: 3-1}}   \hspace{-0.5cm}
      \subfigure[]{
      \includegraphics[scale=1.0]{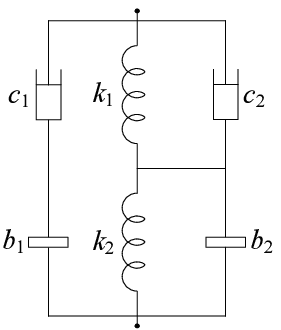}
      \label{fig: 3-2}}
      \caption{The one-port series-parallel circuit configurations containing at most six elements, which can realize the admittance $Y(s)$ in \eqref{eq: specific bicubic admittance} that satisfies Assumption~\ref{assumption: 01} and does not satisfy the conditions of Theorem~\ref{theorem: main theorem 01}. Here, the element value of the damper $c_3$ in (a) satisfies $c_3 \geq 0$ (this damper is open-circuited when $c_3 = 0$), and all the other element values  in (a)--(d) are positive and finite.}
      \label{fig: classes 2 and 3}
\end{figure}

\begin{figure}[thpb]
      \centering
      \subfigure[]{
      \includegraphics[scale=1.0]{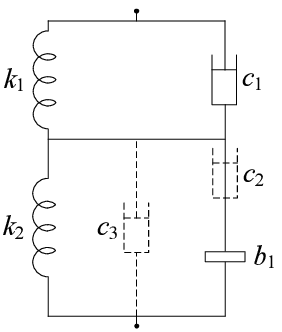}
      \label{fig: 4-1}}  \hspace{-0.5cm}
      \subfigure[]{
      \includegraphics[scale=1.0]{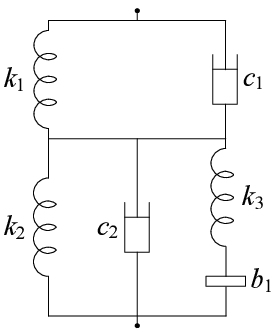}
      \label{fig: 4-2}}  \hspace{-0.5cm}
      \subfigure[]{
      \includegraphics[scale=1.0]{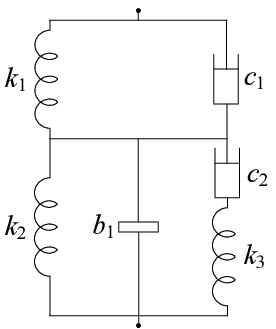}
      \label{fig: 4-3}}   \hspace{-0.5cm}
      \subfigure[]{
      \includegraphics[scale=1.0]{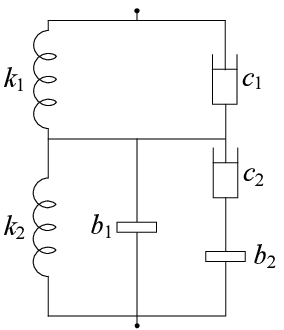}
      \label{fig: 4-4}} \\
      \subfigure[]{
      \includegraphics[scale=1.0]{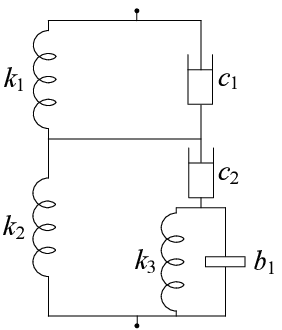}
      \label{fig: 4-5}}   \hspace{-0.5cm}
      \subfigure[]{
      \includegraphics[scale=1.0]{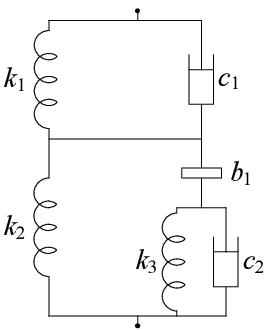}
      \label{fig: 4-6}}    \hspace{-0.5cm}
      \subfigure[]{
      \includegraphics[scale=1.0]{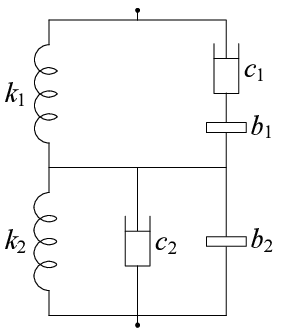}
      \label{fig: 5-1}}    \hspace{-0.5cm}
      \subfigure[]{
      \includegraphics[scale=1.0]{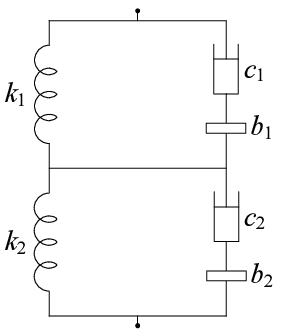}
      \label{fig: 5-2}}
      \caption{The one-port series-parallel circuit configurations containing at most six elements,  which can realize the admittance $Y(s)$ in \eqref{eq: specific bicubic admittance} that satisfies Assumption~\ref{assumption: 01} and does not satisfy the conditions of Theorem~\ref{theorem: main theorem 01}. Here, the element values of the dampers $c_2$ and $c_3$ in (a) satisfy $c_2^{-1} \geq 0$ and $c_3 \geq 0$, where the damper $c_2$ is short-circuited when $c_2^{-1} = 0$, the damper $c_3$ is open-circuited when $c_3 = 0$, and $c_2^{-1} = 0$ and $c_3 = 0$ cannot simultaneously hold;
      all the other element values  in (a)--(h) are positive and finite.}
      \label{fig: classes 4 and 5}
\end{figure}

Then, the following Lemmas~\ref{lemma: 2-1}--\ref{lemma: 5-2}  present the realizability conditions and element value expressions of the circuit configurations in Figs.~\ref{fig: classes 2 and 3} and \ref{fig: classes 4 and 5}.

\begin{lemma}  \label{lemma: 2-1}
Any admittance $Y(s)$ in \eqref{eq: specific bicubic admittance} satisfying Assumption~\ref{assumption: 01}   is realizable by the circuit configuration in
Fig.~\ref{fig: 2-1} ($c_3$ can be zero),   if and only if there exits a positive root $x > 0$ for the equation
\begin{subequations}
\begin{equation} \label{eq: 2-1 equation}
\begin{split}
\beta_3^2 \Delta_\beta x^2 - 2 \beta_3 (\beta_2 \mathcal{M}_{23} -
2 \beta_3 (\alpha_2\beta_1 + \alpha_0\beta_3)) x + (\mathcal{B}_{23}^2
- 4 \mathcal{B}_{13} \mathcal{B}_{33}) = 0,
\end{split}
\end{equation}
such that
\begin{equation} \label{eq: 2-1 condition01}
x \geq \frac{\mathcal{B}_{23}}{\beta_2 \beta_3},
\end{equation}
\begin{equation}  \label{eq: 2-1 condition02}
x > \max \left\{ \frac{\alpha_0}{\beta_1}, - \frac{\mathcal{B}_{23}}{\beta_2 \beta_3} \right\},
\end{equation}
and
\begin{equation}  \label{eq: 2-1 condition03}
\beta_2\beta_3 x^2 - \mathcal{M}_{23} x + 2\alpha_0\alpha_3 \leq 0.
\end{equation}
\end{subequations}
Moreover, the element values can be expressed as
\begin{equation}  \label{eq: 2-1 element values}
\begin{split}
c_1 = \frac{\beta_2\beta_3 x^2 - \mathcal{M}_{23} x + 2\alpha_0\alpha_3}{-2 \beta_3 (\beta_1 x - \alpha_0)},~~
c_2 = \frac{x (\beta_2\beta_3 x + \mathcal{B}_{23})}{2 \beta_3 (\beta_1 x - \alpha_0)},  \\
c_3 = \frac{x (\beta_2\beta_3 x - \mathcal{B}_{23})}{2 \beta_3 (\beta_1 x - \alpha_0)}, ~~
k_1 = x, ~~
 k_2 = \frac{\alpha_0 x}{\beta_1 x - \alpha_0}, ~~
b_1 = \frac{\beta_3 x^2}{\beta_1 x - \alpha_0}.
\end{split}
\end{equation}
\end{lemma}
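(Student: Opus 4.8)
The plan is to argue exactly as in the proof of Lemma~\ref{lemma: Case-6} in Appendix~\ref{appendix: B}: fix the topology of Fig.~\ref{fig: 2-1}, express its port admittance symbolically in the six element values, match it against $Y(s)=\alpha(s)/\beta(s)$, and then solve and analyze the resulting algebraic system. Since Lemma~\ref{lemma: Other Series-Parallel Structures} already guarantees that Fig.~\ref{fig: 2-1} is one of the configurations to be considered once the conditions of Theorem~\ref{theorem: main theorem 01} fail, no further structural justification is needed here.

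First I would compute the admittance $\hat Y(s)$ of the network in Fig.~\ref{fig: 2-1} by repeatedly applying the series rule (impedances add) and the parallel rule (admittances add) along its series-parallel decomposition, treating a damper $c$ as admittance $c$, a spring $k$ as $k/s$, and an inerter $b$ as $bs$. Because the circuit has one inerter and two springs, $\hat Y(s)$ comes out as a bicubic rational function of $s$ whose numerator and denominator coefficients are explicit polynomials in $c_1,c_2,c_3,k_1,k_2,b_1$. Identifying $\hat Y(s)$ with $Y(s)$ up to a common nonzero scalar yields a system of polynomial equations in the element values; since a bicubic admittance carries six essential parameters and the configuration has six elements, the solution set should become zero-dimensional once one quantity is fixed.

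Next I would carry out the elimination with $x := k_1$ as the distinguished parameter. I expect the matching equations to determine $c_1,c_2,c_3,k_2,b_1$ as rational functions of $x$ and the coefficients $\alpha_i,\beta_j$, reproducing the expressions in \eqref{eq: 2-1 element values}, where the common factor $\beta_1 x-\alpha_0$ and the quantities $\mathcal{B}_{23}$, $\mathcal{M}_{23}$ arise naturally from the Bezoutian and $\mathcal{M}$ notations of Section~\ref{sec: notation}; substituting these back into the one remaining consistency equation should collapse to \eqref{eq: 2-1 equation}, with leading coefficient $\beta_3^2\Delta_\beta$. Then the realizability requirement — $c_1,c_2,k_1,k_2,b_1>0$ and $c_3\geq0$ (with $c_3=0$ allowed, giving a five-element degeneration) — has to be rewritten as sign conditions in $x$ and the coefficients. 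Here $x>0$ together with $\beta_1 x-\alpha_0>0$ (forced by positivity of $c_2,k_2,b_1$) is \eqref{eq: 2-1 condition02}; nonnegativity of the numerator of $c_3$, i.e. $\beta_2\beta_3 x\geq\mathcal{B}_{23}$, is \eqref{eq: 2-1 condition01}; and the sign of the numerator of $c_1$ is \eqref{eq: 2-1 condition03}; the remaining inequalities ($c_2>0$, $k_1>0$, $k_2>0$, $b_1>0$) should reduce to consequences of these three together with Assumption~\ref{assumption: 01}. The converse is then immediate: any $x>0$ solving \eqref{eq: 2-1 equation} under \eqref{eq: 2-1 condition01}--\eqref{eq: 2-1 condition03} gives, via \eqref{eq: 2-1 element values}, positive (resp. nonnegative) element values for which the coefficient matching is an identity, so Fig.~\ref{fig: 2-1} realizes $Y(s)$.

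I expect the main obstacle to be this last reduction: after elimination the element-positivity constraints appear as a sizeable collection of sign conditions on rational expressions in $x$, and one must use the quadratic \eqref{eq: 2-1 equation} to eliminate $x^2$ and the strict positivity of $\alpha_i,\beta_j$ to show that all but the three listed inequalities are redundant, while ruling out spurious boundary situations (e.g.\ $\beta_1 x=\alpha_0$, or two elements vanishing simultaneously). A secondary technical nuisance is verifying that the symbolic admittance $\hat Y(s)$ is computed correctly for the exact topology of Fig.~\ref{fig: 2-1}, so that the $c_3=0$ branch genuinely degenerates to a valid five-element realization; but this is routine once the figure is fixed.
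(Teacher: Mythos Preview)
Your proposal is correct and follows essentially the same route as the paper's proof: compute the port admittance $n(s)/d(s)$ of Fig.~\ref{fig: 2-1}, set $n(s)=k\alpha(s)$, $d(s)=k\beta(s)$ for some $k>0$, choose $x:=k_1$ as the free parameter, and solve the coefficient equations sequentially to obtain the expressions in \eqref{eq: 2-1 element values} together with the residual equation \eqref{eq: 2-1 equation}. One small correction to your bookkeeping: the condition $x>-\mathcal{B}_{23}/(\beta_2\beta_3)$ in \eqref{eq: 2-1 condition02} is not a consequence of the other three but comes directly from the numerator of $c_2$ being positive; otherwise your attribution of the inequalities is right, and the sufficiency direction is exactly the back-substitution you describe.
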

\begin{proof}
The derivation process is similar to that of Lemma~\ref{lemma: Case-6}, and the details of the proof are presented in the
supplementary material \cite[Section~III.1]{Wang_sup}.
\end{proof}

\begin{lemma}  \label{lemma: 2-2}
Any admittance $Y(s)$ in \eqref{eq: specific bicubic admittance} satisfying Assumption~\ref{assumption: 01}   is realizable by the circuit configuration in
Fig.~\ref{fig: 2-2}, if and only if there exits   positive roots $x > 0$ and $y > 0$ for the two equations
\begin{subequations}
\begin{equation} \label{eq: 2-2 equation01}
\begin{split}
\beta_1 (\beta_1 x - 2 \alpha_0)(\beta_1 x - \alpha_0) y^2 - \beta_2 x (\beta_1 x - \alpha_0)^2 y
+ x^3 (\beta_1^2 \beta_3 x + \beta_2 \tilde{\mathcal{B}}_{11} - \alpha_2 \beta_1^2) = 0
\end{split}
\end{equation}
and
\begin{equation} \label{eq: 2-2 equation02}
\begin{split}
\beta_1  (\beta_1 x - \alpha_0)^2 y^3 - \beta_1 \beta_2 x^2  (\beta_1 x - \alpha_0) y^2
+ \beta_3 x^2 (\beta_1 x + \alpha_0)(\beta_1 x - \alpha_0) y - x^4 (\beta_1 \mathcal{B}_{23} -\beta_2 \mathcal{B}_{13}) = 0,
\end{split}
\end{equation}
such that
\begin{equation} \label{eq: 2-2 condition01}
0 < (\beta_1 x - \alpha_0)y < \beta_2 x^2
\end{equation}
and
\begin{equation}  \label{eq: 2-2 condition02}
\begin{split}
0 <  (\beta_1 x - \alpha_0)^2 y^2 - & \beta_2  x^2  (\beta_1 x - \alpha_0)y + \beta_1\beta_3 x^4 < \frac{\alpha_3 \beta_1 x^4}{y}.
\end{split}
\end{equation}
\end{subequations}
Moreover, the element values can be expressed as
\begin{equation}  \label{eq: 2-2 element values}
\begin{split}
c_1 =  \frac{\alpha_3 \beta_1 x^4 - y ((\beta_1 x - \alpha_0)^2 y^2  - \beta_2  x^2  (\beta_1 x - \alpha_0) y + \beta_1\beta_3 x^4)}{\beta_1 \beta_3 x^4},  \\
 c_2 = y,  ~~ c_3 = \frac{(\beta_1 x - \alpha_0)^2 y^2  - \beta_2  x^2 (\beta_1 x - \alpha_0) y  + \beta_1\beta_3 x^4}{(\beta_1 x - \alpha_0) (\beta_2 x^2 - (\beta_1 x - \alpha_0) y)}, \\
k_1 = x, ~~
k_2 = \frac{\alpha_0 x}{\beta_1 x - \alpha_0},  ~~
b_1 = \frac{(\beta_1 x - \alpha_0)^2 y^2  - \beta_2  x^2  (\beta_1 x - \alpha_0) y + \beta_1\beta_3 x^4}{\beta_1 x^2 (\beta_1 x - \alpha_0)}.
\end{split}
\end{equation}
\end{lemma}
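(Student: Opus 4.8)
The plan is to follow the template of the proof of Lemma~\ref{lemma: Case-6} (Appendix~\ref{appendix: B}): regard the circuit of Fig.~\ref{fig: 2-2} as a formal series-parallel composition of its six elements (dampers $c_1,c_2,c_3$, springs $k_1,k_2$, inerter $b_1$), compute its admittance symbolically, and match it with $Y(s)=\alpha(s)/\beta(s)$ from \eqref{eq: specific bicubic admittance}. Since the topology is series-parallel, the admittance is built from the element immittances $c_i$, $k_j/s$, $b_1 s$ by a finite sequence of operations $Z\mapsto Z_1+Z_2$ and $Y\mapsto Y_1+Y_2$, giving a ratio of cubics $\widehat Y(s)=\widehat\alpha(s)/\widehat\beta(s)$ whose coefficients are explicit polynomials in the six element values, with $\widehat\beta(0)=0$ reflecting the simple pole at $s=0$ forced by Assumption~\ref{assumption: 01}. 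Equating $\widehat\alpha(s)\beta(s)=\widehat\beta(s)\alpha(s)$ and cancelling a common scalar yields a system of six polynomial equations in the six element values.

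First I would perform the elimination. Reparametrize by $x:=k_1$ and $y:=c_2$; four of the six matching equations can then be solved for the remaining element values $k_2,b_1,c_1,c_3$ as rational functions of $x,y$ and the $\alpha_i,\beta_j$, producing exactly the closed forms in \eqref{eq: 2-2 element values}. In particular $k_2=\alpha_0 x/(\beta_1 x-\alpha_0)$ forces $\beta_1 x-\alpha_0\neq0$, and the remaining formulas introduce the denominators $\beta_2 x^2-(\beta_1 x-\alpha_0)y$ and $\beta_1\beta_3 x^4$. Substituting these four expressions into the two leftover matching equations and clearing denominators produces precisely the bivariate polynomial equations \eqref{eq: 2-2 equation01} and \eqref{eq: 2-2 equation02}. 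It then remains to impose that all six element values be strictly positive and finite, which is a sign analysis of the formulas in \eqref{eq: 2-2 element values}: $x>0,y>0$ from $k_1,c_2>0$; $k_2>0$ together with $k_1>0$ forces $\beta_1 x-\alpha_0>0$, which with $y>0$ is the left inequality of \eqref{eq: 2-2 condition01}; writing $P(x,y):=(\beta_1 x-\alpha_0)^2 y^2-\beta_2 x^2(\beta_1 x-\alpha_0)y+\beta_1\beta_3 x^4$, we have $b_1=P/(\beta_1 x^2(\beta_1 x-\alpha_0))$ so $b_1>0\Leftrightarrow P>0$ (left inequality of \eqref{eq: 2-2 condition02}); $c_1=(\alpha_3\beta_1 x^4-yP)/(\beta_1\beta_3 x^4)$ so $c_1>0\Leftrightarrow P<\alpha_3\beta_1 x^4/y$ (right inequality of \eqref{eq: 2-2 condition02}); and $c_3=P/((\beta_1 x-\alpha_0)(\beta_2 x^2-(\beta_1 x-\alpha_0)y))$, so given $P>0$ and $\beta_1 x-\alpha_0>0$ we get $c_3>0\Leftrightarrow(\beta_1 x-\alpha_0)y<\beta_2 x^2$, the right inequality of \eqref{eq: 2-2 condition01}. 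This establishes necessity.

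For sufficiency I would reverse the argument: given $x,y>0$ solving \eqref{eq: 2-2 equation01}–\eqref{eq: 2-2 equation02} and satisfying \eqref{eq: 2-2 condition01}–\eqref{eq: 2-2 condition02}, the six quantities in \eqref{eq: 2-2 element values} are well-defined, strictly positive and finite by the same sign analysis; substituting them into the symbolic admittance of Fig.~\ref{fig: 2-2} returns $Y(s)$ identically, since the four ``solved'' matching equations hold by construction and the two remaining ones are exactly \eqref{eq: 2-2 equation01}–\eqref{eq: 2-2 equation02}. The hard part will be the elimination-plus-sign bookkeeping: the admittance of a six-element circuit is a ratio of cubics with many monomials in the element values, so isolating the two clean equations \eqref{eq: 2-2 equation01}–\eqref{eq: 2-2 equation02} (e.g.\ by resultants or a Gr\"obner basis in the chosen elimination order) and then verifying that \eqref{eq: 2-2 condition01}–\eqref{eq: 2-2 condition02} are \emph{jointly} necessary and sufficient for positivity of all four derived element values — with no redundant and no missing constraint — is the delicate, computation-heavy step, just as in the proof of Lemma~\ref{lemma: Case-6}.
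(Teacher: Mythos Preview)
Your proposal is correct and follows essentially the same route as the paper's proof. The paper introduces the scaling factor $k>0$ explicitly (writing $n(s)=k\alpha(s)$, $d(s)=k\beta(s)$ to obtain seven equations in the six element values plus $k$), sets $x=k_1$, $y=c_2$, and then successively eliminates $k_2$, $c_1$, $c_3$, $b_1$, and $k$ from five of the equations before substituting into the remaining two to obtain \eqref{eq: 2-2 equation01}--\eqref{eq: 2-2 equation02}; your cross-multiplication formulation simply absorbs $k$ from the outset, but the elimination order and the resulting sign analysis are the same.
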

\begin{proof}
The derivation process is similar to that of Lemma~\ref{lemma: Case-6}, and the details of the proof are presented in the
supplementary material \cite[Section~III.2]{Wang_sup}.
\end{proof}

\begin{lemma}  \label{lemma: 3-1}
Any admittance $Y(s)$ in \eqref{eq: specific bicubic admittance} satisfying Assumption~\ref{assumption: 01}   is realizable by the circuit configuration in
Fig.~\ref{fig: 3-1}, if and only if
\begin{subequations}
\begin{equation} \label{eq: 3-1 condition00}
\beta_1 \tilde{\mathcal{B}}_{13} + 4 \beta_3 \mathcal{B}_{12} \geq 0,
\end{equation}
and there exits a positive root $x > 0$ for the equation
\begin{equation} \label{eq: 3-1 equation}
\begin{split}
(2 \beta_2^2 + \beta_1 \beta_3) x^2  - \beta_2 (3 \beta_2 \mathcal{B}_{23}  + \alpha_2(\beta_2^2 + 2\beta_1\beta_3)) x
+ \beta_2^2 (\mathcal{B}_{23}^2 + \alpha_2 \beta_2 \mathcal{B}_{23} + \alpha_2^2 \beta_1 \beta_3) = 0,
\end{split}
\end{equation}
such that
\begin{equation} \label{eq: 3-1 condition01}
\mathcal{B}_{23} < x < \min \{ \tilde{\mathcal{B}}_{13}, \alpha_2 \beta_2 \}
\end{equation}
and
\begin{equation}  \label{eq: 3-1 condition02}
x = \frac{1}{2}\left( \tilde{\mathcal{B}}_{13} \pm \sqrt{\tilde{\mathcal{B}}_{13}^2 + 4
\mathcal{B}_{12} \tilde{\mathcal{B}}_{33}} \right).
\end{equation}
\end{subequations}
Moreover, the element values can be expressed as
\begin{equation}  \label{eq: 3-1 element values}
\begin{split}
c_1 = \frac{x - \mathcal{B}_{23}}{\beta_1 \beta_3}, ~~ c_2 = \frac{b_1 k_1^2 + c_1^2 k_2 + c_1^2 k_3}{c_1 k_1},  \\
k_1 = \frac{\alpha_2 \beta_2 - x}{\beta_2 \beta_3}, ~~
k_2 = \frac{x}{\beta_2 \beta_3}, ~~ k_3 = \frac{\tilde{\mathcal{B}}_{13} - x}{\beta_2 \beta_3},
~~ b_1 = \frac{\alpha_3}{\beta_2}.
\end{split}
\end{equation}
\end{lemma}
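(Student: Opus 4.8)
The plan is to follow the same route as the proof of Lemma~\ref{lemma: Case-6}: express the admittance of the fixed topology in Fig.~\ref{fig: 3-1} in terms of its six element values, match it coefficient by coefficient against $Y(s)=\alpha(s)/\beta(s)$, reduce the resulting polynomial system to a single free parameter $x$, and then read off both the algebraic constraints on $x$ and the positivity requirements on the element values.

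First I would compute the admittance $Y_{3\text{-}1}(s)$ of the network in Fig.~\ref{fig: 3-1} as a ratio of two cubic polynomials in $s$ whose coefficients are explicit polynomials in $c_1,c_2,k_1,k_2,k_3,b_1$. Imposing $Y_{3\text{-}1}(s)\equiv Y(s)$, clearing denominators, and fixing the common scaling yields a polynomial system in the six element values. Guided by the network structure, a suitable auxiliary unknown is $x=\beta_2\beta_3 k_2$; one then solves the ``easy'' part of the system for $c_1$, $k_1$, $k_3$, $b_1$ in terms of $x$ and the coefficients, obtaining exactly the closed forms in \eqref{eq: 3-1 element values} (with $b_1=\alpha_3/\beta_2$ independent of $x$), while the structure of the network additionally forces $c_2=(b_1k_1^2+c_1^2k_2+c_1^2k_3)/(c_1k_1)$. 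What remains of the matching system then collapses onto the two scalar equations \eqref{eq: 3-1 equation} and \eqref{eq: 3-1 condition02} for $x$. In particular \eqref{eq: 3-1 condition02} is equivalent to $x^2-\tilde{\mathcal{B}}_{13}x-\mathcal{B}_{12}\tilde{\mathcal{B}}_{33}=0$, and since $\tilde{\mathcal{B}}_{13}^2+4\mathcal{B}_{12}\tilde{\mathcal{B}}_{33}=\alpha_3(\beta_1\tilde{\mathcal{B}}_{13}+4\beta_3\mathcal{B}_{12})$ with $\alpha_3>0$ under Assumption~\ref{assumption: 01}, condition \eqref{eq: 3-1 condition00} is precisely the reality (discriminant) condition guaranteeing that this auxiliary quadratic has a real root.

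For the necessity direction, any realization by the configuration in Fig.~\ref{fig: 3-1} must have element values of the above form for some $x$, so $x$ is a positive common root of \eqref{eq: 3-1 equation} and of $x^2-\tilde{\mathcal{B}}_{13}x-\mathcal{B}_{12}\tilde{\mathcal{B}}_{33}=0$; moreover the positivity and finiteness of $c_1$, $k_1$, $k_2$, $k_3$ force $x>\mathcal{B}_{23}$, $x<\alpha_2\beta_2$, and $0<x<\tilde{\mathcal{B}}_{13}$, which is exactly the chain \eqref{eq: 3-1 condition01}, while $b_1=\alpha_3/\beta_2>0$ holds automatically and $c_2>0$ follows once $c_1,k_1,k_2,k_3,b_1$ are positive. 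For the sufficiency direction I would take $x$ to be the stipulated root, define the element values by \eqref{eq: 3-1 element values}, check they are positive and finite using \eqref{eq: 3-1 condition00}--\eqref{eq: 3-1 condition01} and Assumption~\ref{assumption: 01}, and then verify by direct substitution, invoking \eqref{eq: 3-1 equation} and \eqref{eq: 3-1 condition02} to cancel the residual terms, that the admittance of the resulting network equals $Y(s)$.

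The main obstacle is the middle step: obtaining $Y_{3\text{-}1}(s)$ symbolically for this particular six-element series-parallel network and then disentangling the nonlinear coefficient-matching system. The decisive point is to choose the auxiliary variable $x$ so that all six element values become rational in $x$, and to recognize that the leftover compatibility conditions factor cleanly into the quadratic \eqref{eq: 3-1 equation} and the auxiliary quadratic behind \eqref{eq: 3-1 condition02}; checking that these two constraints are mutually consistent is where the bookkeeping is most delicate. Everything else --- translating positivity of the closed-form element values into the inequalities \eqref{eq: 3-1 condition00}--\eqref{eq: 3-1 condition01}, and the final verification --- is routine algebra of the same type already carried out in the proofs of Lemmas~\ref{lemma: Case-6}--\ref{lemma: Case-8}.
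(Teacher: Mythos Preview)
Your overall route is the paper's route --- reduce the coefficient-matching system to a single free parameter $x=\beta_2\beta_3 k_2$, read off the element values \eqref{eq: 3-1 element values}, and identify the leftover constraints as the two quadratics \eqref{eq: 3-1 equation} and \eqref{eq: 3-1 condition02} together with the positivity chain \eqref{eq: 3-1 condition01}. Your observation that $\tilde{\mathcal{B}}_{13}^2+4\mathcal{B}_{12}\tilde{\mathcal{B}}_{33}=\alpha_3(\beta_1\tilde{\mathcal{B}}_{13}+4\beta_3\mathcal{B}_{12})$, so that \eqref{eq: 3-1 condition00} is exactly the discriminant condition for \eqref{eq: 3-1 condition02}, is also what the paper uses.

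There is one concrete misstatement you should fix. The raw admittance of the configuration in Fig.~\ref{fig: 3-1} is \emph{not} a ratio of cubics: the circuit has four reactive elements, and $Y_{3\text{-}1}(s)=n(s)/d(s)$ with $n,d$ of degree four. The formula $c_2=(b_1k_1^2+c_1^2k_2+c_1^2k_3)/(c_1k_1)$ is not a consequence of ``the structure of the network'' but of the requirement $\delta(Y)=3$ under Assumption~\ref{assumption: 01}: one computes the resultant $R_0(n,d,s)=-b_1 c_1^2 k_1^3 k_2 k_3 (b_1 k_2^2 + c_2^2 k_3)^2 (b_1 k_1^2+c_1^2 k_2+c_1^2 k_3-c_1 c_2 k_1)^2$, sets it to zero, and only the last factor can vanish for positive element values --- this is what yields the $c_2$ expression. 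After substituting this $c_2$, a common linear factor cancels and the admittance becomes the genuine cubic ratio $\tilde n(s)/\tilde d(s)$ whose seven coefficients you then match against $k\alpha(s)$ and $k\beta(s)$. The paper carries out exactly this reduction first (Appendix~\ref{appendix: D}); once you insert that step, the rest of your plan (choice of $x$, derivation of element values, the two residual quadratics, and the positivity inequalities) coincides with the paper's argument essentially line by line.
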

\begin{proof}
The details of the proof are presented in Appendix~\ref{appendix: D}.
\end{proof}

\begin{lemma}  \label{lemma: 3-2}
Any admittance $Y(s)$ in \eqref{eq: specific bicubic admittance} satisfying Assumption~\ref{assumption: 01}   is realizable by the circuit configuration in
Fig.~\ref{fig: 3-2}, if and only if
\begin{subequations}
\begin{equation}  \label{eq: 3-2 condition00}
\Delta_y \geq 0,
\end{equation}
\begin{equation}  \label{eq: 3-2 condition0-0}
\alpha_2 \tilde{\mathcal{B}}_{11} + \alpha_1 \mathcal{B}_{13} > 0,
\end{equation}
and there exits a positive root $x > 0$ for the equation
\begin{equation} \label{eq: 3-2 equation}
\begin{split}
 \beta_3 \tilde{\mathcal{B}}_{11} &(\alpha_2 \beta_2 + \alpha_1 \beta_3) x^2   - \tilde{\mathcal{B}}_{23} \tilde{\mathcal{B}}_{11} (\alpha_2 \tilde{\mathcal{B}}_{11}
+ \alpha_1 \mathcal{B}_{13}) x + \alpha_1^2\beta_3 (\alpha_2 \tilde{\mathcal{B}}_{11} + \alpha_1 \mathcal{B}_{13})^2 = 0,
\end{split}
\end{equation}
such that
\begin{equation} \label{eq: 3-2 condition01}
x = \frac{\alpha_2 \tilde{\mathcal{B}}_{11} + \alpha_1 \mathcal{B}_{13}}{2 \beta_1 \beta_3} \cdot \left( \beta_2 \pm \sqrt{\Delta_y} \right).
\end{equation}
\end{subequations}
Moreover, the element values can be expressed as
\begin{equation}  \label{eq: 3-2 element values}
\begin{split}
c_1 = \frac{x}{\tilde{\mathcal{B}}_{11}}, ~~
c_2 = \frac{b_1^2 k_1 + b_1^2 k_2 + b_2 c_1^2}{b_1 c_1},  ~~
k_1 = \frac{\alpha_0 \alpha_1}{\tilde{\mathcal{B}}_{11}}, \\
k_2 = \frac{\alpha_1}{\beta_2}, ~~
b_1 = \frac{\alpha_2 \tilde{\mathcal{B}}_{11}  + \alpha_1 \mathcal{B}_{13}}{\beta_1 \tilde{\mathcal{B}}_{11}}, ~~
b_2 = \frac{\alpha_1^2 \beta_3}{\beta_2 \tilde{\mathcal{B}}_{11}}.
\end{split}
\end{equation}
\end{lemma}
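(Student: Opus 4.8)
The plan is to follow the strategy already used for Lemma~\ref{lemma: Case-6} and Lemma~\ref{lemma: 3-1}: fix the topology of Fig.~\ref{fig: 3-2}, compute its port admittance symbolically in the element values, and match it against $Y(s)$ in \eqref{eq: specific bicubic admittance} coefficient by coefficient.

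First I would assign symbols $c_1,c_2,k_1,k_2,b_1,b_2>0$ to the six elements of Fig.~\ref{fig: 3-2} and, applying the series/parallel composition rules (a damper, a spring, and an inerter contributing admittances $c$, $k/s$, and $bs$, respectively), write the resulting admittance $\hat{Y}(s)$ as a ratio of two polynomials in $s$ whose coefficients are explicit polynomials in $c_1,\dots,b_2$. Since the circuit carries four energy-storage elements, $\hat{Y}(s)$ is a priori of McMillan degree up to four; forcing $\hat{Y}(s)=Y(s)$, which has degree three with a simple pole only at $s=0$, imposes one polynomial identity among the element values, and this identity is the source of the displayed relation for $c_2$ in \eqref{eq: 3-2 element values}. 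Equating the remaining coefficients then leaves a system that is determined up to a single free parameter; I would solve it by reading off $k_1,k_2,b_1,b_2$ (and hence $c_2$) as the stated rational functions of the $\alpha_i$ and $\beta_j$ and by introducing $x$ proportional to $c_1$, after which the last coefficient identity collapses to the quadratic \eqref{eq: 3-2 equation} in $x$, with a companion algebraic constraint reducing to the closed form \eqref{eq: 3-2 condition01}; here $\Delta_y$ is the discriminant-type quantity appearing in that closed form, so \eqref{eq: 3-2 condition00} is exactly the requirement that a real $x$ exist.

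For the sufficiency direction I would substitute the expressions \eqref{eq: 3-2 element values} back into $\hat{Y}(s)$ and verify $\hat{Y}(s)\equiv Y(s)$ directly, using the Bezoutian identities of Section~\ref{sec: notation} and Assumption~\ref{assumption: 01}, and then translate ``all six element values positive and finite'' into inequalities. Under Assumption~\ref{assumption: 01} together with positive-realness (Lemma~\ref{lemma: positive-realness}) one has $\tilde{\mathcal{B}}_{11}>0$ (were it zero, $c_1$, $k_1$, $b_1$, $b_2$ would be singular), so $x>0$ already yields $c_1,k_1,b_2>0$; positivity of $b_1$ is precisely \eqref{eq: 3-2 condition0-0}; and positivity of $c_2$ reduces, after clearing the denominator $b_1 c_1>0$, to a sign condition on a polynomial in $x$ which --- by using \eqref{eq: 3-2 equation} to lower its degree --- is seen to follow from the remaining hypotheses rather than constituting a separate inequality. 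The converse is obtained by retracing these equivalences: any realization by this configuration yields positive element values, hence forces $\tilde{\mathcal{B}}_{11}>0$, \eqref{eq: 3-2 condition0-0}, the real-solvability condition \eqref{eq: 3-2 condition00}, and $x$ of the form \eqref{eq: 3-2 condition01}.

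The main obstacle I expect is the necessity direction, where I must show that every parametrization of Fig.~\ref{fig: 3-2} realizing $Y(s)$ is of the stated form. This requires verifying that the coefficient-matching system admits no extraneous branches: once the degree-dropping identity is imposed, the block $(k_1,k_2,b_1,b_2)$ is forced and $x$ is pinned to the two values in \eqref{eq: 3-2 condition01}. That step rests on a somewhat intricate elimination (resultants, plus bookkeeping of which subexpressions are guaranteed nonzero under Assumption~\ref{assumption: 01}), and, as in Lemma~\ref{lemma: Case-6} and Lemma~\ref{lemma: 3-1}, the remaining expansions of $\hat{Y}(s)$ and the verification of the coefficient identities are routine but long, so I would place them in an appendix or the supplementary material.
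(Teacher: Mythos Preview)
Your proposal is correct and follows essentially the same route as the paper's own proof (given in the supplementary material): compute the degree-four admittance of Fig.~\ref{fig: 3-2}, impose the resultant condition $R_0(n,d,s)=0$ to force the degree drop and thereby obtain the expression for $c_2$, then match the resulting bicubic coefficients to solve successively for $k_2,k_1,b_2,b_1$ and introduce $x=c_1\tilde{\mathcal{B}}_{11}$, after which two coefficient identities yield the quadratic \eqref{eq: 3-2 equation} and the companion quadratic $\beta_1\beta_3 x^2-\beta_2(\alpha_2\tilde{\mathcal{B}}_{11}+\alpha_1\mathcal{B}_{13})x+(\alpha_2\tilde{\mathcal{B}}_{11}+\alpha_1\mathcal{B}_{13})^2=0$ whose solution is \eqref{eq: 3-2 condition01} with $\Delta_y=\Delta_\beta$. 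One small simplification: positivity of $c_2$ requires no degree-lowering argument, since $c_2=(b_1^2k_1+b_1^2k_2+b_2c_1^2)/(b_1c_1)$ is manifestly positive once $c_1,k_1,k_2,b_1,b_2>0$; also, $\tilde{\mathcal{B}}_{11}>0$ is not a consequence of Assumption~\ref{assumption: 01} plus positive-realness alone (Lemma~\ref{lemma: positive-realness} gives only $\tilde{\mathcal{B}}_{11}\geq 0$) but is forced here by \eqref{eq: 3-2 condition0-0} together with $\mathcal{B}_{13}=-\alpha_0\beta_3<0$.
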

\begin{proof}
The derivation process is similar to that of Lemma~\ref{lemma: 3-1}, and the details of the proof are presented in the
supplementary material \cite[Section~III.3]{Wang_sup}.
\end{proof}

\begin{lemma} \label{lemma: 4-1}
Any admittance $Y(s)$ in \eqref{eq: specific bicubic admittance} satisfying Assumption~\ref{assumption: 01}   is realizable by the circuit configuration in
Fig.~\ref{fig: 4-1} ($c_2$ or $c_3$ can be infinite),   if and only if at least one of the following two conditions holds:
\begin{enumerate}
  \item[1.] there exists a negative root $y < 0$ for the equation
\begin{equation}  \label{eq: 4-1 condition 01 equation 01}
\alpha_3 y^3 + \alpha_2 y^2 + \alpha_1 y + \alpha_0 = 0,
\end{equation}
such that
$\Gamma_1 := \beta_3 y^2 + \beta_2 y + \beta_1$,
$\Gamma_2 :=  -2\alpha_3 y^3 - \alpha_2 y^2 + \alpha_0$,
$\Gamma_3 :=  2 \tilde{\mathcal{B}}_{13} y^2 + \tilde{\mathcal{M}}_{12} y - \mathcal{B}_{12}$,
$\Gamma_4 :=  - \tilde{\mathcal{B}}_{33} y^3 + \mathcal{B}_{33} y^2 + \tilde{\mathcal{B}}_{13} y - \mathcal{B}_{12}$, and
$\Gamma_5 := \beta_1 \beta_3 y^2 (\alpha_3 y + \alpha_2)^2 + (\tilde{\mathcal{B}}_{13} y - \mathcal{B}_{13})^2 - \beta_2 y (\alpha_3 y + \alpha_2) (\tilde{\mathcal{B}}_{13} y + \mathcal{B}_{13}) + \mathcal{B}_{12} \tilde{\mathcal{B}}_{23} y$, and
$\Gamma_6 := -y (2 \alpha_3 y + \alpha_2)(\tilde{\mathcal{B}}_{13} y^2 + \alpha_2 \beta_1 y - \mathcal{B}_{12}) - \alpha_0 (\tilde{\mathcal{B}}_{13} y
- \mathcal{B}_{13})$  have the same sign, where $\Gamma_k$ is nonzero for $k = 1, 2, ..., 5$  and $\Gamma_6$ can be zero;
  \item[2.] $\mathcal{M}_{23}^2 + 8 \mathcal{B}_{13} \tilde{\mathcal{B}}_{23} \geq 0$, and
  there exists a positive root $x > 0$ for the equation
\begin{subequations}
\begin{equation}  \label{eq: 4-1 condition 02 equation 01}
\beta_3^3 x^3 - \alpha_2 \beta_3^2 x^2 + \alpha_1 \tilde{\mathcal{B}}_{33} x - \alpha_0 \alpha_3^2 = 0,
\end{equation}
such that
\begin{equation}  \label{eq: 4-1 condition 02 equation 02}
x > \max \left\{ \frac{\alpha_0}{\beta_1}, \frac{\alpha_0 \alpha_3}{\alpha_1 \beta_3}  \right\}
\end{equation}
and
\begin{equation}  \label{eq: 4-1 condition 02 equation 03}
x = \frac{\mathcal{M}_{23} \pm \sqrt{\mathcal{M}_{23}^2 + 8 \mathcal{B}_{13} \tilde{\mathcal{B}}_{23}}}{2 \beta_2 \beta_3}.
\end{equation}
\end{subequations}
\end{enumerate}
Moreover, if Condition~1 holds, then the element values can be expressed as
\begin{equation}  \label{eq: 4-1 element values}
\begin{split}
c_1 = -\frac{\Gamma_2}{y \Gamma_1}, ~~  c_2 = - \frac{\Gamma_2^2 \Gamma_5}{y \Gamma_3^2 \Gamma_4}, ~~  c_3 = - \frac{\Gamma_2 \Gamma_6}{y \Gamma_3^2}, ~~
k_1 = \frac{\Gamma_2}{\Gamma_1}, ~~  k_2 = -\frac{\alpha_0 \Gamma_2}{y \Gamma_3}, ~~  b_1 = -\frac{\Gamma_2^2 \Gamma_5}{y \Gamma_3^3};
\end{split}
\end{equation}
if  Condition~2 holds, then the element values can be expressed as
\begin{equation}  \label{eq: 4-1 element values 02}
\begin{split}
c_1 =  \frac{\alpha_3}{\beta_3}, ~~  c_2 = \infty, ~~ c_3 = \frac{\alpha_1 \beta_3 x - \alpha_0 \alpha_3}{\beta_3 (\beta_1 x - \alpha_0)}, ~~
k_1 = x, ~~ k_2 = \frac{\alpha_0 x}{\beta_1 x - \alpha_0}, ~~
b_1 = \frac{\beta_3 x^2}{\beta_1 x - \alpha_0}.
\end{split}
\end{equation}
\end{lemma}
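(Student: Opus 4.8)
The plan is to treat Lemma~\ref{lemma: 4-1} as a pure coefficient-matching problem for the \emph{fixed} topology in Fig.~\ref{fig: 4-1}. By Lemma~\ref{lemma: Other Series-Parallel Structures}, this configuration (together with the others in Figs.~\ref{fig: classes 2 and 3} and \ref{fig: classes 4 and 5}) is already known to be among the only series-parallel topologies that can realize an admittance of the present class once the conditions of Theorem~\ref{theorem: main theorem 01} fail, so the only task here is to characterize exactly when Fig.~\ref{fig: 4-1} realizes a given $Y(s)$. First I would write the admittance $\hat{Y}(s)$ of the circuit in Fig.~\ref{fig: 4-1} as a rational function of the six element values $c_1,c_2,c_3,k_1,k_2,b_1$ using the series/parallel composition rules (admittances add in parallel, impedances add in series); one checks in passing that this topology automatically satisfies the necessary structural constraints of Lemmas~\ref{lemma: graph constraint} and \ref{lemma: lossless subnetwork N1}. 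Since the circuit has six elements and must realize a bicubic of McMillan degree three (Assumption~\ref{assumption: 01}), we have $\hat{Y}(s)=\alpha(s)/\beta(s)$ after clearing a common scalar, and equating the coefficients of $s^0,\dots,s^3$ in numerator and denominator (up to one overall scaling) gives a polynomial system in the six element values.

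Second, I would solve that system exactly as in the proof of Lemma~\ref{lemma: Case-6}: a few element values are pinned down immediately by the behaviour of $\hat{Y}$ at $s=0$ and $s=\infty$ and by the residue structure (this is where $c_1=\alpha_3/\beta_3$ in \eqref{eq: 4-1 element values 02} comes from), the remaining ones are parametrized by a single auxiliary unknown, and eliminating that unknown produces the displayed scalar equations. The split into Conditions~1 and~2 reflects the two admissible degenerations of Fig.~\ref{fig: 4-1}: the sub-case $c_2=\infty$ (the damper branch short-circuited) collapses one loop and yields the cubic \eqref{eq: 4-1 condition 02 equation 01} together with the quadratic \eqref{eq: 4-1 condition 02 equation 03} and element values \eqref{eq: 4-1 element values 02}; the other sub-case forces a real negative zero of $\alpha(s)$, i.e.\ the requirement that $y<0$ solve $\alpha_3y^3+\alpha_2y^2+\alpha_1y+\alpha_0=0$ as in \eqref{eq: 4-1 condition 01 equation 01}, after which the remaining element values become rational in $y$ through the auxiliary quantities $\Gamma_1,\dots,\Gamma_6$.

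Third, I would impose realizability: every element value in \eqref{eq: 4-1 element values} (resp.\ \eqref{eq: 4-1 element values 02}) must be positive and finite, with the stated exceptions that $c_3\ge 0$ (open when $c_3=0$) and $c_2=\infty$ is allowed. Translating these sign and finiteness requirements on the closed-form expressions yields the inequality $x>\max\{\alpha_0/\beta_1,\,\alpha_0\alpha_3/(\alpha_1\beta_3)\}$ in \eqref{eq: 4-1 condition 02 equation 02} and the condition that $\Gamma_1,\dots,\Gamma_6$ share a common sign (with $\Gamma_6$ permitted to vanish, since $c_3=-\Gamma_2\Gamma_6/(y\Gamma_3^2)=0$ exactly then), while requiring the auxiliary quadratic in \eqref{eq: 4-1 condition 02 equation 03} to have real roots gives the discriminant condition $\mathcal{M}_{23}^2+8\mathcal{B}_{13}\tilde{\mathcal{B}}_{23}\ge 0$. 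For the converse direction, given any of the stated conditions I would substitute the corresponding $x$ (or $y$) into the element-value formulas, verify positivity and finiteness directly from the inequalities, and confirm by back-substitution into $\hat{Y}(s)$ that the resulting circuit realizes $Y(s)$.

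The hard part will be the elimination and the necessity direction: reducing the multivariate polynomial system to the univariate equations of degrees two and three while (a) selecting the correct branch of each square root, (b) proving that no admissible solution of the original system is lost in the process, and (c) treating the degenerate limits $c_2\to\infty$ and $c_3\to 0$ carefully so that the boundary cases line up with the "$\ge$" signs and with the clause that $c_2^{-1}=0$ and $c_3=0$ cannot hold simultaneously. Organizing Condition~1 so that $\Gamma_1,\dots,\Gamma_5$ provably stay nonzero (so the formulas make sense) while $\Gamma_6$ is allowed to vanish is the most delicate bookkeeping. As elsewhere in the paper, these explicit computations would be carried out with the aid of computer algebra and the full details deferred to the appendix and supplementary material.
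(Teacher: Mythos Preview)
Your proposal is correct and follows essentially the same route as the paper's proof: write the admittance of Fig.~\ref{fig: 4-1} as a bicubic $n(s)/d(s)$, match coefficients against $k\alpha(s)$ and $k\beta(s)$, split into the finite-$c_2$ case (Condition~1) and the $c_2=\infty$ case (Condition~2), and translate positivity of the element values into the stated sign and root conditions. The one concrete mechanism you leave implicit is the paper's parametrization $y:=-k_1/c_1<0$ in the finite-$c_2$ case; combining the matched equations for the $s^2$ and $s^1$ coefficients then yields $\alpha(y)=0$ directly, and the remaining equations solve for $k_1,k_2,b_1,c_2,c_3$ as rational functions of $y$ via the $\Gamma_i$, which is exactly how \eqref{eq: 4-1 element values} is obtained.
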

\begin{proof}
The derivation process is similar to that of Lemma~\ref{lemma: Case-6}, and the details of the proof are presented in the
supplementary material \cite[Section~IV.1]{Wang_sup}.
\end{proof}

\begin{lemma} \label{lemma: 4-2}
Any admittance $Y(s)$ in \eqref{eq: specific bicubic admittance} satisfying Assumption~\ref{assumption: 01}   is realizable by the circuit configuration in
Fig.~\ref{fig: 4-2}, if and only if
\begin{subequations}
\begin{equation}  \label{eq: 4-2 condition 01}
\Delta_\alpha > 0,
\end{equation}
\begin{equation}  \label{eq: 4-2 condition 02}
\mathcal{B}_{23} < \min \left\{  \frac{\alpha_3 \mathcal{B}_{12}}{\alpha_1},
\frac{\beta_3 \mathcal{B}_{12}}{\beta_1}  \right\},
\end{equation}
and
\begin{equation}  \label{eq: 4-2 condition 03}
\mathcal{B}_{23}^3 + \alpha_2\beta_2 \mathcal{B}_{23}^2 + \alpha_1 \beta_2 \tilde{\mathcal{B}}_{23} \mathcal{B}_{23} - \tilde{\mathcal{B}}_{23}^2 \mathcal{B}_{12} = 0.
\end{equation}
\end{subequations}
Moreover, the element values can be expressed as
\begin{equation}   \label{eq: 4-2 element values}
\begin{split}
c_1  = \frac{- \alpha_3 (\alpha_1 \mathcal{B}_{23} - \alpha_3 \mathcal{B}_{12})}{\mathcal{B}_{23}^2}, ~~
c_2  = \frac{c_1(b_1k_1^2k_2 + b_1k_1^2k_3 + c_1^2k_2k_3)}{k_1(b_1k_1^2+c_1^2k_3)}, ~~
k_1  = \frac{\alpha_1 \mathcal{B}_{23} - \alpha_3 \mathcal{B}_{12}}{\beta_2 \mathcal{B}_{23}}, \\
k_2  = \frac{\alpha_0(\alpha_1 \mathcal{B}_{23} - \alpha_3 \mathcal{B}_{12})}{\alpha_1 (\beta_1 \mathcal{B}_{23} - \beta_3 \mathcal{B}_{12})},    ~~
k_3  =  \frac{\Delta_\alpha (\alpha_1 \mathcal{B}_{23} - \alpha_3 \mathcal{B}_{12})}{\alpha_1\alpha_3 (\beta_1 \mathcal{B}_{23} - \beta_3 \mathcal{B}_{12})}, ~~
b_1  =   \frac{\Delta_\alpha (\alpha_1 \mathcal{B}_{23}
- \alpha_3 \mathcal{B}_{12})}{\alpha_1^2 (\beta_1 \mathcal{B}_{23}
- \beta_3 \mathcal{B}_{12})}.
\end{split}
\end{equation}
\end{lemma}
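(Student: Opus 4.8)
The plan is to analyze the fixed series--parallel topology of Fig.~\ref{fig: 4-2} directly. First I would compute its driving-point admittance as a rational function of the six element values $c_1,c_2,k_1,k_2,k_3,b_1$ by repeatedly applying the series and parallel composition rules (reciprocal addition of impedances in series, addition of admittances in parallel). This produces a bicubic $Y(s)=\alpha(s)/\beta(s)$ whose coefficients are known polynomial expressions in the element values. Equating with the prescribed admittance \eqref{eq: specific bicubic admittance}, clearing denominators, and matching the coefficients of $s^0,\dots,s^3$ in numerator and denominator modulo one common positive scaling factor gives a determined system of six polynomial equations in the six unknown element values --- exactly the scheme used in the proof of Lemma~\ref{lemma: Case-6}.

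Next I would solve this system by elimination. From the extreme (leading and trailing) coefficients one reads off $c_1$, $k_1$, and $b_1$ as rational expressions in the admittance coefficients, rewritten in terms of the Bezoutian entries $\mathcal{B}_{ij}$, $\tilde{\mathcal{B}}_{ij}$ and $\Delta_\alpha$; then $k_2$ and $k_3$ follow, $c_2$ is fixed by the stated ratio, and the one remaining equation collapses to a single scalar compatibility condition on the coefficients --- the cubic $\mathcal{B}_{23}^3 + \alpha_2\beta_2\mathcal{B}_{23}^2 + \alpha_1\beta_2\tilde{\mathcal{B}}_{23}\mathcal{B}_{23} - \tilde{\mathcal{B}}_{23}^2\mathcal{B}_{12}=0$. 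I would then translate ``all element values strictly positive and finite'' into inequalities: positivity of $k_1$ and $b_1$ reduces to $\alpha_1\mathcal{B}_{23}-\alpha_3\mathcal{B}_{12}<0$ and $\beta_1\mathcal{B}_{23}-\beta_3\mathcal{B}_{12}<0$ (that is, $\mathcal{B}_{23}<\min\{\alpha_3\mathcal{B}_{12}/\alpha_1,\;\beta_3\mathcal{B}_{12}/\beta_1\}$) together with $\Delta_\alpha>0$, and I would verify that under Assumption~\ref{assumption: 01} and the cubic constraint the remaining quantities $c_1,c_2,k_2,k_3$ are then automatically positive. For the converse I would substitute the closed forms \eqref{eq: 4-2 element values} back into the admittance of the Fig.~\ref{fig: 4-2} topology and check that it reproduces $Y(s)$ identically, noting that positive-realness of $Y(s)$ needs no separate hypothesis since it is then realized by a passive circuit.

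The main obstacle I anticipate is the elimination step: the coefficient-matching system is genuinely nonlinear, and extracting the clean factored formulas of \eqref{eq: 4-2 element values} --- in particular isolating a \emph{cubic} constraint in $\mathcal{B}_{23}$ rather than a bulky resultant --- requires choosing the elimination order carefully and repeatedly invoking the algebraic relations among $\alpha_i,\beta_j$ encoded in the Bezoutian notation of Section~\ref{sec: notation}. The second delicate point is sufficiency: showing the three listed conditions force all six element values to be simultaneously positive, which I would handle by reducing every positivity requirement to the two sign-sensitive factors $\alpha_1\mathcal{B}_{23}-\alpha_3\mathcal{B}_{12}$ and $\beta_1\mathcal{B}_{23}-\beta_3\mathcal{B}_{12}$ and to $\Delta_\alpha$, and proving all other positivity statements follow from these together with the cubic equation and Assumption~\ref{assumption: 01}.
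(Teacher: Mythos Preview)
Your plan has a genuine gap at the very first step. You claim that computing the admittance of the Fig.~\ref{fig: 4-2} topology ``produces a bicubic $Y(s)=\alpha(s)/\beta(s)$'' and then you match coefficients of $s^0,\dots,s^3$. But this circuit has four reactive elements ($k_1,k_2,k_3,b_1$), and its raw admittance is in fact \emph{biquartic}: both numerator and denominator are degree four, with leading terms $b_1c_1c_2 s^4$ and $b_1(c_1+c_2)s^4$ respectively. With all element values positive these $s^4$ coefficients never vanish, so the function is not bicubic unless $n(s)$ and $d(s)$ share a common linear factor. The template you cite --- the proof of Lemma~\ref{lemma: Case-6} --- is for a configuration whose admittance happens to be structurally bicubic; that is not the case here, and the correct analogue in the paper is the proof of Lemma~\ref{lemma: 3-1}.

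The paper handles this by computing the resultant $R_0(n,d,s)$ and observing that for the circuit to realize a degree-three $Y(s)$ one must have $R_0(n,d,s)=0$. That resultant factors nicely, and its vanishing is precisely what forces
\[
c_2=\frac{c_1(b_1k_1^2k_2+b_1k_1^2k_3+c_1^2k_2k_3)}{k_1(b_1k_1^2+c_1^2k_3)},
\]
which is why $c_2$ appears in the statement as a derived quantity rather than a free one. Only after substituting this $c_2$ does the admittance reduce to a genuine bicubic $\tilde n(s)/\tilde d(s)$, and \emph{then} you match seven coefficients (with one scaling parameter $k$) against the five remaining unknowns $c_1,k_1,k_2,k_3,b_1$ --- yielding exactly one compatibility condition, the cubic \eqref{eq: 4-2 condition 03}. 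Your accounting of ``six equations in six unknowns'' with a leftover condition is inconsistent as stated; the missing resultant step is what makes the bookkeeping come out right. Once you insert that step, the rest of your elimination and positivity strategy is sound and matches the paper.
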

\begin{proof}
The derivation process is similar to that of Lemma~\ref{lemma: 3-1}, and the details of the proof are presented in the
supplementary material \cite[Section~IV.2]{Wang_sup}.
\end{proof}

\begin{lemma} \label{lemma: 4-3}
Any admittance $Y(s)$ in \eqref{eq: specific bicubic admittance} satisfying Assumption~\ref{assumption: 01}   is realizable by the circuit configuration in
Fig.~\ref{fig: 4-3}, if and only if
\begin{subequations}
\begin{equation}   \label{eq: 4-3 condition00}
\Delta_\alpha > 0,
\end{equation}
and
there exists a positive root $x > 0$ for the equation
\begin{equation} \label{eq: 4-3 equation}
\begin{split}
\beta_3^2 (\alpha_3 \tilde{\mathcal{B}}_{12}      - \alpha_2^2  \beta_2) x^2
+ \alpha_2 \beta_3^2 \Delta_{\alpha} x   + \alpha_0 \alpha_3^2 (\mathcal{B}_{23} - \alpha_2 \beta_2) = 0,
\end{split}
\end{equation}
such that
\begin{equation} \label{eq: 4-3 condition01}
x > \frac{\alpha_0}{\beta_1}
\end{equation}
and
\begin{equation} \label{eq: 4-3 condition02}
\beta_3^3 x^3 - \alpha_2 \beta_3^2 x^2 + \alpha_1 \tilde{\mathcal{B}}_{33} x - \alpha_0 \alpha_3^2 = 0.
\end{equation}
\end{subequations}
Moreover, the element values can be expressed as
\begin{equation}  \label{eq: 4-3 element values}
\begin{split}
c_1 = \frac{\alpha_3}{\beta_3}, ~~
c_2 = \frac{c_1 k_3 (b_1 k_1^2 + c_1^2 k_2)}{k_1 (b_1 k_1^2 + c_1^2 k_2 + c_1^2 k_3)}, ~~
k_1 = x, \\
k_2 = \frac{\alpha_0 x}{\beta_1 x - \alpha_0}, ~~
k_3 = \frac{\Delta_\alpha x}{\alpha_3 (\beta_1 x - \alpha_0)}, ~~
b_1 = \frac{\alpha_2 x}{\beta_1 x - \alpha_0}.
\end{split}
\end{equation}
\end{lemma}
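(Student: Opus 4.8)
The plan is to prove both directions by writing the driving-point admittance of the network in Fig.~\ref{fig: 4-3} as an explicit rational function of its six element values $c_1,c_2,k_1,k_2,k_3,b_1$ and comparing it with $Y(s)=\alpha(s)/\beta(s)$. The argument is of the same type as the proof of Lemma~\ref{lemma: 3-1}: reduce the circuit by successive series and parallel combinations, equate the result with $\alpha(s)/\beta(s)$, and match coefficients.

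\emph{Necessity.} Suppose $Y(s)$ is realized by the configuration in Fig.~\ref{fig: 4-3} with positive and finite element values. Forming the admittance by successive series/parallel reductions gives $Y(s)=P(s)/Q(s)$, where $P$ and $Q$ are polynomials whose coefficients are explicit polynomials in the element values and $Q$ vanishes at $s=0$ (reflecting the $k$-path forced by Lemma~\ref{lemma: graph constraint} and the simple pole of $Y$ at the origin). Equating $P/Q$ with $\alpha/\beta$ and matching coefficients up to an overall scalar yields a polynomial system, which I would solve stepwise: the limit of $Y$ at $s=\infty$ gives $c_1=\alpha_3/\beta_3$; introducing $x:=k_1$ as a parameter, the remaining equations force $k_2$, $k_3$, $b_1$, and the dependent damper $c_2$ to take the forms in \eqref{eq: 4-3 element values}; and the leftover compatibility relations reduce exactly to \eqref{eq: 4-3 equation} together with \eqref{eq: 4-3 condition02}, so $x$ must be a common positive root of these two equations. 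For positivity, $k_2>0$ and $b_1>0$ force $\beta_1 x-\alpha_0>0$, i.e.\ \eqref{eq: 4-3 condition01}; then $k_3>0$ forces $\Delta_\alpha>0$ (noting $\Delta_\alpha\neq0$ by Assumption~\ref{assumption: 01}), i.e.\ \eqref{eq: 4-3 condition00}; and $c_1>0$, $x>0$, and $c_2>0$ then hold automatically under Assumption~\ref{assumption: 01}.

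\emph{Sufficiency.} Conversely, assume $\Delta_\alpha>0$ and that some $x>0$ satisfies \eqref{eq: 4-3 equation}, \eqref{eq: 4-3 condition01}, and \eqref{eq: 4-3 condition02}. Define the six element values by \eqref{eq: 4-3 element values}. Using $\alpha_i,\beta_j>0$, $\Delta_\alpha>0$, and $\beta_1 x-\alpha_0>0$, each of $c_1$, $k_1$, $k_2$, $k_3$, $b_1$ is strictly positive and finite, and $c_2$ is then a quotient of sums of positive quantities, hence also positive and finite. Substituting these values into the symbolic admittance of Fig.~\ref{fig: 4-3} and reducing the resulting numerator and denominator polynomials using the two defining equations \eqref{eq: 4-3 equation} and \eqref{eq: 4-3 condition02} recovers $\alpha(s)/\beta(s)$, which establishes realizability.

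\emph{The main obstacle} is the elimination/verification step. On the necessity side one must show that, after solving for $c_1$, $c_2$, $k_2$, $k_3$, $b_1$, the residual constraints collapse \emph{exactly} to \eqref{eq: 4-3 equation} and \eqref{eq: 4-3 condition02} (a resultant-type calculation), and on the sufficiency side that these two relations are precisely what makes all the coefficient identities cancel. This is lengthy symbolic polynomial algebra of the kind carried out for Lemma~\ref{lemma: 3-1}. A secondary care point is the bookkeeping: tracking which root of the quadratic \eqref{eq: 4-3 equation} is taken, handling the boundary situations controlled by \eqref{eq: 4-3 condition01}, and checking that the dependent element $c_2$ never degenerates, with the singular subcases excluded by Assumption~\ref{assumption: 01}.
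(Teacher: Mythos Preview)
Your outline is correct in spirit and aligns with the paper's own proof in the supplementary material. One point deserves sharpening: the raw admittance of Fig.~\ref{fig: 4-3} is a ratio of \emph{degree-four} polynomials $n(s)/d(s)$, not degree three, so you cannot simply ``match coefficients up to an overall scalar'' against $\alpha(s)/\beta(s)$. The paper handles this by first computing the resultant $R_0(n,d,s)$ and setting it to zero (because $\delta(Y)=3$ under Assumption~\ref{assumption: 01}); this resultant factors and the vanishing of the relevant factor is exactly what fixes $c_2$ in terms of the other five elements, yielding the formula for $c_2$ in \eqref{eq: 4-3 element values}. Only after this substitution does the admittance reduce to $\tilde n(s)/\tilde d(s)$ of degree three, and then the seven coefficient equations $\tilde n=k\alpha$, $\tilde d=k\beta$ are solved as you describe: $c_1=\alpha_3/\beta_3$ from the leading ratio, $k_2$ from $k_1^{-1}+k_2^{-1}=\beta_1/\alpha_0$, then $b_1$ and $k_3$, with the two leftover equations giving \eqref{eq: 4-3 condition02} and (after elimination) \eqref{eq: 4-3 equation}. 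Your phrase ``resultant-type calculation'' is attached to the wrong step; the resultant is used at the outset to drop the degree, not at the end to collapse the residual constraints. Apart from this organizational point, your necessity and sufficiency arguments and your positivity bookkeeping match the paper's.
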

\begin{proof}
The derivation process is similar to that of Lemma~\ref{lemma: 3-1}, and the details of the proof are presented in the
supplementary material \cite[Section~IV.3]{Wang_sup}.
\end{proof}

\begin{lemma} \label{lemma: 4-4}
Any admittance $Y(s)$ in \eqref{eq: specific bicubic admittance} satisfying Assumption~\ref{assumption: 01}   is realizable by the circuit configuration in
Fig.~\ref{fig: 4-4}, if and only if
\begin{subequations}
\begin{equation}  \label{eq: 4-4 condition00}
\Delta_\alpha > 0,
\end{equation}
and
there exists a positive root $x > 0$ for the equation
\begin{equation} \label{eq: 4-4 equation}
\begin{split}
\beta_3^2 (\alpha_2 \tilde{\mathcal{B}}_{11} +   \alpha_1 \mathcal{B}_{13}) x^2
 - \beta_3 \mathcal{B}_{13} \Delta_\alpha x  + \alpha_0 \alpha_3 (\alpha_1 \mathcal{B}_{23}
+ \alpha_3 \mathcal{B}_{12}) = 0,
\end{split}
\end{equation}
such that
\begin{equation} \label{eq: 4-4 condition01}
x > \frac{\alpha_0}{\beta_1}
\end{equation}
and
\begin{equation} \label{eq: 4-4 condition02}
\beta_3^3 x^3 - \alpha_2 \beta_3^2 x^2 + \alpha_1 \tilde{\mathcal{B}}_{33} x - \alpha_0 \alpha_3^2 = 0.
\end{equation}
\end{subequations}
Moreover, the element values can be expressed as
\begin{equation}  \label{eq: 4-4 element values}
\begin{split}
c_1 = \frac{\alpha_3}{\beta_3}, ~~
c_2 = \frac{b_2 k_1 (b_1 k_1^2 + c_1^2 k_2)}{c_1 (b_1 k_1^2 + b_2 k_1^2 + c_1^2 k_2)}, ~~
k_1 = x,  \\
k_2 = \frac{\alpha_0 x}{\beta_1 x - \alpha_0}, ~~
b_1 = \frac{\alpha_0 \alpha_3 x}{\alpha_1 (\beta_1 x - \alpha_0)}, ~~
b_2 = \frac{\Delta_\alpha x}{\alpha_1 (\beta_1 x - \alpha_0)}.
\end{split}
\end{equation}
\end{lemma}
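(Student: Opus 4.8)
The plan is to follow the template already used for Lemma~\ref{lemma: 3-1} and, in particular, Lemma~\ref{lemma: 4-3}, whose realizability condition has the same shape. First I would assign symbolic positive element values $c_1,c_2,k_1,k_2,b_1,b_2$ to the series-parallel interconnection of two dampers, two springs and two inerters shown in Fig.~\ref{fig: 4-4}, and compute its driving-point admittance $\hat{Y}(s)$ by the usual rules (impedances add in series, admittances add in parallel); this presents $\hat{Y}(s)$ as a ratio of polynomials of degree at most three whose coefficients are polynomials in the six element values. It is harmless to assume throughout that $Y(s)$ is positive-real, since a passive realization forces this and, under Assumption~\ref{assumption: 01}, positive-realness is exactly the inequality set of Lemma~\ref{lemma: positive-realness}; these inequalities, together with the graph constraints of Lemmas~\ref{lemma: graph constraint} and \ref{lemma: lossless subnetwork N1}, will be needed at the end to fix the signs of the Bezoutian-type quantities.

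Next I would impose $\hat{Y}(s)=Y(s)=\alpha(s)/\beta(s)$ and equate coefficients, obtaining a polynomial system $\Sigma$ in $c_1,\dots,b_2$. Matching the infinite-frequency value gives $c_1=\alpha_3/\beta_3$; writing $x:=k_1$ and processing the remaining equations of $\Sigma$, I expect to triangularise the system and successively express $k_2$, $b_1$, $b_2$ and finally $c_2$ as the rational functions of $x$ and of $\alpha_i,\beta_j$ recorded in \eqref{eq: 4-4 element values}. Because the configuration of Fig.~\ref{fig: 4-4} realizes only a restricted, lower-dimensional family of bicubic admittances, two residual scalar constraints on $x$ should survive this reduction; after discarding the non-vanishing factors that accumulate (such as $\beta_3$, $\alpha_1$, $\beta_1 x-\alpha_0$ and Bezoutian combinations that are nonzero under Assumption~\ref{assumption: 01}), these should collapse precisely to the quadratic \eqref{eq: 4-4 equation} and the cubic \eqref{eq: 4-4 condition02}. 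Conversely, for any common positive root $x$ of \eqref{eq: 4-4 equation}--\eqref{eq: 4-4 condition02}, substituting \eqref{eq: 4-4 element values} into $\hat{Y}(s)$ and verifying, as a polynomial identity in $s$ using the definitions of $\mathcal{B}_{ij}$, $\tilde{\mathcal{B}}_{ij}$ and $\Delta_\alpha$, that $\hat{Y}(s)=Y(s)$ settles the algebraic half of the equivalence.

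It then remains to match the requirement that all six element values lie in $(0,\infty)$ with the stated side conditions. Here $c_1=\alpha_3/\beta_3>0$ and $k_1=x>0$ are immediate from Assumption~\ref{assumption: 01}; $k_2=\alpha_0 x/(\beta_1 x-\alpha_0)$ and $b_1=\alpha_0\alpha_3 x/\big(\alpha_1(\beta_1 x-\alpha_0)\big)$ are positive and finite exactly when $\beta_1 x-\alpha_0>0$, which is \eqref{eq: 4-4 condition01}; and $b_2=\Delta_\alpha x/\big(\alpha_1(\beta_1 x-\alpha_0)\big)>0$ then forces $\Delta_\alpha>0$, which is \eqref{eq: 4-4 condition00}. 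Given all of these, the numerator and denominator of the expression for $c_2$ in \eqref{eq: 4-4 element values} are sums of manifestly positive terms, so $c_2$ is automatically positive and finite and no extra condition is needed. For necessity, any realization by Fig.~\ref{fig: 4-4} supplies positive finite element values, hence, by the parametrisation of all solutions of $\Sigma$, a common positive root $x=k_1$ of \eqref{eq: 4-4 equation}--\eqref{eq: 4-4 condition02} with $\beta_1 x>\alpha_0$ and $\Delta_\alpha>0$; this is exactly the asserted condition, and \eqref{eq: 4-4 element values} then yields the element values.

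The main obstacle is the elimination in the second step: carrying out, and certifying (for instance by a resultant computation), the symbolic reduction of the full system $\Sigma$ to the compact pair \eqref{eq: 4-4 equation}--\eqref{eq: 4-4 condition02} together with the closed forms \eqref{eq: 4-4 element values}, checking that every factor cancelled along the way is genuinely nonzero under Assumption~\ref{assumption: 01} and positive-realness, and confirming that \eqref{eq: 4-4 element values} satisfies $\Sigma$ identically. A subtler point is establishing that \eqref{eq: 4-4 condition00}--\eqref{eq: 4-4 condition01} are \emph{necessary}, not merely sufficient, for positivity: one must rule out that $c_2$ could remain positive through a sign cancellation when $\Delta_\alpha\le 0$ or $\beta_1 x-\alpha_0\le 0$, which again requires sign information on $\tilde{\mathcal{B}}_{11}$, $\mathcal{B}_{13}$, $\mathcal{B}_{23}$ and $\tilde{\mathcal{B}}_{33}$ drawn from Lemma~\ref{lemma: positive-realness} and the graph constraints.
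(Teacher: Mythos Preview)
Your overall plan is on the right track and mirrors the paper's argument for the companion lemmas, but there is one concrete gap: the raw admittance of the configuration in Fig.~\ref{fig: 4-4} is \emph{not} of degree at most three. The circuit contains four reactive elements ($k_1,k_2,b_1,b_2$), so direct computation gives $\hat Y(s)=n(s)/d(s)$ with $n,d$ of degree four (the leading terms are $b_1b_2c_1s^4$ and $b_1b_2s^4$). Matching $\hat Y(s)$ to a bicubic $Y(s)$ therefore cannot be done by equating seven coefficients; one first needs the degree to drop, i.e.\ $n$ and $d$ must share a nontrivial common factor. In the paper's proof this is handled by setting the resultant $R_0(n,d,s)$ to zero, which factors and forces exactly the expression for $c_2$ recorded in \eqref{eq: 4-4 element values}; only after substituting this $c_2$ does $\hat Y(s)$ reduce to a bicubic $\tilde n(s)/\tilde d(s)$, and it is the resulting seven coefficient equations that are triangularised to produce $c_1,k_2,b_1,b_2$ and the two residual constraints \eqref{eq: 4-4 equation}, \eqref{eq: 4-4 condition02}. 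This also explains why $c_2$ in \eqref{eq: 4-4 element values} is written in terms of the other element values rather than directly in $\alpha_i,\beta_j$: it comes from the resultant step, not from the coefficient matching. Your proposed order (derive $c_2$ last, from the triangularisation) will not work as stated.

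A minor point: your closing worry about ruling out sign cancellations in $c_2$ when $\Delta_\alpha\le 0$ or $\beta_1 x-\alpha_0\le 0$ is unnecessary. Necessity of \eqref{eq: 4-4 condition00} and \eqref{eq: 4-4 condition01} does not pass through $c_2$ at all: $k_2>0$ already forces $\beta_1 x-\alpha_0>0$, and then $b_2>0$ forces $\Delta_\alpha>0$ directly from their closed forms in \eqref{eq: 4-4 element values}. No Bezoutian sign information from Lemma~\ref{lemma: positive-realness} is needed here.
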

\begin{proof}
The derivation process is similar to that of Lemma~\ref{lemma: 3-1}, and the details of the proof are presented in the
supplementary material \cite[Section~IV.4]{Wang_sup}.
\end{proof}

\begin{lemma} \label{lemma: 4-5}
Any admittance $Y(s)$ in \eqref{eq: specific bicubic admittance} satisfying Assumption~\ref{assumption: 01}   is realizable by the circuit configuration in
Fig.~\ref{fig: 4-5}, if and only if
\begin{subequations}
\begin{equation}  \label{eq: 4-5 condition01}
\tilde{\mathcal{B}}_{12} > 0,
\end{equation}
\begin{equation}   \label{eq: 4-5 condition02}
\beta_2 (\beta_1 \Delta_\alpha + \alpha_2 \mathcal{B}_{12}) - \tilde{\mathcal{B}}_{12}^2 > 0,
\end{equation}
and
\begin{equation}   \label{eq: 4-5 condition03}
\begin{split}
(\mathcal{B}_{23}+\alpha_2\beta_2)(\beta_1 \Delta_\alpha + \alpha_2 \mathcal{B}_{12})^2   - \alpha_2 \tilde{\mathcal{B}}_{12}^2 (\beta_1 \Delta_\alpha + \alpha_2 \mathcal{B}_{12})  + \alpha_0\alpha_3 \tilde{\mathcal{B}}_{12}^3 = 0.
\end{split}
\end{equation}
\end{subequations}
Moreover, the element values can be expressed as
\begin{equation}   \label{eq: 4-5 element values}
\begin{split}
c_1 = \frac{\Delta_\alpha (\beta_1 \Delta_\alpha + \alpha_2 \mathcal{B}_{12})^2}{\tilde{\mathcal{B}}_{12} (\alpha_0\alpha_2 \tilde{\mathcal{B}}_{12}^2 + (\beta_1 \Delta_\alpha + \alpha_2 \mathcal{B}_{12})^2)}, ~~
c_2 = \frac{c_1k_2(b_1 k_1^2 + c_1^2 k_3)}{k_1 (b_1 k_1^2 + c_1^2 k_2 + c_1^2 k_3)}, \\
k_1 = \frac{\alpha_0\Delta_\alpha (\beta_1 \Delta_\alpha + \alpha_2 \mathcal{B}_{12})}{\alpha_0\alpha_2 \tilde{\mathcal{B}}_{12}^2 +
(\beta_1 \Delta_\alpha + \alpha_2 \mathcal{B}_{12})^2}, ~~
k_2 = \frac{\Delta_\alpha (\beta_1 \Delta_\alpha + \alpha_2 \mathcal{B}_{12})}{\alpha_2(\beta_2 (\beta_1 \Delta_\alpha + \alpha_2 \mathcal{B}_{12}) - \tilde{\mathcal{B}}_{12}^2)},   \\
k_3 = \frac{\alpha_0\alpha_3 (\beta_1 \Delta_\alpha + \alpha_2 \mathcal{B}_{12})}{\alpha_2(\beta_2 (\beta_1 \Delta_\alpha + \alpha_2 \mathcal{B}_{12}) - \tilde{\mathcal{B}}_{12}^2)}, ~~
b_1 = \frac{\alpha_3 (\beta_1 \Delta_\alpha + \alpha_2 \mathcal{B}_{12})}{\beta_2 (\beta_1 \Delta_\alpha + \alpha_2 \mathcal{B}_{12}) - \tilde{\mathcal{B}}_{12}^2}.
\end{split}
\end{equation}
\end{lemma}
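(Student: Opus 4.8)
The plan is to follow the same route as in the proof of Lemma~\ref{lemma: 3-1}: write the driving‑point admittance of the \emph{fixed} six‑element series‑parallel configuration in Fig.~\ref{fig: 4-5} as an explicit rational function of its six element values $c_1, c_2, k_1, k_2, k_3, b_1$, and then force this function to coincide with $Y(s)=\alpha(s)/\beta(s)$ from \eqref{eq: specific bicubic admittance}. First I would reduce Fig.~\ref{fig: 4-5} by the usual series/parallel rules, a damper contributing $c$, a spring $k/s$, and an inerter $bs$ to its admittance; the outcome is a bicubic $\widehat{Y}(s)=\widehat{\alpha}(s)/\widehat{\beta}(s)$ whose numerator and denominator coefficients are explicit polynomials in $c_1,\dots,b_1$. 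For $\widehat{Y}$ to have the form \eqref{eq: specific bicubic admittance} it must have a simple pole at $s=0$, i.e.\ the constant term of $\widehat{\beta}$ must vanish; together with the projective matching of the remaining coefficients of $\widehat{\alpha}$ with $\alpha_0,\dots,\alpha_3$ and of $\widehat{\beta}$ with $\beta_3,\beta_2,\beta_1$, realizability by this configuration becomes equivalent to the solvability, in positive finite $c_1,\dots,b_1$, of the resulting polynomial system. Since this system carries one scalar equation more than there are element‑value unknowns, it is over‑determined by one, and that accounts for the single equality condition \eqref{eq: 4-5 condition03} that eventually appears.

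Next I would carry out the elimination. As in Lemma~\ref{lemma: 3-1}, and in contrast with the ``positive root of an auxiliary polynomial'' lemmas such as Lemma~\ref{lemma: Case-6}, the particular branch structure here lets the system be solved in closed form: the pole‑at‑origin equation solves for $c_2$ in terms of the other five elements (this yields the displayed formula for $c_2$ in \eqref{eq: 4-5 element values}), the remaining matches then determine $c_1,k_1,k_2,k_3,b_1$ uniquely as rational functions of $\alpha_0,\dots,\beta_3$ — precisely the expressions in \eqref{eq: 4-5 element values} — and the last, left‑over matching equation collapses to the single polynomial relation \eqref{eq: 4-5 condition03}. For the converse direction I would substitute the formulas \eqref{eq: 4-5 element values} back into $\widehat{Y}(s)$ and verify, using \eqref{eq: 4-5 condition03}, that $\widehat{Y}(s)\equiv Y(s)$; Assumption~\ref{assumption: 01} (specifically $R_0(\alpha,\beta)\neq 0$, so that $\delta(Y(s))=3$) rules out any accidental pole–zero cancellation, so the configuration genuinely realizes $Y(s)$.

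It then remains to show that ``all six element values are positive and finite'' is equivalent, under Assumption~\ref{assumption: 01} together with \eqref{eq: 4-5 condition03}, to \eqref{eq: 4-5 condition01}--\eqref{eq: 4-5 condition02}. The denominators appearing in \eqref{eq: 4-5 element values} are, up to positive scalar factors and one factor of $\tilde{\mathcal{B}}_{12}$, either a sum of squares or the quantity $\beta_2(\beta_1\Delta_\alpha+\alpha_2\mathcal{B}_{12})-\tilde{\mathcal{B}}_{12}^2$ of \eqref{eq: 4-5 condition02}; and the formula for $c_2$ shows $c_2>0$ automatically once $c_1,k_1,k_2,k_3,b_1>0$. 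A short sign count then gives: \eqref{eq: 4-5 condition02}, together with $\beta_1,\beta_2>0$ and $\mathcal{B}_{12}=-\alpha_0\beta_2$, forces $\beta_1\Delta_\alpha+\alpha_2\mathcal{B}_{12}>0$ and hence $\Delta_\alpha>0$; with $\Delta_\alpha>0$ in hand, $c_1>0$ is equivalent to $\tilde{\mathcal{B}}_{12}>0$ (which is \eqref{eq: 4-5 condition01}), while $k_1,k_2,k_3,b_1>0$ follow from \eqref{eq: 4-5 condition02}. Conversely, \eqref{eq: 4-5 condition01}--\eqref{eq: 4-5 condition02} make every value in \eqref{eq: 4-5 element values} positive and finite, completing both implications.

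I expect the main obstacle to be purely computational rather than conceptual: carrying the six‑element admittance through the series/parallel reduction, and then performing the elimination that reduces the over‑determined coefficient‑matching system to the single clean relation \eqref{eq: 4-5 condition03} together with the closed‑form expressions \eqref{eq: 4-5 element values}, without error. As the paper notes, ``the algebraic calculations of the realizability results are much more complex'' than in the five‑element setting of \cite{WJ19}, so a computer‑algebra verification of the polynomial identities (and of the back‑substitution $\widehat{Y}\equiv Y$) is essentially indispensable in this step.
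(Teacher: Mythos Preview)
Your overall strategy---compute the admittance of Fig.~\ref{fig: 4-5}, match coefficients with $\alpha(s)/\beta(s)$, solve the over-determined system to get closed-form element values plus one equality constraint---is exactly what the paper does, and your sign analysis at the end is sound. However, you have misidentified the mechanism that produces the $c_2$ relation, and this is a genuine gap.

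The configuration in Fig.~\ref{fig: 4-5} contains four reactive elements (three springs and one inerter), so the raw series--parallel reduction yields $Y(s)=n(s)/d(s)$ with $n,d$ of degree \emph{four}, not three; moreover the constant term of $d(s)$ is already zero by the structure of the circuit, so the pole at $s=0$ is automatic and imposes no constraint. The equation that determines $c_2$ is therefore not a ``pole-at-origin'' condition but the requirement that $\delta(Y(s))=3$, i.e.\ that $n$ and $d$ share a common linear factor. In the paper this is enforced by setting the resultant $R_0(n,d,s)=0$, which factors and yields the displayed formula for $c_2$; after cancelling the common factor one obtains degree-three polynomials $\tilde n,\tilde d$ and matches $\tilde n=k\alpha$, $\tilde d=k\beta$ (seven scalar equations in the six unknowns $c_1,k_1,k_2,k_3,b_1,k$), from which \eqref{eq: 4-5 element values} and the residual constraint \eqref{eq: 4-5 condition03} are extracted. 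This is precisely what the proof of Lemma~\ref{lemma: 3-1} actually does---re-read Appendix~\ref{appendix: D} and you will see the resultant step there as well---so your plan to ``follow the same route'' is right, but your description of that route is not.
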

\begin{proof}
The derivation process is similar to that of Lemma~\ref{lemma: 3-1}, and the details of the proof are presented in the
supplementary material \cite[Section~IV.5]{Wang_sup}.
\end{proof}

\begin{lemma} \label{lemma: 4-6}
Any admittance $Y(s)$ in \eqref{eq: specific bicubic admittance} satisfying Assumption~\ref{assumption: 01}   is realizable by the circuit configuration in
Fig.~\ref{fig: 4-6}, if and only if
\begin{subequations}
\begin{equation}  \label{eq: 4-6 condition0-1}
\Delta_\alpha > 0,
\end{equation}
\begin{equation}  \label{eq: 4-6 condition00}
\tilde{\mathcal{B}}_{11} > 0,
\end{equation}
and there exists a positive root $x > 0$ for the equation
\begin{equation}  \label{eq: 4-6 equation}
\begin{split}
\tilde{\mathcal{B}}_{11} (\beta_1 &\Delta_\alpha + \alpha_1 \mathcal{B}_{13}) x^2  - \alpha_0 \tilde{\mathcal{B}}_{11} \Delta_\alpha x + \alpha_0^3 \alpha_1 \alpha_3 = 0,
\end{split}
\end{equation}
such that
\begin{equation} \label{eq: 4-6 condition01}
\max \left\{ \frac{\alpha_0}{\beta_1}, \frac{\alpha_0^2 \alpha_3}{\alpha_2 \tilde{\mathcal{B}}_{11}} \right\}
< x < \frac{\alpha_0 \alpha_1}{\tilde{\mathcal{B}}_{11}}
\end{equation}
and
\begin{equation} \label{eq: 4-6 condition02}
\alpha_3 y^3 + \alpha_2 y^2 + \alpha_1 y + \alpha_0 = 0,
\end{equation}
where
\begin{equation} \label{eq: 4-6 condition03}
y = - \sqrt{\frac{\alpha_0 (\tilde{\mathcal{B}}_{11} x - \alpha_0\alpha_1)}{\alpha_0^2 \alpha_3 - \alpha_2 \tilde{\mathcal{B}}_{11} x}}.
\end{equation}
\end{subequations}
Moreover, the element values can be expressed as
\begin{equation}  \label{eq: 4-6 element values}
\begin{split}
c_1 = -\frac{x}{y}, ~~ c_2 = \frac{c_1 (b_1 k_1^2 k_2 + b_1 k_1^2 k_3 + c_1^2 k_2 k_3)}{k_1 (b_1 k_1^2 + c_1^2 k_2)}, ~~
k_1 = x, \\
k_2 = \frac{\alpha_0 x}{\beta_1 x - \alpha_0}, ~~
k_3 = \frac{\alpha_0^2 \alpha_3 x}{\Delta_\alpha (\beta_1 x - \alpha_0)}, ~~
b_1 = \frac{\alpha_0 \alpha_3 x}{\alpha_1 (\beta_1 x - \alpha_0)}.
\end{split}
\end{equation}
\end{lemma}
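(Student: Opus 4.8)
The plan is to follow the same route as the proof of Lemma~\ref{lemma: Case-6} (and the analogous Lemmas~\ref{lemma: 3-1}--\ref{lemma: 4-5}): parametrize the driving-point admittance realized by the configuration of Fig.~\ref{fig: 4-6}, equate it with $Y(s)=\alpha(s)/\beta(s)$ from \eqref{eq: specific bicubic admittance}, and reduce the resulting polynomial system to the stated conditions together with the element-value formulas in \eqref{eq: 4-6 element values}. First I would write $Y_c(s)=p(s)/q(s)$ for Fig.~\ref{fig: 4-6} by successive series/parallel combination of $c_1,c_2,k_1,k_2,k_3,b_1$, where $p,q$ are at most cubic in $s$ with coefficients that are explicit polynomials in the six element values. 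In this configuration the parallel pair $\{c_1,k_1\}$ (admittance $c_1+k_1/s=(c_1 s+k_1)/s$) appears in series with the remaining four-element block, and its impedance $s/(c_1 s+k_1)$ has a pole at $s=-k_1/c_1$; hence $Z_c(-k_1/c_1)=\infty$ and $Y_c(-k_1/c_1)=0$, so matching $Y_c\equiv Y$ forces $\alpha$ to have a real negative root $y=-k_1/c_1$, i.e.\ \eqref{eq: 4-6 condition02}, with $c_1=-x/y$ and $k_1=x$.

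Next, cancelling the factor $s-y$ and equating $Y_c$ with $Y$ up to a common positive scalar yields coefficient equations (four from the numerator $\alpha$ and three from $\tilde\beta(s)=\beta(s)/s$). The natural elimination is to solve for $k_2,k_3,b_1$ linearly in $x=k_1$ and the $\alpha_i,\beta_j$ --- recovering the formulas in \eqref{eq: 4-6 element values} --- and then eliminate the remaining unknowns, which collapses the system to the quadratic \eqref{eq: 4-6 equation} in $x$ and to the closed form \eqref{eq: 4-6 condition03} giving $y$ as a function of $x$, so that the root condition $\alpha(y)=0$ becomes a condition on $x$ alone. The value $c_2$ then falls out as the ``consistency'' expression in \eqref{eq: 4-6 element values}, namely the single degree of freedom of the series--parallel interconnection not pinned down by coefficient matching.

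For sufficiency and the sign analysis I would: (i) substitute \eqref{eq: 4-6 element values} back into $Y_c(s)$ and verify $Y_c\equiv Y$ identically in $s$, using \eqref{eq: 4-6 equation} and \eqref{eq: 4-6 condition02} to cancel terms; (ii) translate positivity and finiteness of the six element values into the lemma's inequalities. Specifically, $c_1=-x/y>0$ requires $y<0$, which together with reality of $y$ in \eqref{eq: 4-6 condition03} forces both $\alpha_0^2\alpha_3-\alpha_2\tilde{\mathcal{B}}_{11}x<0$ and $\tilde{\mathcal{B}}_{11}x-\alpha_0\alpha_1<0$, i.e.\ the interval \eqref{eq: 4-6 condition01}; positivity of $k_2,k_3,b_1$ requires $\beta_1 x-\alpha_0>0$ (the lower bound of \eqref{eq: 4-6 condition01}) and $\Delta_\alpha>0$ (condition \eqref{eq: 4-6 condition0-1}); and $\tilde{\mathcal{B}}_{11}>0$ (condition \eqref{eq: 4-6 condition00}) is exactly what keeps the radicand in \eqref{eq: 4-6 condition03} and the denominators of $c_1,c_2$ well-defined. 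Positivity and finiteness of $c_2$ then reduce to $b_1k_1^2+c_1^2k_2\neq 0$ having the correct sign, which I expect to be automatic from the constraints already obtained. For the converse direction, any admittance realizable by Fig.~\ref{fig: 4-6} necessarily satisfies Assumption~\ref{assumption: 01} and these constraints, which is consistent with Lemmas~\ref{lemma: graph constraint}, \ref{lemma: lossless subnetwork N1}, and \ref{lemma: Other Series-Parallel Structures}.

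The main obstacle is the elimination step: proving that the seven coefficient-matching equations are equivalent to a single quadratic in $x$ plus the scalar constraint $\alpha(y)=0$ with $y$ given explicitly by \eqref{eq: 4-6 condition03}, and that passing from ``all six element values positive and finite'' to the displayed inequalities neither loses nor introduces a constraint. This is a resultant / Gr\"obner-basis computation with heavy bookkeeping --- the ``much more complex algebraic calculations'' flagged in the Introduction --- and in the paper it is deferred to the supplementary material \cite{Wang_sup}.
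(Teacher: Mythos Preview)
Your strategy is essentially that of the paper, and your structural observation that the series $\{c_1\parallel k_1\}$ block forces $Y_c(-k_1/c_1)=0$, hence $\alpha(y)=0$ for $y=-k_1/c_1$, is a clean shortcut to \eqref{eq: 4-6 condition02}; the paper recovers this purely algebraically as one of the consequences of coefficient matching.

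The one genuine gap is your degree count. Direct series/parallel combination for Fig.~\ref{fig: 4-6} yields \emph{quartic} polynomials $n(s),d(s)$, not cubic: while $(c_1s+k_1)\mid n(s)$ always (your pole argument shows exactly this), one has $(c_1s+k_1)\nmid d(s)$ for generic element values, so there is nothing to ``cancel'' yet. The paper's first step is to compute the resultant $R_0(n,d)$ and set it to zero; this yields precisely the $c_2$ formula in \eqref{eq: 4-6 element values}, and only with this $c_2$ do $n,d$ share the factor $(c_1s+k_1)$ and reduce to cubics $\tilde n,\tilde d$. So $c_2$ is not a leftover ``consistency'' value that falls out last---it is the degree-reduction constraint and is fixed first. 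After that, the seven equations $\tilde n=k\alpha$, $\tilde d=k\beta$ in the remaining unknowns $c_1,k_1,k_2,k_3,b_1,k$ are solved by setting $x=k_1$, $y=-k_1/c_1$ and eliminating exactly as you outline, arriving at \eqref{eq: 4-6 equation}, \eqref{eq: 4-6 condition03}, and the sign conditions \eqref{eq: 4-6 condition0-1}--\eqref{eq: 4-6 condition01}.
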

\begin{proof}
The derivation process is similar to that of Lemma~\ref{lemma: 3-1}, and the details of the proof are presented in the
supplementary material \cite[Section~IV.6]{Wang_sup}.
\end{proof}

\begin{lemma} \label{lemma: 5-1}
Any admittance $Y(s)$ in \eqref{eq: specific bicubic admittance} satisfying Assumption~\ref{assumption: 01}   is realizable by the circuit configuration in
Fig.~\ref{fig: 5-1}, if and only if
\begin{subequations}
\begin{equation}   \label{eq: 5-1 condition00}
\tilde{\mathcal{B}}_{11} > 0,
\end{equation}
and there are positive roots $x > 0$ and $y > 0$ for the equations
\begin{equation}  \label{eq: 5-1 equation00}
\begin{split}
\alpha_3 (\beta_1^2 \beta_3 x^2  + (2 \beta_1 \mathcal{B}_{13} &- \beta_2 \tilde{\mathcal{B}}_{11}) x   - \alpha_0 \mathcal{B}_{13} + \alpha_1 \tilde{\mathcal{B}}_{11} ) y  \\
&- (\beta_1 x - \alpha_0)  (\beta_1 \beta_3^2 x^2 - \beta_3 \tilde{\mathcal{M}}_{12} x
   - \alpha_2 \mathcal{B}_{13} + \alpha_3 \tilde{\mathcal{B}}_{11}     )
    = 0
\end{split}
\end{equation}
and
\begin{equation}  \label{eq: 5-1 equation01}
\begin{split}
(\beta_2 x - \alpha_1) \Psi_2 y^2 - (\beta_1 x - \alpha_0) \Psi_1 y + \alpha_0 (\beta_1 x - \alpha_0)^2
\Psi_0 = 0,
\end{split}
\end{equation}
such that
\begin{equation} \label{eq: 5-1 condition03}
\beta_1  x - \alpha_0 > 0,
\end{equation}
\begin{equation}  \label{eq: 5-1 condition04}
(\beta_2 x - \alpha_1)y + \beta_1 x - \alpha_0 > 0,
\end{equation}
and
\begin{equation} \label{eq: 5-1 equation02}
\begin{split}
(\beta_2 x - \alpha_1) \Upsilon_2 y^2 + (\beta_2 x - \alpha_1) (\beta_1 x - \alpha_0) \Upsilon_1 y + (\beta_1 x - \alpha_0)^2 \Upsilon_0 = 0.
\end{split}
\end{equation}
\end{subequations}
Moreover, the element values can be expressed as
\begin{equation}  \label{eq: 5-1 element values}
\begin{split}
c_1 = \frac{\alpha_3}{\beta_3}, ~~ c_2 = \frac{\tilde{\mathcal{B}}_{11} x^2}{(\beta_1 x - \alpha_0)^2}, ~~   k_1 = x, ~~
k_2 = \frac{\alpha_0 x}{\beta_1 x - \alpha_0}, \\
b_1 = \frac{\alpha_3 ((\beta_2 x - \alpha_1)y + \beta_1 x - \alpha_0)}{\beta_3 y (\beta_1  x - \alpha_0)}, ~~
b_2 = \frac{\beta_3 x^2}{(\beta_2 x - \alpha_1)y + \beta_1 x - \alpha_0}.
\end{split}
\end{equation}
Here, $\Psi_2  := \beta_3 (\alpha_0 \beta_2^2 - \alpha_1 \beta_1 \beta_2 + \alpha_2 \beta_1^2) x^3 - (\alpha_0 \alpha_1 \beta_2 \beta_3 + 2 \alpha_0 \alpha_2 \beta_1 \beta_3 + \alpha_0 \alpha_3 \beta_1 \beta_2 - \alpha_1^2 \beta_1 \beta_3) x^2 + \alpha_0 (\alpha_0 \alpha_2 \beta_3 + \alpha_0 \alpha_3 \beta_2 + \alpha_1 \alpha_3 \beta_1) x + \alpha_0^2 \alpha_1 \alpha_3$,
$\Psi_1  := \beta_1^2 \beta_3^2 x^4 - \beta_3 (2 \alpha_0 \beta_1 \beta_3 + 2 \alpha_0 \beta_2^2 - \alpha_1 \beta_1 \beta_2 + \alpha_2 \beta_1^2) x^3 + (\alpha_0^2 \beta_3^2 + 3 \alpha_0 \alpha_1 \beta_2 \beta_3 + 2 \alpha_0 \alpha_2 \beta_1 \beta_3 + 2 \alpha_0 \alpha_3 \beta_1 \beta_2 - \alpha_1^2 \beta_1 \beta_3) x^2 - \alpha_0 (\alpha_0 \alpha_2 \beta_3 + 2 \alpha_0 \alpha_3 \beta_2 + \alpha_1^2 \beta_3 + 2 \alpha_1 \alpha_3 \beta_1) x + 2 \alpha_0^2 \alpha_1 \alpha_3$,
$\Psi_0  := \beta_2 \beta_3 x^2 - \mathcal{M}_{23} x + \alpha_0 \alpha_3$,
$\Upsilon_2 := \beta_1^2 \beta_3^2 x^4 + (\alpha_0 \beta_2^2 \beta_3  -2 \alpha_0 \beta_1 \beta_3^2  - \alpha_1 \beta_1 \beta_2 \beta_3 - \alpha_3 \beta_1^2 \beta_2) x^3 + (\alpha_0^2 \beta_3^2 - \alpha_0 \alpha_1 \beta_2 \beta_3 + 2 \alpha_0 \alpha_3 \beta_1 \beta_2 + \alpha_1^2 \beta_1 \beta_3 + \alpha_1 \alpha_3 \beta_1^2) x^2 - \alpha_0 \alpha_3 (\alpha_0 \beta_2 + 2 \alpha_1 \beta_1) x + \alpha_0^2 \alpha_1 \alpha_3$,
$\Upsilon_1 := \beta_1 \beta_2 \beta_3 x^3 -
(2 \alpha_1 \beta_1 \beta_3 + 2 \alpha_3 \beta_1^2 -\alpha_0 \beta_2 \beta_3) x^2 + 4 \alpha_0 \alpha_3 \beta_1 x - 2 \alpha_0^2 \alpha_3$, and
$\Upsilon_0 := \beta_1 \beta_2 \beta_3 x^3 - \beta_1 \mathcal{M}_{23} x^2 + 2 \alpha_0 \alpha_3 \beta_1 x - \alpha_0^2 \alpha_3$.
\end{lemma}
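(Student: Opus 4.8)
The plan is to follow the template used in the proof of Lemma~\ref{lemma: Case-6} and the subsequent configuration lemmas. First I would write down the driving-point admittance of the circuit in Fig.~\ref{fig: 5-1} as an explicit rational function in $s$ with the six element values $c_1, c_2, k_1, k_2, b_1, b_2$ as parameters, obtained by repeatedly applying the series and parallel composition rules for damper-spring-inerter one-ports. Equating this rational function with $Y(s) = \alpha(s)/\beta(s)$ from \eqref{eq: specific bicubic admittance} and clearing denominators, I would compare the coefficients of each power of $s$ in the numerator and in the denominator. Since Assumption~\ref{assumption: 01} guarantees $\delta(Y(s)) = 3$, this produces a consistent system of polynomial equations that the six element values must satisfy in terms of the admittance coefficients (taken modulo the common scaling of $\alpha$ and $\beta$).

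Next I would solve this system. The series-parallel decomposition of Fig.~\ref{fig: 5-1} lets one read off $c_1 = \alpha_3/\beta_3$ from the $s \to \infty$ behaviour and introduce $k_1 = x$ as a first free parameter; successive elimination then forces $k_2 = \alpha_0 x/(\beta_1 x - \alpha_0)$ and $c_2 = \tilde{\mathcal{B}}_{11} x^2/(\beta_1 x - \alpha_0)^2$, and expresses $b_1$ and $b_2$ in terms of $x$ and a second free parameter $y$, exactly as in \eqref{eq: 5-1 element values}. Substituting these expressions back into the remaining coefficient-matching equations and eliminating the redundancy yields precisely the constraint equations \eqref{eq: 5-1 equation00}, \eqref{eq: 5-1 equation01}, and \eqref{eq: 5-1 equation02}; the polynomials $\Psi_0, \Psi_1, \Psi_2, \Upsilon_0, \Upsilon_1, \Upsilon_2$ simply collect the coefficients thrown off by this elimination.

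For the forward implication I would argue that realizability by Fig.~\ref{fig: 5-1} forces every quantity in \eqref{eq: 5-1 element values} to be strictly positive and finite: positivity of $c_2$ gives $\tilde{\mathcal{B}}_{11} > 0$ (condition \eqref{eq: 5-1 condition00}); finiteness and positivity of $k_2, c_2$ give $\beta_1 x - \alpha_0 > 0$ (condition \eqref{eq: 5-1 condition03}); positivity of $b_1, b_2$ gives $(\beta_2 x - \alpha_1)y + \beta_1 x - \alpha_0 > 0$ (condition \eqref{eq: 5-1 condition04}); and $x, y > 0$ must be roots of \eqref{eq: 5-1 equation00}--\eqref{eq: 5-1 equation02}. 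For the converse, assuming all the stated conditions, I would check directly that the values in \eqref{eq: 5-1 element values} are positive and finite, and that inserting them into the admittance of Fig.~\ref{fig: 5-1} reproduces $Y(s)$ identically --- the latter being exactly what equations \eqref{eq: 5-1 equation00}, \eqref{eq: 5-1 equation01}, \eqref{eq: 5-1 equation02} encode.

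The hard part will be the algebraic elimination. The second free parameter $y$ enters the coefficient-matching equations nonlinearly, so eliminating it forces the degree-four equations \eqref{eq: 5-1 equation01} and \eqref{eq: 5-1 equation02}, whose coefficients $\Psi_i$ and $\Upsilon_i$ are themselves quartics in $x$; one must then verify that these equations together with \eqref{eq: 5-1 equation00} are not merely necessary but also sufficient, i.e.\ that the resultant computation introduces no spurious branch and that the inequalities \eqref{eq: 5-1 condition03}--\eqref{eq: 5-1 condition04} together with the root conditions carve out exactly the realizable region. This requires careful, computer-assisted resultant manipulation; as with Lemmas~\ref{lemma: Case-7}--\ref{lemma: 4-6}, the full computation would be relegated to the supplementary material.
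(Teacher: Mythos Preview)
Your plan follows the right overall strategy, but there is a structural point you have glossed over that is actually the crux of this particular lemma. The circuit in Fig.~\ref{fig: 5-1} contains \emph{four} reactive elements ($k_1,k_2,b_1,b_2$), and a direct computation of its admittance gives $n(s)/d(s)$ with $n$ and $d$ both of degree four:
\[
n(s)=b_1b_2c_1s^4+\cdots,\qquad d(s)=b_1b_2s^4+\cdots .
\]
Hence the template of Lemma~\ref{lemma: Case-6} (where $n,d$ are cubic and one simply sets $n=k\alpha$, $d=k\beta$) does not apply. For the circuit to realize a bicubic $Y(s)$, $n$ and $d$ must share a common linear factor, and the paper makes this explicit by writing
\[
n(s)=k(s+y)\alpha(s),\qquad d(s)=k(s+y)\beta(s),\qquad y>0.
\]
This is where the second parameter $y$ comes from: it is the (negated) common root of $n$ and $d$, not an arbitrary free parameter. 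Matching coefficients then yields nine equations in the six element values together with $k$ and $y$; after setting $k_1=x$, six of these determine $c_1,c_2,k_2,b_1,b_2,k$ exactly as in \eqref{eq: 5-1 element values}, and the remaining three become the constraint equations \eqref{eq: 5-1 equation00}, \eqref{eq: 5-1 equation01}, \eqref{eq: 5-1 equation02}.

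Your narrative introduces $y$ as ``a second free parameter'' without saying why one is entitled to an extra degree of freedom, and your coefficient count (implicitly degree three on both sides) would leave no room for it. Once you insert the common-factor step, the counting works out and your outline becomes correct; but as written the proposal would stall at the point where you try to match quartic $n,d$ against cubic $\alpha,\beta$.
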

\begin{proof}
The details of the proof are presented in  \cite[Section~IV.7]{Wang_sup}.
\end{proof}

\begin{lemma} \label{lemma: 5-2}
Any admittance $Y(s)$ in \eqref{eq: specific bicubic admittance} satisfying Assumption~\ref{assumption: 01}   is realizable by the circuit configuration in
Fig.~\ref{fig: 5-2}, if and only if
\begin{subequations}
\begin{equation}   \label{eq: 5-2 condition00}
\tilde{\mathcal{B}}_{11} = 0,
\end{equation}
and there are positive roots $x > 0$, $y > 0$, and $z > 0$ for the equations
\begin{equation}  \label{eq: 5-2 equation01}
\alpha_0 x \Theta_0 z^2 -  y \Theta_1 z + \alpha_3^2 x^2 y^2 (\beta_1 y - \alpha_0) = 0,
\end{equation}
\begin{equation}  \label{eq: 5-2 equation02}
\alpha_0 \Theta_0 z^2 + x \Theta_2  z + \alpha_0 \alpha_3 x y^2 (\beta_3 x - \alpha_3) = 0,
\end{equation}
and
\begin{equation}  \label{eq: 5-2 equation03}
x (\beta_1 y - \alpha_0) \Theta_0 z^2 - y^2  \Theta_3 z + \alpha_3^2 x^2 y^2 (\beta_1 y - \alpha_0)  = 0,
\end{equation}
such that
\begin{equation} \label{eq: 5-2 condition03}
\beta_3 x - \alpha_3 > 0,
\end{equation}
\begin{equation}  \label{eq: 5-2 condition04}
\beta_1 y - \alpha_0 > 0,
\end{equation}
and
\begin{equation} \label{eq: 5-2 condition05}
\Theta_0 > 0.
\end{equation}
\end{subequations}
Moreover, the element values can be expressed as
\begin{equation}  \label{eq: 5-2 element values}
\begin{split}
c_1 = x, ~~ c_2 = \frac{\alpha_3 x}{\beta_3 x - \alpha_3}, ~~
k_1 = y, ~~
 k_2 = \frac{\alpha_0 y}{\beta_1 y - \alpha_0}, ~~
b_1 = z, ~~ b_2 = \frac{\alpha_3^2 x y^2}{z \Theta_0}.
\end{split}
\end{equation}
Here, $\Theta_0 := -  \tilde{\mathcal{B}}_{12} x y + \alpha_3 y (\beta_1 y - 2 \alpha_0) + \alpha_0\alpha_2 x$,
$\Theta_1 := \alpha_0 (\alpha_2 x - \alpha_3 y)^2 - \tilde{\mathcal{B}}_{12} x y (\alpha_2 x - \alpha_3 y)  + \alpha_1 \alpha_3 x^2(\beta_1 y - \alpha_0)$, $\Theta_2 := x y (\beta_1 \Delta_\alpha  + \alpha_1 \mathcal{B}_{13}) - \alpha_0 \Delta_\alpha x  - \alpha_1 \alpha_3 y (\beta_1 y - 2 \alpha_0)$,
and $\Theta_3 := x^2 y
(\beta_1 \mathcal{B}_{33} - \beta_3 \mathcal{B}_{13})  - \alpha_0  \mathcal{B}_{33} x^2 + 2  \alpha_3 \mathcal{B}_{13} x y + \alpha_3 \tilde{\mathcal{B}}_{13} y^2$.
\end{lemma}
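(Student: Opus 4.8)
The plan is to follow the coefficient-matching strategy already used for Lemma~\ref{lemma: Case-6}, Lemma~\ref{lemma: 3-1}, and especially Lemma~\ref{lemma: 5-1}, since the configuration in Fig.~\ref{fig: 5-2} again consists of two dampers, two springs, and two inerters. First I would trace the series-parallel interconnection of Fig.~\ref{fig: 5-2} to write its driving-point admittance as a real-rational function of $s$ whose numerator and denominator have degree at most three; because the spring subnetwork provides a pole of the admittance at $s=0$, this function has the form \eqref{eq: specific bicubic admittance}. Equating it with $Y(s)=\alpha(s)/\beta(s)$ and clearing denominators yields a polynomial identity in $s$, hence six scalar equations obtained by matching the coefficients of $s^3,s^2,s,1$ in the numerator and of $s^3,s^2,s$ in the denominator, up to a common scaling, in the six unknown element values $c_1,c_2,k_1,k_2,b_1,b_2$ and the coefficients $\alpha_0,\dots,\alpha_3,\beta_1,\beta_2,\beta_3$.

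Next I would eliminate variables. Taking $x:=c_1$, $y:=k_1$, $z:=b_1$ as the parameters to retain, three of the matching equations can be solved for $c_2,k_2,b_2$, which produces exactly the closed forms $c_2=\alpha_3x/(\beta_3x-\alpha_3)$, $k_2=\alpha_0y/(\beta_1y-\alpha_0)$, and $b_2=\alpha_3^2xy^2/(z\Theta_0)$ recorded in \eqref{eq: 5-2 element values}, with $\Theta_0:=-\tilde{\mathcal{B}}_{12}xy+\alpha_3y(\beta_1y-2\alpha_0)+\alpha_0\alpha_2x$ appearing as the common denominator of this step. Substituting these expressions back into the remaining matching equations and clearing denominators gives the three polynomial constraints \eqref{eq: 5-2 equation01}--\eqref{eq: 5-2 equation03} in $x,y,z$, in which $\Theta_1,\Theta_2,\Theta_3$ are precisely the numerators that arise; the elimination is consistent only when the low-order coefficients of $\alpha$ and $\beta$ satisfy the scalar compatibility relation $\tilde{\mathcal{B}}_{11}=0$ of \eqref{eq: 5-2 condition00}, which is the codimension-one feature separating this configuration from the other configurations in Figs.~\ref{fig: classes 2 and 3} and \ref{fig: classes 4 and 5}.

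For the necessity direction, suppose $Y(s)$ is realized by the circuit of Fig.~\ref{fig: 5-2} with all six element values positive and finite. Then those values solve the coefficient-matching system, so $\tilde{\mathcal{B}}_{11}=0$ holds and $(x,y,z):=(c_1,k_1,b_1)$ are positive roots of \eqref{eq: 5-2 equation01}--\eqref{eq: 5-2 equation03}; since $\alpha_i,\beta_j>0$, positivity of $c_2$, $k_2$, and $b_2$ forces $\beta_3x-\alpha_3>0$, $\beta_1y-\alpha_0>0$, and $\Theta_0>0$, i.e. \eqref{eq: 5-2 condition03}--\eqref{eq: 5-2 condition05}. For sufficiency, assume $\tilde{\mathcal{B}}_{11}=0$ together with positive $x,y,z$ satisfying \eqref{eq: 5-2 equation01}--\eqref{eq: 5-2 equation03} and \eqref{eq: 5-2 condition03}--\eqref{eq: 5-2 condition05}, and define $c_1,c_2,k_1,k_2,b_1,b_2$ by \eqref{eq: 5-2 element values}. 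The three inequalities guarantee that all six values are positive and finite, and substituting them into the symbolic admittance of Fig.~\ref{fig: 5-2}, using the three constraint equations to cancel the residual terms, reproduces $Y(s)$; hence $Y(s)$ is realizable by this one-port six-element series-parallel damper-spring-inerter circuit.

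The main obstacle is the elimination step: reducing the six coupled matching equations to the compact form \eqref{eq: 5-2 condition00} and \eqref{eq: 5-2 equation01}--\eqref{eq: 5-2 equation03}, keeping track of all solution branches without losing any and without introducing spurious ones, and verifying that the sign conditions \eqref{eq: 5-2 condition03}--\eqref{eq: 5-2 condition05} are exactly---not merely sufficiently---equivalent to positivity and finiteness of every element value, requires lengthy symbolic computation (resultants or a Gr\"{o}bner basis). As with Lemmas~\ref{lemma: 4-1}--\ref{lemma: 5-1}, these algebraic details would be deferred to the supplementary material \cite{Wang_sup}.
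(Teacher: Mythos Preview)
Your plan has a structural gap at the very first step. The raw driving-point admittance of the configuration in Fig.~\ref{fig: 5-2} is \emph{not} bicubic: with two dampers, two springs, and two inerters arranged as in that figure, the numerator and denominator each have degree four, namely
\[
n(s)=b_1b_2c_1c_2s^4+\cdots,\qquad d(s)=b_1b_2(c_1+c_2)s^4+\cdots .
\]
Consequently, for this circuit to realize a bicubic $Y(s)$ satisfying Assumption~\ref{assumption: 01}, $n(s)$ and $d(s)$ must share a common linear factor $(s+\gamma)$ with $\gamma>0$, exactly as in Lemma~\ref{lemma: 5-1}. Writing $n(s)=k(s+\gamma)\alpha(s)$ and $d(s)=k(s+\gamma)\beta(s)$ produces \emph{nine} coefficient equations in the eight unknowns $c_1,c_2,k_1,k_2,b_1,b_2,k,\gamma$, not the six-by-six square system you describe. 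It is precisely this overdetermination that forces the codimension-one relation $\tilde{\mathcal{B}}_{11}=0$: dividing the equations for $k_1k_2(b_1c_2+b_2c_1)$ and $(k_1+k_2)(b_1c_2+b_2c_1)$ and using $k_2=\alpha_0k_1/(\beta_1k_1-\alpha_0)$ gives $\alpha_0/\beta_1=(\alpha_1\gamma+\alpha_0)/(\beta_2\gamma+\beta_1)$, hence $\gamma(\alpha_1\beta_1-\alpha_0\beta_2)=0$.

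In your six-equation framework there is no extra parameter $\gamma$ and no surplus equation, so your account of where \eqref{eq: 5-2 condition00} comes from (``the elimination is consistent only when\ldots'') has no mechanism behind it. Likewise, the three polynomial constraints \eqref{eq: 5-2 equation01}--\eqref{eq: 5-2 equation03} arise by substituting the expressions for $c_2,k_2,b_2$ and for $k$ back into three of the nine equations (after $\gamma$ has been absorbed), not into ``the remaining'' equations of a six-equation system. Once you insert the common-factor step and carry the extra unknown $\gamma$ through the elimination, the rest of your outline---setting $x=c_1$, $y=k_1$, $z=b_1$, solving for $c_2,k_2,b_2$, and reading off the sign conditions \eqref{eq: 5-2 condition03}--\eqref{eq: 5-2 condition05} from positivity of those three---matches the paper's argument.
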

\begin{proof}
The details of the proof are presented in  \cite[Section~IV.8]{Wang_sup}.
\end{proof}


\begin{theorem}    \label{theorem: main theorem 02}
For any admittance $Y(s)$ in \eqref{eq: specific bicubic admittance} that satisfies Assumption~\ref{assumption: 01} and does not satisfy the conditions of Theorem~\ref{theorem: main theorem 01}, $Y(s)$   is realizable by a one-port series-parallel damper-spring-inerter circuit containing at most six elements, if and only if
one of the conditions in Lemmas~\ref{lemma: 2-1}--\ref{lemma: 5-2} holds. Moreover,
the realizability conditions correspond to the circuit configurations in Figs.~\ref{fig: classes 2 and 3} and \ref{fig: classes 4 and 5}.
\end{theorem}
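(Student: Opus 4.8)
The plan is to obtain Theorem~\ref{theorem: main theorem 02} as the direct combination of Lemma~\ref{lemma: Other Series-Parallel Structures} with Lemmas~\ref{lemma: 2-1}--\ref{lemma: 5-2}, in exact parallel with the way Theorem~\ref{theorem: main theorem 01} is assembled from Lemma~\ref{lemma: Spring-Other-Network} and Lemmas~\ref{lemma: Case-6}--\ref{lemma: Case-8}. No new circuit-theoretic input is needed at the level of the theorem; all of it is already packaged in those lemmas.

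For the necessity direction, suppose $Y(s)$ satisfies Assumption~\ref{assumption: 01}, fails every condition of Theorem~\ref{theorem: main theorem 01}, and is realizable by some one-port series-parallel damper-spring-inerter circuit with at most six elements. Since the conditions of Theorem~\ref{theorem: main theorem 01} fail, the realizing circuit cannot have the structure of Fig.~\ref{fig: Spring-N2}, so Lemma~\ref{lemma: Other Series-Parallel Structures} applies and $Y(s)$ must be realizable by one of the finitely many configurations in Figs.~\ref{fig: classes 2 and 3} and \ref{fig: classes 4 and 5}. Whichever configuration this is, the matching lemma among Lemmas~\ref{lemma: 2-1}--\ref{lemma: 5-2} asserts that realizability by that configuration is equivalent to the algebraic condition it lists; hence at least one of the conditions in Lemmas~\ref{lemma: 2-1}--\ref{lemma: 5-2} holds.

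For sufficiency, if one of the conditions in Lemmas~\ref{lemma: 2-1}--\ref{lemma: 5-2} holds, the same lemma, via its element-value formulas (and the degenerate/limiting values permitted for the dampers $c_2$, $c_3$ in Figs.~\ref{fig: classes 2 and 3}--\ref{fig: classes 4 and 5}), exhibits an explicit one-port series-parallel damper-spring-inerter circuit with at most six elements that realizes $Y(s)$; in particular $Y(s)$ is realizable with at most six elements. This gives both implications, and because each satisfied condition is tied to a named configuration, the ``moreover'' statement about correspondence with Figs.~\ref{fig: classes 2 and 3} and \ref{fig: classes 4 and 5} comes out of the same bookkeeping.

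The main obstacle is not in the theorem itself but in the supporting results it cites: Lemma~\ref{lemma: Other Series-Parallel Structures} (proved in Appendix~\ref{appendix: C}) must enumerate \emph{every} admissible series-parallel topology on at most six elements that does not reduce to Fig.~\ref{fig: Spring-N2}, using the $k$-path/no-$b$-path/no-$k$-cut-set/no-$b$-cut-set restrictions of Lemma~\ref{lemma: graph constraint}, the lossless-subnetwork obstruction of Lemma~\ref{lemma: lossless subnetwork N1}, and the minimal count of two dampers and three energy-storage elements; and each of Lemmas~\ref{lemma: 2-1}--\ref{lemma: 5-2} rests on a finite but heavy elimination computation producing the stated polynomial equations and inequality constraints in the auxiliary roots $x$, $y$, $z$. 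For the theorem one only needs these to be in hand; the care required is simply to make sure the case list of Lemma~\ref{lemma: Other Series-Parallel Structures} is exhaustive and that no realization is lost when a damper is open- or short-circuited or taken infinite, which is already absorbed into the inequality conditions of the respective lemmas.
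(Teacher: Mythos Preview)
Your proposal is correct and follows essentially the same approach as the paper's own proof: invoke Lemma~\ref{lemma: Other Series-Parallel Structures} to reduce every admissible six-element series-parallel realization (when the conditions of Theorem~\ref{theorem: main theorem 01} fail) to the finite list of configurations in Figs.~\ref{fig: classes 2 and 3} and~\ref{fig: classes 4 and 5}, and then read off the equivalence from Lemmas~\ref{lemma: 2-1}--\ref{lemma: 5-2}. The paper states this in two sentences; your version spells out the two directions and the degenerate-damper cases more explicitly, but the logical content is identical.
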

\begin{proof}
By Lemma~\ref{lemma: Other Series-Parallel Structures},  the circuit configurations
that can realize all the possible cases of $Y(s)$ in this theorem are shown in
Figs.~\ref{fig: classes 2 and 3} and \ref{fig: classes 4 and 5}.
Since the realizability conditions of these circuit configurations are presented in Lemmas~\ref{lemma: 2-1}--\ref{lemma: 5-2}, this theorem can be proved.
\end{proof}

\subsection{Summary}

Combining the above discussions, the final result of this paper is summarized.

\begin{theorem}    \label{theorem: main theorem 03}
Any admittance $Y(s)$ in \eqref{eq: specific bicubic admittance} satisfying Assumption~\ref{assumption: 01}
is realizable as a one-port series-parallel damper-spring-inerter circuit containing at most six elements, if and only if
$Y(s)$ satisfies one of the conditions in
Lemmas~\ref{lemma: Spring-Biquadratic-Network}, \ref{lemma: Case-6}--\ref{lemma: Case-8}, and \ref{lemma: 2-1}--\ref{lemma: 5-2}. Moreover, if any of the conditions in Lemma~\ref{lemma: Spring-Biquadratic-Network} holds, then
$Y(s)$ is realizable by the required circuit
after extracting the parallel spring $k_1 = \alpha_0/\beta_1$ to remove the pole at $s = 0$;
If one of the conditions in Lemmas~\ref{lemma: Case-6}--\ref{lemma: Case-8}  and \ref{lemma: 2-1}--\ref{lemma: 5-2} holds, then
$Y(s)$ is realizable by one of the circuit configurations in Figs.~\ref{fig: Spring-Biquadratic-Network-partial-removal}--\ref{fig: classes 4 and 5} (summarized in Table~\ref{table: summary}).
\end{theorem}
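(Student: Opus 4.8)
The plan is to obtain Theorem~\ref{theorem: main theorem 03} as a direct consequence of Theorems~\ref{theorem: main theorem 01} and~\ref{theorem: main theorem 02} by a case split on whether $Y(s)$ satisfies the realizability condition of Theorem~\ref{theorem: main theorem 01}. Write $\mathcal{C}_1$ for the union of the conditions in Lemmas~\ref{lemma: Spring-Biquadratic-Network} and~\ref{lemma: Case-6}--\ref{lemma: Case-8} (the condition of Theorem~\ref{theorem: main theorem 01}), and $\mathcal{C}_2$ for the union of the conditions in Lemmas~\ref{lemma: 2-1}--\ref{lemma: 5-2}. The target is to show that a $Y(s)$ satisfying Assumption~\ref{assumption: 01} is realizable as a one-port series-parallel damper-spring-inerter circuit with at most six elements if and only if $\mathcal{C}_1 \vee \mathcal{C}_2$ holds, together with the stated correspondence between each branch of the condition and the circuit configurations.

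For the sufficiency ($\Leftarrow$) direction, I would argue separately for $\mathcal{C}_1$ and $\mathcal{C}_2$. If $\mathcal{C}_1$ holds, Theorem~\ref{theorem: main theorem 01} already gives a realization of the form in Fig.~\ref{fig: Spring-N2}: via the Foster preamble after extracting the parallel spring $k_1 = \alpha_0/\beta_1$ when Condition~1 of Lemma~\ref{lemma: Spring-Biquadratic-Network} holds, and via one of the configurations in Figs.~\ref{fig: Spring-Biquadratic-Network} and~\ref{fig: Spring-Biquadratic-Network-partial-removal} otherwise; in each case the circuit is series-parallel with at most six elements. If $\mathcal{C}_2$ holds, then one of Lemmas~\ref{lemma: 2-1}--\ref{lemma: 5-2} applies, and each such lemma --- stated unconditionally under Assumption~\ref{assumption: 01}, i.e.\ it does \emph{not} presuppose that the conditions of Theorem~\ref{theorem: main theorem 01} fail --- directly exhibits element values realizing $Y(s)$ as the corresponding configuration in Fig.~\ref{fig: classes 2 and 3} or Fig.~\ref{fig: classes 4 and 5}, all of which are series-parallel with at most six elements. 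This also establishes the ``moreover'' part of the statement.

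For the necessity ($\Rightarrow$) direction, suppose $Y(s)$ is realizable by some one-port series-parallel damper-spring-inerter circuit with at most six elements. If $\mathcal{C}_1$ holds there is nothing to prove, so assume $\mathcal{C}_1$ fails. Then Theorem~\ref{theorem: main theorem 02} applies: its hypothesis ``$Y(s)$ does not satisfy the conditions of Theorem~\ref{theorem: main theorem 01}'' is precisely ``$\mathcal{C}_1$ fails'', and its remaining hypothesis ``$Y(s)$ is realizable by a one-port series-parallel circuit with at most six elements'' is the standing assumption, so one of the conditions in Lemmas~\ref{lemma: 2-1}--\ref{lemma: 5-2} must hold, i.e.\ $\mathcal{C}_2$ holds. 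Hence $\mathcal{C}_1 \vee \mathcal{C}_2$ holds in every case, and the assignment of each branch of the condition to its circuit family is read off from the earlier theorems and lemmas; the whole correspondence would be collected in Table~\ref{table: summary}.

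The main point to get right here is not a computation but the bookkeeping: one must check that the hypotheses of the two theorems partition the situation with no overlap gap. The only place a gap could arise is if some $Y(s)$ satisfied a condition in Lemmas~\ref{lemma: 2-1}--\ref{lemma: 5-2} yet fell outside the conditional framing of Theorem~\ref{theorem: main theorem 02}; this is ruled out because Lemmas~\ref{lemma: 2-1}--\ref{lemma: 5-2} are themselves unconditional ``if and only if'' characterizations of realizability by fixed configurations, so their sufficiency needs no appeal to Theorem~\ref{theorem: main theorem 02}. The genuine difficulty behind the result --- the exhaustive enumeration of admissible series-parallel structures via Lemmas~\ref{lemma: graph constraint} and~\ref{lemma: lossless subnetwork N1} (carried out in Lemmas~\ref{lemma: Spring-Other-Network} and~\ref{lemma: Other Series-Parallel Structures}) and the per-configuration algebraic reductions --- has already been discharged in the earlier lemmas and the supplementary material, so Theorem~\ref{theorem: main theorem 03} itself is purely a matter of collating those results.
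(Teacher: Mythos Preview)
Your proposal is correct and follows essentially the same approach as the paper: the paper's own proof is a two-sentence sketch that simply observes any realizing circuit either has the parallel-spring structure of Fig.~\ref{fig: Spring-N2} or does not, and then invokes Theorems~\ref{theorem: main theorem 01} and~\ref{theorem: main theorem 02}. Your version is a more explicit unpacking of this same case split, and your observation that the sufficiency of $\mathcal{C}_2$ rests directly on the unconditional Lemmas~\ref{lemma: 2-1}--\ref{lemma: 5-2} (rather than on Theorem~\ref{theorem: main theorem 02}, whose hypothesis presupposes $\neg\mathcal{C}_1$) is a valid and useful clarification of the logical structure.
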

\begin{proof}
Any one-port series-parallel damper-spring-inerter circuit
 realizing $Y(s)$ in this theorem
is either the structure in Fig.~\ref{fig: Spring-N2} or any of other possible configurations.
Combining Theorems~\ref{theorem: main theorem 01} and \ref{theorem: main theorem 02} can imply this theorem.
\end{proof}

\begin{table}[!t]
\renewcommand{\arraystretch}{1.1}
\caption{The summary of the realization results of the circuit configurations in Figs.~\ref{fig: Spring-Biquadratic-Network-partial-removal}--\ref{fig: classes 4 and 5}.}
\label{table: summary}
\centering
\begin{tabular}{c   c}
\hline
Configurations & Realizability results    \\
\hline
Fig.~\ref{fig: Spring-Biquadratic-Network-partial-removal}    & Conditions and element value expressions in Lemmas~\ref{lemma: Case-6}--\ref{lemma: Case-8}     \\
Fig.~\ref{fig: classes 2 and 3}      &   Conditions and element value expressions in   Lemmas~\ref{lemma: 2-1}--\ref{lemma: 3-2}    \\
Fig.~\ref{fig: classes 4 and 5}     &  Conditions and element value expressions in  Lemmas~\ref{lemma: 4-1}--\ref{lemma: 5-2}   \\
\hline
\end{tabular}
\end{table}

\begin{remark}
It is noted that the necessary and sufficient condition   in Theorem~\ref{theorem: main theorem 03} is the
union    of the   conditions    in
Lemmas~\ref{lemma: Spring-Biquadratic-Network}, \ref{lemma: Case-6}--\ref{lemma: Case-8}, and \ref{lemma: 2-1}--\ref{lemma: 5-2}, which can be described by the set $\mathcal{S} = \{ (\alpha_3, ..., \alpha_0, \beta_3, ..., \beta_1) ~|~\alpha_i, \beta_j > 0 \text{ satisfy Assumption~\ref{assumption: 01} and the condition in Theorem~\ref{theorem: main theorem 03}}  \}$.
Then,   $\mathcal{S}$ is a
semi-algebraic subset  of the 7-dimensional Euclidean space, whose dimension is equal to the dimension of the positive-real set of $Y(s)$ in Lemma~\ref{lemma: positive-realness}. In contrast, the dimension of the realizability set for the five-element  realization results in \cite{WJ19} is one less than that of the positive-real set.  The five-element series-parallel realization results in \cite{WJ19} can be included by the results of this paper as special cases.
\end{remark}

\section{Examples of Circuit Synthesis and Passive Controller Optimizations}
\label{sec: examples}

This section will present several examples to illustrate the circuit synthesis results of this paper,
including the passive controller optimizations for a train suspension system.
Example~\ref{example 4-1} is an ideal numerical example, Example~\ref{eq: example optimal admittances} presents the mechanical circuit synthesis results for the optimal positive-real admittances in  \cite{WLLSC09}, and Example~\ref{example: 04} gives the mechanical circuit synthesis results based on the optimization results for the passive controller design of a side-view train suspension system.


\begin{example}    \label{example 4-1}
Consider an admittance $Y(s)$ in \eqref{eq: specific bicubic admittance} with
$\alpha_3 = 6$, $\alpha_2 = 13$, $\alpha_1 = 17$, $\alpha_0 = 10$, $\beta_3 = 7$, $\beta_2 = 13$, and $\beta_1 = 15$. Since
one can check that none of the conditions in Lemma~\ref{lemma: Spring-Biquadratic-Network} holds,
$Y(s)$ cannot be realized by a one-port six-element series-parallel
circuit  by completely removing the pole at $s = 0$. By  the Bott-Duffin circuit synthesis procedure, $Y(s)$ can be realized by a one-port ten-element series-parallel damper-spring-inerter circuit.
Then, it is
calculated that a negative root of the equation \eqref{eq: 4-1 condition 01 equation 01} is $y = -1$, such that $\Gamma_1 = 9$, $\Gamma_2 = 9$, $\Gamma_3 = 45$, $\Gamma_4 = 9$, $\Gamma_5 = 1125$, and $\Gamma_6 = 225$ have the same sign. Therefore, Condition~1 of Lemma~\ref{lemma: 4-1} holds. By the element value expressions in \eqref{eq: 4-1 element values}, $Y(s)$ can be realized by the one-port six-element series-parallel circuit in Fig.~\ref{fig: 4-1} with $c_1 = 1$~Ns/m, $c_2 = 5$~Ns/m, $c_3 = 1$~Ns/m, $k_1 = 1$~N/m, $k_2 = 2$~N/m, and $b_1 = 1$~kg, which saves four elements compared with the circuit realization by the Bott-Duffin procedure.
\end{example}

\begin{example} \label{eq: example optimal admittances}
Consider the one-wheel train suspension model in \cite[Fig.~1]{WLLSC09}.
By choosing the  static stiffness settings of $Q_1(s)$ and $Q_2(s)$ as $k_s = 1.41 \times 10^5$~N/m and $k_b = 1.26 \times 10^6$~N/m, it is shown in   \cite{WLLSC09} that the
positive-real
admittances $Q_1(s)$ and $Q_2(s)$ in the form of \eqref{eq: specific bicubic admittance} that minimizes the ride comfort index $J_1$ can be determined by the BMI optimization method as $Q_1(s) = (3212.9 s^3 + 2.407 \times 10^5 s^2 + 1.082 \times 10^6 s + 7.42 \times 10^6)/(s^3 + 7.674 s^2 + 52.627  s)$
and
$Q_2(s) =  (13754 s^3 + 1.272 \times 10^6 s^2 + 8.294 \times 10^7 s + 9.644  \times 10^7)/(s^3 + 65.763  s^2 + 76.541  s)$.
One can verify that $Q_1(s)$   satisfies Condition~1 of Lemma~\ref{lemma: Spring-Biquadratic-Network}. By making use of the Foster preamble, $Q_1(s)$ is realizable by a one-port six-element series-parallel circuit in Fig.~\ref{fig: foster-preamble-example03} with $c_1 = 0.26455$~Ns/m, $c_2 = 3212.635$~Ns/m, $c_3 = 13465.328$~Ns/m,
$k_1 = k_s = 1.41 \times 10^5$~N/m, $k_2 = 80490.163$~N/m, and $b_1 = 1894.339$~kg.
It can be checked that $Q_2(s)$ does not satisfy any of the conditions in Lemma~\ref{lemma: Spring-Biquadratic-Network}, which means that $Q_2(s)$ cannot be realized by any six-element series-parallel circuit by completely removing the pole at $s = 0$.
It can be checked that $Q_2(s)$   satisfies any of the
conditions in Lemmas~\ref{lemma: Case-6}--\ref{lemma: Case-8}, which means that the admittance
$Q_2(s)$ is realizable by one of the one-port six-element series-parallel circuit configurations in Fig.~\ref{fig: Spring-Biquadratic-Network-partial-removal}.
 For instance, by the element value expressions in \eqref{eq: Case-6 element values} with $x = 9.012 \times 10^5 > 0$, $Q_2(s)$ is realizable as  the configuration in Fig.~\ref{fig: Case-6} (also shown in Fig.~\ref{fig: Case-6-example03}) with $c_1 = 13754$~Ns/m,
$c_2 = 50.471$~Ns/m, $k_1 = 1.257 \times 10^6$~N/m, $k_2 = 11828.383$~N/m, $k_3 = 4238.539$~N/m,
and $b_1 = 209.912$~kg. In comparison, by utilizing the Bott-Duffin circuit synthesis procedure, $Q_2(s)$ is realizable as a ten-element series-parallel circuit as shown in Fig.~\ref{fig: Bott-Duffin} with
$c_1 = 16435.553$~Ns/m, $c_2 = 1047.275$~Ns/m, $c_3 = 84299.893$~Ns/m,
$k_1 = k_b = 1.26 \times 10^6$~N/m,  $k_2 = 1136.435$~N/m, $k_3 = 4.698 \times 10^5$~N/m, $k_4 = 3.819 \times 10^6$~N/m, $c_1 = 16435.553$~Ns/m, $c_2 = 1047.275$~Ns/m, and $c_3 = 84299.893$~Ns/m. Therefore, the results of this paper can save four elements for the physical realization of $Q_2(s)$.
\end{example}

\begin{figure}[thpb]
      \centering
      \subfigure[]{
      \includegraphics[scale=0.8]{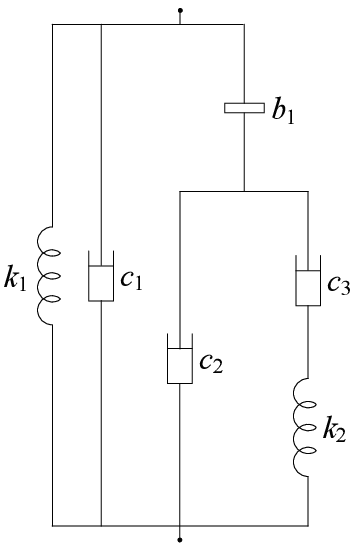}
      \label{fig: foster-preamble-example03}}
      \subfigure[]{
      \includegraphics[scale=0.8]{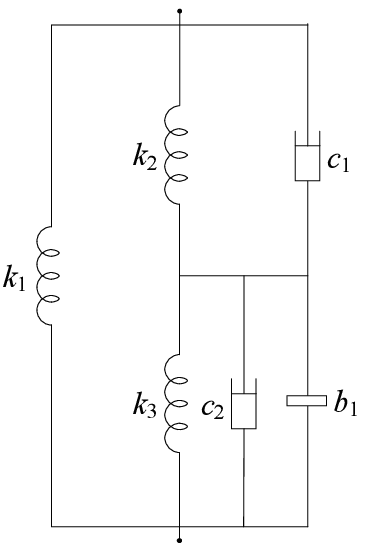}
      \label{fig: Case-6-example03}}
       \subfigure[]{
      \includegraphics[scale=0.8]{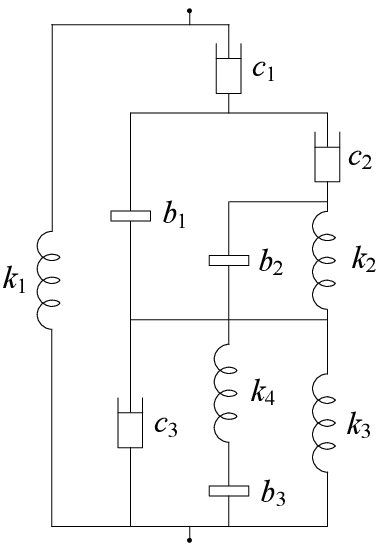}
      \label{fig: Bott-Duffin}}
      \caption{(a) The one-port six-element series-parallel circuit realizing the admittance $Q_1(s)$ in
      Example~\ref{eq: example optimal admittances}; (b) the one-port six-element series-parallel circuit realizing the admittance
      $Q_2(s)$ in Example~\ref{eq: example optimal admittances}, which is the configuration in
      Figure~\ref{fig: Case-6}; (c)  the one-port ten-element series-parallel circuit realizing the admittance $Q_2(s)$ by making use of the
      Bott-Duffin circuit synthesis procedure.}
      \label{fig: Examples-Configurations}
\end{figure}

The following train suspension control system  is  a specific case of Appendix~\ref{sec: mechanical control}, where the number of positive-real admittances satisfies $m = 2$, and the two admittances $Q_1(s) = Q_2(s)$   as in \eqref{eq: specific bicubic admittance} can be expressed in  \eqref{eq: Qi}
with the McMillan degree  being three.

Consider a side-view train suspension model  as shown in Fig.~\ref{fig: side-view-train-model} (see  \cite{JMGS12}), where $m_s$, $I_s$, $z_s$, and $\theta_s$ denote the mass, pitch inertia, vertical displacement, and pitch angle of train body;
$m_{b1}$, $I_{b1}$, $z_{s1}$, and $\theta_{b1}$  denote the mass, pitch inertia, vertical displacement, and pitch angle of the front bogie; $m_{b2}$, $I_{b2}$, $z_{s2}$, and $\theta_{b2}$  denote the mass, pitch inertia, vertical displacement, and pitch angle of the rear bogie;
$m_w$ and $k_w$ denote the wheel-set mass and the stimulate vertical stiffness between the rail and the wheel;
$z_{w1}$, $z_{w2}$, $z_{w3}$, and $z_{w4}$ denote the vertical displacements of four wheelsets;
$z_{r1}$, $z_{r2}$, $z_{r3}$, and $z_{r4}$ denote the rail track displacements;
$z_{sf}$ and $z_{sr}$ denote the displacements of the front and rear part of the train body;
$V$ denotes the train speed.    Here, each of   four primary suspensions is the  parallel connection of a damper $c_p$ and a spring $k_p$;
the admittances of the two secondary suspension struts are denoted as $Q_1(s)$ and $Q_2(s)$; $F_1$ and $F_2$ are forces provided by $Q_1(s)$ and $Q_2(s)$; $l_s$ and $l_b$ denote the semi-longitudinal spacings of the secondary suspensions and wheelsets.

\begin{figure}[thpb]
      \centering
      \includegraphics[scale=1.2]{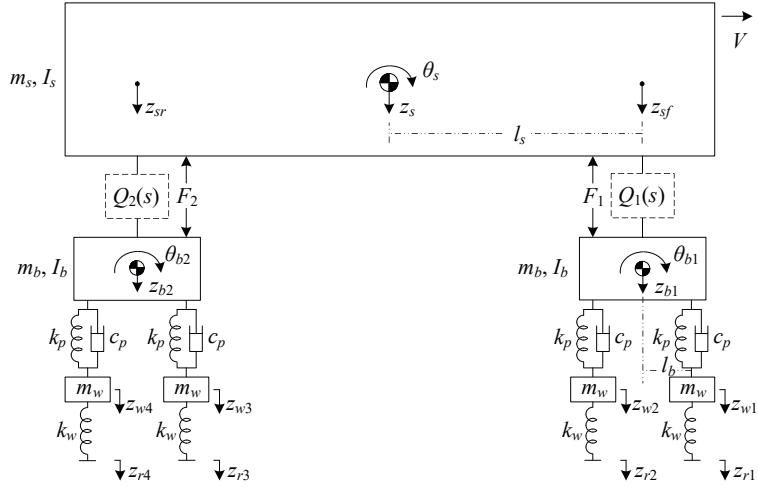}
      \caption{A side-view train suspension model, where the positive-real transfer functions $Q_1(s)$ and $Q_2(s)$ to be optimized are the admittances of  damper-spring-inerter circuits (secondary suspension parts) \cite{JMGS12}.  }
       \label{fig: side-view-train-model}
\end{figure}

By Newton's Second Law, one can formulate the  motion equations as
\begin{equation}  \label{eq: side-view suspension motion equation}
M_g \ddot{z}_g + C_g \dot{z}_g + K_g z_g = E_g u + K_r z_r,
\end{equation}
where $z_g  = \left[
           z_s, \theta_s, z_{b1}, \theta_{b1}, z_{b2}, \theta_{b2}, z_{w1}, z_{w2}, z_{w3}, z_{w4}
       \right]^\mathrm{T}$,
$u = \left[ F_1, F_2 \right]^\mathrm{T}$, $z_r = \left[ z_{r1}, z_{r2}, z_{r3}, z_{r4} \right]^\mathrm{T}$,
and the matrices $M_g \in \mathbb{R}^{10 \times 10}$, $C_g \in \mathbb{R}^{10 \times 10}$, $K_g \in \mathbb{R}^{10 \times 10}$, $E_g \in \mathbb{R}^{10 \times 2}$, and $K_r \in \mathbb{R}^{10 \times 4}$ are shown in  \cite[Section~V]{Wang_sup}.
By letting $x_m = \left[z_{g}^\mathrm{T}, \dot{z}_{g}^\mathrm{T} \right]^\mathrm{T}$, equation
\eqref{eq: side-view suspension motion equation} can be equivalent to
\begin{equation}  \label{eq: motion state-space model}
\dot{x}_m = A_m x_m + B_m u + B_{mw} z_r,
\end{equation}
where
\begin{equation*}
\begin{split}
A_m = \left[
    \begin{array}{cc}
                \mathbf{0} & I   \\
                -M_g^{-1} K_g & -M_g^{-1} C_g    \\
              \end{array}
            \right], ~
B_m = \left[
        \begin{array}{c}
          \mathbf{0} \\
          M_g^{-1} E_g \\
        \end{array}
      \right], ~
      B_{mw} = \left[
        \begin{array}{c}
          \mathbf{0} \\
          M_g^{-1} K_r \\
        \end{array}
      \right].
\end{split}
\end{equation*}
Let
$w = z_{r1}$.
Then, it is clear that
$z_{r2}(t) = w(t - \tau_2)$, $z_{r3}(t) = w(t - \tau_3)$, and
$z_{r4}(t) = w(t - \tau_4)$, where
$\tau_2 = 2 l_b/ V$, $\tau_3 = 2 l_s/V$, and $\tau_4 = 2 (l_b + l_s)/V$.
Then, by making use of the Pad\'{e} approximation method, the
minimal state-space realization of the transfer function from $w$ to $z_r$ can be obtained as
\begin{equation} \label{eq: wheel model}
\begin{split}
\dot{x}_r = A_r x_r + B_r w, ~~
z_r = C_r x_r + D_r w,
\end{split}
\end{equation}
where the dimension of $x_r \in \mathbb{R}^{n_r}$ is based on the order of the Pad\'{e} approximation.
By \eqref{eq: side-view suspension motion equation}--\eqref{eq: wheel model}, the state-space equations are obtained in the form of \eqref{eq: state-space equation}, that is,
\begin{equation}    \label{eq: state-space equation 02}
\begin{split}
\dot{x}  &= A x + B  u + B_w w,  \\
y  &= C  x,    ~~ z = C_z x,
\end{split}
\end{equation}
where
$x = \left[ x_m^\mathrm{T}, x_r^\mathrm{T} \right]^\mathrm{T}$,  $z = \left[ \dot{z}_{s}, \dot{z}_{sf}, \dot{z}_{sr}  \right]^\mathrm{T}$, $y = \left[ z_{sf} - z_{b1}, z_{sr} - z_{b2}, \dot{z}_{sf} - \dot{z}_{b1}, \dot{z}_{sr} - \dot{z}_{b2} \right]^\mathrm{T}$, and
\begin{equation*}
\begin{split}
A = \left[
      \begin{array}{cc}
        A_m & B_{mw} C_r \\
        \mathbf{0} &  A_r \\
      \end{array}
    \right], ~~
B = \left[
      \begin{array}{c}
        B_m   \\
        \mathbf{0}   \\
      \end{array}
    \right],   ~~
B_w =  \left[
      \begin{array}{c}
        B_{mw} D_r   \\
        B_r   \\
      \end{array}
    \right], \\
C = \left[
      \begin{array}{ccc}
        C_{11} & \mathbf{0}  & \mathbf{0}_{1\times n_r}  \\
        \mathbf{0}  & C_{11} & \mathbf{0}_{1\times n_r}   \\
      \end{array}
    \right],  ~~
C_z = \left[
      \begin{array}{cccc}
        \mathbf{0}_{1\times 10}   &  1 &  0 &  \mathbf{0}_{1\times (n_r+8)}  \\
        \mathbf{0}_{1\times 10}   &  1 & l_s &  \mathbf{0}_{1\times (n_r+8)}  \\
        \mathbf{0}_{1\times 10}   &  1 & -l_s &  \mathbf{0}_{1\times (n_r+8)}  \\
      \end{array}
    \right],
\end{split}
\end{equation*}
with
\[
C_{11} = \left[
           \begin{array}{ccccccc}
             1 &  l_s & -1 & 0 & 0 &  0 & \mathbf{0}_{1\times 4} \\
             1 & -l_s &  0 & 0 & -1 & 0 & \mathbf{0}_{1\times 4} \\
           \end{array}
         \right].
\]
Furthermore, let the admittances of two secondary suspension circuits satisfy
$Q_1(s) = Q_2(s)$ and be as in \eqref{eq: specific bicubic admittance},
that is,
\[
Q_1(s) = Q_2(s) = \frac{\alpha_3 s^3 + \alpha_2 s^2 + \alpha_1 s + \alpha_0}{\beta_3 s^3 + \beta_2 s^2 + \beta_1 s},
\]
where  the coefficients satisfy Assumption~\ref{assumption: 01}. Then, by \eqref{eq: controller K(s)},
the passive controller $K(s)$   can be obtained as
\begin{equation*}
K(s) = \left[
          \begin{array}{cccc}
               \alpha_{0}/\beta_{1} &     &  Q_{1,1}(s) &        \\
                 &    \alpha_{0}/\beta_{1} &     & Q_{2,1}(s)
          \end{array}
        \right],
\end{equation*}
where
\[
Q_{i,1}(s) = Q_{i}(s) - \frac{\alpha_{0}}{\beta_{1} s}
= \frac{\alpha_2' s^2 + \alpha_1' s + \alpha_0'}{\beta_3 s^2 + \beta_2 s + \beta_1}
\]
with $\alpha_2' = \alpha_3$ and $\alpha_j' = (\alpha_{j+1} \beta_1 - \alpha_0 \beta_{j+2})/\beta_1$
for $i = 1, 2$ and $j = 0, 1$.
Furthermore, by \eqref{eq: Realization Ak},
one can formulate a minimal state-space realization of $K(s)$ as in \eqref{eq: state-space equation controller}, that is,
\begin{equation}  \label{eq: state-space K}
\dot{x}_k = A_k x_k + B_k y,  ~~
u = C_k x_k + D_k y,
\end{equation}
where $\{A_k, B_k, C_k, D_k \}$ satisfy
\begin{equation*}
\begin{split}
A_k = \left[
        \begin{array}{cccc}
          0 & 1 & 0 & 0 \\
          -\displaystyle\frac{\beta_1}{\beta_3} & -\displaystyle\frac{\beta_2}{\beta_3} & 0 & 0 \\
          0 & 0 & 0 & 1 \\
          0 & 0 & -\displaystyle\frac{\beta_1}{\beta_3} & -\displaystyle\frac{\beta_2}{\beta_3} \\
        \end{array}
      \right],   ~
B_k = \left[
        \begin{array}{cccc}
          0 & 0 & 0 & 0 \\
          0 & 0 & 1 & 0 \\
          0 & 0 & 0 & 0 \\
          0 & 0 & 0 & 1 \\
        \end{array}
  \right],   \\
C_k = \left[
        \begin{array}{cccc}
          \gamma_0 & \gamma_1 & 0 & 0 \\
          0 & 0 & \gamma_0 & \gamma_1 \\
        \end{array}
      \right],  ~
 D_k = \left[
        \begin{array}{cccc}
          \displaystyle\frac{\alpha_0}{\beta_1} & 0 & \displaystyle\frac{\alpha_3}{\beta_3} & 0 \\
          0 & \displaystyle\frac{\alpha_0}{\beta_1} & 0 & \displaystyle\frac{\alpha_3}{\beta_3} \\
        \end{array}
      \right]
\end{split}
\end{equation*}
with $\gamma_0 = (-\alpha_0 \beta_2 \beta_3 + \alpha_1 \beta_1 \beta_3 - \alpha_3 \beta_1^2)/(\beta_1 \beta_3^2)$ and $\gamma_1 = (-\alpha_0 \beta_3^2 + \alpha_2 \beta_1 \beta_3 - \alpha_3 \beta_1 \beta_2)/(\beta_1 \beta_3^2)$.
Finally, by \eqref{eq: state-space equation 02} and \eqref{eq: state-space K},
one  obtains
the closed-loop state-space equations in the form of \eqref{eq: closed-loop state-space equation}, that is,
\begin{equation} \label{eq: closed-loop system}
\dot{x}_{cl}  = A_{cl} x_{cl} + B_{cl} w, ~~
z  = C_{cl} x_{cl},
\end{equation}
where $x_{cl} = [x^\mathrm{T}, x_k^\mathrm{T}]^\mathrm{T}$, and $A_{cl}$, $B_{cl}$, and $C_{cl}$ satisfy
\eqref{eq: Acl Bcl}, that is,
\begin{equation*}
\begin{split}
A_{cl} = \left[
           \begin{array}{cc}
             A + B D_k C & B C_k \\
             B_k C & A_k \\
           \end{array}
         \right], ~~
B_{cl} = \left[
           \begin{array}{c}
             B_w   \\
             \mathbf{0} \\
           \end{array}
         \right],   ~~
C_{cl} = \left[
           \begin{array}{cc}
             C_z & \mathbf{0}
           \end{array}
         \right].
\end{split}
\end{equation*}

Let $S_{\dot{z}}(j\omega)$ and $S_{\dot{w}}(j\omega)$ denote the power spectral density (PSD) function of $\dot{z}$ and $\dot{w}$, respectively.
By  \cite{JMGS12}, assume that $\dot{w} = \dot{z}_{r1}$ is a white noise and the corresponding PSD function  satisfies $S_{\dot{w}}(j\omega) = 4 \pi^2 \kappa V$, where $\kappa$ denotes the vertical track roughness factor.
The ride comfort performance measure index $J_{1}$ can be expressed in terms of the root-mean-square (RMS) of $\dot{z} = \left[ \ddot{z}_{s}, \ddot{z}_{sf}, \ddot{z}_{sr}  \right]^\mathrm{T}$, which by   \cite{ZDG95} can be equivalent to the $\mathcal{H}_2$ norm of the transfer matrix from $w$ to $z$, that is,
\begin{equation*}
\begin{split}
J_{1} &= \sqrt{\lim_{T \rightarrow \infty} \frac{1}{2T} \int_{-T}^T \| \dot{z}(t) \|^2 dt}  = \sqrt{\frac{1}{2\pi} \int_{-\infty}^{\infty} \text{Trace}(S_{\dot{z}}(j\omega)) d \omega  }  \\
&= 2 \pi \sqrt{\frac{\kappa V}{2\pi} \int_{-\infty}^{\infty} \text{Trace}( H^{\mathrm{T}}_{\dot{w} \rightarrow \dot{z}}(-j\omega) H_{\dot{w} \rightarrow \dot{z}}(j\omega)) d \omega  } \\
&= 2 \pi \sqrt{\kappa V} \| H_{w\rightarrow z} \|_2,
\end{split}
\end{equation*}
where  $H_{\dot{w} \rightarrow \dot{z}}$ denotes the transfer matrix  from $\dot{w}$ to $\dot{z}$ and
$H_{w\rightarrow z}$ denotes the  transfer matrix  from  $w$ to $z$ for the closed-loop system in \eqref{eq: closed-loop system}, satisfying $H_{\dot{w} \rightarrow \dot{z}} = H_{w\rightarrow z} = C_{cl} (s I - A_{cl})^{-1} B_{cl}$.

By referring to  \cite{JMGS12,CSW17},  set the parameters as $m_s = 38000$~kg, $m_b = 2500$~kg, $m_w = 1117.9$~kg,
$I_s = 2.31 \times 10^6$~kgm$^2$, $I_b = 1500$~kgm$^2$, $l_s = 9.5$~m, $l_b = 1.25$~m, $k_p = 4.935 \times 10^6$~N/m, $k_w = 1 \times 10^6$~N/m,  $c_p = 5.074 \times 10^4$~Ns/m, $V = 55$~m/s, and
$\kappa = 2.5 \times 10^{-7}$~m$^3$/cycle. Furthermore, utilize the third-order Pad\'{e} approximation to obtain \eqref{eq: wheel model}.
By  following Procedure~\ref{procedure: 01} and
utilizing the optimization solver \emph{patternsearch} in MATLAB, one can obtain the optimal values of $J_1$ (solid line in Fig.~\ref{fig: performances}) and the corresponding optimal positive-real admittances $Q_1(s) = Q_2(s)$, where the value of static stiffness $k_s$ is fixed and ranges from $0.5 \times 10^6$~N/m to $10 \times 10^6$~N/m. In the optimization, the objective function can be expressed as  $J_1^2/(4 \pi^2 \kappa V ) = \text{Trace}(C_{cl} P C_{cl}^\mathrm{T})$, where
$P$ is the positive definite matrix solved from
the Lyapunov equation
$A_{cl} P + P A_{cl}^\mathrm{T} + B_{cl} B_{cl}^\mathrm{T} = 0$,
$A_{cl}$ is constrained to be stable, and
$Q_1(s)=Q_2(s)$  is constrained to be a positive-real admittance in \eqref{eq: specific bicubic admittance}, whose coefficients satisfy the condition in
Lemma~\ref{lemma: positive-realness}.
In comparison, the optimal performances corresponding to the case when each of the admittances  $Q_1(s) = Q_2(s)$ is realizable as the parallel circuit of one spring and one damper are also presented (dot-dashed line in Fig.~\ref{fig: performances}). As shown in Fig.~\ref{fig: performances_comparison}, the ride comfort performance $J_1$ can be significantly improved by using the positive-real admittance as in \eqref{eq: specific bicubic admittance}, which can also show that introducing inerters can certainly improve system performances.

\begin{figure}[thpb]
      \centering
      \subfigure[]{
      \includegraphics[scale=0.5]{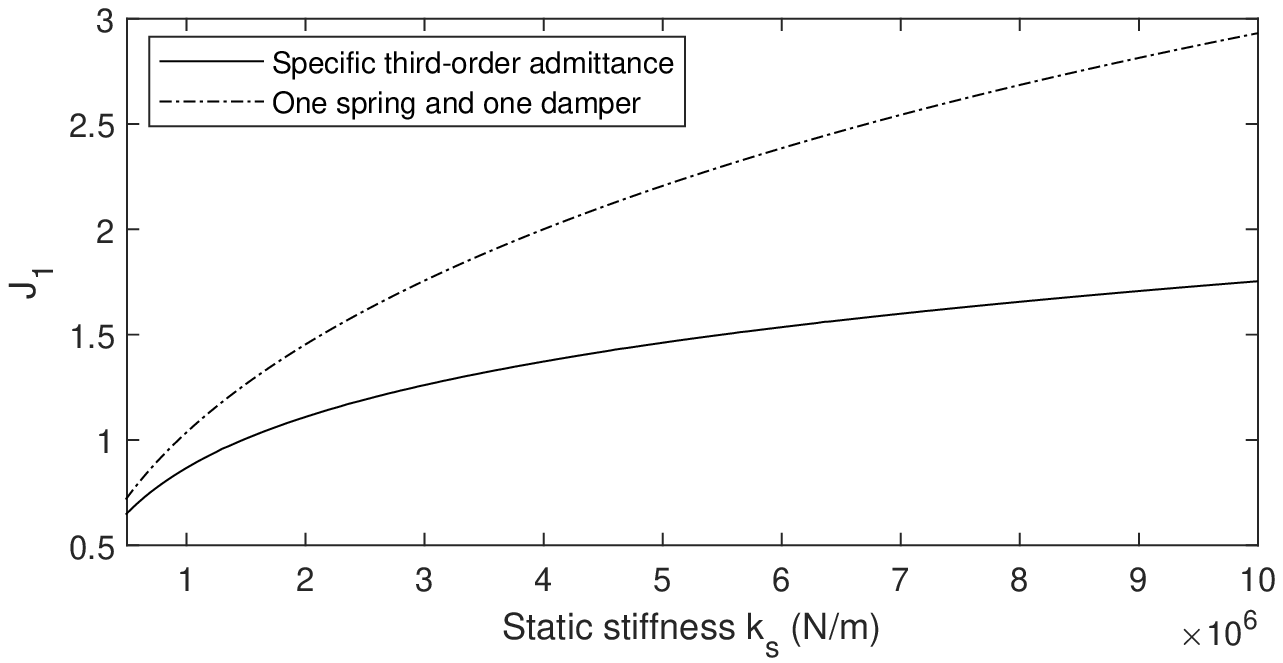}
      \label{fig: performances}}
      \subfigure[]{
      \includegraphics[scale=0.5]{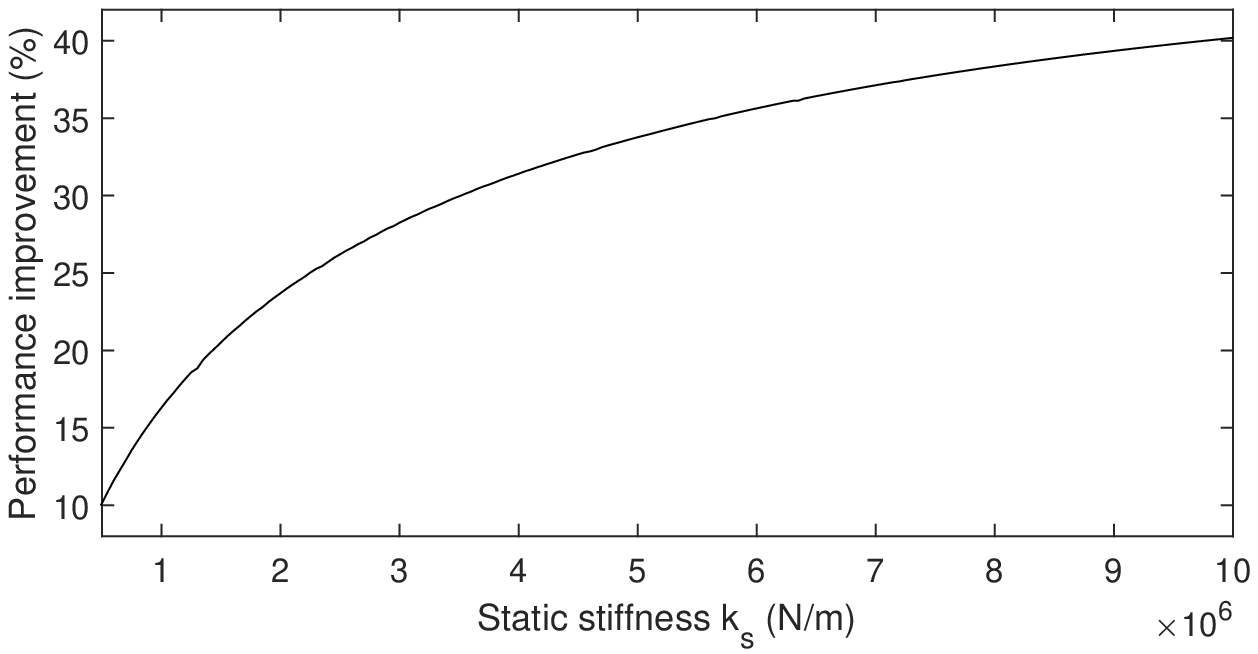}
      \label{fig: performances_comparison}}
      \caption{(a) The optimal ride comfort performances $J_1$ for the case when each of the admittances $Q_1(s)=Q_2(s)$ is in the form of
      \eqref{eq: specific bicubic admittance} (solid line) and the case when each of the admittances  $Q_1(s) = Q_2(s)$ is realizable by the two-element parallel circuit of one spring and one damper  (dot-dashed line); (b) the optimal performance improvement in (a),  which is
      $(J_1^{(2)} - J_1^{(1)})/J_1^{(2)}  \times 100 \%$, where $J_1^{(1)}$ and $J_1^{(2)}$ are optimal performance values corresponding to the above two cases, respectively. Here, the static stiffness $k_s$ ranges from $0.5 \times 10^6$~N/m to $10 \times 10^6$~N/m.}
      \label{fig: J1}
\end{figure}

\begin{example}  \label{example: 04}
When $k_s = 4 \times 10^6$~N/m, the optimal performance satisfies $J_1 = 1.3722$ and the corresponding positive-real admittance $Q_1(s) = Q_2(s)$ is in the form of
\eqref{eq: specific bicubic admittance} where $\alpha_3 = 3.905 \times 10^7$,
$\alpha_2 = 1.647 \times 10^8$, $\alpha_1 = 2.93 \times 10^9$, $\alpha_0 = k_s \beta_1 = 4 \times 10^6$, $\beta_3 = 41.181$,
$\beta_2 = 732.533$, and $\beta_1 = 1$. It is verified that $Q_1(s) = Q_2(s)$  does not satisfy any of the conditions in Lemma~\ref{lemma: Spring-Biquadratic-Network}, which means that $Q_1(s) = Q_2(s)$ cannot be realized by a one-port six-element series-parallel circuit by completely removing the pole at $s = 0$.
It can be checked that $Q_1(s) = Q_2(s)$   satisfies any of the
conditions in Lemmas~\ref{lemma: Case-6}--\ref{lemma: Case-8}, which means that
$Q_1(s) = Q_2(s)$ can be realized by one of the  six-element circuit configurations in Fig.~\ref{fig: Spring-Biquadratic-Network-partial-removal}.
 For instance, by the element value expressions in \eqref{eq: Case-8 element values} with
 $x = 2.413 \times 10^{11}$, $Q_1(s)$ and $Q_2(s)$ can be realized as the circuit in Fig.~\ref{fig: Case-8}
with $c_1 = 1.177$~Ns/m, $c_2 = 9.484 \times 10^5$~Ns/m, $k_1 = 3.9997 \times 10^6$~N/m,
$k_2 = 809.474$~N/m, $k_3 = 485.188$~N/m, and $b_1 = 53314.9$~kg. In comparison,  by utilizing the Bott-Duffin circuit synthesis procedure, $Q_1(s) = Q_2(s)$ is realizable by a one-port ten-element series-parallel circuit as shown in Fig.~\ref{fig: Bott-Duffin-02} with
$c_1 = 8.701 \times 10^6$~Ns/m, $c_2 = 1.355 \times 10^5$~Ns/m, $c_3 = 1.064 \times 10^6$~Ns/m,
$k_1 = k_s = 4 \times 10^6$~N/m,  $k_2 = 182.143$~N/m, $k_3 = 4.206 \times 10^6$~N/m,
$k_4 = 7.583 \times 10^6$~N/m, $c_1 = 8.701 \times 10^6$~Ns/m,
$c_2 = 1.355 \times 10^5$~Ns/m, and $c_3 = 1.064 \times 10^6$~Ns/m. Therefore, the results of this paper can save four elements for each mechanical circuit realization.
\end{example}

\begin{remark}
The circuit synthesis results of this paper can guarantee that the six-element series-parallel damper-spring-inerter circuit realizations in Examples~\ref{example 4-1}--\ref{example: 04} contain the minimal number of elements. In most cases, if a given admittance satisfies one of
the realizability conditions in this paper,
any of the corresponding  circuit realizations cannot be equivalent to another circuit containing fewer elements by other circuit synthesis approaches.
\end{remark}

\begin{figure}[thpb]
      \centering
      \subfigure[]{
      \includegraphics[scale=0.8]{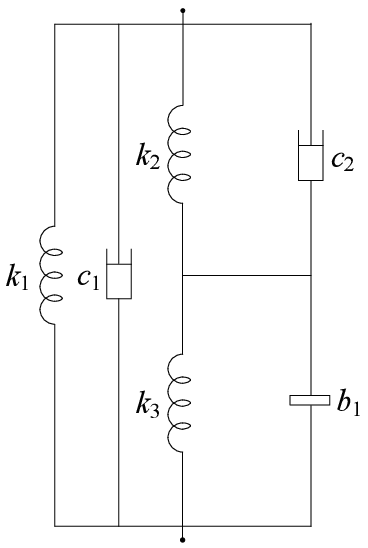}
      \label{fig: Case-8-example04}}
       \subfigure[]{
      \includegraphics[scale=0.8]{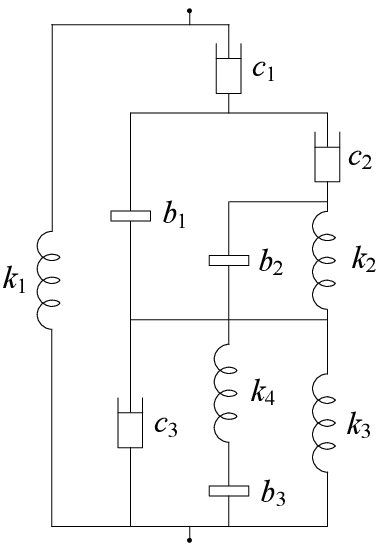}
      \label{fig: Bott-Duffin-02}}
      \caption{(a) The one-port six-element series-parallel circuit realizing the optimal positive-real admittances $Q_1(s)$ and $Q_2(s)$ in
      Example~\ref{example: 04}, which is the configuration in
      Figure~\ref{fig: Case-8};  (b)  the one-port ten-element series-parallel circuit realizing $Q_1(s)$ and $Q_2(s)$ in
      Example~\ref{example: 04} by making use of   the Bott-Duffin circuit synthesis procedure.}
      \label{fig: Examples-04-Configurations}
\end{figure}

\section{Conclusion}

This paper has solved the passive circuit synthesis problem for a bicubic (third-order) admittance with a simple pole at the origin
to be realizable by a one-port series-parallel damper-spring-inerter circuit consisting of at most six elements.
Necessary and sufficient conditions have been derived for such a specific bicubic admittance to be realizable as this class of passive circuits,  where the conditions are related to the function coefficients and the roots of certain algebraic equations.
Moreover, a group of circuit configurations that can realize the admittances satisfying the conditions have been
presented, where the element value expressions are explicitly given.
Compared with the Bott-Duffin circuit synthesis procedure, much fewer elements are needed to achieve the circuit realizations by using the results of this paper.
Finally, numerical examples and the control system design for a train suspension system have been presented.
The results derived in this paper can be applied to design and to realize the passive
controllers in many inerter-based control systems, and can contribute to the development of passive circuit synthesis and many other related fields.

\vspace{0.2cm}

\begin{appendices}

\section{Inerter-Based Mechanical Control Using Passive Controllers}  \label{sec: mechanical control}

\setcounter{procedure}{0}
\renewcommand{\theprocedure}{A.\arabic{procedure}}

\setcounter{equation}{0}
\renewcommand{\theequation}{A.\arabic{equation}}

\setcounter{remark}{0}
\renewcommand{\theremark}{A.\arabic{remark}}

In this appendix, the  design procedure of a general class of inerter-based control systems will be formulated.  There are $m$ positive-real admittances with a pole at $s=0$, which  constitute
the passive controller to be determined such that the closed-loop system is stable and the system performance is optimized.

Consider the augmented  model of a linear time-invariant vibration system $G(s)$ to be controlled, such as a suspension system, wind turbine vibration system, building vibration system, etc.,  whose state-space equations are
\begin{equation}    \label{eq: state-space equation}
\begin{split}
\dot{x}  &= A x + B  u + B_w w,  \\
y  &= C  x,    ~~ z = C_z x,
\end{split}
\end{equation}
where $x$ denotes the state vector, $u$ denotes the input vector consisting of forces provided by passive mechanical circuits, $y$ denotes the measured output for control, $z$ is the controlled output related to system performances,  and  $w$ denotes the noise vector.

Suppose that there are $m$ one-port   spring-damper-inerter circuits, and the admittance of each circuit is positive-real and contains a pole at $s = 0$, which is expressed as
\begin{equation}  \label{eq: Qi}
Q_i(s) = \frac{\alpha_{i, n_i} s^{n_i} + \alpha_{i, n_i - 1} s^{n_i - 1} + \cdots + \alpha_{i, 1} s +  \alpha_{i, 0}}{\beta_{i, n_i} s^{n_i} + \beta_{i, n_i - 1} s^{n_i - 1} + \cdots + \beta_{i, 1}s}
= \frac{\alpha_{i, 0}}{\beta_{i, 1}s} + Q_{i,1}(s),
\end{equation}
for $i = 1, 2, \ldots, m$, where
\begin{equation}  \label{eq: Qi1}
Q_{i,1}(s) = \frac{\alpha_{i, n_i - 1}' s^{n_i - 1} + \alpha_{i, n_i - 2}' s^{n_i - 2} + \cdots + \alpha_{i, 0}'}{\beta_{i, n_i} s^{n_i - 1} + \beta_{i, n_i - 1} s^{n_i - 2} + \cdots + \beta_{i, 1}}
\end{equation}
is also a positive-real function
with
\begin{equation*}
\alpha_{i, n_i-1}' = \alpha_{i, n_i}, ~~~ \alpha_{i, j}' = \frac{\alpha_{i, j+1} \beta_{i, 1} - \alpha_{i, 0}\beta_{i, j+2}}{\beta_{i,1}},
\end{equation*}
for $j = 0, 1, \ldots, n_i - 2$.

Then, one aims to design an inerter-based control system whose diagram is as shown in   Fig.~\ref{fig: Control-Diagram}, where the above $m$ admittances constitute the passive controller $K(s)$.

\begin{figure}[thpb]
      \centering
      \includegraphics[scale=0.9]{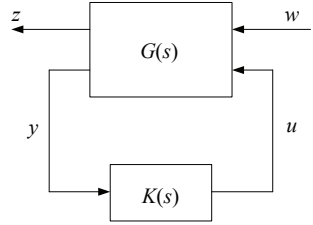}
      \caption{Control synthesis diagram, where $G(s)$ is the transfer (function) matrix of the vibration system \eqref{eq: state-space equation} to be controlled, $K(s)$ is the passive controller, $y$ is the measured output, $u$ is the control input, $w$ is the noise signal, and $z$ is the controlled output.}
      \label{fig: Control-Diagram}
\end{figure}

Let the measured output $y = \left[
                                y_1^\mathrm{T} , y_2^\mathrm{T}
                             \right]^\mathrm{T} \in \mathbb{R}^{2m}$ consist of the relative displacements $y_1 \in \mathbb{R}^{m}$ and the relative velocities $y_2 \in \mathbb{R}^{m}$
of two terminals for $m$  circuits. Referring to \cite{CHW15}, the $m$ passive circuits $Q_i(s)$ as in \eqref{eq: Qi} for $i = 1, 2, \ldots, m$ can constitute the passive controller $K(s)$ whose input is $y$ and output is the force $u$ as in \eqref{eq: state-space equation}, that is, $u = K(s) y$. Here, the  controller $K(s)$ can be expressed as
\begin{equation}  \label{eq: controller K(s)}
K(s) = \left[
          \begin{array}{ccc;{2pt/2pt}ccc}
             \displaystyle\frac{\alpha_{1, 0}}{\beta_{1, 1}} &   &  &  Q_{1,1} &   &       \\
                  &  \ddots &  &  & \ddots &   \\
                 &   & \displaystyle\frac{\alpha_{m, 0}}{\beta_{m, 1}} &  &   & Q_{m,1}
          \end{array}
        \right],
\end{equation}
with $Q_{i,1}(s)$ for $i = 1, 2, \ldots, m$ being expressed as in \eqref{eq: Qi1}. Assume that $Q_{i,1}(s)$ for $i = 1, 2, \ldots, m$ as in \eqref{eq: Qi1} does not contain any common factor. Based on the results in \cite{AV06},
a minimal state-space realization $\{A_{q_{i}}, B_{q_{i}}, C_{q_{i}}, D_{q_{i}}\}$ of the positive-real function $Q_{i,1}(s)$ for $i = 1, 2, \ldots, m$ can be obtained as
\begin{equation*}
\begin{split}
&A_{q_{i}}  = \left[\begin{array}{cccc}
0 & 1 &   \cdots & 0   \\
\vdots & \vdots   & \ddots & \vdots    \\
0 & 0 &   \cdots & 1     \\
-\displaystyle\frac{\beta_{i,1}}{\beta_{i,n_i}} &  -\displaystyle\frac{\beta_{i,2}}{\beta_{i,n_i}}   & \cdots & -\displaystyle\frac{\beta_{i,n_i-1}}{\beta_{i,n_i}}
\end{array}\right],
B_{q_{i}}   = \left[\begin{array}{c}  0 \\  \vdots \\ 0 \\ 1 \end{array}\right],   \\
&~~C_{q_{i}}  = \left[\begin{array}{cccc} \gamma_{i,0} &  \gamma_{i,1}  & \cdots &   \gamma_{i,n_i-2} \end{array}\right], ~~
D_{q_{i}} = \left[\begin{array}{c}  \displaystyle\frac{\alpha_{i,n_i}}{\beta_{i,n_i}} \end{array}\right],
\end{split}
\end{equation*}
where
\begin{equation*}
\gamma_{i,k}  = \frac{\alpha_{i,k}'\beta_{i,n_i} - \alpha_{i,n_i-1}'\beta_{i,k+1}}{\beta_{i,n_i}^2}  =  \frac{(\alpha_{i,k+1}\beta_{i,1} - \alpha_{i,0}\beta_{i,k+2}) \beta_{i,n_i} - \alpha_{i,n_i}\beta_{i,1}\beta_{i,k+1}}{\beta_{i,1}\beta_{i,n_i}^2},
\end{equation*}
for $k = 0, 1, \ldots, n_i-2$.
Furthermore, a minimal state-space realization $\{ A_k, B_k, C_k, D_k \}$ of the controller $K(s)$ as in \eqref{eq: controller K(s)}
satisfies
\begin{equation}  \label{eq: state-space equation controller}
\begin{split}
\dot{x}_k = A_k x_k + B_k y,  ~~
u = C_k x_k + D_k y,
\end{split}
\end{equation}
where the dimension of $x_k$ is equal to the  McMillan degree  of $K(s)$, and $A_k$, $B_k$, $C_k$, and $D_k$ are expressed as
\begin{equation}   \label{eq: Realization Ak}
\begin{split}
A_k  = \left[
  \begin{array}{ccc}
    A_{q_{1}} &   &  \\
     & \ddots & \\
     &   &  A_{q_{m}}
  \end{array}
\right],     ~~
 B_k  = \left[
  \begin{array}{ccc;{2pt/2pt}ccc}
    \mathbf{0}    & &   &  B_{q_{1}} & &    \\
       & \ddots  &  &     & \ddots &         \\
      &   &  \mathbf{0} &   &  &  B_{q_{m}}
  \end{array}
\right],       \\
C_k   = \left[
  \begin{array}{ccc}
    C_{q_{1}} &   &  \\
      & \ddots & \\
      &   &  C_{q_{m}}
  \end{array}
\right],     ~~
D_k  =  \left[
  \begin{array}{ccc;{2pt/2pt}ccc}
    \displaystyle\frac{\alpha_{1, 0}}{\beta_{1, 1}}  &   &   &  D_{q_{1}} & &   \\
       & \ddots  &  &     & \ddots &         \\
      &   &  \displaystyle\frac{\alpha_{m, 0}}{\beta_{m, 1}}    & &  &  D_{q_{m}}
  \end{array}
\right].
\end{split}
\end{equation}

Combining \eqref{eq: state-space equation} and \eqref{eq: state-space equation controller}, the closed-loop state-space equation can be obtained as
\begin{equation}   \label{eq: closed-loop state-space equation}
\begin{split}
\dot{x}_{cl} = A_{cl} x_{cl} + B_{cl} w,  ~~
z = C_{cl} x_{cl},
\end{split}
\end{equation}
where $x_{cl} = [x^\mathrm{T}, x_k^\mathrm{T}]^\mathrm{T}$, and
\begin{equation}  \label{eq: Acl Bcl}
A_{cl} = \left[
           \begin{array}{cc}
             A + B D_k C & B C_k \\
             B_k C & A_k \\
           \end{array}
         \right], ~~
B_{cl} = \left[
           \begin{array}{c}
             B_w   \\
             \mathbf{0} \\
           \end{array}
         \right],   ~~
C_{cl} = \left[
           \begin{array}{cc}
             C_z & \mathbf{0}
           \end{array}
         \right].
\end{equation}
Let $H_{w\rightarrow z} (s) = C_{cl} (sI - A_{cl})^{-1} B_{cl}$ denote the transfer (function) matrix  from $w$ to $z$.
Then, suppose that a system performance  is proportional to the $\mathcal{H}_2$ norm of  $H_{w\rightarrow z} (s)$, that is,
\begin{equation}   \label{eq: system performances}
J_{\mathrm{passive}} \propto \| H_{w\rightarrow z} \|_2.
\end{equation}
It is implied from \cite[Lemma~4.6]{ZDG95} that $\| H_{w\rightarrow z} \|_2^2 = \text{Trace}(C_{cl} P C_{cl}^\mathrm{T})$, where the positive definite matrix $P > 0$ is the unique solution of the   Lyapunov equation
\begin{equation}  \label{eq: Lyapunov equation}
A_{cl} P + P A_{cl}^\mathrm{T} + B_{cl} B_{cl}^\mathrm{T} = 0.
\end{equation}

The optimization design procedure for $J_{\mathrm{passive}}$ can be summarized as follows.
\begin{procedure}  \label{procedure: 01}
Consider a vibration system whose state-space equation satisfies \eqref{eq: state-space equation}. Then, the steps of designing a passive controller $K(s)$ to minimize the system performance $J_{\mathrm{passive}} \propto \| H_{w\rightarrow z} \|_2$ are   as follows.
\begin{enumerate}
  \item[1.] Choosing the McMillan degrees of   admittances $Q_i(s)$   in \eqref{eq: Qi} for $i = 0, 1, \ldots, m$, $K(s)$ can be formulated as in \eqref{eq: controller K(s)}. Determine the positive-real conditions and further choose the constraint conditions of $Q_i(s)$ such that $Q_i(s)$ is realizable as a specific class of passive spring-damper-inerter circuits, where the coefficients of $Q_i(s)$ are optimization variables.
  \item[2.] Formulate  a minimal state-space realization $\{ A_k, B_k, C_k, D_k \}$ of $K(s)$ by \eqref{eq: Realization Ak}.
  \item[3.] Formulate  $A_{cl}$, $B_{cl}$, and $C_{cl}$ by \eqref{eq: Acl Bcl}.
  \item[4.] Solve the following optimization problem to determine the optimal $J_{\mathrm{passive}}$ and the positive-real admittances $Q_i(s)$ for $i = 1, 2, \ldots, m$:
\begin{equation*}
\begin{split}
& \min_{\alpha_{i,j}, \beta_{i,k}}~  \| H_{w\rightarrow z} \|_2^2 =  \text{Trace}(C_{cl} P C_{cl}^\mathrm{T})   \\
& ~\text{s.t.}~~A_{cl}~\text{is stable}   \\
& ~~~~~~  P>0~\text{is the solution of   \eqref{eq: Lyapunov equation}}  \\
& ~~~~~~   Q_i(s)~\text{is a class of positive-real functions in Step~1}.
\end{split}
\end{equation*}
  \item[5.] By utilizing the results of passive circuit synthesis, realize the positive-real functions $Q_i(s)$ for $i = 1, 2, \ldots, m$ corresponding to the optimal performance as the admittances of the required spring-damper-inerter circuits.
\end{enumerate}
\end{procedure}

\begin{remark}
By properly modifying the objective function,
Procedure~\ref{procedure: 01} can be similarly applied to the control system design when the system performances are in other forms, such as  the $\mathcal{H}_{\infty}$ norm of transfer functions, the weighting sum of multiple performances, etc.
\end{remark}

As shown in Procedure~\ref{procedure: 01},
the circuit synthesis results can be utilized as the further optimization constraints in Step~4 and can be utilized to physically realize the positive-real admittances as passive mechanical circuits in Step~5.
Therefore, it is both theoretically and practically significant to solve the   realization problems of positive-real admittances in the form of \eqref{eq: Qi} as passive mechanical circuits containing the least number of elements, where this paper   investigates the  low-complexity passive circuit  synthesis problem when the McMillan degree of the admittance in \eqref{eq: Qi} is three.

\section{Proof of Lemma~5}   \label{appendix: A}

\setcounter{lemma}{0}
\renewcommand{\thelemma}{B.\arabic{lemma}}

\setcounter{equation}{0}
\renewcommand{\theequation}{B.\arabic{equation}}

To prove Lemma~\ref{lemma: Spring-Other-Network},
the following lemmas are presented.

\begin{lemma}   \label{lemma: Spring-Other-Network lemma a}
For  any admittance $Y(s)$ in \eqref{eq: specific bicubic admittance} that satisfies Assumption~\ref{assumption: 01} and does not satisfy the conditions of
Lemma~\ref{lemma: Spring-Biquadratic-Network}, if $Y(s)$ can be
realized by the one-port series-parallel damper-spring-inerter circuit  containing at most six elements as in Fig.~\ref{fig: Spring-N2}, which satisfies Assumption~\ref{assumption: 02}, then the graph of the subcircuit $N_2$ must have $k$-$\mathcal{P}(a, a')$
and cannot have any of $b$-$\mathcal{P}(a,a')$, $k$-$\mathcal{C}(a,a')$, or $b$-$\mathcal{C}(a,a')$.
\end{lemma}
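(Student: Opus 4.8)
The plan is to compare the graph $\mathcal{G}(N)$ of the whole realizing circuit $N$ with the graph $\mathcal{G}(N_2)$ of its subcircuit, exploiting that, by the structure of Fig.~\ref{fig: Spring-N2}, $\mathcal{G}(N)$ is obtained from $\mathcal{G}(N_2)$ by adding one spring edge $e_{k_1}$ joining the terminal vertices $a$ and $a'$, and that $N_2$ has at most five elements. Writing $Y(s)=k_1/s+Y_2(s)$ with $Y_2$ the positive-real admittance of $N_2$, the residue of $Y_2$ at $s=0$ equals $\alpha_0/\beta_1-k_1$, which is nonnegative by positive-realness, so $k_1\leq\alpha_0/\beta_1$. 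The first real step is to exclude equality: if $k_1=\alpha_0/\beta_1$, then the impedance $Y_2^{-1}$ of $N_2$ coincides with the biquadratic impedance $Z_b$ of \eqref{eq: Zb biquadratic impedance}, so $N_2$ is an at-most-five-element series-parallel realization of $Z_b$ and hence $Y$ is realizable by a circuit of the form Fig.~\ref{fig: Spring-N2} with at most six elements whose inner subcircuit has biquadratic impedance; Lemma~\ref{lemma: Spring-Biquadratic-Network} would then force one of its five conditions to hold, contradicting the hypothesis of the present lemma. Therefore $k_1<\alpha_0/\beta_1$, and $Y_2$ has a simple pole at $s=0$.

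Next I would apply Lemma~\ref{lemma: graph constraint} to $N$ (it realizes $Y$, and $Y$ satisfies Assumption~\ref{assumption: 01}, while $N$ satisfies Assumption~\ref{assumption: 02}), obtaining that $\mathcal{G}(N)$ has a $k$-$\mathcal{P}(a,a')$ and none of $b$-$\mathcal{P}(a,a')$, $k$-$\mathcal{C}(a,a')$, $b$-$\mathcal{C}(a,a')$, and then transfer these to $N_2$. Any $b$-$\mathcal{P}(a,a')$ of $\mathcal{G}(N_2)$ uses only inerter edges, so it avoids $e_{k_1}$ and is a $b$-$\mathcal{P}(a,a')$ of $\mathcal{G}(N)$ --- impossible. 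A $k$-$\mathcal{C}(a,a')$ of $\mathcal{G}(N_2)$ together with the spring edge $e_{k_1}$ is a set of spring edges whose deletion disconnects $a$ from $a'$ in $\mathcal{G}(N)$, hence contains a $k$-$\mathcal{C}(a,a')$ of $\mathcal{G}(N)$ --- again impossible. For the existence of a $k$-$\mathcal{P}(a,a')$ in $\mathcal{G}(N_2)$: the $k$-$\mathcal{P}(a,a')$ furnished for $\mathcal{G}(N)$ is a simple all-spring path, hence it is either $\{e_{k_1}\}$ alone or a path lying entirely in $\mathcal{G}(N_2)$, so it remains to rule out the first alternative; but the admittance of a one-port damper-spring-inerter circuit has a pole at the origin only if its terminals are joined by an all-spring path (equivalently, contracting all spring edges of $\mathcal{G}(N_2)$ must identify $a$ with $a'$, since otherwise the residual damper-inerter network would have finite admittance at $s=0$), and this applies since $Y_2$ has a pole at $s=0$. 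Finally, that all-spring $a$--$a'$ path in $\mathcal{G}(N_2)$ contains no inerter edge, so it survives deletion of any set of inerter edges; hence no set of inerter edges disconnects $a$ from $a'$, i.e.\ $\mathcal{G}(N_2)$ has no $b$-$\mathcal{C}(a,a')$, which finishes the proof.

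The main obstacle is the single analytic-to-combinatorial step --- converting ``$Y_2$ has a pole at $s=0$'' into ``$\mathcal{G}(N_2)$ contains an all-spring $a$--$a'$ path'' --- handled by the standard DC-limit (or edge-contraction) argument for network functions; the remaining implications are bookkeeping with the extra spring edge $e_{k_1}$. Two minor points deserve mention: Lemma~\ref{lemma: graph constraint} cannot be applied to $N_2$ directly, since $Y_2$ need not satisfy Assumption~\ref{assumption: 01} (it may acquire a zero on $j\mathbb{R}\setminus\{0\}$), which is exactly why the argument routes through $\mathcal{G}(N)$; and $N_2$ may be taken to satisfy Assumption~\ref{assumption: 02}, for otherwise $N_2$, and hence $N$, would reduce to an equivalent circuit with strictly fewer elements.
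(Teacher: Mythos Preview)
Your proof is correct but takes a more circuitous route than the paper's.  The paper argues directly on $N_2$: once $k_1<\alpha_0/\beta_1$ is established (same first step as yours), it observes that $Y_2(s)=Y(s)-k_1/s$ is again a bicubic of the form~\eqref{eq: specific bicubic admittance} with the same denominator $\beta(s)$ and a numerator $\alpha_3 s^3+(\alpha_2-k_1\beta_3)s^2+(\alpha_1-k_1\beta_2)s+(\alpha_0-k_1\beta_1)$ whose leading and constant coefficients are positive; hence $Y_2$ has a simple pole at $s=0$, no pole or zero at $s=\infty$, and no zero at $s=0$.  With these four $0/\infty$ properties in hand, the paper invokes Lemma~\ref{lemma: graph constraint} (that is, \cite[Theorem~2]{Ses59}) on $N_2$ directly and is done.

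Your concern that $Y_2$ might fail Assumption~\ref{assumption: 01} via a zero on $j\mathbb{R}\setminus\{0\}$ is well-placed if Lemma~\ref{lemma: graph constraint} is read strictly as stated, but the underlying result of Seshu depends only on the behavior at $s=0$ and $s=\infty$, which is exactly why the paper singles out those four properties before citing it.  Your workaround---apply Lemma~\ref{lemma: graph constraint} to the full circuit $N$ and transfer the conclusions to $N_2$ by tracking the extra spring edge $e_{k_1}$---is sound, but notice that for the existence of $k$-$\mathcal P(a,a')$ in $N_2$ you still end up invoking the principle ``pole at $0$ $\Rightarrow$ all-spring terminal path'' on $N_2$ itself, which is precisely (one direction of) the Seshu result you set out to avoid applying there.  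So the detour buys scrupulousness about the stated hypotheses of Lemma~\ref{lemma: graph constraint}, at the cost of a longer argument; the paper's route is shorter and in substance identical.
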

\begin{proof}
The assumption that the conditions of Lemma~\ref{lemma: Spring-Biquadratic-Network} do not hold implies that the admittance (or impedance) of circuit $N_2$ is not in the biquadratic form, which is obtained by the partial removal of the pole of $Y(s)$ at $s = 0$. Therefore, it is implied that $k_1 \in (0, \alpha_0/\beta_1)$. Then, the admittance of $N_2$ is also in the form of
\eqref{eq: specific bicubic admittance} with all the coefficients being positive, which has a pole at $s = 0$, does not have any pole at $s = \infty$, and does not have any zero at $s = 0$ or $s = \infty$. By Lemma~\ref{lemma: graph constraint}, one can prove this lemma.
\end{proof}

\begin{lemma}   \label{lemma: Spring-Other-Network lemma b}
For any admittance $Y(s)$ in \eqref{eq: specific bicubic admittance} that satisfies Assumption~\ref{assumption: 01} and does not satisfy the conditions of
Lemma~\ref{lemma: Spring-Biquadratic-Network}, if $Y(s)$ can be realized by a  one-port series-parallel damper-spring-inerter circuit containing at most six elements as in Fig.~\ref{fig: Spring-N2}, then the subcircuit $N_2$ cannot contain  at most two types of elements.
\end{lemma}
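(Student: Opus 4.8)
The plan is a proof by contradiction over the possible element contents of $N_2$. The starting point is the analysis already carried out in the proof of Lemma~\ref{lemma: Spring-Other-Network lemma a}: because $Y(s)$ satisfies none of the conditions of Lemma~\ref{lemma: Spring-Biquadratic-Network}, the parallel spring in Fig.~\ref{fig: Spring-N2} can only partially remove the pole at the origin, i.e.\ $0 < k_1 < \alpha_0/\beta_1$; hence the admittance of $N_2$, namely $Y_2(s) := Y(s) - k_1/s$, is again of the form \eqref{eq: specific bicubic admittance} with all coefficients positive, it has McMillan degree three, a simple pole at $s=0$, no pole at $s=\infty$, and no zero at $s=0$ or $s=\infty$. (Were $k_1 = \alpha_0/\beta_1$, the circuit $N_2$ would realize a biquadratic admittance with at most five elements, which by \cite{JS11} would force $Y(s)$ to satisfy a condition of Lemma~\ref{lemma: Spring-Biquadratic-Network}, contrary to hypothesis.) So I assume $N_2$ contains elements of at most two of the three types and aim to contradict the properties of $Y_2$ just listed.

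I would first clear the low cases. If $N_2$ uses a single type, then a one-port series-parallel network of dampers realizes a constant, of springs an admittance $k_{\mathrm{eff}}/s$, and of inerters an admittance $b_{\mathrm{eff}}\,s$; in each case $\delta(Y_2)\le 1<3$, a contradiction. If $N_2$ uses springs and inerters only, then $N_2$ is a lossless spring-inerter circuit with $\delta(Y_2)=3\ge 2$, and Fig.~\ref{fig: Spring-N2} exhibits $Y(s)$ as the parallel connection of this spring-inerter circuit with the passive circuit consisting of the single spring $k_1$; this is precisely what Lemma~\ref{lemma: lossless subnetwork N1} forbids. If $N_2$ uses dampers and inerters only, I would evaluate at $s=0$: there every inerter has admittance $b\,s\to 0$ and so acts as an open circuit, whence $Y_2(0)$ equals the DC admittance of the damper-only subnetwork, a finite nonnegative number; if it is positive, $Y_2$ has no pole at the origin, and if it is zero, $Y_2$ has a zero at the origin --- either conclusion contradicts the properties of $Y_2$ recorded above.

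The remaining and hardest case is $N_2$ using dampers and springs only. Then the whole network of Fig.~\ref{fig: Spring-N2} (the spring $k_1$ in parallel with $N_2$) is itself a damper-spring network, so $Y(s)$ is an RL-type admittance. Here the plan is to invoke the classical fact (RL-RC duality together with the monotone decrease of the real part of an RC impedance along the imaginary axis; see, e.g., \cite{Van60}) that $\Re(Y(j\omega))$ is nonincreasing on $(0,\infty)$, hence attains its minimum at $\omega=\infty$. Since $\alpha_0/(\beta_1 s)$ is purely imaginary on $j\mathbb{R}$, one has $\Re\big(1/Z_b(j\omega)\big)=\Re(Y(j\omega))$ with $Z_b(s)$ as in \eqref{eq: Zb biquadratic impedance}, so the minimum of $\Re\big(1/Z_b(j\omega)\big)$ is also attained at $\omega=\infty$, which means that the biquadratic $Z_b(s)$ is regular, i.e.\ Condition~1 of Lemma~\ref{lemma: Spring-Biquadratic-Network} holds --- again a contradiction. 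The delicate point throughout is this last case: it requires the monotonicity property of the real part of RL admittances on $j\mathbb{R}$ and its precise translation into regularity of $Z_b$. Having contradicted every case with at most two element types, I conclude that $N_2$ must contain all three types.
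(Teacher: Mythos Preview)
Your proof is correct and follows the same overall case analysis as the paper's, with two minor technical differences worth noting. For the damper-inerter case, the paper instead invokes Lemma~\ref{lemma: Spring-Other-Network lemma a}: since the graph of $N_2$ must contain a $k$-$\mathcal{P}(a,a')$ (a path of springs between the terminals), $N_2$ cannot be spring-free; your direct evaluation at $s=0$ reaches the same contradiction by purely analytic means. For the damper-spring case, the paper removes the pole of $Y_2$ at the origin via the second Foster form to obtain the biquadratic $Z_3(s) = 1/(Y_2(s)-k_2/s)$, observes that this $Z_3$ coincides with $Z_b$, and then cites \cite[Lemma~7]{JS11} (any biquadratic RL function is regular). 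Your route---using the monotonicity of $\Re(Y(j\omega))$ for an RL admittance and transferring it to $\Re(1/Z_b(j\omega))$---is a direct unpacking of what that cited lemma encodes, so the two arguments are essentially equivalent; yours has the advantage of not needing the external citation, while the paper's is slightly more concise once \cite{JS11} is available.
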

\begin{proof}
As shown in the proof of Lemma~\ref{lemma: Spring-Other-Network lemma a}, the admittance $Y_2(s)$ of $N_2$ is also in the form of \eqref{eq: specific bicubic admittance} with positive coefficients. Therefore, it is clear that $N_2$ contains at least two types of elements. By Lemma~\ref{lemma: lossless subnetwork N1},   $N_2$ cannot be a spring-inerter circuit. By Lemma~\ref{lemma: Spring-Other-Network lemma a}, the requirement of $k$-$\mathcal{P}(a, a')$ implies that $N_2$ cannot be a damper-inerter circuit.

Assume that $N_2$ is a damper-spring circuit. Then, $Y_2(s)$ can be   in the form of
$Y_2(s) = H(s+z_1)(s+z_2)(s+z_3)/(s(s+p_1)(s+p_2))$ where $H > 0$ and $0 < z_1 < p_1 < z_2 < p_2 < z_3$ \cite[Chapter~6]{Van60}. By the \emph{second Foster form} \cite[Chapter~6]{Van60}, $Z_3(s) = 1/(Y_2(s) - k_2/s) = 1/(H_1/(s + p_1) + H_2/(s + p_2) + H_3)$ is a positive-real biquadratic impedance that can be realized by a damper-spring circuit, where $k_2$ is the residue of $Y_2(s)$ at $ s= 0$. Furthermore, by  \cite[Lemma 7]{JS11},
$Z_3(s)$ is regular. Therefore, Condition~1 of Lemma~\ref{lemma: Spring-Biquadratic-Network} holds, which contradicts the assumption.
\end{proof}

\begin{lemma}   \label{lemma: not realizable}
Any admittance  $Y(s)$ in \eqref{eq: specific bicubic admittance} satisfying Assumption~\ref{assumption: 01} cannot be realized by the one-port circuit configuration in Fig.~\ref{fig: figures in Appendix a}(a).
\end{lemma}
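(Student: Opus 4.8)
The plan is to argue by contradiction: suppose some admittance $Y(s)$ in \eqref{eq: specific bicubic admittance} satisfying Assumption~\ref{assumption: 01} is realized by the configuration in Fig.~\ref{fig: figures in Appendix a}(a), and derive that its coefficients must violate Assumption~\ref{assumption: 01}. First I would compute the driving-point admittance $\hat{Y}(s)$ of that configuration by the usual series/parallel reductions, obtaining $\hat{Y}(s) = \hat{\alpha}(s)/\hat{\beta}(s)$ where $\hat{\alpha}, \hat{\beta}$ are polynomials in $s$ whose coefficients are explicit polynomials in the (positive, finite) element values. Before matching coefficients it is worth disposing of the cheap cases: by Lemma~\ref{lemma: graph constraint} the graph of the configuration must possess a $k$-$\mathcal{P}(a,a')$ and cannot have any of $b$-$\mathcal{P}(a,a')$, $k$-$\mathcal{C}(a,a')$, or $b$-$\mathcal{C}(a,a')$ --- if it fails any of these tests we are immediately done --- and by Lemma~\ref{lemma: lossless subnetwork N1} we may assume the configuration is not the parallel or series connection of a spring--inerter subcircuit of McMillan degree at least two with a passive remainder. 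This leaves the single ``hard'' topology to analyse.

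For that topology the plan is to impose $\hat{Y}(s) = \alpha(s)/\beta(s)$. Since Assumption~\ref{assumption: 01} forces $R_0(\alpha,\beta) \neq 0$, the McMillan degree is exactly three and no common factor may be cancelled, so $\hat{\alpha}$ and $\hat{\beta}$ must be proportional to $\alpha$ and $\beta$ term by term; this yields a system of polynomial equations in the element values and in one scaling parameter. I would then eliminate the element values one at a time, choosing the order so that the eliminant collapses to a manifestly impossible statement over the positive orthant --- concretely I expect it to force one of: a coefficient $\alpha_i$ or $\beta_j$ to vanish, or $\Delta_\alpha = \alpha_1 \alpha_2 - \alpha_0 \alpha_3 = 0$, or $\alpha(s)$ and $\beta(s)$ to acquire a common root (equivalently $R_0(\alpha,\beta) = 0$), or $\beta(s)$ to pick up an imaginary-axis factor. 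Each of these contradicts Assumption~\ref{assumption: 01}, recalling that under Assumption~\ref{assumption: 01} the equation $\alpha(s) = 0$ has no root on $j\mathbb{R}$ exactly when $\Delta_\alpha \neq 0$. Hence no admissible $Y(s)$ is realizable by Fig.~\ref{fig: figures in Appendix a}(a).

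The main obstacle is the elimination step: the coefficient-matching system carries up to six unknowns plus a scale, and exhibiting genuine inconsistency with Assumption~\ref{assumption: 01} --- rather than with merely some proper sub-case --- requires picking the right elimination order (or a resultant/Gr\"obner computation) so that the contradiction surfaces as a sign-definite or identically-vanishing quantity. I would also handle the boundary behaviour with care: letting an element value tend to $0$ or $\infty$ must not smuggle in a valid realization, but each such limit either collapses the configuration to one with fewer elements and a different topology (treated elsewhere in the proof of Lemma~\ref{lemma: Spring-Other-Network}) or again kills a coefficient, so these limits are subsumed by the same alternatives above.
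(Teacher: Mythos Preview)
Your plan contains a logical gap at the matching step. The configuration in Fig.~\ref{fig: figures in Appendix a}(a) has five reactive elements ($b_1,b_2,k_1,k_2,k_3$), and when you carry out the series--parallel reduction you will find $\hat{\alpha}(s)$ and $\hat{\beta}(s)$ are both of degree four, not three. Your inference ``since $R_0(\alpha,\beta)\neq 0$, no common factor may be cancelled, so $\hat{\alpha},\hat{\beta}$ must be proportional to $\alpha,\beta$ term by term'' is therefore backwards: coprimality of $\alpha,\beta$ says nothing about $\hat{\alpha},\hat{\beta}$. For $\hat{Y}(s)$ to equal a McMillan-degree-three $Y(s)$ you would \emph{need} $\hat{\alpha}$ and $\hat{\beta}$ to share a common linear factor $(s+\gamma)$, so your coefficient-matching system would have to carry $\gamma$ as an extra unknown (as is done, e.g., in the proofs for Figs.~\ref{fig: 5-1} and \ref{fig: 5-2}), and the elimination becomes substantially harder than you anticipate.

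The paper sidesteps all of this with a one-line argument: compute the resultant of $\hat{\alpha}$ and $\hat{\beta}$ in $s$ as a polynomial in the element values, and observe that it factors as a manifestly positive expression (a product of positive monomials times a sum of squares). Hence $\hat{\alpha}$ and $\hat{\beta}$ can never share a root for positive finite element values, the McMillan degree of the circuit's admittance is always four, and no $Y(s)$ of McMillan degree three can be realized. This is exactly the kind of ``sign-definite quantity'' you were hoping your elimination would produce, but it appears immediately from the resultant of the raw degree-four polynomials rather than after a multi-variable elimination; you should compute $R_0(\hat{\alpha},\hat{\beta},s)$ directly and inspect its sign.
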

\begin{proof}
The admittance of the circuit in Fig.~\ref{fig: figures in Appendix a}(a) is calculated as $Y(s) = n(s)/d(s)$ where
$n(s) = b_1b_2c_1 s^4 + b_1b_2(k_1+k_3) s^3 + c_1(b_1k_1 + b_1k_3 + b_2k_1 + b_2k_2)s^2 + b_2(k_1k_2 + k_1k_3 + k_2k_3)s + c_1(k_1k_2 + k_1k_3 + k_2k_3)$ and $d(s) = b_1b_2 s^4 + c_1(b_1 + b_2) s^3 + b_2(k_2+k_3) s^2 + c_1(k_2+k_3) s$. Then, the resultant of $n(s)$ and $d(s)$ in $s$ is calculated as $R_0(n, d, s) = b_1b_2c_1^4(k_1k_2 + k_1k_3 + k_2k_3)(c_1^2(b_1k_3 - b_2k_2)^2 + b_1b_2^2k_2k_3^2)^2$, which can never be zero. Thus, the circuit in  Fig.~\ref{fig: figures in Appendix a}(a) cannot realize the admittance $Y(s)$ in this lemma, whose McMillan degree is three.
\end{proof}

\emph{Necessity.} To prove the necessity part, one will show that any circuit realizing the admittance $Y(s)$ in this lemma can be equivalent to one of the configurations in Fig.~\ref{fig: Spring-Biquadratic-Network-partial-removal}.
Since any one-port circuit whose graph is not connected or whose augmented graph is separable can be equivalent to another circuit satisfying Assumption~\ref{assumption: 02},
one only needs to discuss the circuits that satisfies Assumption~\ref{assumption: 02} to avoid repeated discussions.

When the circuit $N_2$ is  further the series connection of two circuits, by Lemma~\ref{lemma: Spring-Other-Network lemma a}, $Y(s)$ can be realized by the circuit structure in Fig.~\ref{fig: figures in Appendix a}(b) to guarantee the existence of $k$-$\mathcal{P}(a,a')$ and to avoid the existence of $k$-$\mathcal{C}(a,a')$, where  the springs $k_2$ and $k_3$ and two subcircuits $N_3$ and $N_4$ constitute $N_2$. Based on the symmetry, assume that $N_3$ contains one element and $N_4$ contains one or two elements. Then, by Lemmas~\ref{lemma: Spring-Other-Network lemma a}--\ref{lemma: not realizable},
the realization can always be equivalent to one of
the configurations in Figs.~\ref{fig: Case-6} and \ref{fig: Case-7}, where Fig.~\ref{fig: Case-6} can represent a five-element configuration when $c_2 = 0$ (open-circuited).

When the circuit $N_2$ is further the parallel connection of two circuits, to avoid repeated discussions, one assumes that $N_2$ cannot be  equivalent to the parallel circuit of a spring and a subcircuit due to the parallel spring $k_1$. Therefore, the subcircuit of $N_2$ providing the $k$-$\mathcal{P}(a,a')$ must contain at least two springs.
Furthermore, based on the equivalence in Fig.~\ref{fig: Network_Equivalence} and by Lemma~\ref{lemma: Spring-Other-Network lemma a}, the realization of $Y(s)$ can be a circuit whose structure is in Fig.~\ref{fig: figures in Appendix a}(c), where the damper $c_1$, springs $k_2$ and $k_3$, and two subcircuits $N_3$ and $N_4$ constitute $N_2$, and each of $N_3$ and $N_4$ contains only one element. Furthermore, by Lemmas~\ref{lemma: lossless subnetwork N1},
\ref{lemma: Spring-Other-Network lemma a}, and \ref{lemma: Spring-Other-Network lemma b}, it is implied that only the circuit in Fig.~\ref{fig: Case-8} is possible.

Therefore, the proof of the necessity part has been completed.

\emph{Sufficiency.} The sufficiency part of this lemma obviously holds.

\begin{figure}[thpb]
      \centering
      \subfigure[]{
      \includegraphics[scale=1.0]{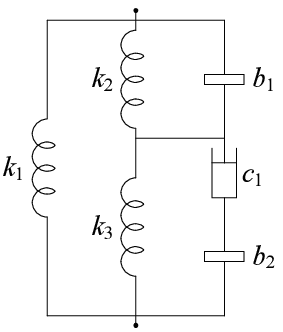}
      \label{fig: Fourth-Degree-Network-01}}
      \subfigure[]{
      \includegraphics[scale=1.0]{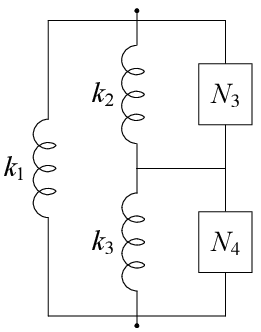}
      \label{fig: Spring-N3-N4}}
      \subfigure[]{
      \includegraphics[scale=1.0]{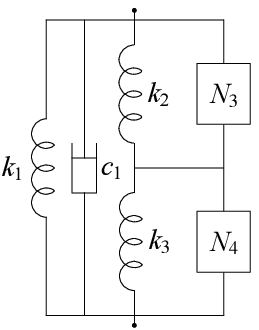}
      \label{fig: Spring-damper-N3-N4}}
      \caption{(a) The circuit configuration that cannot realize the admittance $Y(s)$ in Lemma~\ref{lemma: not realizable}; (b) the realization structure of $Y(s)$ discussed in the proof of Lemma~\ref{lemma: Spring-Other-Network}, where $k_2$, $k_3$, $N_3$, and $N_4$ constitute the circuit $N_2$ as in Fig.~1, $N_3$ contains one element, and $N_4$ contains one or two elements; (c) the realization structure of $Y(s)$ discussed in the proof of Lemma~\ref{lemma: Spring-Other-Network}, where $c_1$, $k_2$, $k_3$, $N_3$, and $N_4$ constitute the circuit $N_2$ as in  Fig.~1, and each of $N_3$ and $N_4$ contains only one element. }
      \label{fig: figures in Appendix a}
\end{figure}

\begin{figure}[thpb]
      \centering
      \subfigure[]{
      \includegraphics[scale=1.0]{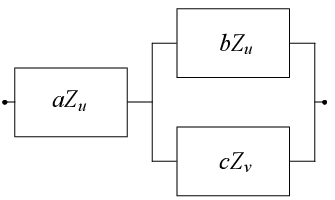}
      \label{fig: Equivalent-a}}
      \subfigure[]{
      \includegraphics[scale=1.0]{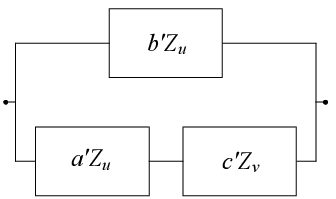}
      \label{fig: Equivalent-b}}
      \caption{Two equivalent one-port series-parallel passive circuit structures, where $a' = a(a+b)/b$, $b' = a + b$,   $c' = c(a+b)^2/b^2$, and $Z_u$ and $Z_v$ are positive-real impedances (See  \cite[Fig.~6]{JS11}). }
      \label{fig: Network_Equivalence}
\end{figure}

\section{Proof of Lemma~6}   \label{appendix: B}

\setcounter{equation}{0}
\renewcommand{\theequation}{C.\arabic{equation}}

\emph{Necessity.} The admittance of the circuit configuration in Fig.~\ref{fig: Case-6} is computed as $Y(s) = n(s)/d(s)$, where
$n(s)  = b_1c_1 s^3 + (b_1k_1 + b_1k_2 + c_1c_2) s^2 + (c_1k_1 + c_1k_3 + c_2k_1 + c_2k_2) s + k_1k_2 + k_1k_3 + k_2k_3$ and
$d(s)  = b_1 s^3 + (c_1 + c_2) s^2 + (k_2 + k_3) s$.
Since $Y(s)$ in  \eqref{eq: specific bicubic admittance} satisfying Assumption~\ref{assumption: 01} is realizable by the circuit in Fig.~\ref{fig: Case-6},
there exists $k>0$ such that $n(s) = k \alpha(s)$ and $d(s) = k \beta(s)$. Then, it follows that
\begin{subequations}
\begin{align}
b_1 c_1 &= k \alpha_3,  \label{eq: Case-6 eqn1}  \\
b_1k_1 + b_1k_2 + c_1c_2 &= k \alpha_2,  \label{eq: Case-6 eqn2}  \\
c_1k_1 + c_1k_3 + c_2k_1 + c_2k_2 &= k \alpha_1, \label{eq: Case-6 eqn3}  \\
k_1k_2 + k_1k_3 + k_2k_3 &= k \alpha_0, \label{eq: Case-6 eqn4}  \\
b_1 &= k \beta_3,   \label{eq: Case-6 eqn5}  \\
c_1 + c_2 &= k \beta_2,  \label{eq: Case-6 eqn6}  \\
k_2 + k_3 &= k \beta_1,  \label{eq: Case-6 eqn7}
\end{align}
\end{subequations}
where $k > 0$. Letting
\begin{equation} \label{eq: Case-6 lambda}
x = \frac{\alpha_3^2}{k \beta_3} > 0,
\end{equation}
the expression of $b_1$ can be obtained from \eqref{eq: Case-6 eqn5} as in
\eqref{eq: Case-6 element values}.
Then, it follows from \eqref{eq: Case-6 eqn1} and \eqref{eq: Case-6 eqn5} that
$c_1$ can be expressed as in \eqref{eq: Case-6 element values}, which together with
\eqref{eq: Case-6 eqn6} and \eqref{eq: Case-6 lambda} implies that $c_2$ can be expressed as in
\eqref{eq: Case-6 element values}.
Since $c_2 \geq 0$, one implies that $x$ satisfies \eqref{eq: Case-6 condition01}. It follows from \eqref{eq: Case-6 eqn7} and \eqref{eq: Case-6 lambda} that
\begin{equation}  \label{eq: Case-6 k3 k2}
k_3 = \frac{\alpha_3^2 \beta_1}{\beta_3 x} - k_2.
\end{equation}
Then, substituting
\eqref{eq: Case-6 eqn5},  \eqref{eq: Case-6 k3 k2}, and the expressions of $c_1$, $c_2$, and $b_1$ shown in \eqref{eq: Case-6 element values} into \eqref{eq: Case-6 eqn2} and \eqref{eq: Case-6 eqn3} yields the expressions of $k_1$ and $k_2$   as in \eqref{eq: Case-6 element values}, which together with \eqref{eq: Case-6 k3 k2} implies that $k_3$ can be expressed as in \eqref{eq: Case-6 element values}. By the assumption that $k_1 > 0$, $k_2 > 0$, and $k_3 > 0$, it is implied that $x$ satisfies \eqref{eq: Case-6 condition02} and \eqref{eq: Case-6 condition03}. Furthermore, substituting  \eqref{eq: Case-6 lambda} and the expressions of $k_1$, $k_2$, and $k_3$ as in \eqref{eq: Case-6 element values}
 into \eqref{eq: Case-6 eqn4}, one implies that $x$ is a positive root of the equation
\eqref{eq: Case-6 equation}. Now, the proof of the necessity part has been completed.

\emph{Sufficiency.} Let $c_1$, $c_2$, $k_1$, $k_2$, $k_3$, and $b_1$ satisfy the expressions in \eqref{eq: Case-6 element values}, where $x$ is a positive root of equation
\eqref{eq: Case-6 equation} such that \eqref{eq: Case-6 condition01}--\eqref{eq: Case-6 condition03}. Then, it can be verified that $c_2 \geq 0$ and other element values are  positive and finite.
Since \eqref{eq: Case-6 equation} holds, it can be verified that \eqref{eq: Case-6 eqn1}--\eqref{eq: Case-6 eqn7} hold with $k$ satisfying $k = \alpha_3^2/(\beta_3 x)$.  Therefore,   $Y(s)$ can be realized by the circuit in Fig.~\ref{fig: Case-6}.

\section{Proof of Lemma~9}   \label{appendix: C}

\setcounter{lemma}{0}
\renewcommand{\thelemma}{D.\arabic{lemma}}

\setcounter{equation}{0}
\renewcommand{\theequation}{D.\arabic{equation}}

To prove Lemma~\ref{lemma: Other Series-Parallel Structures}, the following lemmas are presented, which will be utilized for the proof.

\begin{lemma} \label{lemma: Other Series-Parallel Structures lemma 01}
For any admittance $Y(s)$ in \eqref{eq: specific bicubic admittance} that satisfies Assumption~\ref{assumption: 01} and does not satisfy the conditions of Theorem~\ref{theorem: main theorem 01}, if   $Y(s)$ is realizable as a one-port series-parallel damper-spring-inerter circuit containing at most six elements, then the circuit cannot contain no more than two types of elements.
\end{lemma}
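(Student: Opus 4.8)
The plan is to rule out, in turn, realizing circuits that use only one type of element and those that use exactly two types; since there are only three element types, this proves the claim. Throughout I may assume the circuit satisfies Assumption~\ref{assumption: 02}, since otherwise it is equivalent to a one-port circuit with strictly fewer elements. I will also use the bounds recorded just after Lemma~\ref{lemma: lossless subnetwork N1}: any one-port series-parallel realization of $Y(s)$ contains at least three energy-storage elements (springs or inerters) and at least two dampers. A single-type circuit cannot meet both bounds — an all-damper circuit has no energy-storage element, while an all-spring or all-inerter circuit has no damper — so no single-type realization exists. (Equivalently, a single-type series-parallel network collapses to a single element, whose admittance is a positive constant, or of the form $k/s$, or of the form $bs$, none of McMillan degree three.)

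Suppose next that the circuit uses exactly two types. Because it must contain at least two dampers, one of those types is the damper, so the circuit is either a damper-inerter circuit or a damper-spring circuit. If it is a damper-inerter circuit, its graph has no spring edge, hence no spring path $k$-$\mathcal{P}(a,a')$ between the two terminals; this contradicts Lemma~\ref{lemma: graph constraint}, which asserts that such a path must be present. It remains only to exclude the damper-spring circuit, which is the principal case.

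For the damper-spring case I would reuse the argument in the proof of Lemma~\ref{lemma: Spring-Other-Network lemma b}. A damper-spring one-port admittance has a partial-fraction (Foster) expansion $Y(s) = k_{-1}/s + k_0 + \sum_i k_i/(s+\sigma_i)$ with all coefficients non-negative and $\sigma_i > 0$, so $k_{-1}$ equals the residue $\alpha_0/\beta_1$ of the pole at $s=0$, and $Y(s) - \alpha_0/(\beta_1 s)$ — obtained by extracting the parallel spring $k_1 = \alpha_0/\beta_1$ of Fig.~\ref{fig: Spring-N2} — is again a damper-spring admittance, now of McMillan degree two. Hence $Z_b(s)$ in \eqref{eq: Zb biquadratic impedance} is a biquadratic impedance realizable by a damper-spring circuit, and by \cite[Lemma~7]{JS11} it is regular. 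Therefore Condition~1 of Lemma~\ref{lemma: Spring-Biquadratic-Network} holds, so $Y(s)$ satisfies one of the conditions of Theorem~\ref{theorem: main theorem 01}, contradicting the hypothesis.

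The step I expect to need the most care is the damper-spring case: one has to justify both that extracting the pole at $s=0$ from a damper-spring realization again yields a damper-spring network — via non-negativity of the residue and the $RL$/$RC$-type Foster expansion — and that every damper-spring biquadratic impedance is regular. The single-type and damper-inerter cases are immediate from the element-count bounds and from Lemmas~\ref{lemma: graph constraint} and \ref{lemma: lossless subnetwork N1}.
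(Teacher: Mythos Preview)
Your proof is correct and takes essentially the same approach as the paper: the paper's proof simply notes that the failure of Theorem~\ref{theorem: main theorem 01} prevents the structure of Fig.~\ref{fig: Spring-N2} and then refers back to the argument of Lemma~\ref{lemma: Spring-Other-Network lemma b}, whose damper-spring case is exactly your Foster-expansion/regularity argument leading to Condition~1 of Lemma~\ref{lemma: Spring-Biquadratic-Network}. Your handling of the single-type and spring-inerter cases via the element-count bounds (at least two dampers, at least three reactive elements) is a harmless variant of the paper's direct appeal to Lemmas~\ref{lemma: graph constraint} and~\ref{lemma: lossless subnetwork N1}.
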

\begin{proof}
The assumption that the conditions of Theorem~\ref{theorem: main theorem 01}
 do not hold implies that the series-parallel realizations of $Y(s)$ containing no more than six elements cannot be as  in Fig.~\ref{fig: Spring-N2}. Then, this lemma can be proved similar to  Lemma~\ref{lemma: Spring-Other-Network lemma b}.
\end{proof}

\begin{lemma} \label{lemma: Other Series-Parallel Structures lemma 02}
For any admittance $Y(s)$ in \eqref{eq: specific bicubic admittance} that satisfies Assumption~\ref{assumption: 01} and does not satisfy the conditions of Theorem~\ref{theorem: main theorem 01}, if  $Y(s)$ is realizable as a one-port series-parallel damper-spring-inerter circuit containing no more than six elements, which satisfies Assumption~\ref{assumption: 02} and is the parallel connection of two subcircuits $N_1$ and $N_2$, then the subcircuit  $N_2$ constituting $k$-$\mathcal{P}(a,a')$ must contain at least four elements and cannot contain   no more than two types of elements.
\end{lemma}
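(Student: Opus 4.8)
The plan is to reduce the statement to two facts about $N_2$: that it contains all three element types, and that it contains at least two springs; together these immediately give at least four elements. Throughout, write $Y_1,Y_2$ for the admittances of $N_1,N_2$, so that $Y=Y_1+Y_2$, and exploit the standing hypothesis that $Y$ fails the conditions of Theorem~\ref{theorem: main theorem 01}: by the ``if and only if'' there, $Y$ is \emph{not} realizable as any circuit of the form of Fig.~\ref{fig: Spring-N2}, so any reduction that exhibits the realizing circuit $C$ in that form produces a contradiction.

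First I would fix the structure of the $k$-path. Since $C$ is the parallel connection of $N_1$ and $N_2$, these subcircuits share only the two terminal vertices $a,a'$, so every path of $\mathcal{G}(\mathcal{V},\mathcal{E})$ from $a$ to $a'$ lies wholly inside one of them; the $k$-$\mathcal{P}(a,a')$ guaranteed by Lemma~\ref{lemma: graph constraint} therefore lies in $N_2$. If this path were a single spring, that spring would join $a$ and $a'$ directly, hence would be a parallel branch of $N_2$ across its terminals, and then $C$ would be of the form of Fig.~\ref{fig: Spring-N2} --- contradiction. So the all-spring path in $N_2$ uses at least two springs; in particular $N_2$ contains at least two springs, and no spring sits directly across its terminals.

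Next I would eliminate the possibilities in which $N_2$ has at most two element types. If $N_2$ were a spring-only network it would be equivalent to a single spring, again putting $C$ in the form of Fig.~\ref{fig: Spring-N2}. If $N_2$ were a spring--inerter network it would be lossless, and, containing at least one spring and (the spring-only case now excluded) at least one inerter, it would have McMillan degree at least two; then $C$ would be the parallel connection of a spring--inerter circuit of degree $\ge 2$ with a passive circuit, contradicting Lemma~\ref{lemma: lossless subnetwork N1}. The last case is that $N_2$ is a damper--spring network, and then Lemma~\ref{lemma: Other Series-Parallel Structures lemma 01} forces an inerter into $N_1$. The decisive point is that a series--parallel damper--spring one-port whose graph contains an all-spring path has an admittance $Y_2$ with a simple pole at $s=0$ of positive residue, and removing that pole (a Foster-type partial-fraction removal of the pole at the origin for a damper--spring admittance, or, at the circuit level, the equivalence in Fig.~\ref{fig: Network_Equivalence}) exhibits $N_2$ as a spring in parallel with a damper--spring circuit of strictly smaller McMillan degree --- so $C$ is once more equivalent to a Fig.~\ref{fig: Spring-N2} circuit, a contradiction.

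I expect this last subcase to be the main obstacle, because one has to make the element bookkeeping precise: after the reduction the inner circuit together with $N_1$ must still contain at most five elements for Fig.~\ref{fig: Spring-N2} to apply. I would handle this by a short enumeration of the admissible small structures of a damper--spring $N_2$ --- using that $N_2$ has no spring across its terminals, contains (after combining any directly combinable like elements) at most $6-|N_1|$ elements, and still has its all-spring path --- verifying in each case that either $C$ collapses to the form of Fig.~\ref{fig: Spring-N2}, or Lemma~\ref{lemma: graph constraint} is violated, or the McMillan degree of $Y$ cannot be three. Granting this, $N_2$ contains all three types; combining with the ``at least two springs'' established above, $N_2$ contains at least two springs, at least one damper, and at least one inerter, hence at least four elements (and consequently $|N_1|\le 2$). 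The argument mirrors that of Lemma~\ref{lemma: Other Series-Parallel Structures lemma 01} and the structural sublemmas used in the proof of Lemma~\ref{lemma: Spring-Other-Network}.
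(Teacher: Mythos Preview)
Your proposal is correct and matches the paper's proof in substance; the only organizational difference is that the paper first establishes ``at least four elements'' independently (two springs to avoid a single parallel spring, plus two further elements via the equivalence in Fig.~\ref{fig: Network_Equivalence}), and only afterwards rules out the two-type cases, whereas you rule out the two-type cases first and then read off ``at least four'' from ``three types plus two springs.'' Your handling of the spring--inerter case (Lemma~\ref{lemma: lossless subnetwork N1}) and the damper--spring case (Foster extraction of the pole at $s=0$ to exhibit a Fig.~\ref{fig: Spring-N2} realization) is exactly what the paper means by ``similar to the proof of Lemma~\ref{lemma: Spring-Other-Network lemma b},'' and your acknowledged element-count bookkeeping is the same detail the paper leaves implicit; the short enumeration you propose is the right way to close it.
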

\begin{proof}
Since   the conditions of Theorem~\ref{theorem: main theorem 01}
do not hold, $Y(s)$ cannot be realizable by any circuit as in Fig.~\ref{fig: Spring-N2}, which contains at most six elements.
Therefore, the subnetwork  $N_2$ constituting $k$-$\mathcal{P}(a,a')$ must contain at least two springs, to avoid the single parallel spring in Fig.~\ref{fig: Spring-N2}. Moreover, together with the equivalence in Fig.~\ref{fig: Network_Equivalence}, there must be at least two other elements. Therefore, the total number of elements is at least four.
Assume that $N_2$ is a damper-spring circuit. Then, similar to the proof of Lemma~\ref{lemma: Spring-Other-Network lemma b}, this contradicts the assumption that $Y(s)$ cannot be realizable as in Fig.~\ref{fig: Spring-N2} containing no more than six elements.
Together with Lemma~\ref{lemma: lossless subnetwork N1}, one can prove that $N_2$ cannot contain   no more than two types of elements.
\end{proof}

\begin{lemma}  \label{lemma: Other Series-Parallel Structures lemma 03}
Any admittance $Y(s)$ in \eqref{eq: specific bicubic admittance} satisfying Assumption~\ref{assumption: 01} cannot be realized by any of the circuit configurations in Fig.~\ref{fig: non-realizable-2}.
\end{lemma}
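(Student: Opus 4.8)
The plan is to proceed exactly as in the proof of Lemma~\ref{lemma: not realizable}: for each one-port configuration in Fig.~\ref{fig: non-realizable-2}, write its driving-point admittance as a ratio $Y(s)=n(s)/d(s)$, where $n(s),d(s)\in\mathbb{R}[s]$ have coefficients that are explicit polynomials in the element values $c_i,k_j,b_l>0$. Since each circuit in Fig.~\ref{fig: non-realizable-2} contains four energy-storage elements arranged so that no pair of them collapses by a trivial series/parallel cancellation, after factoring out the single $s$ coming from the $k$-$\mathcal{P}(a,a')$ path (which must be present by Lemma~\ref{lemma: graph constraint}) both $n(s)$ and $d(s)$ have degree four. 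For $Y(s)$ in \eqref{eq: specific bicubic admittance} satisfying Assumption~\ref{assumption: 01} one has $\delta(Y(s))=3$, so a necessary condition for realizability is that $n(s)$ and $d(s)$ share a common factor, i.e. the resultant $R_0(n,d,s)$ with respect to $s$ vanishes.

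First I would compute $R_0(n,d,s)$ symbolically for each configuration. The key claim, to be verified case by case, is that each such resultant factors as a product of element values (all strictly positive) times a factor that is manifestly positive -- a sum of squares plus a nonnegative combination of products of element values -- so that $R_0(n,d,s)>0$ for every admissible choice of positive finite element values. This contradicts $R_0(n,d,s)=0$ and hence rules out realization of a degree-three admittance. It is precisely the mechanism of Lemma~\ref{lemma: not realizable}, where $R_0(n,d,s)=b_1b_2c_1^4(k_1k_2+k_1k_3+k_2k_3)\big(c_1^2(b_1k_3-b_2k_2)^2+b_1b_2^2k_2k_3^2\big)^2$ was shown never to vanish. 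For any configuration in Fig.~\ref{fig: non-realizable-2} whose denominator already has degree three, so that the McMillan degree could a priori be three, I would instead match the coefficients of $n(s)$ and $d(s)$ against $k\alpha(s)$ and $k\beta(s)$ for some $k>0$, solve for the element values in terms of $\alpha_i,\beta_j$, and show that at least one element value is forced to be nonpositive, zero, or infinite, or that the matching forces $\Delta_\alpha=0$ or $R_0(\alpha,\beta)=0$, each contradicting Assumption~\ref{assumption: 01}.

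Before the explicit computation, I would prune the list of genuine cases using Lemma~\ref{lemma: graph constraint} and Lemma~\ref{lemma: lossless subnetwork N1}: any configuration containing a $b$-$\mathcal{P}(a,a')$, a $k$-$\mathcal{C}(a,a')$, a $b$-$\mathcal{C}(a,a')$, or a spring-inerter subnetwork of McMillan degree at least two is excluded outright, so only a short list of admittance computations remains. Each remaining configuration then falls into exactly one of the two branches above.

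The main obstacle I anticipate is purely the algebra: computing and, more importantly, \emph{certifying} the positivity of the several resultant polynomials, since these are high-degree expressions in six or more positive variables whose sum-of-squares regrouping is not unique or obvious. Recognizing the right grouping -- typically extracting the monomial factor built from the energy-storage element values and then completing squares among the spring/inerter values, as in the $c_1^2(b_1k_3-b_2k_2)^2+b_1b_2^2k_2k_3^2$ pattern above -- is where the real work lies; once such a decomposition is in hand for every configuration, the conclusion of the lemma is immediate.
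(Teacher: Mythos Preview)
Your proposal is correct and follows essentially the same approach as the paper: the paper's proof simply states that ``the method of the proof is similar to Lemma~\ref{lemma: not realizable},'' i.e., compute the admittance $n(s)/d(s)$ of each configuration and show the resultant $R_0(n,d,s)$ cannot vanish for positive element values. Your additional branch (coefficient matching when the denominator already has degree three) and the pruning via Lemmas~\ref{lemma: graph constraint} and~\ref{lemma: lossless subnetwork N1} are sensible refinements, but the core mechanism is identical to the paper's.
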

\begin{proof}
The method of the proof is similar to  Lemma~\ref{lemma: not realizable}.
\end{proof}

\begin{figure}[thpb]
      \centering
      \subfigure[]{
      \includegraphics[scale=1.0]{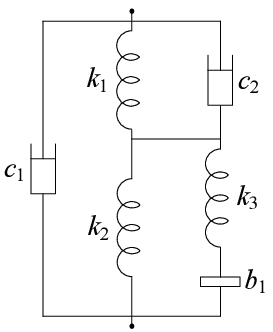}
      \label{fig: non-realizable-2-3}}
      \subfigure[]{
      \includegraphics[scale=1.0]{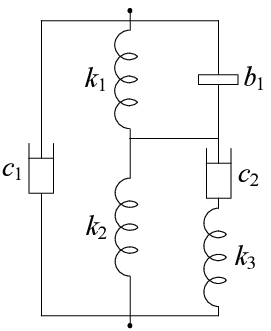}
      \label{fig: non-realizable-2-4}}
      \subfigure[]{
      \includegraphics[scale=1.0]{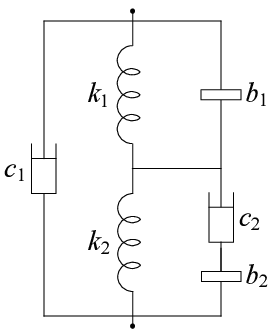}
      \label{fig: non-realizable-2-5}}
      \caption{The circuit configurations that cannot realize the admittance $Y(s)$ in Lemma~\ref{lemma: Other Series-Parallel Structures lemma 03}.}
      \label{fig: non-realizable-2}
\end{figure}

\emph{Necessity.}
Similar to the proof of Lemma~\ref{lemma: Spring-Other-Network},  one will show that any circuit realizing the admittance $Y(s)$ in this lemma
can be equivalent to one of the configurations in Figs.~\ref{fig: classes 2 and 3}--\ref{fig: classes 4 and 5}. To avoid repeated discussions,
one only needs to discuss the circuits that satisfies Assumption~\ref{assumption: 02}.

First, one will investigate the case when the circuit realizing $Y(s)$ is the parallel connection of two subcircuits $N_1$ and $N_2$.
By Lemma~\ref{lemma: graph constraint}, at least one of the two subcircuits must constitute $k$-$\mathcal{P}(a,a')$, which is assumed to be $N_2$.
Then, together with Lemmas~\ref{lemma: graph constraint}, \ref{lemma: lossless subnetwork N1}, and \ref{lemma: Other Series-Parallel Structures lemma 02}, one can imply that the realization can be equivalent to one of the structures in Fig.~\ref{fig: essential-parallel structures}, where $N_1$ contains only one element  and $N_2$ contains one or two elements for the structure in Fig.~\ref{fig: essential-parallel structures}(a), and each of $N_1$ and $N_2$ contains only one element for each structure in Figs.~\ref{fig: essential-parallel structures}(b) and \ref{fig: essential-parallel structures}(c).
Together with the realization constraints in Lemma~\ref{lemma: graph constraint} and the configurations that cannot realize $Y(s)$ as stated in Lemma~\ref{lemma: Other Series-Parallel Structures lemma 03}, $Y(s)$ is realizable by one of the circuit configurations in Fig.~\ref{fig: classes 2 and 3} by the method of enumeration.

Then, it turns to discuss the case when the realization of $Y(s)$ is the series  connection of two subcircuits $N_1$ and $N_2$. Here, one can assume that the number of elements in $N_1$ is no larger than the number of elements in $N_2$
without loss of generality. Thus, $N_1$ can only contain one, two, or three elements. By Lemma~\ref{lemma: graph constraint}, $N_1$ cannot contain only one element, to simultaneously guarantee $k$-$\mathcal{P}(a,a')$ and avoid $k$-$\mathcal{C}(a,a')$. When $N_1$ contain two elements, it is implied that $N_1$ can only be the parallel circuit of a damper and a spring,
since $N_1$ cannot be a spring-inerter circuit by Lemma~\ref{lemma: lossless subnetwork N1} and $N_1$ must contain
at least one spring to form $k$-$\mathcal{P}(a,a')$.
Furthermore, one can prove that $N_2$ can only be the parallel connection of two subcircuits.
To guarantee $k$-$\mathcal{P}(a,a')$ and together with the equivalence in Fig.~\ref{fig: Network_Equivalence}, $N_2$ can always be equivalent to the parallel connection of a spring and a subcircuit $N_3$, where $N_3$ contains two or three elements. The structure is shown in Fig.~\ref{fig: essential-parallel structures}(d).
By the method of enumeration and together with Lemmas~\ref{lemma: lossless subnetwork N1} and \ref{lemma: Other Series-Parallel Structures lemma 01},
$Y(s)$ is realizable by one of the circuit configurations in Figs.~\ref{fig: 4-1}--\ref{fig: 4-6}, where the circuit configuration in Fig.~\ref{fig: 4-1} can represent a five-element series-parallel circuit configuration when $c_2^{-1} = 0$ or $c_3 = 0$ ($c_2^{-1} = 0$ and $c_3 = 0$ cannot simultaneously hold).
Similarly, when $N_1$ contain three elements, one can show that any realization of $Y(s)$ can always be equivalent to the structure in
Fig.~\ref{fig: essential-parallel structures}(e), where both $N_3$ and $N_4$ contain two elements.
Furthermore, by the method of enumeration and together with
Lemmas~\ref{lemma: lossless subnetwork N1} and \ref{lemma: Other Series-Parallel Structures lemma 01}, $Y(s)$ is realizable by one of the circuit configurations in Figs.~\ref{fig: 5-1} and \ref{fig: 5-2}.

Therefore, the necessity part of the proof has been completed.

\emph{Sufficiency.} The sufficiency part of this lemma obviously holds.

\begin{figure}[thpb]
      \centering
      \subfigure[]{
      \includegraphics[scale=1.0]{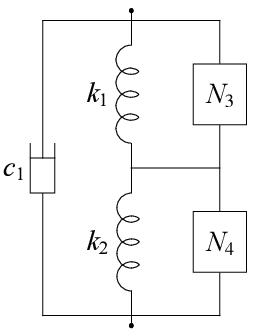}
      \label{fig: damper-N3-N4}}
      \subfigure[]{
      \includegraphics[scale=1.0]{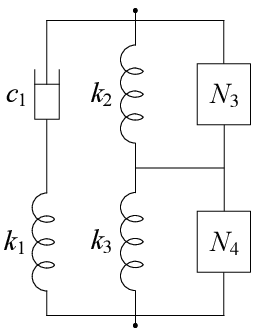}
      \label{fig: damper_spring-N3-N4}}
      \subfigure[]{
      \includegraphics[scale=1.0]{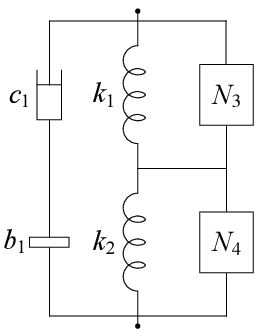}
      \label{fig: damper_inerter-N3-N4}}
      \subfigure[]{
      \includegraphics[scale=1.0]{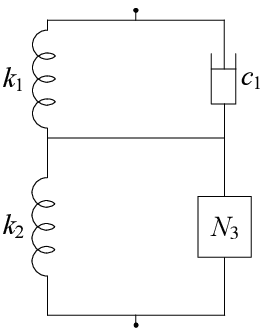}
      \label{fig: spring-damper_spring-N3}}
      \subfigure[]{
      \includegraphics[scale=1.0]{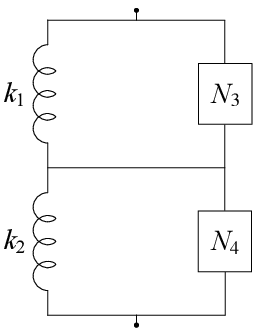}
      \label{fig: spring-N3_spring-N4}}
      \caption{The structures of the circuits realizing $Y(s)$ discussed in the proof of
      Lemma~\ref{lemma: Other Series-Parallel Structures lemma 02}. For the structure in (a),  $N_3$ contains only one element,  and $N_4$ contains two or three elements; for the structures in (b) and (c), each of $N_3$ and $N_4$ contains one element; for the structure in (d), $N_3$ contains two or three elements; for the structure  in  (e), each of $N_3$ and $N_4$ contains two elements.  }
      \label{fig: essential-parallel structures}
\end{figure}

\section{Proof of Lemma~12}   \label{appendix: D}

\setcounter{equation}{0}
\renewcommand{\theequation}{E.\arabic{equation}}

\emph{Necessity.} The admittance of the circuit configuration in Fig.~\ref{fig: 3-1} is computed as $Y(s) = n(s)/d(s)$, where
$n(s) = b_1 c_1 c_2 s^4 + b_1 (c_1 k_1 + c_1 k_2 + c_2 k_1) s^3 + (b_1 k_1 k_2 + c_1 c_2 k_1 + c_1 c_2 k_3) s^2 + (c_1 k_1 k_2 + c_1 k_2 k_3 + c_1 k_1 k_3 + c_2 k_1 k_3) s + k_1 k_2 k_3$ and $d(s) = b_1 c_1 s^4 + (b_1 k_1 + c_1 c_2) s^3 + (c_1 k_2 + c_1 k_3 + c_2 k_1) s^2 + k_1 (k_3 + k_3) s$.
If the given admittance $Y(s)$ of this lemma is realizable by the circuit in Fig.~\ref{fig: 3-1}, then the resultant of $n(s)$ and $d(s)$ in $s$ calculated as $R_0 (n, d, s) = - b_1 c_1^2 k_1^3 k_2 k_3 (b_1 k_2^2 + c_2^2 k_3)^2 (b_1 k_1^2 + c_1^2 k_2 + c_1^2 k_3 - c_1 c_2 k_1)^2$ is zero. Therefore, one obtains that $c_2$ satisfies the expression in
\eqref{eq: 3-1 element values}, which further implies that the admittance of the configuration in Fig.~\ref{fig: 3-1} becomes
$Y(s) = \tilde{n}(s) / \tilde{d}(s)$, where $\tilde{n}(s) = b_1 (b_1 k_1^2 + k_2 c_1^2 + k_3 c_1^2) s^3 + b_1 c_1 k_1 (k_1 + k_2) s^2 + (k_3 (b_1 k_1^2 + c_1^2 k_2 + c_1^2 k_3) + c_1^2 k_1 (k_2 + k_3)) s + c_1 k_1 k_2 k_3$ and $\tilde{d}(s) = b_1 c_1 k_1 s^3 + (b_1 k_1^2 + c_1^2 k_2 + c_1^2 k_3) s^2 + c_1 k_1 (k_2 + k_3) s$. Therefore, there exists $k > 0$ such that $\tilde{n}(s) = k \alpha(s)$ and $\tilde{d}(s) = k \beta(s)$.
Then, it follows that
\begin{subequations}
\begin{align}
b_1 (b_1 k_1^2 + k_2 c_1^2 + k_3 c_1^2) &= k \alpha_3,  \label{eq: 3-1 eqn1}  \\
b_1 c_1 k_1 (k_1 + k_2) &= k \alpha_2,   \label{eq: 3-1 eqn2}  \\
k_3 (b_1 k_1^2 + c_1^2 k_2 + c_1^2 k_3) + c_1^2 k_1 (k_2 + k_3) &= k \alpha_1, \label{eq: 3-1 eqn3}  \\
c_1 k_1 k_2 k_3  &=  k \alpha_0, \label{eq: 3-1 eqn4}  \\
b_1 c_1 k_1 &=  k \beta_3,   \label{eq: 3-1 eqn5}  \\
b_1 k_1^2 + c_1^2 k_2 + c_1^2 k_3    &=  k \beta_2,  \label{eq: 3-1 eqn6}  \\
c_1 k_1 (k_2 + k_3) &= k \beta_1,  \label{eq: 3-1 eqn7}
\end{align}
\end{subequations}
where $k > 0$. From \eqref{eq: 3-1 eqn1} and \eqref{eq: 3-1 eqn6}, it is implied that the value of $b_1$ can be expressed as in \eqref{eq: 3-1 element values}. Let $x$ satisfies
\begin{equation} \label{eq: 3-1 lambda k1}
x = \beta_2 (\alpha_2 - \beta_3 k_1).
\end{equation}
Then, the expression of $k_1$ can be obtained as in \eqref{eq: 3-1 element values}. Together with \eqref{eq: 3-1 lambda k1}, it is implied from \eqref{eq: 3-1 eqn2} and \eqref{eq: 3-1 eqn5} that $k_2$ can be expressed as in \eqref{eq: 3-1 element values}, which implies that $x > 0$.
By \eqref{eq: 3-1 eqn5}, \eqref{eq: 3-1 eqn7}, \eqref{eq: 3-1 lambda k1}, and the expression of $b_1$ as in \eqref{eq: 3-1 element values}, the expression of $k_3$ can be derived as in \eqref{eq: 3-1 element values}. It follows from \eqref{eq: 3-1 eqn4} and \eqref{eq: 3-1 eqn7} that
$k_2^{-1} + k_3^{-1} = \beta_1/\alpha_0$, which together with the expressions of $k_2$ and $k_3$ as in \eqref{eq: 3-1 element values} implies that
\begin{equation} \label{eq: 3-1 lambda equation}
x^2 - \alpha_3 \beta_1 x + \alpha_0 \alpha_3 \beta_2 \beta_3 = 0.
\end{equation}
Therefore, it is clear that \eqref{eq: 3-1 condition00} holds and the root of equation \eqref{eq: 3-1 lambda equation} can be solved as in \eqref{eq: 3-1 condition02}. Substituting \eqref{eq: 3-1 eqn6} and \eqref{eq: 3-1 eqn7} into \eqref{eq: 3-1 eqn3} implies that $k_3 \beta_2 + c_1 \beta_1 = \alpha_1$, which together with the expressions of $c_1$ and $k_3$ implies that the value of $c_1$ can be expressed as in \eqref{eq: 3-1 element values}. By the element values of $c_1$, $k_1$, $k_2$, and $k_3$, it follows from \eqref{eq: 3-1 eqn7} that
\begin{equation} \label{eq: 3-1 x}
k = \frac{\alpha_3 (\alpha_2 \beta_2 - x)(x - \mathcal{B}_{23})}{\beta_1\beta_2^2\beta_3^3}.
\end{equation}
Furthermore, by the expressions of $c_1$, $k_1$, $k_2$, $k_3$ and $b_1$, it follows from \eqref{eq: 3-1 eqn6} that \eqref{eq: 3-1 equation} holds. By the assumption that the element values expressed in \eqref{eq: 3-1 element values} are positive and finite, one implies that $x > 0$  satisfies \eqref{eq: 3-1 condition01}. The proof of the necessity part has been completed.

\emph{Sufficiency.}
Let the element values of $c_1$, $c_2$, $k_1$, $k_2$, $k_3$, and
$b_1$ satisfy \eqref{eq: 3-1 element values}, where
\eqref{eq: 3-1 condition00} holds,
and $x$ is a positive root of \eqref{eq: 3-1 equation}, such that \eqref{eq: 3-1 condition01} and \eqref{eq: 3-1 condition02} hold. Then, it can be verified that  the element values are positive and finite. Moreover, together with the expression of $c_2$ in \eqref{eq: 3-1 element values}, it can be calculated that the admittance of the configuration in Fig.~\ref{fig: 3-1} is $Y(s) = \tilde{n}(s) / \tilde{d}(s)$, where $\tilde{n}(s) = b_1 (b_1 k_1^2 + k_2 c_1^2 + k_3 c_1^2) s^3 + b_1 c_1 k_1 (k_1 + k_2) s^2 + (k_3 (b_1 k_1^2 + c_1^2 k_2 + c_1^2 k_3) + c_1^2 k_1 (k_2 + k_3)) s + c_1 k_1 k_2 k_3$ and $\tilde{d}(s) = b_1 c_1 k_1 s^3 + (b_1 k_1^2 + c_1^2 k_2 + c_1^2 k_3) s^2 + c_1 k_1 (k_2 + k_3) s$. Since $x$ satisfies \eqref{eq: 3-1 equation} and \eqref{eq: 3-1 condition02}, it can be verified that \eqref{eq: 3-1 eqn1}--\eqref{eq: 3-1 eqn7} hold. Therefore,  $Y(s)$ is realizable by the circuit in Fig.~\ref{fig: 3-1}.

\end{appendices}

\vspace{0.2cm}

\section*{References}

\newpage

\setcounter{figure}{0}
\setcounter{equation}{0}
\setcounter{section}{0}

\numberwithin{equation}{section}
\numberwithin{figure}{section}
\numberwithin{lemma}{section}
\numberwithin{theorem}{section}
\numberwithin{definition}{section}

\begin{center}
\LARGE Supplementary Material to: Series-Parallel Mechanical Circuit Synthesis of a
Positive-Real Third-Order Admittance Using at Most
Six Passive Elements for Inerter-Based Control
\end{center}

\begin{center}
\large Kai~Wang,  ~Michael~Z.~Q.~Chen, ~ and ~ Fei Liu
\end{center}

\vspace{0.5cm}

 \section{Introduction}
This report presents some supplementary material to  the paper entitled   ``Series-parallel mechanical circuit synthesis of a positive-real third-order admittance using at most six passive elements for inerter-based control'' \cite{IJCTA_sub}, which are omitted from the original paper for brevity. It is assumed that
the numbering of lemmas, theorems, equations and figures in this report agrees with that in the original paper.

\section{Realizability Conditions of the Configurations in Fig.~3}
\label{appendix: Lemmas Cases-6 to 8}

\subsection{The Configuration in Fig.~3(b) (The Proof of Lemma~7)}

\emph{Necessity.} The admittance of the circuit configuration in Fig.~3(b) is calculated as $Y(s) = n(s)/d(s)$, where
$n(s) = b_1c_1c_2 s^3 + b_1(c_1k_1 + c_1k_3 + c_2k_1 + c_2k_2) s^2 +
(b_1k_1k_2 + b_1k_1k_3 + b_1k_2k_3 + c_1c_2k_1 + c_1c_2k_3) s
+ c_2 (k_1k_2 + k_2k_3 + k_1k_3)$
and $d(s) = b_1(c_1+c_2)s^3 + (b_1k_2 + b_1k_3 + c_1c_2)s^2 + c_2(k_2+k_3)s$.
Since $Y(s)$ is realizable by the circuit in Fig.~3(b),
there exists $k > 0$ such that $n(s) = k \alpha(s)$ and $d(s) = k \beta(s)$. Then,
it follows that
\begin{subequations}
\begin{align}
b_1c_1c_2 &= k \alpha_3,  \label{eq: Case-7 eqn1}  \\
b_1(c_1k_1 + c_1k_3 + c_2k_1 + c_2k_2) &= k \alpha_2,
\label{eq: Case-7 eqn2}  \\
b_1k_1k_2 + b_1k_1k_3 + b_1k_2k_3 + c_1c_2k_1 + c_1c_2k_3 &= k \alpha_1,
\label{eq: Case-7 eqn3}  \\
c_2 (k_1k_2 + k_2k_3 + k_1k_3)  &=  k \alpha_0,
\label{eq: Case-7 eqn4}  \\
b_1(c_1+c_2) &= k \beta_3, \label{eq: Case-7 eqn5}  \\
b_1k_2 + b_1k_3 + c_1c_2  &= k \beta_2, \label{eq: Case-7 eqn6}  \\
c_2(k_2+k_3)  &=  k \beta_1,   \label{eq: Case-7 eqn7}
\end{align}
\end{subequations}
where $k > 0$.
Then, it follows from \eqref{eq: Case-7 eqn1} and \eqref{eq: Case-7 eqn5} that
$1/c_1 + 1/c_2 = \beta_3/\alpha_3$, which  implies the expression of $c_2$ as in
\begin{equation}  \label{eq: Case-7 c2 c1}
c_2 = \frac{\alpha_3 c_1}{\beta_3 c_1 - \alpha_3}.
\end{equation}
Furthermore, by \eqref{eq: Case-7 eqn4} and \eqref{eq: Case-7 eqn7},  the expression of $k_3$ can be obtained as in
\begin{equation}   \label{eq: Case-7 k3 k1 k2}
k_3 = \frac{k_2 (\alpha_0 - \beta_1 k_1)}{\beta_1 (k_1 + k_2) - \alpha_0}.
\end{equation}
It follows from \eqref{eq: Case-7 eqn1} and \eqref{eq: Case-7 eqn7} that
$(k_2 + k_3)/(b_1c_1) = \beta_1/\alpha_3$, which implies the expression of $b_1$ as in
\begin{equation}   \label{eq: Case-7 k3 k1 k2}
b_1 = \frac{\alpha_3 k_2^2}{c_1 (\beta_1 (k_1 + k_2) - \alpha_0)},
\end{equation}
which implies that $\beta_1 (k_1 + k_2) - \alpha_0 > 0$. Substituting
\eqref{eq: Case-7 c2 c1}--\eqref{eq: Case-7 k3 k1 k2} into \eqref{eq: Case-7 eqn1}
yields
\begin{equation}  \label{eq: Case-7 x}
k = \frac{\alpha_3 c_1 k_2^2}{(\beta_3 c_1 - \alpha_3)(\beta_1(k_1 + k_2)-\alpha_0)}.
\end{equation}
Let
\begin{equation} \label{eq: lambda mu nu}
x = k_1 > 0, ~~~ y = k_2 > 0.
\end{equation}
By \eqref{eq: Case-7 c2 c1}--\eqref{eq: Case-7 k3 k1 k2} the element values of $c_2$, $k_3$, and $b_1$ can be further expressed as in (7), which together with \eqref{eq: Case-7 eqn3} and \eqref{eq: Case-7 eqn6} implies that $c_1$ can be expressed as in (7). Substituting the element values in (7) and $k$ expressed as in \eqref{eq: Case-7 x} into \eqref{eq: Case-7 eqn2} and \eqref{eq: Case-7 eqn3} implies (6a) and (6b), respectively. The assumption that the element values are positive and finite implies that (6c) and (6d)  hold.
Now, the necessity part is proved.

\emph{Sufficiency.}
Let  $c_1$, $c_2$, $k_1$, $k_2$, $k_3$, and $b_1$ satisfy
(7), where
$x > 0$ and $y > 0$ are positive roots of the equations in (6a) and (6b) such that (6c) and (6d) hold.   Then, it can be verified that the element values are positive and finite. Since (6a) and (6b) hold, it can be verified that \eqref{eq: Case-7 eqn1}--\eqref{eq: Case-7 eqn7} hold with $k$ satisfying \eqref{eq: Case-7 x}.  Therefore,   $Y(s)$ is realizable by the circuit in Fig.~3(b).

\subsection{The Configuration in Fig.~3(c) (The Proof of Lemma~8)}

\emph{Necessity.} The admittance of the circuit configuration in Fig.~3(c) is calculated as $Y(s) = n(s)/d(s)$ where
$n(s) = b_1(c_1+c_2)s^3 + (b_1k_1 + b_1k_2 + c_1c_2)s^2 +
(c_1k_2 + c_1k_3 + c_2k_1 + c_2k_3)s + k_1k_2 + k_2k_3 + k_1k_3$
and $d(s) = b_1 s^3 + c_2 s^2 + (k_2 + k_3) s$.  Since $Y(s)$ is realizable by the circuit in Fig.~3(c),
there exists $k > 0$ such that $n(s) = k \alpha(s)$ and $d(s) = k \beta(s)$. Then, it follows that
\begin{subequations}
\begin{align}
b_1(c_1+c_2) &= k \alpha_3,  \label{eq: Case-8 eqn1}  \\
b_1k_1 + b_1k_2 + c_1c_2 &= k \alpha_2,  \label{eq: Case-8 eqn2}  \\
c_1k_2 + c_1k_3 + c_2k_1 + c_2k_3 &= k \alpha_1, \label{eq: Case-8 eqn3}  \\
k_1k_2 + k_2k_3 + k_1k_3 &= k \alpha_0, \label{eq: Case-8 eqn4}  \\
b_1 &= k \beta_3,   \label{eq: Case-8 eqn5}  \\
c_2 &= k \beta_2,  \label{eq: Case-8 eqn6}  \\
k_2 + k_3 &= k \beta_1,  \label{eq: Case-8 eqn7}
\end{align}
\end{subequations}
where $k > 0$. Substituting \eqref{eq: Case-8 eqn5} and \eqref{eq: Case-8 eqn6} into \eqref{eq: Case-8 eqn1} implies
\begin{equation}  \label{eq: Case-8 c1 x}
c_1 = \frac{\alpha_3 - \beta_2\beta_3 k}{\beta_3},
\end{equation}
which together with \eqref{eq: Case-8 eqn2},
\eqref{eq: Case-8 eqn5}, and \eqref{eq: Case-8 eqn6} implies that
\begin{equation}    \label{eq: Case-8 k2 k1 x}
k_2 = \frac{\beta_2^2\beta_3 k - \beta_3^2 k_1 - \mathcal{B}_{33}}{\beta_3^2}.
\end{equation}
Substituting \eqref{eq: Case-8 eqn6}--\eqref{eq: Case-8 k2 k1 x} into
\eqref{eq: Case-8 eqn3} can imply
\begin{equation}     \label{eq: Case-8 k3 k1 x}
k_3 = \frac{\beta_1\beta_2\beta_3 k - \beta_2\beta_3 k_1 - \mathcal{B}_{23}}{\beta_2\beta_3}.
\end{equation}
By \eqref{eq: Case-8 k2 k1 x} and \eqref{eq: Case-8 k3 k1 x}, it follows from
\eqref{eq: Case-8 eqn7} that
\begin{equation}   \label{eq: Case-8 x k1}
k = \frac{2\beta_2 \beta_3^2 k_1 + \beta_3 \mathcal{B}_{23} + \beta_2 \mathcal{B}_{33}}{\beta_2^3 \beta_3}.
\end{equation}
Letting
\begin{equation}  \label{eq: Case-8 lambda k1}
x = 2 \beta_2\beta_3 k_1 > 0,
\end{equation}
it follows from \eqref{eq: Case-8 eqn5}, \eqref{eq: Case-8 eqn6}, and
\eqref{eq: Case-8 c1 x}--\eqref{eq: Case-8 x k1} that the values of $c_1$, $c_2$, $k_1$, $k_2$, $k_3$, and $b_1$ can be expressed as in
(9). The assumption that the element values are positive and finite implies that $x$ satisfies
(8b) and (8c). Finally, substituting the expressions of $k_1$, $k_2$, and $k_3$ as in (9) into
\eqref{eq: Case-8 eqn4}, it is implied that $x > 0$ is a positive root of the equation
(8a). Now, the proof of the necessity part has been completed.

\emph{Sufficiency.}
Let the values of $c_1$, $c_2$, $k_1$, $k_2$, $k_3$, and $b_1$ satisfy
(9), where $x$ is a positive root of equation (8a) such that
(8b) and (8c) hold. Then, it can be verified that all the element values are positive and finite.
Since (8a) holds, it can be verified that \eqref{eq: Case-8 eqn1}--\eqref{eq: Case-8 eqn7} hold with $k$ satisfying \eqref{eq: Case-8 x k1}.  Therefore,   $Y(s)$ is realizable by  the circuit in Fig.~3(c).

\section{Realizability Conditions of the Configurations in Fig.~4}
\label{appendix: Lemmas classes 2 and 3}

\subsection{The Configuration in Fig.~4(a) (The Proof of Lemma~10)}

\emph{Necessity.} The admittance of the circuit configuration in Fig.~4(a) is computed as $Y(s) = n(s)/d(s)$ where
$n(s) = b_1 (c_1 + c_2) s^3 + (b_1 k_1 + c_1c_2 + c_2c_3 + c_1c_3) s^2 + (c_1 k_1 + c_1 k_2 + c_2 k_2 + c_3 k_1) s + k_1k_2$
and $d(s) = b_1 s^3 + (c_2 + c_3) s^2 + (k_1 + k_2) s$.  Since $Y(s)$ can be realized by the circuit in Fig.~4(a),
there exists $k > 0$ such that $n(s) = k \alpha(s)$ and $d(s) = k \beta(s)$. Then,
it follows that
\begin{subequations}
\begin{align}
b_1 (c_1 + c_2) &= k \alpha_3,  \label{eq: 2-1 eqn1}  \\
b_1 k_1 + c_1c_2 + c_2c_3 + c_1c_3 &= k \alpha_2,  \label{eq: 2-1 eqn2}  \\
c_1 k_1 + c_1 k_2 + c_2 k_2 + c_3 k_1 &= k \alpha_1, \label{eq: 2-1 eqn3}  \\
k_1 k_2 &= k \alpha_0, \label{eq: 2-1 eqn4}  \\
b_1 &= k \beta_3,   \label{eq: 2-1 eqn5}  \\
c_2 + c_3 &= k \beta_2,  \label{eq: 2-1 eqn6}  \\
k_1 + k_2 &= k \beta_1,  \label{eq: 2-1 eqn7}
\end{align}
\end{subequations}
where $k > 0$. Then, it follows from \eqref{eq: 2-1 eqn1} and \eqref{eq: 2-1 eqn5} that
$c_1 + c_2 = \alpha_3/\beta_3$,
which together with \eqref{eq: 2-1 eqn6} implies that
\begin{equation} \label{eq: 2-1 c2 c3}
c_2 = \frac{\alpha_3 - \beta_3 c_1}{\beta_3}, ~~~ c_3 = \frac{\beta_2\beta_3 k + \beta_3 c_1 - \alpha_3}{\beta_3}.
\end{equation}
Let $x$ satisfies
\begin{equation} \label{eq: 2-1 lambda}
x  = k_1 > 0.
\end{equation}
From \eqref{eq: 2-1 eqn4} and \eqref{eq: 2-1 eqn7}, one obtains
\begin{equation} \label{eq: 2-1 x}
k = \frac{x^2}{\beta_1 x - \alpha_0},
\end{equation}
which implies that $\beta_1 x - \alpha_0 > 0$.
Substituting \eqref{eq: 2-1 eqn7} and \eqref{eq: 2-1 c2 c3}--\eqref{eq: 2-1 x} into \eqref{eq: 2-1 eqn3}, it is implied that $c_1$ can be expressed as in (11). Furthermore, substituting the expression of $k$ in \eqref{eq: 2-1 x} into \eqref{eq: 2-1 eqn5}, \eqref{eq: 2-1 eqn7}, and \eqref{eq: 2-1 c2 c3}, one can obtain the expressions of $b_1$, $k_2$, $c_2$, and $c_3$ as in (11). Together with the element value expressions in (11) and the expression of $k$ in \eqref{eq: 2-1 x}, it follows from \eqref{eq: 2-1 eqn2} that  (10a) holds. The assumption that $c_3 \geq 0$ and other element values are positive and finite implies that $x$ satisfies
(10b)--(10d). Now, the proof of the necessity part has been completed.

\emph{Sufficiency.}
Let  $c_1$, $c_2$, $c_3$, $k_1$, $k_2$, and $b_1$ satisfy
(11), where $x$ is a positive root of equation (10a) such that
(10b)--(10d).  Then, it can be verified that $c_3 \geq 0$ and other element values can be positive and finite.
Since (10a) holds, one can verify that \eqref{eq: 2-1 eqn1}--\eqref{eq: 2-1 eqn7} hold with $k$ satisfying \eqref{eq: 2-1 x}.  Therefore, $Y(s)$ is realizable by the circuit configuration in Fig.~4(a).

\subsection{The Configuration in Fig.~4(b) (The Proof of Lemma~11)}

\emph{Necessity.} The admittance of the circuit configuration in Fig.~4(b) is calculated as $Y(s) = n(s)/d(s)$ where
$n(s) = b_1 (c_1 c_2 + c_2 c_3 + c_1 c_3) s^3 + (c_1(b_1 k_1 + b_1 k_2 + c_2 c_3) + b_1 (c_2 k_2 + c_3 k_1)) s^2 + (b_1k_1k_2 + c_1c_3 k_1 + c_1c_3 k_2 + c_2c_3 k_2) s + c_3 k_1 k_2$ and $d(s) = b_1(c_2+c_3)s^3 + (b_1k_1 + b_1k_2 + c_2c_3) s^2 + c_3(k_1+k_2)s$.
Since the admittance $Y(s)$  is realizable by the circuit in Fig.~4(b), there exists $k > 0$ such that $n(s) = k \alpha(s)$ and $d(s) = k \beta(s)$.
Then, it follows that
\begin{subequations}
\begin{align}
b_1 (c_1 c_2 + c_2 c_3 + c_1 c_3) &= k \alpha_3,  \label{eq: 2-2 eqn1}  \\
c_1(b_1 k_1 + b_1 k_2 + c_2 c_3) + b_1 (c_2 k_2 + c_3 k_1) &= k \alpha_2,
\label{eq: 2-2 eqn2}  \\
b_1k_1k_2 + c_1c_3 k_1 + c_1c_3 k_2 + c_2c_3 k_2  &= k \alpha_1, \label{eq: 2-2 eqn3}  \\
c_3 k_1 k_2  &=  k \alpha_0, \label{eq: 2-2 eqn4}  \\
b_1(c_2+c_3) &=  k \beta_3,   \label{eq: 2-2 eqn5}  \\
b_1k_1 + b_1k_2 + c_2c_3    &=  k \beta_2,  \label{eq: 2-2 eqn6}  \\
c_3(k_1+k_2) &= k \beta_1,  \label{eq: 2-2 eqn7}
\end{align}
\end{subequations}
where $k > 0$. It follows from \eqref{eq: 2-2 eqn4} and \eqref{eq: 2-2 eqn7} that $1/k_1 + 1/k_2 = \beta_1/\alpha_0$, which implies that
\begin{equation}  \label{eq: 2-2 element values k2 k1}
k_2 = \frac{\alpha_0 k_1}{\beta_1 k_1 - \alpha_0}.
\end{equation}
The assumption that $k_1 > 0$ and $k_2 > 0$ implies that $\beta_1 k_1 - \alpha_0 > 0$.
Similarly, it follows from \eqref{eq: 2-2 eqn1} and \eqref{eq: 2-2 eqn5} that
\begin{equation}  \label{eq: 2-2 element values c1 c2 c3}
c_1 = \frac{\alpha_3 (c_2 + c_3) - \beta_3 c_2 c_3}{\beta_3 (c_2 + c_3)}.
\end{equation}
Substituting \eqref{eq: 2-2 element values k2 k1} into \eqref{eq: 2-2 eqn4} yields
\begin{equation}  \label{eq: 2-2 element values c3 k1}
c_3 = \frac{k(\beta_1 k_1 - \alpha_0)}{k_1^2},
\end{equation}
which together with \eqref{eq: 2-2 eqn5} implies that $b_1$ can be expressed as in
\begin{equation}  \label{eq: 2-2 element values b1 k1}
b_1 = \frac{k \beta_3 k_1^2}{(\beta_1 k_1 - \alpha_0)k + c_2 k_1^2}.
\end{equation}
Therefore, it is implied that  $(\beta_1 k_1 - \alpha_0)k + c_2 k_1^2 > 0$.
Let
\begin{equation}  \label{eq: 2-2 lambda nu}
x = k_1 > 0, ~~~ y = c_2 > 0.
\end{equation}
Then, by \eqref{eq: 2-2 eqn6}, \eqref{eq: 2-2 element values k2 k1}, and \eqref{eq: 2-2 element values c3 k1}--\eqref{eq: 2-2 lambda nu},
one obtains
\begin{equation}   \label{eq: 2-2 x}
k = \frac{x^2((\beta_1 x - \alpha_0)^2 y^2  - \beta_2  x^2 (\beta_1 x - \alpha_0) y  + \beta_1\beta_3 x^4)}{(\beta_1 x - \alpha_0)^2(\beta_2 x^2 - (\beta_1 x - \alpha_0) y)}.
\end{equation}
Since   $\beta_2 x^2 - (\beta_1 x - \alpha_0) y   < 0$ can imply
$(\beta_1 x - \alpha_0)^2 y^2  - \beta_2  x^2 (\beta_1 x - \alpha_0) y  + \beta_1\beta_3 x^4 > 0$, it follows from \eqref{eq: 2-2 x} and $k > 0$ that $\beta_2 x^2 - (\beta_1 x - \alpha_0) y   > 0$ and $(\beta_1 x - \alpha_0)^2 y^2  - \beta_2  x^2 (\beta_1 x - \alpha_0) y  + \beta_1\beta_3 x^4 > 0$.
Combining \eqref{eq: 2-2 element values k2 k1}--\eqref{eq: 2-2 x}, the element values can be expressed as in (13), which together with \eqref{eq: 2-2 eqn2} and \eqref{eq: 2-2 eqn3} implies that (12a) and (12b) hold. The assumption that the element values are positive and finite implies that (12c) and (12d) hold.
The proof of the  necessity part has been completed.

\emph{Sufficiency.}
Let the element values of $c_1$, $c_2$, $c_3$, $k_1$, $k_2$, and $b_1$
 satisfy (13), where $x > 0$ and $y > 0$ are positive roots of the equations
(12a) and (12b), such that
(12c) and (12d) hold.
  Then, it can be verified that the element values are positive and finite.
Since (12a) and (12b) hold, it can be verified that \eqref{eq: 2-2 eqn1}--\eqref{eq: 2-2 eqn7} hold with $k$ satisfying
\eqref{eq: 2-2 x}. Therefore,  $Y(s)$   is realizable by  the circuit configuration in Fig.~4(b).

\subsection{The Configuration in Fig.~4(d) (The Proof of Lemma~13)}

\emph{Necessity.} The admittance of the configuration in Fig.~4(d) is calculated as $Y(s) = \alpha(s)/\beta(s)$, where
$\alpha(s) = b_1 b_2 (c_1 + c_2) s^4 + (b_1b_2k_1 + b_1c_1c_2 + b_2c_1c_2) s^3 +
(b_1 c_1 k_1 + b_1 c_1 k_2 + b_1 c_2 k_2 + b_2 c_1 k_1) s^2 + k_2 (b_1 k_1 +
c_1 c_2) s + c_1 k_1 k_2$ and $\beta(s) = b_1b_2 s^4 + (b_1 c_2 + b_2 c_1) s^3 +
(b_1 k_1 + b_1 k_2 + c_1c_2) s^2 + c_1 (k_1 + k_2) s$.
If the given admittance $Y(s)$ of this lemma is realizable as in Fig.~4(d), then the resultant of $\alpha(s)$ and $\beta(s)$ in $s$ calculated as $R_0 (\alpha, \beta, s) = b_1b_2c_1^4k_1k_2(b_2k_1^2 + c_2^2k_2)^2 (b_1^2 k_1 + b_1^2 k_2 + c_1^2 b_2 - b_1 c_1 c_2)^2$ is zero. Therefore, one obtains that $c_2$ satisfies the expression in
(17), which further implies that the admittance of the configuration in Fig.~4(d) becomes
$Y(s) = \alpha'(s)/\beta'(s)$, where $\alpha'(s) = b_2 (b_1^2 k_1 + b_1^2 k_2 + b_1 c_1^2 + b_2 c_1^2) s^3 + b_1c_1(b_1k_1 + b_1 k_2 + b_2 k_1) s^2 + k_2 (b_1^2k_1 + b_1^2k_2 + b_2 c_1^2) s + b_1c_1k_1 k_2$ and $\beta'(s) = b_1b_2c_1 s^3 + (b_1^2 k_1 + b_1^2 k_2 + b_2 c_1^2) s^2 + b_1 c_1 (k_1 + k_2) s$. Then, it follows that
\begin{subequations}
\begin{align}
b_2 (b_1^2 k_1 + b_1^2 k_2 + b_1 c_1^2 + b_2 c_1^2)  &= k \alpha_3,  \label{eq: 3-2 eqn1}  \\
b_1c_1(b_1k_1 + b_1 k_2 + b_2 k_1) &= k \alpha_2,   \label{eq: 3-2 eqn2}  \\
k_2 (b_1^2k_1 + b_1^2k_2 + b_2 c_1^2) &= k \alpha_1,   \label{eq: 3-2 eqn3}  \\
b_1c_1k_1 k_2 &= k \alpha_0,     \label{eq: 3-2 eqn4}  \\
b_1b_2c_1 &= k \beta_3,  \label{eq: 3-2 eqn5}  \\
b_1^2 k_1 + b_1^2 k_2 + b_2 c_1^2 &= k \beta_2,   \label{eq: 3-2 eqn6}  \\
b_1 c_1 (k_1 + k_2)   &= k \beta_1,  \label{eq: 3-2 eqn7}
\end{align}
\end{subequations}
where $k > 0$.  Then, it follows from \eqref{eq: 3-2 eqn3} and \eqref{eq: 3-2 eqn6} that $k_2$ can be expressed as in (17). From \eqref{eq: 3-2 eqn4} and \eqref{eq: 3-2 eqn7}, one implies that
$k_1^{-1} + k_2^{-1} = \beta_1/\alpha_0$, which together with the expression of $k_2$ further implies the expression of $k_1$ as in (17). The assumption that the value of $k_1$ is positive and finite implies that $\tilde{\mathcal{B}}_{11} > 0$. Furthermore, it follows from \eqref{eq: 3-2 eqn4} and \eqref{eq: 3-2 eqn5} that $b_2 = \beta_3 k_1 k_2/\alpha_0$, which together with the expressions of $k_1$ and $k_2$ yields the expression of $b_2$ as in (17).  Similarly, from \eqref{eq: 3-2 eqn2} and \eqref{eq: 3-2 eqn5}, one obtains $b_1 k_1 + b_1 k_2 + b_2 k_1 = \alpha_2 b_2/\beta_3$.
Together with the expressions of $k_1$, $k_2$, and $b_2$, one implies that $b_1$ can be expressed as in (17), which together with the assumption that $b_1 > 0$ can imply that
(16b) holds.
Let
\begin{equation} \label{eq: 3-2 lambda c1}
x = c_1 \tilde{\mathcal{B}}_{11},
\end{equation}
which is positive since  $\tilde{\mathcal{B}}_{11} > 0$.
Then, it is implied from \eqref{eq: 3-2 lambda c1} that the value of $c_1$ can be expressed as in (17).
Substituting the expressions of $c_1$, $k_1$, $k_2$, $b_1$, and $b_2$ into \eqref{eq: 3-2 eqn1}
and \eqref{eq: 3-2 eqn6}
yields
(16c)  and $\beta_1 \beta_3 x^2 - \beta_2 (\alpha_2 \tilde{\mathcal{B}}_{11}
+ \alpha_1 \mathcal{B}_{13}) x +  (\alpha_2 \tilde{\mathcal{B}}_{11} + \alpha_1 \mathcal{B}_{13})^2= 0$. Then,
from the latter equation, it is implied that (16a) holds and $x$ satisfies
(16d).
From  \eqref{eq: 3-2 eqn5} and
the element values of $c_1$, $b_1$, and $b_2$ as in (17), the expression of $k$ can be obtained as
\begin{equation} \label{eq: 3-2 k}
k = \frac{\alpha_1^2 x(\alpha_2 \tilde{\mathcal{B}}_{11} + \alpha_1 \mathcal{B}_{13})}{\beta_1 \beta_2 \tilde{\mathcal{B}}_{11}^3}.
\end{equation}
Now, the necessity part is proved.

\emph{Sufficiency.} Let the element values of
$c_1$, $c_2$, $k_1$, $k_2$, $b_1$, and $b_2$
satisfy (17), where
(16a) and  (16b) hold, and
 $x$ is a positive root of
(16c), such that (16d) holds. Then, it can be verified that   the element values are positive and finite. Moreover, together with the expression of $c_2$ in (17), it can be calculated that the admittance of the configuration in Fig.~4(d) is $Y(s) = \alpha' (s) / \beta' (s)$, where $\alpha'(s) = b_2 (b_1^2 k_1 + b_1^2 k_2 + b_1 c_1^2 + b_2 c_1^2) s^3 + b_1c_1(b_1k_1 + b_1 k_2 + b_2 k_1) s^2 + k_2 (b_1^2k_1 + b_1^2k_2 + b_2 c_1^2) s + b_1c_1k_1 k_2$ and $\beta'(s) = b_1b_2c_1 s^3 + (b_1^2 k_1 + b_1^2 k_2 + b_2 c_1^2) s^2 + b_1 c_1 (k_1 + k_2) s$.
Since $x$ satisfies (16c) and (16d), it can be verified that \eqref{eq: 3-2 eqn1}--\eqref{eq: 3-2 eqn7} hold with $k$ satisfying \eqref{eq: 3-2 k}.
Therefore,   $Y(s)$ is realizable as the configuration in Fig.~4(d).

\section{Realizability Conditions of the Configurations in Fig.~5}
\label{appendix: Lemmas classes 4 and 5}

\subsection{The Configuration in Fig.~5(a) (The Proof of Lemma~14)}   \label{appendix: F}

\emph{Necessity.}
It can be proved that $c_2^{-1}  = 0$ and $c_3 = 0$ cannot simultaneously hold.
Assume that $c_2^{-1} \neq 0$ and $c_3 \geq 0$, which means that the value of $c_2$ is positive and finite. The admittance of the circuit configuration in Fig.~5(a) is computed as $Y(s) = n(s)/d(s)$, where $n(s) = b_1 c_1 (c_2 + c_3) s^3 + (b_1 c_1 k_2 + b_1 c_2 k_1 + b_1 c_3 k_1 + c_1 c_2 c_3) s^2 + (b_1 k_1 k_2 + c_1 c_2 k_2 + c_2 c_3 k_1) s + c_2 k_1 k_2$ and $d(s) = b_1 (c_1 + c_2 + c_3) s^3 + (b_1 k_1 + b_1 k_2 + c_1 c_2 + c_2 c_3) s^2 + c_2 (k_1 + k_2) s$. Since the admittance $Y(s)$ in this lemma can be realized by the circuit in Fig.~5(a), there exists $k > 0$ such that $n(s) = k \alpha(s)$ and $d(s) = k \beta(s)$.
Then, it follows that
\begin{subequations}
\begin{align}
b_1 c_1 (c_2 + c_3) &= k \alpha_3,  \label{eq: 4-1 eqn1}  \\
b_1 c_1 k_2 + b_1 c_2 k_1 + b_1 c_3 k_1 + c_1 c_2 c_3  &= k \alpha_2   \label{eq: 4-1 eqn2}  \\
b_1 k_1 k_2 + c_1 c_2 k_2 + c_2 c_3 k_1  &= k \alpha_1,   \label{eq: 4-1 eqn3}  \\
c_2 k_1 k_2 &= k \alpha_0,     \label{eq: 4-1 eqn4}  \\
b_1 (c_1 + c_2 + c_3)  &= k \beta_3,  \label{eq: 4-1 eqn5}  \\
b_1 k_1 + b_1 k_2 + c_1 c_2 + c_2 c_3 &= k \beta_2,   \label{eq: 4-1 eqn6}  \\
c_2 (k_1 + k_2)   &= k \beta_1,  \label{eq: 4-1 eqn7}
\end{align}
\end{subequations}
where $k > 0$.
Then, it follows from \eqref{eq: 4-1 eqn4} and \eqref{eq: 4-1 eqn7} that $1/k_1 + 1/k_2 = \beta_1/\alpha_0$, which implies that $k_2$ can be expressed as in
\begin{equation} \label{eq: 4-1 k2 k1}
k_2 = \frac{\alpha_0 k_1}{\beta_1 k_1 - \alpha_0},
\end{equation}
which implies that  $\beta_1 k_1 - \alpha_0 > 0$. Substituting \eqref{eq: 4-1 k2 k1} into \eqref{eq: 4-1 eqn4} implies that $c_2$ can be expressed as in
\begin{equation} \label{eq: 4-1 c2 k1}
c_2 = \frac{k(\beta_1 k_1 - \alpha_0)}{k_1^2}.
\end{equation}
Combining \eqref{eq: 4-1 eqn1} and \eqref{eq: 4-1 eqn5}, one obtains $1/c_1 + 1/(c_2 + c_3) = \beta_3/\alpha_3$, which together with \eqref{eq: 4-1 c2 k1} implies that
\begin{equation} \label{eq: 4-1 c3 k1}
c_3 = -\frac{(\beta_3 c_1 - \alpha_3)(\beta_1 k_1 - \alpha_0)k - c_1 k_1^2 \alpha_3}{k_1^2(\beta_3 c_1 - \alpha_3)}.
\end{equation}
Then, substituting \eqref{eq: 4-1 c2 k1} and \eqref{eq: 4-1 c3 k1} into \eqref{eq: 4-1 eqn1} implies that
\begin{equation}  \label{eq: 4-1 b1 c1}
b_1 = \frac{k(\beta_3 c_1 - \alpha_3)}{c_1^2}.
\end{equation}
The assumption that    $b_1 > 0$ and $c_3 \geq  0$ implies that  $\beta_3 c_1 - \alpha_3 > 0$ and $(\beta_3c_1 - \alpha_3)(\beta_1k_1 - \alpha_0) k - c_1 k_1^2\alpha_3 \leq 0$. Furthermore, let
\begin{equation} \label{eq: 4-1 mu}
y = - \frac{k_1}{c_1} < 0.
\end{equation}
Together with \eqref{eq: 4-1 k2 k1}--\eqref{eq: 4-1 mu}, it follows from \eqref{eq: 4-1 eqn2} and \eqref{eq: 4-1 eqn3} that
(18) holds. Similarly, together with (18) and \eqref{eq: 4-1 k2 k1}--\eqref{eq: 4-1 mu}, it follows from
\eqref{eq: 4-1 eqn2} and \eqref{eq: 4-1 eqn6} that $k_1$ can be expressed as in
(20). Together with \eqref{eq: 4-1 k2 k1}--\eqref{eq: 4-1 mu}, one can obtain from \eqref{eq: 4-1 eqn6} that $k$ satisfies
\begin{equation} \label{eq: 4-1 x}
k = \frac{k_1^4 (\beta_1 y^2 (\alpha_3 y + \beta_3 k_1)^2 + \beta_3 (\beta_1 k_1 - \alpha_0)^2 + \beta_2 y (\alpha_3 y + \beta_3 k_1) (\beta_1 k_1 - \alpha_0))}{-y (\beta_1 k_1 - \alpha_0)^3 (\alpha_3 y + \beta_3 k_1)}.
\end{equation}
Since $\beta_1 k_1 - \alpha_0 > 0$, $y < 0$, and $k > 0$, it is clear that $\beta_1 y^2 (\alpha_3 y + \beta_3 k_1)^2 + \beta_3 (\beta_1 k_1 - \alpha_0)^2 + \beta_2 y (\alpha_3 y + \beta_3 k_1) (\beta_1 k_1 - \alpha_0)$ and $\alpha_3 y + \beta_3 k_1$ are nonzero and have the same sign. Since it can be verified that $\alpha_3 y + \beta_3 k_1 < 0$ implies $\beta_1 y^2 (\alpha_3 y + \beta_3 k_1)^2 + \beta_3 (\beta_1 k_1 - \alpha_0)^2 + \beta_2 y (\alpha_3 y + \beta_3 k_1) (\beta_1 k_1 - \alpha_0) > 0$, one can indicate that $\alpha_3 y + \beta_3 k_1 > 0$ and $\beta_1 y^2 (\alpha_3 y + \beta_3 k_1)^2 + \beta_3 (\beta_1 k_1 - \alpha_0)^2 + \beta_2 y (\alpha_3 y + \beta_3 k_1) (\beta_1 k_1 - \alpha_0) > 0$, together with $\beta_1 k_1 - \alpha_0 > 0$,  $\beta_3 c_1 - \alpha_3 > 0$, $(\beta_3c_1 - \alpha_3)(\beta_1k_1 - \alpha_0) k - c_1 k_1^2\alpha_3 \leq 0$,
and
the expression of $k_1$ in (20) imply that $\Gamma_1$, $\Gamma_2$, $\Gamma_3$, $\Gamma_4$,   $\Gamma_5$, and $\Gamma_6$  have the same sign, where $\Gamma_k$ is nonzero for $k = 1, 2, ..., 5$ and $\Gamma_6$ can be zero.
Furthermore, by \eqref{eq: 4-1 x} and the expression of $k_1$ in (20), the element values in \eqref{eq: 4-1 k2 k1}--\eqref{eq: 4-1 mu} can be further expressed as in (20).

The realization condition for the case when $c_2^{-1} = 0$ and $c_3 > 0$ can be similarly derived as Condition~2, and the element value expressions are as in (21).

\emph{Sufficiency.}
Assume that Condition~1  holds. Let  $c_1$, $c_2$, $c_3$, $k_1$, $k_2$, and $b_1$
satisfy
(20), where $y$ is a negative  root of (18), and $\Gamma_1$, $\Gamma_2$, $\Gamma_3$, $\Gamma_4$,   $\Gamma_5$, and $\Gamma_6$  have the same sign, where $\Gamma_k$ is nonzero for $k = 1, 2, ..., 5$ and $\Gamma_6$ can be zero. Then, it can be verified that  $c_3 \geq 0$ and other element values are positive and finite. Since
(18) holds, it can be verified that
\eqref{eq: 4-1 eqn1}--\eqref{eq: 4-1 eqn7} hold with $k$ satisfying \eqref{eq: 4-1 x}. Therefore, the given admittance $Y(s)$  is realizable as the circuit in Fig.~5(a). Similarly, the case when Condition~2 holds can be also proved.

\subsection{The Configuration in Fig. 5(b) (Proof of Lemma~15)}   \label{appendix: G}

\emph{Necessity.}
The admittance of the circuit configuration in Fig.~5(b) is computed as $Y(s) = n(s)/d(s)$, where
$n(s) = b_1c_1c_2 s^4 + b_1 (c_1 k_2 + c_1 k_3 + c_2 k_1) s^3 + (b_1 k_1 k_2 + b_1 k_1 k_3 + c_1 c_2 k_3) s^2 + k_3 (c_1 k_2 + c_2 k_1) s + k_1 k_2 k_3$ and $d(s) = b_1 (c_1 + c_2) s^4 + b_1 (k_1 + k_2 + k_3) s^3 + k_3 (c_1 + c_2) s^2 + k_3 (k_1 + k_2) s$.
If the given admittance $Y(s)$ of this lemma is realizable by the circuit in Fig.~5(b), then the resultant of $n(s)$ and $d(s)$ in $s$ calculated as $R_0 (n, d, s) = - b_1^3 k_1 k_2 k_3^5 (k_1 (b_1k_1^2 + c_1^2k_3) c_2 - c_1 (b_1k_1^2k_2 + b_1k_1^2k_3 + c_1^2k_2k_3))^2$ is zero. Therefore, one obtains that $c_2$ satisfies the expression in
(23), which further implies that the admittance of the circuit is equivalent to
$Y(s) = \tilde{n}(s)/\tilde{d}(s)$, where $\tilde{n}(s) = b_1 c_1 (b_1 k_1^2 k_2 + b_1 k_1^2 k_3 + c_1^2 k_2 k_3) s^3 + b_1 k_1 (k_2 + k_3)  (b_1 k_1^2 + c_1^2 k_3)s^2 + c_1 k_3 (b_1 k_1^2 k_2 + b_1 k_1^2 k_3 + c_1^2 k_2 k_3) s + k_1 k_2 k_3 (b_1 k_1^2 + c_1^2 k_3)$ and $\tilde{d}(s) = b_1 (b_1 k_1^3 + b_1 k_1^2 k_2 + b_1 k_1^2 k_3 + c_1^2 k_1 k_3 + c_1^2 k_2 k_3) s^3 + b_1 c_1 k_1 k_3^2 s^2 + k_3 (k_1 + k_2) (b_1 k_1^2 + c_1^2 k_3)s$. Therefore, there exists $k > 0$ such that $\tilde{n}(s) = k \alpha(s)$ and $\tilde{d}(s) = k \beta(s)$. Then,
it follows that
\begin{subequations}
\begin{align}
b_1 c_1 (b_1 k_1^2 k_2 + b_1 k_1^2 k_3 + c_1^2 k_2 k_3) &= k \alpha_3,  \label{eq: 4-2 eqn1}  \\
b_1 k_1 (k_2 + k_3)  (b_1 k_1^2 + c_1^2 k_3)  &= k \alpha_2   \label{eq: 4-2 eqn2}  \\
c_1 k_3 (b_1 k_1^2 k_2 + b_1 k_1^2 k_3 + c_1^2 k_2 k_3)  &= k \alpha_1,   \label{eq: 4-2 eqn3}  \\
k_1 k_2 k_3 (b_1 k_1^2 + c_1^2 k_3) &= k \alpha_0,     \label{eq: 4-2 eqn4}  \\
b_1 (b_1 k_1^3 + b_1 k_1^2 k_2 + b_1 k_1^2 k_3 + c_1^2 k_1 k_3 + c_1^2 k_2 k_3)  &= k \beta_3,  \label{eq: 4-2 eqn5}  \\
b_1 c_1 k_1 k_3^2 &= k \beta_2,   \label{eq: 4-2 eqn6}  \\
k_3 (k_1 + k_2) (b_1 k_1^2 + c_1^2 k_3)   &= k \beta_1,  \label{eq: 4-2 eqn7}
\end{align}
\end{subequations}
where $k > 0$. Then, it follows from \eqref{eq: 4-2 eqn4} and \eqref{eq: 4-2 eqn7} that $1/k_1 + 1/k_2 = \beta_1/\alpha_0$, which implies that $k_2$ can be expressed as
\begin{equation}  \label{eq: 4-2 element values k2 k1}
k_2 = \frac{\alpha_0 k_1}{\beta_1 k_1 - \alpha_0},
\end{equation}
which implies that $\beta_1 k_1 - \alpha_0 > 0$.
Similarly, combining \eqref{eq: 4-2 eqn1} and \eqref{eq: 4-2 eqn3}, one obtains
\begin{equation}  \label{eq: 4-2 element values b1 k3}
b_1 = \frac{\alpha_3}{\alpha_1}k_3.
\end{equation}
Then, it is implied from \eqref{eq: 4-2 eqn2} and \eqref{eq: 4-2 eqn4} that
\begin{equation}  \label{eq: 4-2 element values k3 k1}
k_3 = \frac{\Delta_\alpha k_1}{\alpha_3 (\beta_1 k_1 - \alpha_0)}.
\end{equation}
Then, it follows from \eqref{eq: 4-2 eqn1} and \eqref{eq: 4-2 eqn6} that
\begin{equation}  \label{eq: 4-2 element values c1 2}
c_1^2 = \frac{-\alpha_2\beta_2k_1^2 + \Delta_\alpha k_1}{\alpha_0 \beta_2},
\end{equation}
which together with \eqref{eq: 4-2 eqn5} and \eqref{eq: 4-2 eqn6} implies that $c_1$ can be expressed as in
\begin{equation}  \label{eq: 4-2 element values c1 k1}
c_1 = \frac{ k_1 (-\beta_2(\beta_1 k_1 - \alpha_0) + \alpha_1\beta_1)}{\alpha_0 \alpha_1}.
\end{equation}
Substituting \eqref{eq: 4-2 element values k2 k1}--\eqref{eq: 4-2 element values k3 k1} and \eqref{eq: 4-2 element values c1 k1} into \eqref{eq: 4-2 eqn6}
implies that
\begin{equation}  \label{eq: 4-2 x}
k = \frac{\Delta_\alpha^3 k_1^5  (-\beta_2(\beta_1 k_1 - \alpha_0) + \alpha_1\beta_1)}{\alpha_0  \alpha_1^2 \alpha_3  \beta_2 \beta_3 (\beta_1 k_1 - \alpha_0)^3}.
\end{equation}
By the element value expressions in \eqref{eq: 4-2 element values k2 k1}--\eqref{eq: 4-2 element values k3 k1} and \eqref{eq: 4-2 element values c1 k1} and the value of $k$ in \eqref{eq: 4-2 x}, one can obtain
from \eqref{eq: 4-2 eqn4} and \eqref{eq: 4-2 element values c1 2} that
\begin{equation}  \label{eq: 4-2 equivalent equation 01}
\alpha_3^2 \beta_1^2 \beta_2^3 k_1^3 - 2 \alpha_3^2 \beta_1 \beta_2^2 (\alpha_0\beta_2 + \alpha_1\beta_1) k_1^2 + \beta_2 (\alpha_3^2 (\alpha_0\beta_2 + \alpha_1\beta_1)^2 + \alpha_0\alpha_1^2\alpha_2\beta_3^2) k_1 - \alpha_0 \alpha_1^2 \beta_3^2 \Delta_\alpha = 0,
\end{equation}
and
\begin{equation}  \label{eq: 4-2 equivalent equation 02}
\beta_2 \mathcal{B}_{23} k_1 - (\alpha_1 \mathcal{B}_{23} - \alpha_3 \mathcal{B}_{12}) = 0.
\end{equation}
By \eqref{eq: 4-2 equivalent equation 02}, it is clear that $\mathcal{B}_{23} \neq 0$, which further implies that
$k_1$ can be expressed as in (23). Substituting the expression of $k_1$ into
\eqref{eq: 4-2 equivalent equation 01} implies that (22c) holds.
Then, by   \eqref{eq: 4-2 element values k2 k1}--\eqref{eq: 4-2 element values k3 k1},   \eqref{eq: 4-2 element values c1 k1},
and the expression of $k_1$ as in (23),
the element values of $c_1$, $k_2$, $k_3$, and $b_1$ can be further expressed as in
(23).
The assumption that the element values are positive and finite implies that
(22a) and (22b) hold. The proof of the necessity part has been completed.

\emph{Sufficiency.}
Let  $c_1$, $c_2$, $k_1$, $k_2$, $k_3$, and $b_1$
satisfy (23), where (22a)--(22c) hold.
Then, it can be verified that the element values are positive and finite.  Moreover, together with the expression of $c_2$ in
(23), it can be computed that the admittance of the configuration in Fig.~5(b) is $Y(s) = \tilde{n}(s) / \tilde{d}(s)$, where $\tilde{n}(s) = b_1 c_1 (b_1 k_1^2 k_2 + b_1 k_1^2 k_3 + c_1^2 k_2 k_3) s^3 + b_1 k_1 (k_2 + k_3)  (b_1 k_1^2 + c_1^2 k_3)s^2 + c_1 k_3 (b_1 k_1^2 k_2 + b_1 k_1^2 k_3 + c_1^2 k_2 k_3) s + k_1 k_2 k_3 (b_1 k_1^2 + c_1^2 k_3)$ and $\tilde{d}(s) = b_1 (b_1 k_1^3 + b_1 k_1^2 k_2 + b_1 k_1^2 k_3 + c_1^2 k_1 k_3 + c_1^2 k_2 k_3) s^3 + b_1 c_1 k_1 k_3^2 s^2 + k_3 (k_1 + k_2) (b_1 k_1^2 + c_1^2 k_3)s$. Since
(23) holds,
one can verify that \eqref{eq: 4-2 eqn1}--\eqref{eq: 4-2 eqn7} hold with $k$ satisfying \eqref{eq: 4-2 x}.  Therefore,  $Y(s)$  is realizable by the circuit configuration in Fig.~5(b).

\subsection{The Configuration in Fig.~5(c) (The Proof of Lemma~16)}

\emph{Necessity.} The admittance of the circuit configuration in Fig.~5(c) is calculated as $Y(s) = n(s)/d(s)$, where
$n(s) = b_1 c_1 c_2 s^4 + b_1 (c_1 k_3 + c_2 k_1) s^3 + (b_1 k_1 k_3 + c_1 c_2 k_2 + c_1 c_2 k_3) s^2 + (c_1 k_2 k_3 + c_2 k_1 k_2 + c_2 k_1 k_3) s + k_1 k_2 k_3$ and $d(s) = b_1 c_2 s^4 + (b_1 k_3 + c_1 c_2) s^3 + (c_1 k_3 + c_2 k_1 + c_2 k_2 + c_2 k_3) s^2 + k_3 (k_1 + k_2) s$.
If the given admittance $Y(s)$ of this lemma is realizable by the circuit in Fig.~5(c), then the resultant of $n(s)$ and $d(s)$ in $s$ calculated as $R_0 (n, d, s) = -b_1 c_2^4 k_1 k_2 k_3^3 (c_1 k_3 (b_1 k_1^2 + c_1^2k_2) - c_2 k_1 (b_1 k_1^2 + c_1^2 k_2 + c_1^2 k_3) )^2$ is zero. Therefore, one obtains that $c_2$ satisfies the expression in
(25), which further implies that the admittance of the configuration in Fig.~5(c) becomes
$Y(s) = \tilde{n}(s)/\tilde{d}(s)$, where $\tilde{n}(s) = b_1 c_1 (b_1 k_1^2 + c_1^2 k_2) s^3 + b_1 k_1 (b_1 k_1^2 + c_1^2 k_2 + c_1^2 k_3) s^2 + c_1 (k_2 + k_3) (b_1 k_1^2 + c_1^2 k_2) s + k_1 k_2 (b_1 k_1^2 + c_1^2 k_2 + c_1^2 k_3)$ and $\tilde{d}(s) = b_1 (b_1 k_1^2 + c_1^2 k_2) s^3 + c_1 (b_1 k_1^2 + c_1^2 k_2 + b_1 k_1 k_3) s^2 + (k_1 + k_2) (b_1 k_1^2 + c_1^2 k_2 + c_1^2 k_3) s$.
Therefore, there exists $k > 0$ such that $n(s) = k\alpha(s)$ and $d(s) = k \beta(s)$. Then, it follows that
\begin{subequations}
\begin{align}
b_1 c_1 (b_1 k_1^2 + c_1^2 k_2) &= k \alpha_3,  \label{eq: 4-3 eqn1}  \\
b_1 k_1 (b_1 k_1^2 + c_1^2 k_2 + c_1^2 k_3)  &= k \alpha_2   \label{eq: 4-3 eqn2}  \\
c_1 (k_2 + k_3) (b_1 k_1^2 + c_1^2 k_2)  &= k \alpha_1,   \label{eq: 4-3 eqn3}  \\
k_1 k_2 (b_1 k_1^2 + c_1^2 k_2 + c_1^2 k_3) &= k \alpha_0,     \label{eq: 4-3 eqn4}  \\
b_1 (b_1 k_1^2 + c_1^2 k_2)  &= k \beta_3,  \label{eq: 4-3 eqn5}  \\
c_1 (b_1 k_1^2 + c_1^2 k_2 + b_1 k_1 k_3) &= k \beta_2,   \label{eq: 4-3 eqn6}  \\
 (k_1 + k_2) (b_1 k_1^2 + c_1^2 k_2 + c_1^2 k_3)   &= k \beta_1,  \label{eq: 4-3 eqn7}
\end{align}
\end{subequations}
where $k > 0$. Then, it follows from \eqref{eq: 4-3 eqn1} and \eqref{eq: 4-3 eqn5} that the value of $c_1$ can be expressed as in (25). Let
\begin{equation} \label{eq: 4-3 lambda k1}
x = k_1 > 0.
\end{equation}
Clearly, it follows from \eqref{eq: 4-3 lambda k1} that $k_1$ can be expressed as in (25).
Furthermore, combining \eqref{eq: 4-3 eqn4} and \eqref{eq: 4-3 eqn7}, it is implied that $k_1^{-1} + k_2^{-1} = \beta_1/\alpha_0$, which together with the expression of $k_1$
implies the expression of $k_2$ as in (25). From \eqref{eq: 4-3 eqn2} and \eqref{eq: 4-3 eqn4}, one obtains $b_1 = \alpha_2 k_2/\alpha_0$, which together with the expression of $k_2$ further implies that the value of $b_1$ can be expressed as in (25).
It follows from \eqref{eq: 4-3 eqn3} and \eqref{eq: 4-3 eqn5} that $c_1 (k_2 + k_3) = \alpha_1 b_1 / \beta_3$, which together with the expressions of $c_1$ and $k_2$ implies that $k_3$ can be expressed as in (25). Substituting  the expressions of $c_1$, $k_1$, $k_2$, and $b_1$ into \eqref{eq: 4-3 eqn5} yields
\begin{equation} \label{eq: 4-3 x}
k = \frac{\alpha_2 x^2 (\alpha_2 \beta_3^2 x^2 + \alpha_0 \alpha_3^2)}{\beta_3^3 (\beta_1 x - \alpha_0)^2}.
\end{equation}
Substituting the element values in (25) into \eqref{eq: 4-3 eqn2} and
\eqref{eq: 4-3 eqn6} implies (24d) and $\alpha_2 \beta_3^3 (\alpha_3 \beta_1 - \alpha_2 \beta_2) x^3 + \alpha_2 \beta_3^2 (\alpha_1 \alpha_2 - 2 \alpha_0 \alpha_3) x^2  + \alpha_0 \alpha_3^2 (\alpha_3 \beta_1 - \alpha_2 \beta_2) x - \alpha_0^2 \alpha_3^3 = 0$, which can further imply
(24b). The assumption that the element values are positive and finite implies that
(24a) and (24c) hold. The proof of the necessity part has been completed.

\emph{Sufficiency.}
Let the element values of $c_1$, $c_2$, $k_1$, $k_2$, $k_3$, and $b_1$ satisfy (25), where
(24a) holds,
and $x$ is positive root of (24b), such that (24c) and (24d) hold.
Then, it can be verified that the elements values are positive and finite. Moreover, together with the expression of $c_2$ in
(25), it can be calculated that the admittance of the configuration in Fig.~5(c) is
$Y(s) = \tilde{n}(s) / \tilde{d}(s)$, where $\tilde{n}(s) = b_1 c_1 (b_1 k_1^2 + c_1^2 k_2) s^3 + b_1 k_1 (b_1 k_1^2 + c_1^2 k_2 + c_1^2 k_3) s^2 + c_1 (k_2 + k_3) (b_1 k_1^2 + c_1^2 k_2) s + k_1 k_2 (b_1 k_1^2 + c_1^2 k_2 + c_1^2 k_3)$ and $\tilde{d}(s) = b_1 (b_1 k_1^2 + c_1^2 k_2) s^3 + c_1 (b_1 k_1^2 + c_1^2 k_2 + b_1 k_1 k_3) s^2 + (k_1 + k_2) (b_1 k_1^2 + c_1^2 k_2 + c_1^2 k_3) s$. Since $x$ satisfies (24b) and (24d), it can be verified that
\eqref{eq: 4-3 eqn1}--\eqref{eq: 4-3 eqn7} hold with $k$ satisfying \eqref{eq: 4-3 x}. Therefore,  $Y(s)$  is realizable by the circuit in Fig.~5(c).

\subsection{The Configuration in Fig.~5(d) (The Proof of Lemma~17)}

\emph{Necessity.} The admittance of the circuit configuration in Fig.~5(d) is calculated as $Y(s) = n(s)/d(s)$, where
$n(s) = b_1 b_2 c_1 s^4 + (b_1 b_2 k_1 + b_1 c_1 c_2 + b_2 c_1 c_2) s^3 + (b_1 c_2 k_1 + b_2 c_1 k_2 + b_2 c_2 k_1) s^2 + k_2 (b_2 k_1 + c_1 c_2) s + c_2 k_1 k_2$ and $d(s) = b_1 b_2 s^4 + (b_1 c_2 + b_2 c_1 + b_2c_2) s^3 + (b_2 k_1 + b_2 k_2 + c_1 c_2) s^2 + c_2 (k_1 + k_2) s$.
If the given admittance $Y(s)$ of this lemma is realizable by the circuit in Fig.~5(d), then the resultant of $n(s)$ and $d(s)$ in $s$ calculated as $R_0 (n, d, s) = b_1 b_2^3 c_2^4 k_1 k_2 (b_2 k_1 (b_1 k_1^2 + c_1^2k_2) - c_1c_2 (b_1k_1^2 + b_2 k_1^2 + c_1^2 k_2) )^2$ is zero. Therefore, one obtains that $c_2$ satisfies the expression in
(27), which further implies that the admittance of the configuration in Fig.~5(d) becomes
$Y(s) = \tilde{n}(s)/\tilde{d}(s)$, where $\tilde{n}(s) = b_1 c_1 (b_1 k_1^2 + b_2 k_1^2 + c_1^2 k_2) s^3 + k_1 (b_1 + b_2) (b_1 k_1^2 + c_1^2 k_2) s^2 + c_1 k_2 (b_1 k_1^2 + b_2 k_1^2 + c_1^2 k_2) s + k_1 k_2 (b_1 k_1^2 + c_1^2 k_2)$ and $\tilde{d}(s) = b_1 (b_1 k_1^2 + b_2 k_1^2 + c_1^2 k_2) s^3 + c_1 (b_1 k_1^2 + b_2 k_1^2 + b_2 k_1 k_2 + c_1^2 k_2) s^2 + (k_1 + k_2) (b_1 k_1^2 + c_1^2 k_2) s$. Therefore, there exists $k > 0$ such that $n(s) = k \alpha(s)$ and $d(s) = k \beta(s)$.  Then, it follows that
\begin{subequations}
\begin{align}
b_1 c_1 (b_1 k_1^2 + b_2 k_1^2 + c_1^2 k_2) &= k \alpha_3,  \label{eq: 4-4 eqn1}  \\
k_1 (b_1 + b_2) (b_1 k_1^2 + c_1^2 k_2)   &= k \alpha_2   \label{eq: 4-4 eqn2}  \\
c_1 k_2 (b_1 k_1^2 + b_2 k_1^2 + c_1^2 k_2)  &= k \alpha_1,   \label{eq: 4-4 eqn3}  \\
k_1 k_2 (b_1 k_1^2 + c_1^2 k_2) &= k \alpha_0,     \label{eq: 4-4 eqn4}  \\
b_1 (b_1 k_1^2 + b_2 k_1^2 + c_1^2 k_2)  &= k \beta_3,  \label{eq: 4-4 eqn5}  \\
c_1 (b_1 k_1^2 + b_2 k_1^2 + b_2 k_1 k_2 + c_1^2 k_2) &= k \beta_2,   \label{eq: 4-4 eqn6}  \\
(k_1 + k_2) (b_1 k_1^2 + c_1^2 k_2)   &= k \beta_1,  \label{eq: 4-4 eqn7}
\end{align}
\end{subequations}
where $k > 0$. Then, it follows from \eqref{eq: 4-4 eqn1} and \eqref{eq: 4-4 eqn5} that the value of $c_1$ can be expressed as in (27).
Let
\begin{equation} \label{eq: 4-4 lambda k1}
x = k_1 > 0.
\end{equation}
Clearly, it can be derived from \eqref{eq: 4-4 lambda k1} that $k_1$ can be expressed as in
(27).
From \eqref{eq: 4-4 eqn4} and \eqref{eq: 4-4 eqn7}, one can derive that $k_1^{-1} + k_2^{-1} = \beta_1/\alpha_0$, which together with the expression of $k_1$ further implies the expression of $k_2$ as in
(27). Then, it follows from \eqref{eq: 4-4 eqn3} and \eqref{eq: 4-4 eqn5} that
$b_1 = \beta_3 c_1 k_2/\alpha_1$, which together with the expressions of $c_1$ and $k_2$ can imply the expression of $b_1$ as in (27). Furthermore, one can derive from \eqref{eq: 4-4 eqn2} and \eqref{eq: 4-4 eqn4} that $b_1 + b_2 = \alpha_2 k_2 / \alpha_0$, which together with the expressions of $k_2$ and $b_1$ can further imply that  the value of $b_2$ can be expressed as in (27).
By the element value expressions in (27), it follows from  \eqref{eq: 4-4 eqn4} that
\begin{equation}  \label{eq: 4-4 k}
k = \frac{\alpha_0 \alpha_3 x^3 (\beta_3^2 x^2 + \alpha_1 \alpha_3)}{\alpha_1
\beta_3^2 (\beta_1 x - \alpha_0)^2}.
\end{equation}
Substituting \eqref{eq: 4-4 k} and the element values in (27) into \eqref{eq: 4-4 eqn5} and \eqref{eq: 4-4 eqn6} implies (26d) and $\alpha_0 \beta_2 \beta_3^3 x^4 - \alpha_1 \alpha_2 \beta_1 \beta_3^2 x^3 + \alpha_0 \alpha_3 \beta_3 (\alpha_0 \beta_3 + \alpha_1 \beta_2) x^2 - \alpha_0 \alpha_1 \alpha_3^2 \beta_1 x + \alpha_0^2 \alpha_1 \alpha_3^2 = 0$,
 respectively, which can further imply
(26b). The assumption that the element values are positive and finite can imply that
(26a) and (26c). The proof of the necessity part has been completed.

\emph{Sufficiency.}
Let the element values of $c_1$, $c_2$, $k_1$, $k_2$, $b_1$, and
$b_2$
 satisfy (27), where (26a) holds, and $x$ is a positive root of (26b), such that
(26c) and (26d) hold.
Then, it can be verified that the elements values are positive and finite. Moreover, together with the expression of $c_2$ in
(26c), it can be calculated that the admittance of the configuration in Fig.~5(d) is
$Y(s) = \tilde{n}(s)/\tilde{d}(s)$, where $\tilde{n}(s) = b_1 c_1 (b_1 k_1^2 + b_2 k_1^2 + c_1^2 k_2) s^3 + k_1 (b_1 + b_2) (b_1 k_1^2 + c_1^2 k_2) s^2 + c_1 k_2 (b_1 k_1^2 + b_2 k_1^2 + c_1^2 k_2) s + k_1 k_2 (b_1 k_1^2 + c_1^2 k_2)$ and $\tilde{d}(s) = b_1 (b_1 k_1^2 + b_2 k_1^2 + c_1^2 k_2) s^3 + c_1 (b_1 k_1^2 + b_2 k_1^2 + b_2 k_1 k_2 + c_1^2 k_2) s^2 + (k_1 + k_2) (b_1 k_1^2 + c_1^2 k_2) s$.
Since $x$ satisfies (26b) and (26d), it can be verified that \eqref{eq: 4-4 eqn1}--\eqref{eq: 4-4 eqn7} hold with
$k$ satisfying \eqref{eq: 4-4 k}. Therefore,  $Y(s)$  is realizable by the circuit in Fig.~5(d).

\subsection{The Configuration in Fig.~5(e) (The Proof of Lemma~18)}

\emph{Necessity.} The admittance of the circuit configuration in Fig.~5(e) is calculated as $Y(s) = n(s)/d(s)$, where
$n(s) = b_1 c_1 c_2 s^4 + b_1(c_1 k_2 + c_2 k_1) s^3 + (b_1 k_1 k_2 + c_1 c_2 k_2 + c_1 c_2 k_3) s^2 + (c_1 k_2 k_3 + c_2 k_1 k_2 + c_2 k_1 k_3) s + k_1 k_2 k_3$ and $d(s) = b_1 (c_1 + c_2) s^4 + (b_1 k_1+b_1k_2+c_1 c_2) s^3 + (c_1 k_3 + c_2 k_1 + c_2 k_2 + c_2 k_3) s^2 + k_3 (k_1 + k_2) s$.
If the given admittance $Y(s)$ of this lemma is realizable as in Fig.~5(e), then the resultant of $n(s)$ and $d(s)$ in $s$ calculated as $R_0 (n, d, s) = -b_1 c_2^4 k_1 k_2 k_3^3 (c_2 k_1 (b_1 k_1^2 + c_1^2 k_2 + c_1^2 k_3) - c_1 k_2(b_1 k_1^2 + c_1^2 k_3))^2$ is zero. Therefore, one obtains that $c_2$ satisfies the expression in
(29), which further implies that the admittance of the configuration in Fig.~5(e) becomes
$Y(s) = \tilde{n}(s)/\tilde{d}(s)$, where $\tilde{n}(s) = b_1 c_1 k_2 (b_1 k_1^2 + c_1^2 k_3) s^3 + b_1 k_1 k_2 (b_1 k_1^2 + c_1^2 k_2 + c_1^2 k_3) s^2 + c_1 k_2 (k_2 + k_3) (b_1 k_1^2 + c_1^2 k_3) s + k_1 k_2 k_3  (b_1 k_1^2 + c_1^2 k_2 + c_1^2 k_3)$ and $\tilde{d}(s) = b_1 (b_1 k_1^3 + b_1 k_1^2 k_2 + c_1^2 k_1 k_2 + c_1^2 k_1 k_3 + c_1^2 k_2 k_3) s^3 + c_1 k_2 (b_1 k_1^2 + b_1 k_1 k_2 + c_1^2 k_3) s^2 + k_3 (k_1 + k_2) (b_1 k_1^2 + c_1^2 k_2 + c_1^2 k_3)s$.
Therefore, there exists $k > 0$ such that $n(s) = k \alpha(s)$ and $d(s) = k \beta(s)$.
Then, it follows that
\begin{subequations}
\begin{align}
b_1 c_1 k_2 (b_1 k_1^2 + c_1^2 k_3) &= k \alpha_3, \label{eq: 4-5 eqn1}    \\
b_1 k_1 k_2 (b_1 k_1^2 + c_1^2 k_2 + c_1^2 k_3) &= k \alpha_2,    \label{eq: 4-5 eqn2}    \\
c_1 k_2 (k_2 + k_3) (b_1 k_1^2 + c_1^2 k_3)  &=  k \alpha_1,  \label{eq: 4-5 eqn3}    \\
k_1 k_2 k_3  (b_1 k_1^2 + c_1^2 k_2 + c_1^2 k_3)  &=  k \alpha_0,  \label{eq: 4-5 eqn4}    \\
b_1 (b_1 k_1^3 + b_1 k_1^2 k_2 + c_1^2 k_1 k_2 + c_1^2 k_1 k_3 + c_1^2 k_2 k_3)  &=  k \beta_3,  \label{eq: 4-5 eqn5}    \\
c_1 k_2 (b_1 k_1^2 + b_1 k_1 k_2 + c_1^2 k_3)  &=  k \beta_2,  \label{eq: 4-5 eqn6}    \\
k_3 (k_1 + k_2) (b_1 k_1^2 + c_1^2 k_2 + c_1^2 k_3)  &=  k \beta_1,  \label{eq: 4-5 eqn7}
\end{align}
\end{subequations}
where $k > 0$. Then, it follows from \eqref{eq: 4-5 eqn4} and \eqref{eq: 4-5 eqn7} that $k_2$ can be expressed as in
\begin{equation} \label{eq: 4-5 k2 equivalent}
k_2 = \frac{\alpha_0 k_1}{\beta_1 k_1 - \alpha_0}.
\end{equation}
Similarly, it is derived from \eqref{eq: 4-5 eqn2} and \eqref{eq: 4-5 eqn4} that
\begin{equation}  \label{eq: 4-5 b1 equivalent}
b_1 = \frac{\alpha_2 k_3}{\alpha_0}.
\end{equation}
Together with \eqref{eq: 4-5 k2 equivalent} and \eqref{eq: 4-5 b1 equivalent}, one can obtain from \eqref{eq: 4-5 eqn1} and \eqref{eq: 4-5 eqn3} that
\begin{equation}   \label{eq: 4-5 k3 equivalent}
k_3 = \frac{\alpha_0^2 \alpha_3 k_1}{\Delta_\alpha (\beta_1 k_1 - \alpha_0)}.
\end{equation}
Then, it follows from \eqref{eq: 4-5 eqn1} and \eqref{eq: 4-5 eqn6} that $k \beta_2 b_1 - k \alpha_3 = b_1 c_1 k_1 k_2^2$, which further implies that $c_1$ can be expressed as in
\begin{equation}  \label{eq: 4-5 c1 equivalent}
c_1 = \frac{k \Delta_\alpha (\beta_1 k_1 - \alpha_0)^3(\alpha_0\alpha_2\beta_2 k_1 - \Delta_\alpha (\beta_1k_1 - \alpha_0))}{\alpha_0^4 \alpha_2^2 \alpha_3 k_1^5}
\end{equation}
Since $k_2 > 0$ and $k_3 > 0$, it is implied from \eqref{eq: 4-5 k2 equivalent} and \eqref{eq: 4-5 k3 equivalent} that
$\beta_1 k_1 - \alpha_0 > 0$ and $\Delta_\alpha > 0$. Then, combining \eqref{eq: 4-5 eqn2} and \eqref{eq: 4-5 eqn5}, one can obtain the expression of $k$ as
\begin{equation} \label{eq: 4-5 x}
k =  \frac{\alpha_0^4 \alpha_2^3 \alpha_3 \tilde{\mathcal{B}}_{12} k_1^7}{\Delta_\alpha (\beta_1 k_1 - \alpha_0)^3(\alpha_0\alpha_2\beta_2 k_1 - \Delta_\alpha (\beta_1 k_1 - \alpha_0))^2},
\end{equation}
which implies that (28a) holds.
Substituting \eqref{eq: 4-5 k2 equivalent}--\eqref{eq: 4-5 x} into \eqref{eq: 4-5 eqn1} implies the expression of $k_1$ as in (29). Together with
the expression of $k_1$ and \eqref{eq: 4-5 x}, the element values expressed in \eqref{eq: 4-5 k2 equivalent}--\eqref{eq: 4-5 c1 equivalent} can be further equivalent to the expressions
of $c_1$, $k_2$, $k_3$, and $b_1$ in (29),
which implies that  (28b) holds.
By (29) and
\eqref{eq: 4-5 x}, it is implied that \eqref{eq: 4-5 eqn2} can be equivalent to
(28c). The proof of the necessity part has been completed.

\emph{Sufficiency.}
Let the element values of $c_1$, $c_2$, $k_1$, $k_2$, $k_3$, and $b_1$ satisfy
(29), where (28a)--(28c) hold.
Then, it can be verified that the elements values are positive and finite.
Moreover, together with the expression of $c_2$ in
(29), it can be calculated that the admittance of the configuration in Fig.~5(e) is
$Y(s) = \tilde{n}(s)/\tilde{d}(s)$, where $\tilde{n}(s) = b_1 c_1 k_2 (b_1 k_1^2 + c_1^2 k_3) s^3 + b_1 k_1 k_2 (b_1 k_1^2 + c_1^2 k_2 + c_1^2 k_3) s^2 + c_1 k_2 (k_2 + k_3) (b_1 k_1^2 + c_1^2 k_3) s + k_1 k_2 k_3  (b_1 k_1^2 + c_1^2 k_2 + c_1^2 k_3)$ and $\tilde{d}(s) = b_1 (b_1 k_1^3 + b_1 k_1^2 k_2 + c_1^2 k_1 k_2 + c_1^2 k_1 k_3 + c_1^2 k_2 k_3) s^3 + c_1 k_2 (b_1 k_1^2 + b_1 k_1 k_2 + c_1^2 k_3) s^2 + k_3 (k_1 + k_2) (b_1 k_1^2 + c_1^2 k_2 + c_1^2 k_3)s$.
Since  (28c) holds, it can be verified that
\eqref{eq: 4-5 eqn1}--\eqref{eq: 4-5 eqn7} hold with $k$ satisfying \eqref{eq: 4-5 x}. Therefore,  $Y(s)$ is realizable by the circuit in Fig.~5(e).

\subsection{The Configuration in Fig.~5(f) (The Proof of Lemma~19)}

\emph{Necessity.} The admittance of the circuit configuration in Fig.~5(f) is calculated as $Y(s) = n(s)/d(s)$, where
$n(s) = b_1 c_1 c_2 s^4 + b_1 (c_1 k_2 + c_1 k_3 + c_2 k_1) s^3 + (b_1k_1k_2 + b_1k_1k_3 + c_1c_2k_2) s^2 + k_2(c_1k_3 + c_2k_1) s + k_1k_2k_3$ and
$d(s) = b_1(c_1+c_2)s^4 + (b_1k_1 + b_1k_2 + b_1k_3 + c_1c_2) s^3 + (c_1k_3 + c_2k_1 + c_2k_2) s^2 + k_3(k_1 + k_2) s$.
If the given admittance $Y(s)$ of this lemma is realizable as in Fig.~5(f), then the resultant of $n(s)$ and $d(s)$ in $s$ calculated as $R_0 (n, d, s) = -b_1^3 k_1 k_2 k_3^5(k_1 (b_1 k_1^2 + c_1^2 k_2)c_2 - c_1 (b_1 k_1^2 k_2 + b_1 k_1^2 k_3 + c_1^2 k_2 k_3))^2$ is zero.  Therefore, one obtains that $c_2$ satisfies the expression in
(31),   which further implies that the admittance of the configuration in Fig.~5(f) becomes
$Y(s) = \tilde{n}(s)/\tilde{d}(s)$, where $\tilde{n}(s) = b_1 c_1 (b_1 k_1^2 k_2 + b_1 k_1^2 k_3 + c_1^2 k_2 k_3) s^3 + b_1   k_1  (k_2 + k_3) (b_1 k_1^2 + c_1^2 k_2) s^2 + c_1 k_2 (b_1 k_1^2 k_2 + b_1 k_1^2 k_3 + c_1^2 k_2 k_3) s + k_1 k_2 k_3 (b_1 k_1^2 + c_1^2 k_2)$ and $\tilde{d}(s) = b_1 (b_1 k_1^3 + b_1 k_1^2 k_2 + b_1 k_1^2 k_3 + c_1^2 k_1 k_2 + c_1^2 k_2 k_3) s^3 + c_1 (b_1 k_1^2 k_2 + b_1 k_1^2 k_3 + b_1 k_1 k_2^2 + c_1^2 k_2 k_3) s^2 + k_3 (k_1 + k_2) (b_1 k_1^2 + c_1^2 k_2)s$. Therefore, there exists $k > 0$ such that $n(s) = k \alpha(s)$ and $d(s) = k \beta(s)$.
Then, it follows that
\begin{subequations}
\begin{align}
b_1 c_1 (b_1 k_1^2 k_2 + b_1 k_1^2 k_3 + c_1^2 k_2 k_3) &= k \alpha_3, \label{eq: 4-6 eqn1}    \\
b_1   k_1  (k_2 + k_3) (b_1 k_1^2 + c_1^2 k_2) &= k \alpha_2,    \label{eq: 4-6 eqn2}    \\
c_1 k_2 (b_1 k_1^2 k_2 + b_1 k_1^2 k_3 + c_1^2 k_2 k_3)  &=  k \alpha_1,  \label{eq: 4-6 eqn3}    \\
k_1 k_2 k_3 (b_1 k_1^2 + c_1^2 k_2)  &=  k \alpha_0,  \label{eq: 4-6 eqn4}    \\
b_1 (b_1 k_1^3 + b_1 k_1^2 k_2 + b_1 k_1^2 k_3 + c_1^2 k_1 k_2 + c_1^2 k_2 k_3)  &=  k \beta_3,  \label{eq: 4-6 eqn5}    \\
c_1 (b_1 k_1^2 k_2 + b_1 k_1^2 k_3 + b_1 k_1 k_2^2 + c_1^2 k_2 k_3)  &=  k \beta_2,  \label{eq: 4-6 eqn6}    \\
k_3 (k_1 + k_2) (b_1 k_1^2 + c_1^2 k_2)  &=  k \beta_1,  \label{eq: 4-6 eqn7}
\end{align}
\end{subequations}
where $k > 0$.
Therefore, it can be calculated that $\alpha_1 \beta_1 - \alpha_0\beta_2 = c_1 k_2^2 k_3^2 (b_1k_1^2 + c_1^2k_2)^2/k > 0$, which means that (30b) holds.
Let
\begin{equation}  \label{eq: 4-6 lambda mu}
x = k_1 > 0, ~~~ y = - \frac{k_1}{c_1} < 0.
\end{equation}
Then, it follows from \eqref{eq: 4-6 eqn4} and \eqref{eq: 4-6 eqn7} that $1/k_1 + 1/k_2 = \beta_1/\alpha_0$, which implies that $k_2$
can be expressed as in (31), which implies that $x > \alpha_0/\beta_1$. Then, combining \eqref{eq: 4-6 eqn1} and \eqref{eq: 4-6 eqn3}, one obtains $b_1 = \alpha_3 k_2/\alpha_1$, which implies that $b_1$ can be expressed as in (31). It follows from \eqref{eq: 4-6 eqn2} and \eqref{eq: 4-6 eqn4} that $b_1 (1/k_2 + 1/k_3) = \alpha_2/\alpha_0$, which implies that $k_3$ can be expressed as in (31), which implies that
(30a) holds.
Together with \eqref{eq: 4-6 lambda mu} and the element values of $k_2$, $k_3$, and $b_1$ in (31), it is implied from \eqref{eq: 4-6 eqn1} that $k$ can be expressed as in
\begin{equation} \label{eq: 4-6 x}
k =  - \frac{\alpha_0^3 \alpha_3 x^6 (\alpha_2 y^2 + \alpha_0)}{\alpha_1 \Delta_\alpha y^3 (\beta_1 x - \alpha_0)^3}.
\end{equation}
Together with \eqref{eq: 4-6 lambda mu}, \eqref{eq: 4-6 x}, and the element values of $k_2$, $k_3$, and $b_1$ in (31), it follows that \eqref{eq: 4-6 eqn4}, \eqref{eq: 4-6 eqn5}, and \eqref{eq: 4-6 eqn6} can be equivalent to (30e),
\begin{equation} \label{eq: 4-6 equation mu third-order}
\alpha_3   (\beta_1 \Delta_\alpha x + \alpha_0^2 \alpha_3) y^3 -  \alpha_1\alpha_2 \mathcal{B}_{13} x  y^2 +
\alpha_1   (\beta_1 \Delta_\alpha x - \alpha_0 (\Delta_\alpha - \alpha_0\alpha_3)) y     - \alpha_0 \alpha_1 \mathcal{B}_{13}    = 0,
\end{equation}
and
\begin{equation} \label{eq: 4-6 equation mu}
(\alpha_2  \tilde{\mathcal{B}}_{11} x - \alpha_0^2\alpha_3) y^2 + \alpha_0 (\tilde{\mathcal{B}}_{11} x - \alpha_0\alpha_1) = 0,
\end{equation}
respectively. By \eqref{eq: 4-6 equation mu}, it is clear that that $\alpha_2  \tilde{\mathcal{B}}_{11} x - \alpha_0^2\alpha_3 = 0$ can imply $\tilde{\mathcal{B}}_{11} x - \alpha_0\alpha_1 = 0$, which contradicts the assumption that $\Delta_\alpha \neq 0$. Therefore, it is implied that
$(\tilde{\mathcal{B}}_{11} x - \alpha_0 \alpha_1)(\alpha_0^2\alpha_3 - \alpha_2 \tilde{\mathcal{B}}_{11} x) > 0$, which together with \eqref{eq: 4-6 equation mu} implies that the expression of $y$ can be expressed as in (30f) and together with $\tilde{\mathcal{B}}_{11} > 0$ and $x > \alpha_0/\beta_1$ implies
(30d). By (30e) and (30f), it can be derived that \eqref{eq: 4-6 equation mu third-order} is further equivalent to (30c). The proof of the necessity part has been completed.

\emph{Sufficiency.}
Let the element values of $c_1$, $c_2$, $k_1$, $k_2$, $k_3$, and $b_1$
satisfy (31), where
(30a) and (30b) hold, and $x$ is a positive root of (30c) such that (30d) and (30e) hold
with (30f).
Then, it can be verified that the elements values are positive and finite.
Moreover, together with the expression of $c_2$ in
(31), it can be calculated that the admittance of the configuration in Fig.~5(f) is
$Y(s) = \alpha'(s)/\beta'(s)$, where $\alpha'(s) = b_1 c_1 (b_1 k_1^2 k_2 + b_1 k_1^2 k_3 + c_1^2 k_2 k_3) s^3 + b_1   k_1  (k_2 + k_3) (b_1 k_1^2 + c_1^2 k_2) s^2 + c_1 k_2 (b_1 k_1^2 k_2 + b_1 k_1^2 k_3 + c_1^2 k_2 k_3) s + k_1 k_2 k_3 (b_1 k_1^2 + c_1^2 k_2)$ and $\beta'(s) = b_1 (b_1 k_1^3 + b_1 k_1^2 k_2 + b_1 k_1^2 k_3 + c_1^2 k_1 k_2 + c_1^2 k_2 k_3) s^3 + c_1 (b_1 k_1^2 k_2 + b_1 k_1^2 k_3 + b_1 k_1 k_2^2 + c_1^2 k_2 k_3) s^2 + k_3 (k_1 + k_2) (b_1 k_1^2 + c_1^2 k_2)s$.
Since (30c), (30e), and (30f), it can be verified that
\eqref{eq: 4-6 eqn1}--\eqref{eq: 4-6 eqn7} hold with $k$ satisfying \eqref{eq: 4-6 x}.  Therefore,  $Y(s)$  is realizable by the circuit in Fig.~5(f).

\subsection{The Configuration in Fig.~5(g) (The Proof of Lemma~20)}

\emph{Necessity.}  The admittance of the circuit configuration in Fig.~5(g) is calculated as $Y(s) = n(s)/d(s)$, where
$n(s) = b_1 b_2 c_1 s^4 + b_1 (b_2 k_1 + c_1 c_2) s^3 + (b_1 c_1 k_2 +
b_1 c_2 k_1 + b_2 c_1 k_1) s^2 + k_1 (b_1 k_2 + c_1 c_2) s + c_1 k_1 k_2$
and
$d(s) = b_1 b_2 s^4 + (b_1 c_1 + b_1 c_2 + b_2 c_1) s^3 + (b_1 k_1 + b_1 k_2 +
c_1c_2) s^2 + c_1 (k_1 + k_2) s$. It is clear that
the resultant of $n(s)$ and $d(s)$ in $s$ calculated as $R_0 (n, d, s) = b_1 b_2 c_1^4 k_1 k_2 ( ((b_1 k_2 - b_2 k_1)^2 + b_1 c_2^2 k_1) c_1^2 - b_1 c_2 k_1 (b_1 k_2 + b_2 k_1) c_1 + b_1^2 b_2 k_1^2 k_2 )^2$ can be zero. Assume that the common factor is $(s+y)$ with $y > 0$.
Therefore, there exists $k > 0$ such that $n(s) = k (s+y) \alpha(s)$ and $d(s) = k (s+y) \beta(s)$.
Then, it follows that
\begin{subequations}
\begin{align}
b_1 b_2 c_1 &= k \alpha_3, \label{eq: 5-1 eqn1}    \\
b_1 (b_2 k_1 + c_1 c_2) &= k (\alpha_3 y + \alpha_2),  \label{eq: 5-1 eqn2}    \\
b_1 c_1 k_2 + b_1 c_2 k_1 + b_2 c_1 k_1  &= k (\alpha_2 y + \alpha_1),
\label{eq: 5-1 eqn3}    \\
k_1 (b_1 k_2 + c_1 c_2)  &=  k (\alpha_1 y + \alpha_0),
\label{eq: 5-1 eqn4}    \\
c_1 k_1 k_2  &=  k \alpha_0 y,  \label{eq: 5-1 eqn5}    \\
b_1 b_2  &= k \beta_3,  \label{eq: 5-1 eqn6}    \\
b_1 c_1 + b_1 c_2 + b_2 c_1  &= k (\beta_3 y + \beta_2),  \label{eq: 5-1 eqn7}    \\
b_1 k_1 + b_1 k_2 + c_1 c_2  &= k (\beta_2 y + \beta_1),  \label{eq: 5-1 eqn8}    \\
c_1 (k_1 + k_2)  &=  k \beta_1 y,    \label{eq: 5-1 eqn9}
\end{align}
\end{subequations}
where $k > 0$. Then, it follows from \eqref{eq: 5-1 eqn1} and \eqref{eq: 5-1 eqn6} that
the value of $c_1$ can be expressed as in (33).
Let
\begin{equation} \label{eq: 5-1 lambda k1}
x = k_1 > 0.
\end{equation}
Combining
\eqref{eq: 5-1 eqn5} and \eqref{eq: 5-1 eqn9}, one obtains the expression of $k_2$ as in
(33).
The assumption that $k_2 > 0$ implies that (32d) holds.
Substituting  the expression of $c_1$ and $k_2$ as in (33) into \eqref{eq: 5-1 eqn5} implies that
\begin{equation}  \label{eq: 5-1 x k1}
k = \frac{\alpha_3 k_1^2}{\beta_3 (\beta_1 k_1 - \alpha_0)},
\end{equation}
together with \eqref{eq: 5-1 eqn4} and \eqref{eq: 5-1 eqn8} further implies that $b_1$ can be expressed as in (33).
Therefore, one indicates that (32e) holds.   Substituting the expression of $b_1$ as in (33)
into \eqref{eq: 5-1 eqn6} yields the expression of $b_2$ as in (33).
Then, substituting $k$ in \eqref{eq: 5-1 x k1} and the expressions of $c_1$, $k_2$, and $b_1$ as in (33) into \eqref{eq: 5-1 eqn4}, the expression of $c_2$ can be obtained as in (33),
which implies (32a). Finally, substituting $k$ in \eqref{eq: 5-1 x k1} and the element values expressed in  (33) into \eqref{eq: 5-1 eqn2}, \eqref{eq: 5-1 eqn3} and \eqref{eq: 5-1 eqn7} implies (32b), (32c) and (32f), respectively.
The proof of the necessity part has been completed.

\emph{Sufficiency.}  Let the element values of
$c_1$, $c_2$, $k_1$, $k_2$, $b_1$, and $b_2$
satisfy (33), where
(32a) holds, and  $x>0$ and $y>0$ are positive roots for
(32b)   and  (32c), such that (32d)--(32f) hold.
Then, it can be verified that the element values are positive and finite. Since (32b), (32c), and (32f)
hold, it can be verified that \eqref{eq: 5-1 eqn1}--\eqref{eq: 5-1 eqn9} hold with $k$ satisfying \eqref{eq: 5-1 x k1}.   Therefore, $Y(s)$ is realizable by the circuit in Fig.~5(g).

\subsection{The Configuration in Fig.~5(h) (The Proof of Lemma~21)}

\emph{Necessity.}  The admittance of the circuit configuration in Fig.~5(h) is calculated as $Y(s) = n(s)/d(s)$, where
$n(s) = b_1 b_2 c_1 c_2 s^4 + b_1 b_2 (c_1 k_2 + c_2 k_1) s^3 + (b_1 b_2 k_1 k_2 + b_1 c_1 c_2 k_2 + b_2 c_1 c_2 k_1) s^2 + k_1 k_2 (b_1 c_2 + b_2 c_1) s + c_1 c_2 k_1 k_2$
and
$d(s) = b_1 b_2 (c_1+c_2) s^4 + (b_1 b_2 k_1 + b_1 b_2 k_2 + b_1 c_1 c_2 + b_2 c_1 c_2) s^3 + (k_1 + k_2) (b_1 c_2 + b_2 c_1) s^2 + c_1 c_2 (k_1 + k_2)s$.
It is clear that
the resultant of $n(s)$ and $d(s)$ in $s$ calculated as $R_0 (n, d, s) = b_1 b_2 c_1^4 c_2^4 k_1 k_2 ( (c_2^2 (b_1 k_2 - b_2 k_1)^2 + b_1 b_2^2 k_1 k_2^2) c_1^2 -  b_1 b_2 c_2 k_1 k_2 (b_1 k_2 + b_2 k_1) c_1 + b_1^2 b_2  c_2^2 k_1^2 k_2)^2$ can be zero. Assume that the common factor is $(s+\gamma)$ with $\gamma > 0$.
Therefore, there exists $k > 0$ such that $n(s) = k(s+\gamma) \alpha(s)$ and $d(s) = k(s+\gamma) \beta(s)$.
Then, it follows that
\begin{subequations}
\begin{align}
b_1 b_2 c_1 c_2 &= k \alpha_3, \label{eq: 5-2 eqn1}    \\
b_1 b_2 (c_1 k_2 + c_2 k_1) &= k (\alpha_3 \gamma + \alpha_2),  \label{eq: 5-2 eqn2}    \\
b_1 b_2 k_1 k_2 + b_1 c_1 c_2 k_2 + b_2 c_1 c_2 k_1  &= k (\alpha_2 \gamma + \alpha_1),
\label{eq: 5-2 eqn3}    \\
k_1 k_2 (b_1 c_2 + b_2 c_1)  &=  k (\alpha_1 \gamma + \alpha_0),
\label{eq: 5-2 eqn4}    \\
c_1 c_2 k_1 k_2  &=  k \alpha_0 \gamma,  \label{eq: 5-2 eqn5}    \\
b_1 b_2 (c_1+c_2)  &= k \beta_3,  \label{eq: 5-2 eqn6}    \\
b_1 b_2 k_1 + b_1 b_2 k_2 + b_1 c_1 c_2 + b_2 c_1 c_2  &= k (\beta_3 \gamma + \beta_2),  \label{eq: 5-2 eqn7}
\\
(k_1 + k_2) (b_1 c_2 + b_2 c_1)  &= k (\beta_2 \gamma + \beta_1),  \label{eq: 5-2 eqn8}    \\
c_1 c_2 (k_1 + k_2)  &=  k \beta_1 \gamma,    \label{eq: 5-2 eqn9}
\end{align}
\end{subequations}
where $k > 0$.
Let
\begin{equation} \label{eq: 5-2 lambda1 lambda2 lambda3}
x = c_1 > 0, ~~~ y = k_1 > 0, ~~~ z = b_1 > 0.
\end{equation}
It is clear from \eqref{eq: 5-2 lambda1 lambda2 lambda3} that the element values of $c_1$, $k_1$, and $b_1$ can be expressed as in
(35).
Then, it follows from \eqref{eq: 5-2 eqn1} and \eqref{eq: 5-2 eqn6} that $c_2$ can be expressed as in  (35), which implies that (34e) holds.
Combining \eqref{eq: 5-2 eqn5} and \eqref{eq: 5-2 eqn9}, one obtains the expression of $k_2$ as in (35), which implies that (34f) holds. By the element value expression of $k_2$, it is implied from \eqref{eq: 5-2 eqn4} and \eqref{eq: 5-2 eqn8} that (34a) holds. Substituting the element values of $c_1$, $c_2$, $k_1$, and $k_2$ into \eqref{eq: 5-2 eqn5} implies
\begin{equation} \label{eq: 5-2 x}
k = \frac{\alpha_3^2 x^3 y^2}{(\beta_3 x - \alpha_3) \Theta_0},
\end{equation}
which implies that $\Theta_0 > 0$.
Then, it is implied from \eqref{eq: 5-2 eqn1}, the element values of $c_1$, $c_2$, and $b_1$ in
(35), and $k$ in \eqref{eq: 5-2 x} that $b_2$ can be expressed in
(35).  Finally, substituting the element values in (35), and $k$ in \eqref{eq: 5-2 x} into \eqref{eq: 5-2 eqn3}, \eqref{eq: 5-2 eqn4}, and \eqref{eq: 5-2 eqn7} implies (34b)--(34d), respectively.
The proof of the necessity part has been completed.

\emph{Sufficiency.}
Let the element values of
$c_1$, $c_2$, $k_1$, $k_2$, $b_1$, and $b_2$
 satisfy (35), where
(34a) holds, and
$x > 0$, $y > 0$, and $z > 0$ are positive roots of  (34b)--(34d), such that (34e)--(34g) hold. Since (34a)--(34d) hold,  it can be verified that  \eqref{eq: 5-2 eqn1}--\eqref{eq: 5-2 eqn9} hold with $k$ satisfying
\eqref{eq: 5-2 x}.   Therefore,  $Y(s)$  is realizable by the circuit in Fig.~5(h).

\section{Side-View Train Suspension Model in Fig.~8}

By Newton's Second Law, the motion equations of the side-view train suspension model can be formulated as in (36), that is,
\begin{equation*}
M_g \ddot{z}_g + C_g \dot{z}_g + K_g z_g = E_g u + K_r z_r,
\end{equation*}
where $z_g  = \left[
           z_s, \theta_s, z_{b1}, \theta_{b1}, z_{b2}, \theta_{b2}, z_{w1}, z_{w2}, z_{w3}, z_{w4}
       \right]^\mathrm{T}$,
$u = \left[ F_1, F_2 \right]^\mathrm{T}$, $z_r = \left[ z_{r1}, z_{r2}, z_{r3}, z_{r4} \right]^\mathrm{T}$,
and
\begin{equation}
M_g = \left[
           \begin{array}{cccccccccc}
           m_s &   &  &   &   &   &   &   &   &    \\
             & I_s &  &   &   &  &   &   &   &    \\
             &   & m_{b} &   &   &  &   &  &   &    \\
             &   &   & I_{b} &   &   &  &   &   &   \\
            &   &    &   & m_{b}  &   &   &   &   &   \\
             &   &   &   &   & I_{b} &   &   &  &   \\
             &   &   &   &   &   &  m_w &  &   &   \\
           &   &   &   &   &   &   & m_w &   &   \\
           &   &   &   &   &   &   &  &  m_w &   \\
            &   &   &   &   &   &   &  &   &  m_w \\
           \end{array}
         \right],
\end{equation}
\begin{equation}
C_{g}  = \left[
           \begin{array}{cccccccccc}
           0 & 0 & 0 & 0 & 0 & 0 & 0 & 0 & 0 & 0  \\
           0 & 0 & 0 & 0 & 0 & 0 & 0 & 0 & 0 & 0  \\
           0 & 0 & 2c_p & 0 & 0 & 0 & -c_p & -c_p & 0 & 0  \\
           0 & 0 & 0 & 2 l_b^2 c_p & 0 & 0 & -l_b c_p & l_b c_p & 0 & 0 \\
           0 & 0 & 0 & 0 & 2 c_p & 0 & 0 & 0 & -c_p & -c_p \\
           0 & 0 & 0 & 0 & 0 & 2 l_b^2 c_p & 0 & 0 & -l_b c_p & l_b c_p \\
           0 & 0 & -c_p & -l_b c_p & 0 & 0 & c_p & 0 & 0 & 0 \\
           0 & 0 & -c_p &  l_b c_p & 0 & 0 &  0  & c_p & 0 & 0  \\
           0 & 0 & 0 & 0 & -c_p & -l_b c_p & 0 & 0 & c_p & 0 \\
           0 & 0 & 0 & 0 & -c_p &  l_b c_p & 0 & 0 & 0 & c_p  \\
           \end{array}
         \right],
\end{equation}
\begin{equation}
K_g  = \left[
           \begin{array}{cccccccccc}
            0 & 0 & 0 & 0 & 0 & 0 & 0 & 0 & 0 & 0  \\
           0 & 0 & 0 & 0 & 0 & 0 & 0 & 0 & 0 & 0  \\
           0 & 0 & 2k_p & 0 & 0 & 0 & -k_p & -k_p & 0 & 0  \\
           0 & 0 & 0 & 2 l_b^2 k_p & 0 & 0 & -l_b k_p & l_b k_p & 0 & 0 \\
           0 & 0 & 0 & 0 & 2 k_p & 0 & 0 & 0 & -k_p & -k_p \\
           0 & 0 & 0 & 0 & 0 & 2 l_b^2 k_p & 0 & 0 & -l_b k_p & l_b k_p \\
           0 & 0 & -k_p & -l_b k_p & 0 & 0 & k_p+k_w & 0 & 0 & 0 \\
           0 & 0 & -k_p &  l_b k_p & 0 & 0 &  0  & k_p+k_w & 0 & 0  \\
           0 & 0 & 0 & 0 & -k_p & -l_b k_p & 0 & 0 & k_p+k_w & 0 \\
           0 & 0 & 0 & 0 & -k_p &  l_b k_p & 0 & 0 & 0 & k_p+k_w  \\
           \end{array}
         \right],
\end{equation}
\begin{equation}
E_g  = \left[
         \begin{array}{cc}
           -1 & -1 \\
           -l_s & l_s \\
           1 & 0 \\
           0 & 0 \\
           0 & 1 \\
           0 & 0 \\
           0 & 0 \\
           0 & 0  \\
           0 & 0  \\
           0 & 0  \\
         \end{array}
       \right],
\end{equation}
and
\begin{equation}
K_{r}  = \left[
           \begin{array}{cccc}
             0 & 0 & 0 & 0 \\
             0 & 0 & 0 & 0 \\
             0 & 0 & 0 & 0 \\
             0 & 0 & 0 & 0 \\
             0 & 0 & 0 & 0 \\
             0 & 0 & 0 & 0 \\
             k_w & 0 & 0 & 0 \\
             0 & k_w & 0 & 0 \\
             0 & 0 & k_w & 0 \\
             0 & 0 & 0 & k_w \\
           \end{array}
         \right].
\end{equation}

\section{Conclusion}

In this report, the proofs of some results  in the original
paper \cite{IJCTA_sub} as well as some other supplementary material have been presented,  which are omitted from the paper for brevity.


\end{document}